\theoremstyle{plain}
\newtheorem{thm}{Theorem}[section]
\newtheorem*{theorem*}{Theorem}
\newtheorem{lem}[thm]{Lemma}
\newtheorem{cor}[thm]{Corollary}
\newtheorem{prop}[thm]{Proposition}
\theoremstyle{definition}
\newtheoremstyle{remark}{2ex}{2ex}{}{}{\bfseries}{.}{.5em}{}
\theoremstyle{remark}
\newtheorem{rmk}[thm]{Remark}
\DeclareMathOperator{\THH}{THH}
\DeclareMathOperator{\TC}{TC} 
\DeclareMathOperator{\Tor}{Tor}
\DeclareMathOperator{\Ext}{Ext}
\DeclareMathOperator{\K}{K}
\DeclareMathOperator{\Hom}{Hom}
\DeclareMathOperator{\im}{im}
\DeclareMathOperator{\coker}{coker}
\DeclareMathOperator{\ku}{ku}
\newcommand{\F}{\mathbb{F}}
\newcommand{\id}{\mathrm{id}}
\tikzset{commutative diagrams/.cd,
mysymbol/.style={start anchor=center,end anchor=center,draw=none}
}
\title{The topological Hochschild homology of algebraic $K$-theory of finite fields}
\author{Eva H\"oning}
\address{Fachbereich Mathematik der Universit\"at Hamburg, Bundesstra\ss{}e 55, 20146 Hamburg, Germany}
\email{eva.hoening@uni-hamburg.de}
\begin{document}
\maketitle
\begin{abstract}
Let $\K(\mathbb{F}_q)$ be the algebraic $K$-theory spectrum of the finite field with $q$ elements and let $p \geq 5$ be a prime number coprime to $q$.   In this paper we study the mod $p$ and $v_1$ topological Hochschild homology of $\K(\mathbb{F}_q)$, denoted  $V(1)_*\THH(\K(\mathbb{F}_q))$, as an $\mathbb{F}_p$-algebra. The computations are organized in four different cases, depending on the mod $p$ behaviour of the function $q^n-1$. We use different spectral sequences, in particular the Bökstedt spectral sequence  and a generalization of a spectral sequence of Brun developed in an earlier paper. 
We calculate  the $\mathbb{F}_p$-algebras $\THH_*(\K(\mathbb{F}_q); H\mathbb{F}_p)$,  and we compute $V(1)_*\THH(\K(\mathbb{F}_q))$ in the first two cases. 
\end{abstract}

\section{Introduction}

Let $q =l^n$ be a prime power and let $\K(\mathbb{F}_q)$ be the algebraic $K$-theory spectrum of the finite field with $q$ elements. Let $p$ be  a prime number with $p \neq l$ and $p \geq 5$. In this paper we study the mod $p$ and $v_1$  topological Hochschild homology of $\K(\mathbb{F}_q)$, denoted by $V(1)_*\THH(\K(\mathbb{F}_q))$.   In \cite{Brunss} we constructed a generalization of a spectral sequence of Brun and we applied it to give a short computation of the mod $p$ and $v_1$ topological Hochschild homology of $p$-completed connective complex $K$-theory $\ku_p \simeq \K(\bar{\mathbb{F}}_l)_p$. In this paper we apply the spectral sequence in a similar fashion to $\K(\mathbb{F}_q)_p$.

Topological Hochschild homology  is an ingredient  for the computation of topological cyclic homology ($\TC$) via homotopy fixed points and Tate spectral sequences. 
By  \cite{HM} and \cite{DGM}    the connective cover of  $\TC(\K(\mathbb{F}_q)_p)_p$  is equivalent to $\K(\K(\mathbb{F}_q)_p)_p$. 
 The study of iterated algebraic $K$-theory is interesting because of the red-shift conjecture predicting  that algebraic $K$-theory increases  the chromatic level by one \cite{ARred}. 

By Quillen's computations \cite{Qu} the homotopy of $\K(\mathbb{F}_q)$ is given by 
\[ 
\pi_n(\K(\mathbb{F}_q)) = \begin{cases} 
                                            \mathbb{Z}, &  n = 0; \\
                                            \mathbb{Z}/{q^i-1}, & n = 2i-1, i > 0; \\
                                            0, & \text{otherwise}. 
                                           \end{cases}
\] 
Our computations depend on the degree of the first homotopy group with $p$-torsion  and on the order of the $p$-torsion subgroup.  We define $r$ to be the order of $q$ in $(\mathbb{Z}/p)^*$, so that the first $p$-torsion appears in degree $2r-1$,  and we define $v \coloneqq v_p(q^r-1)$ to be the $p$-adic valuation of $q^r-1$, so that the $p$-torsion subgroup of $\pi_{2r-1}(\K(\mathbb{F}_q))$ has order $p^v$. 
We distinguish the following four cases:

 \begin{enumerate}
\item $r = p-1$ and $v=1$, \label{1}
\item $r = p-1$ and $v \geq 2$, \label{2}
\item $r < p-1$ and $v \geq 2$, \label{3}
\item $r < p-1$ and $ v = 1$. \label{4}
\end{enumerate}

We study $V(1)_*\THH(\K(\mathbb{F}_q))$ by means of the Bökstedt spectral sequence, the generalized  Brun spectral sequence developed in \cite{Brunss} and a spectral sequence of Veen \cite{Veen}.

The Bökstedt spectral sequence  of a commutative $S$-algebra $B$ and a commutative $B$-algebra $C$  has the form 
\[  E^2_{*,*} = \mathbb{H}_{*,*}^{\mathbb{F}_p}\bigl((H\mathbb{F}_p)_*B; (H\mathbb{F}_p)_*C\bigr) \Longrightarrow (H\mathbb{F}_p)_*\THH(B;C). \] 
Here, $H\mathbb{F}_p$ is the Eilenberg-Mac Lane spectrum of $\mathbb{F}_p$, $(H\mathbb{F}_p)_*(-)$ is mod $p$ homology  and  $\mathbb{H}_{*,*}^{\mathbb{F}_p}(-;-)$ denotes ordinary Hochschild homology over the ground ring $\mathbb{F}_p$. The Bökstedt spectral sequence is an $(H\mathbb{F}_p)_*H\mathbb{F}_p$-comodule  $(H\mathbb{F}_p)_*C$-algebra spectral sequence and under some flatness condition one additionally has a coalgebra structure.  

 We define $\K \coloneqq \K(\mathbb{F}_q)_p$. 
We use the following  instances  of the generalized Brun spectral sequence: 
\begin{enumerate}[label = \alph*)]
\item $E^2_{*,*} = V(0)_*(H\mathbb{Z}_p \wedge_{\K} H\mathbb{Z}_p) \otimes_{\mathbb{F}_p} \THH_*(H\mathbb{Z}_p; H\mathbb{F}_p) \Longrightarrow V(0)_*\THH(\K; H\mathbb{Z}_p)$, \label{a}
\item $E^2_{*,*} = V(1)_*\K \otimes_{\mathbb{F}_p} \THH_*(\K; H\mathbb{F}_p)  \Longrightarrow V(1)_*\THH(\K)$.  \label{b}
\end{enumerate}
Here, $V(0)$ denotes the mod $p$ Moore spectrum. 
Note that $V(0)_*\THH(\K; H\mathbb{Z}_p)$ is isomorphic to $\THH_*(\K; H\mathbb{F}_p)$, so that 
 the abutment  of \ref{a} is an input of \ref{b}. 
 
Veen's spectral sequence has the form 
\[ E^{2}_{*,*} = \Tor^{(H\mathbb{F}_p)_*\K}_{*,*}(\mathbb{F}_p, \mathbb{F}_p)  \Longrightarrow \THH_*(\K; H\mathbb{F}_p).\] 
We examine Veen's spectral sequence in small total degrees and use our result to determine the differentials of \ref{a}.

In case (\ref{1}) and (\ref{2}) the mod $p$ homology of $\K$  has an easy form and the Bökstedt spectral sequence converging to $(H\mathbb{F}_p)_*\THH(\K)$ has a coalgebra structure.  This is useful to compute the differentials.  In  case (\ref{1}) the Bökstedt spectral sequence has  already  been computed  by Angeltveit and Rognes 
 \cite{AnRo}.   We proceed similarly in case (\ref{2}) (Subsection \ref{compBocs2}). 
In case (\ref{2}) an Ext spectral sequence argument  shows that the $(2p-3)$th Postnikov invariant of $V(1) \wedge_S \K$ is in the image of the forgetful functor from  the homotopy category of $H\mathbb{F}_p \wedge_S \K$-modules to the homotopy category  of $\K$-modules (Lemma \ref{bimod}).  This implies that $V(1) \wedge_S \THH(\K)$ is an $H\mathbb{F}_p$-module and we can identify $V(1)_*\THH(\K)$ with the $(H\mathbb{F}_p)_*H\mathbb{F}_p$-comodule primitives in $(H\mathbb{F}_p)_*(V(1) \wedge_S \THH(\K))$. We obtain (see Theorem \ref{casetwores}):

\begin{theorem*}  
In case (\ref{2}) we have an isomorphism of $\mathbb{F}_p$-algebras
\[ V(1)_*\THH(\K) \cong E(x) \otimes_{\mathbb{F}_p} E(\lambda_1, \lambda_2) \otimes_{\mathbb{F}_p} P(\mu_2) \otimes_{\mathbb{F}_p} \Gamma(\gamma_1').\] 
Here, $P(-)$, $E(-)$ and $\Gamma(-)$ denote the polynomial, exterior and divided power algebra over $\mathbb{F}_p$ and the degrees are given by $|x| = 2p-3$, $|\lambda_i| = 2p^i-1$, $|\mu_2| = 2p^2$ and $|\gamma_1'| = 2p-2$. 
\end{theorem*}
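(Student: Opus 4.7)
The overall plan is to follow the two-step strategy outlined in the introduction. First I would determine $(H\mathbb{F}_p)_*\THH(\K)$ as an $(H\mathbb{F}_p)_*H\mathbb{F}_p$-comodule algebra via the Bökstedt spectral sequence, and then pass to $V(1)_*\THH(\K)$ by exploiting the $H\mathbb{F}_p$-module structure on $V(1)\wedge_S \THH(\K)$ supplied by Lemma \ref{bimod} and reading off comodule primitives.

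For the first step I would run the Bökstedt spectral sequence as discussed in Subsection \ref{compBocs2}, whose $E^2$-page is $\mathbb{H}^{\mathbb{F}_p}_{*,*}\bigl((H\mathbb{F}_p)_*\K;(H\mathbb{F}_p)_*\K\bigr)$. Because $(H\mathbb{F}_p)_*\K$ has a tractable form in case (\ref{2}) and the spectral sequence carries a coalgebra structure, the surviving differentials can be pinned down by direct analogy with the Angeltveit--Rognes computation for case (\ref{1}), supplemented by Steenrod-operation arguments and by the standard constraints imposed by compatibility with the coproduct. The outcome should be an explicit description of $(H\mathbb{F}_p)_*\THH(\K)$ as an $(H\mathbb{F}_p)_*H\mathbb{F}_p$-comodule algebra, containing lifts of the classes $x,\lambda_1,\lambda_2,\mu_2,\gamma_1'$ together with extra classes destined to be absorbed into the comodule structure.

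For the second step I would invoke Lemma \ref{bimod}: the fact that the $(2p-3)$rd Postnikov invariant of $V(1)\wedge_S \K$ lifts through the forgetful functor from $H\mathbb{F}_p\wedge_S \K$-modules to $\K$-modules implies, as noted in the introduction, that $V(1)\wedge_S \THH(\K)$ acquires an $H\mathbb{F}_p$-module structure. Consequently it splits as a wedge of suspensions of $H\mathbb{F}_p$, so $V(1)_*\THH(\K)$ is identified with the $(H\mathbb{F}_p)_*H\mathbb{F}_p$-comodule primitives inside $(H\mathbb{F}_p)_*(V(1)\wedge_S \THH(\K))$. The latter group is computed by a Künneth argument from the first step together with the known mod $p$ homology of $V(1)$, and extracting the primitives in the resulting comodule algebra should produce exactly the tensor decomposition $E(x)\otimes E(\lambda_1,\lambda_2)\otimes P(\mu_2)\otimes \Gamma(\gamma_1')$ in the asserted degrees; the divided-power factor reflects the standard phenomenon that a polynomial generator in mod $p$ homology contributes only its divided powers on the primitive subalgebra, while the product structure is automatic since primitives form a subalgebra.

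The main obstacle I expect is controlling the Bökstedt spectral sequence in case (\ref{2}). The assumption $v\geq 2$ produces additional classes in $(H\mathbb{F}_p)_*\K$ beyond those present in case (\ref{1}), and one must carefully identify all differentials, rule out hidden multiplicative extensions, and track the $(H\mathbb{F}_p)_*H\mathbb{F}_p$-comodule structure throughout, in particular making sure that the coalgebra constraint really does force the pattern of differentials suggested by the Angeltveit--Rognes model. Once $(H\mathbb{F}_p)_*\THH(\K)$ has been determined and the $H\mathbb{F}_p$-module splitting of $V(1)\wedge_S\THH(\K)$ is in place, the passage to $V(1)_*\THH(\K)$ reduces to degree-wise bookkeeping of the primitive classes.
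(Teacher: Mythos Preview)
Your proposal is correct and follows essentially the same route as the paper: compute $(H\mathbb{F}_p)_*\THH(\K)$ via the B\"okstedt spectral sequence (Theorem~\ref{modphomolTHHK}), use Lemma~\ref{bimod} to deduce that $V(1)\wedge_S^L\THH(\K)$ is an $H\mathbb{F}_p$-module (Lemma~\ref{dimcomod}), and then identify $V(1)_*\THH(\K)$ with the comodule primitives (Theorem~\ref{casetwores}). One point worth noting is that the paper handles the multiplicative extensions in the B\"okstedt spectral sequence by first computing $(H\mathbb{F}_p)_*\THH(\K;H\mathbb{Z}_p)$ (Theorem~\ref{multerw}) and comparing via the map $\THH(\K)\to\THH(\K;H\mathbb{Z}_p)$, and that the final extraction of primitives is not quite bookkeeping: one must inductively construct comodule-primitive representatives $\gamma'_{p^i}$ with vanishing $p$-th power, using the dimension count from Lemma~\ref{dimcomod} to guarantee their existence.
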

We compute $\THH_*(\K; H\mathbb{F}_p)$ via the spectral sequence \ref{a} in all the cases except  for the subcase of case (\ref{4}) where $r = 1$ (Subsection \ref{et2}).  In case (\ref{4}) it seems harder  to compute $\THH_*(\K; H\mathbb{F}_p)$ via the Bökstedt spectral sequence, because this depends on the mod $p$ homology of $\K$ which is complicated in this case. 
  In order to  compute the differentials in the spectral sequence \ref{a} we only need to examine Veen's spectral sequence  in small total degrees. 
  This only depends  on low degrees of $(H\mathbb{F}_p)_*\K$. 

We determine  the spectral sequences \ref{b} in case (\ref{1}) (Subsection \ref{et3}). 
There is only one possible differential. Its existence follows from the fact that the mod $p$ homology of  $V(1) \wedge_S \THH(\K)$ has no non-trivial comodule primitives in degree $2p^2-1$ (Subsection \ref{Boek1}).  We obtain (see Theorem \ref{case1V1THH}):
\begin{theorem*} 
In case (\ref{1})  the $V(1)$-homotopy of $\THH(\K)$ is the homology of the differential graded algebra 
\[  E(x,a,\lambda_2) \otimes_{\mathbb{F}_p} P(\mu_2) \otimes_{\mathbb{F}_p} \Gamma(b), \,  \, \, d(\lambda_2) = xa \] 
with $|x| =  2p-3$, $|a| = p(2p-2) + 1$, $|\lambda_2| =2p^2-1$, $|\mu_2| = 2p^2$ and $|b| = p(2p-2)$. 
\end{theorem*}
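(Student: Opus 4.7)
The plan is to analyze the generalized Brun spectral sequence \ref{b},
\[ E^2_{*,*} = V(1)_*\K \otimes_{\Fp} \THH_*(\K; H\Fp) \Longrightarrow V(1)_*\THH(\K), \]
and to show that it collapses after a single differential. The first step is to identify the $E^2$-page in case (\ref{1}) as an $\Fp$-algebra: the factor $\THH_*(\K; H\Fp)$ was already determined via the Bökstedt spectral sequence, case (\ref{1}) being the Angeltveit--Rognes computation \cite{AnRo}, while $V(1)_*\K$ is read off from Quillen's description of $\pi_*\K(\F_q)$ together with the mod $p$ and $v_1$ Bockstein long exact sequences, using that $r = p-1$ and $v = 1$. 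Combining these one obtains $E^2 \cong E(x, a, \lambda_2) \otimes_{\Fp} P(\mu_2) \otimes_{\Fp} \Gamma(b)$ with the degrees stated.

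Next, I would verify that $x$, $a$, $\mu_2$, and $b$ are permanent cycles for bidegree reasons: in each case the potential targets of differentials either lie in zero groups on the $E^2$-page or are ruled out by multiplicativity. The only class admitting a potentially nonzero differential is then $\lambda_2$, and by multiplicativity together with total-degree counting the unique possible target is a scalar multiple of $xa$.

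The main obstacle is showing that this remaining differential is in fact nonzero. Here I would argue by contradiction, appealing to the Bökstedt spectral sequence computation of Subsection \ref{Boek1}, which establishes that $(H\Fp)_*(V(1) \wedge_S \THH(\K))$ carries no nontrivial $(H\Fp)_*H\Fp$-comodule primitives in total degree $2p^2 - 1$. If $d(\lambda_2) = 0$, then $\lambda_2$ would survive to $E^\infty$ and detect a nonzero class in $V(1)_{2p^2-1}\THH(\K)$; using the explicit description of the filtration-zero edge map for the Brun spectral sequence from \cite{Brunss}, one identifies the image of this class in the mod $p$ homology of $V(1) \wedge \THH(\K)$ with a nonzero comodule primitive in that degree, contradicting the Bökstedt computation. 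Rescaling $\lambda_2$ therefore yields $d(\lambda_2) = xa$.

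Since this is the unique nontrivial differential, $E^\infty$ equals the homology of the differential graded algebra $(E(x, a, \lambda_2) \otimes_{\Fp} P(\mu_2) \otimes_{\Fp} \Gamma(b),\, d(\lambda_2) = xa)$. The stated $\Fp$-algebra isomorphism for $V(1)_*\THH(\K)$ then follows after checking that no hidden multiplicative extensions occur, which is done using canonical lifts of the generators provided by the edge homomorphisms together with the multiplicative structure of the spectral sequence.
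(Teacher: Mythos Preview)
Your proposal is correct and follows essentially the same route as the paper's proof: identify the $E^2$-page of the Brun spectral sequence, rule out all differentials except $d^{2p-2}(\lambda_2)$ by degree arguments, and force that differential to be nonzero by combining the edge homomorphism with the Hurewicz map and Lemma~\ref{primimj}. One small point worth making explicit: since $\Gamma(b)$ is not generated as an algebra by $b$ alone, you must check that each divided power $\gamma_{p^i}(b)$ is an infinite cycle, not just $b$; the paper does this via a divisibility-by-$2p$ argument on total degrees, and the same reasoning covers all $i$.
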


The last result  was also obtained by Angelini-Knoll using a different approach \cite{Gabe}. In \cite{Gabe2}  Angelini-Knoll  also shows that $\K(\K(\mathbb{F}_q)_p)$ detects the $\beta$-family in case (\ref{1}).

There is a fiber sequence  of spectra
\[\begin{tikzcd}
\K \ar{r} & \ku_p \ar{r}{f} & \Sigma^2 \ku_p
\end{tikzcd}\]
(see \cite{Hir}).  Denoting by $\psi^q$ the Adams operation, the map $f$ is the unique lift of  $\psi^q - \id $  to the $1$-connective covering $\Sigma^2\ku_p$ of $\ku_p$. 
The relation between $f$ and the multiplication of $\ku_p$ can be informally written as 
\begin{equation} \label{derf}
 f(ab)  = f(a) b + a f(b) + f(a) (\psi^q-\id)(b) 
 \end{equation} 
(see \cite{Wat}). 
We show that this  fiber sequence can be constructed in the homotopy category of $\K$-modules and that  the  equation (\ref{derf}) also holds  in this category (Section \ref{fibseqsec}).  These observations are very useful for our computations: 
The $\K$-linearity of the fiber sequence is helpful  to determine the multiplicative structure of $V(0)_*\K$ and $(H\mathbb{F}_p)_*\K$ (Section \ref{V0andV1} and Section \ref{modphomK}).   We  use the $\K$-linearity of the fiber sequence and equation (\ref{derf}) to  compute the ring  $V(0)_*(H\mathbb{Z}_p \wedge_{\K} H\mathbb{Z}_p)$  (Subsection \ref{et1}).

\subsection*{Acknowledgments}
The content of this article is part of my PhD thesis. I am very grateful to  my PhD supervisor Christian Ausoni  for his support. 
I woud like to thank Birgit Richter for telling me that one of my initial attempts probably does not work and for her help.  I am very thankful to Magdalena Kedziorek for her feedback. I would like to thank  Gabriel  Angelini-Knoll  for helpful discussions about the subject and I would like  thank Irina Bobkova Gemma Halliwell, Ayelet Lindenstrauss, Kate Poirier, Birgit Richter and Inna Zakharevich  for helpful discussions about higher $\THH$ during the Women in topology II  and the AIM SQuaRE project. 
This work was supported by grants from R\'egion  \^Ile-de-France and the project ANR-16-CE40-0003 ChroK.  I  would like to thank the MPIM Bonn for providing ideal working conditions during final stages of this work. 

\section{Notations and recollections}
We work in the setting of Elmendorf, Kriz, Mandell and May \cite{EKMM}.  For a commutative $S$-algebra $R$  let  $\mathscr{M}_R$ be the category of $R$-modules.  We denote its symmetric monoidal smash  product by $\wedge_R$.  
The category $\mathscr{M}_R$ is a model category, where the weak equivalences are the $\pi_*$-isomorphisms, the cofibrations are the retracts of relative cell $R$-modules and where all objects are fibrant.  We denote its homotopy category by  $\mathscr{D}_R$. The category $\mathscr{D}_R$ is  a tensor triangulated category (see \cite[Definition 1.1]{Bal}), where the  distinguished triangles are given, up to isomorphism,  by the images of the cofiber sequences under $\mathscr{M}_R \to \mathscr{D}_R$. 
The functor $\mathscr{M}_R \to \mathscr{D}_R$ is  lax symmetric monoidal and, denoting the tensor product in $\mathscr{D}_R$ by $\wedge_R^L$,  the structure map
\begin{equation}  \label{MRDR}
\begin{tikzcd}
M \wedge_R^L N \ar{r} &  M \wedge_R N  
\end{tikzcd}
\end{equation} 
is  an  isomorphism in $\mathscr{D}_R$ if $M$ or $N$ is a cofibrant $R$-module. 
For a morphism of commutative $S$-algebras $R \to R'$ the functor $\mathscr{D}_{R'} \to \mathscr{D}_R$   maps distinguished triangles to distinguished triangles and is lax symmetric monoidal. 
Spheres in $\mathscr{M}_R$ are denoted by $S^n_R$  and homotopy groups are given by $\pi_n(M) = \mathscr{D}_R(S^n_R, M)$ for $M \in \mathscr{M}_R$. 
 We denote the right adjoint of $ - \wedge_R^L M$ by $F_R^L(M, -)$. 
The functor $F_R^L(M, -)$  preserves distinguished triangles. We set $\Ext_R^*(-,-) \coloneqq \pi_{-*}(F_R^L(-,-))$. Note that we have a natural isomorphism $\Ext_R^0(-,-) \cong \mathscr{D}_R(-,-)$. 
An $R$-ring spectrum is an object $A \in \mathscr{D}_R$ with maps $A \wedge_R^L A \to A $ and $R \to A$ in $\mathscr{D}_R$ satisfying the left and right unit laws.   
We denote the category of commutative $R$-algebras by $\mathscr{C}\mathscr{A}_R$.  It has a model category structure, where the weak equivalences are 
the $\pi_*$-isomorphisms and all objects are fibrant \cite[Chapter VII]{EKMM}. The category $\mathscr{C}\mathscr{A}_R$ can be identified with the the category of commutative $S$-algebras under $R$ and the model category structure on $\mathscr{C}\mathscr{A}_R$ is the one inherited from $\mathscr{C}\mathscr{A}_S$. 
By \cite[Lemma 2.2]{Brunss} the map (\ref{MRDR}) is an isomorphism of $R$-ring spectra if $R \to M$ and $R \to N$ are maps  between cofibrant commutative $S$-algebras and if $R \to M$ or $R \to N$ is a cofibration in $\mathscr{C}\mathscr{A}_S$. 

For a prime $p \geq 5$ we denote by $V(0)$ and $V(1)$ the mod $p$ Moore spectrum and the mod $(p,v_1)$  Toda-Smith complex. We can assume that $V(0)$ and $V(1)$ are cell $S$-modules. 
We have distinguished triangles in $\mathscr{D}_S$ 
\begin{equation} \begin{tikzcd} \label{V0defi}
S \ar{r}{p \cdot \id} & S \ar{r} & V(0) \ar{r} & \Sigma S  \end{tikzcd} \end{equation}
\[ \begin{tikzcd}
\Sigma^{2p-2} V(0) \ar{r} & V(0) \ar{r} & V(1) \ar{r} & \Sigma^{2p-1} V(0),  
\end{tikzcd} \]
where $S \to V(0)$ and $V(0) \to V(1)$ are maps of associative and commutative $S$-ring spectra, \cite{OkaMoore}, \cite{OkaMult}, \cite{Okafewcells}.

By \cite[Example VI.5.2]{MayRingSp},\cite{MayBip}, \cite{Maywhat} and \cite[Corollary II.3.6]{EKMM}  algebraic $K$-theory can be realized as a functor $\K(-)$  from the category of commutative rings to $\mathscr{C}\mathscr{A}_S$. For a commutative ring $R$ the $S$-algebra  $\K(R)$ is connective.  One has $\pi_0(\K(R)) = \mathbb{Z}$ and $\pi_i(\K(R))$ is Quillen's $i$th algebraic $\K$-theory group for $i > 0$ \cite[Example 6.2]{MayGood}.

 Recall that  $p$-completion is Bousfield localization with respect to $V(0)$.   
By the proofs of \cite[Lemma VII.5.8]{EKMM}, \cite[Lemma VII.5.2]{EKMM}  and \cite[Theorem VIII.2.2]{EKMM}
 the $p$-completion of a commutative $S$-algebra can be constructed as a commutative $S$-algebra in such a way that we get a functor $(-)_p: \mathscr{C}\mathscr{A}_S \to \mathscr{C}\mathscr{A}_S$  with values in cofibrant commutative $S$-algebras.
 
For an abelian group $A$ we denote by $HA$ its Eilenberg-Mac Lane spectrum. 
Recall that by  \cite{Maywhat}, \cite{MayMult}, \cite{MayRingSp}, \cite[Corollary II.3.6]{EKMM}  and by functoriality of cofibrant replacements \cite[LemmaVII.5.8]{EKMM} the Eilenberg-Mac Lane spectrum $HR$ of a commutative ring $R$ can be realized as a cofibrant commutative $S$-algebra  in such a way that we get a functor from the category of commutative rings to $\mathscr{C}\mathscr{A}_S$.

 We denote by $P(-)$, $E(-)$, $\Gamma(-)$ and $P_k(-)$ the polynomial algebra, the exterior algebra, the divided power algebra and the truncated polynomial algebra over $\mathbb{F}_p$, and  we write $\otimes$ for $\otimes_{\mathbb{F}_p}$.    Furthermore,  we write $b \doteq c$ if $b$ and $c$ are equal up to multiplication by a unit in $\mathbb{F}_p$.

 An infinite cycle in a  spectral sequence is a class $b$ such that we have $d^s(b) = 0$ for all $s$. A permanent cycle is an infinite cycle that is not in the image  of $d^s$ for any $s$.  
\section{The fiber sequence relating $\K(\mathbb{F}_q)_p$ and $\K(\bar{\mathbb{F}}_l)_p$} \label{fibseqsec}

Throughout this paper  we fix a prime power $q = l^n$ for $n \geq 1$ and denote by $\mathbb{F}_q$ the finite field with $q$ elements. 
Let $p \neq l$ be a prime number with $p \geq 5$  and let 
$V(0)$ and $V(1)$ be built   with respect to this prime.

By \cite{Qu} we have a fiber sequence of spaces 
\[\K(\F_q) \xrightarrow{ ~ ~ ~ ~ } BU \times \mathbb{Z} \xrightarrow{\psi^q-\id} BU,\]
where $\psi^q$ is the Adams operation. 
After $p$-completion one gets an analogous fiber sequence of spectra \cite{Hir}. 
In this section we construct a fiber sequence of this form in  $\mathscr{D}_{\K(\F_q)_p}$.

We define  $\K$ to be the commutative $S$-algebra   $\K(\mathbb{F}_q)_p$. Quillen's computations \cite[Theorem 8]{Qu} imply that we have
\[ V(0)_n(\K) = \begin{cases}
                                                      \mathbb{F}_p,  & n = 2ri, i \geq 0;  \\
                                                       \mathbb{F}_p, &  n = 2ri-1, i > 0;  \\
                                                       0,  & \text{otherwise};
                                                     \end{cases} \] 
where $r$ is the order of $q$ in $(\mathbb{Z}/p)^*$. 
Let $\bar{\mathbb{F}}_l$ be the algebraic closure of $\mathbb{F}_l$. Then, $\K(\bar{\mathbb{F}}_l)_p$ is equivalent as a ring spectrum to $p$-completed connective complex $\K$-theory \cite[Section7]{MayGood}. We thus have an isomorphism of rings 
\[ \pi_*(\K(\bar{\mathbb{F}}_l)_p) \cong \mathbb{Z}_p[u]\] 
with $|u| = 2$. By \cite[Section 7]{MayGood} the Adams operation $\psi^q$ corresponds to the map $\phi^q$ induced by the Frobenius automorphism $\bar{\mathbb{F}}_l \to \bar{\mathbb{F}}_l, x \mapsto x^q$. In homotopy it is the map given by $u \mapsto qu$ \cite[Subsection 5.5.1]{RogGal}.  We have  isomorphisms of $\mathbb{F}_p$-algebras 
$V(0)_*\K(\bar{\mathbb{F}}_l)_p= P(u)$ and $V(1)_*\K(\bar{\mathbb{F}}_l)_p  = P_{p-1}(u)$.

By functoriality the map $\phi^q: \K(\bar{\mathbb{F}}_l)_p \to \K(\bar{\mathbb{F}}_l)_p$ induced by the Frobenius automorphism is a map of  $\K$-algebras and therefore of $\K$-modules.
We consider the element 
\[ \phi^q - \id \in \mathscr{D}_{\K}\bigl(\K(\bar{\mathbb{F}}_l)_p,\K(\bar{\mathbb{F}}_l)_p\bigr).\]
Note that the $0$-connected covering of $\K(\bar{\mathbb{F}}_l)_p$ in $\K$-modules is given by 
the morphism  $\bar{u}: S^2_{\K}\wedge_{\K}^L \K(\bar{\mathbb{F}}_l)_p \to \K(\bar{\mathbb{F}}_l)_p$ in $\mathscr{D}_{\K}$ defined by
\[\begin{tikzcd}
 S^2_K \wedge_{\K}^L \K(\bar{\mathbb{F}}_l)_p \ar{r}{u \wedge \id} & \K(\bar{\mathbb{F}}_l)_p \wedge_{\K}^L \K(\bar{\mathbb{F}}_l)_p \ar{r}{m} & \K(\bar{\mathbb{F}}_l)_p.\end{tikzcd}\]
Here $m$ is the product of the $\K$-ring spectrum $\K(\bar{\mathbb{F}}_l)_p$.

 \begin{lem} \label{lift}
 There exist exactly one element $f \in \mathscr{D}_{\K}\bigl(\K(\bar{\mathbb{F}}_l)_p, S^2_{\K} \wedge_{\K}^L \K(\bar{\mathbb{F}}_l)_p\bigr)$ that is mapped to $\phi^q - \id$ under 
 \[\mathscr{D}_{\K}\bigl(\K(\bar{\mathbb{F}}_l)_p, S^2_{\K}  \wedge_{\K}^L  \K(\bar{\mathbb{F}}_l)_p\bigr) \xrightarrow{\bar{u}_*} \mathscr{D}_{\K}\bigl(\K(\bar{\mathbb{F}}_l)_p, \K(\bar{\mathbb{F}}_l)_p\bigr).\]

 \end{lem}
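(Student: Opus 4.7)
The plan is to exploit the distinguished triangle in $\mathscr{D}_{\K}$
\[ S^2_{\K} \wedge_{\K}^L \K(\bar{\mathbb{F}}_l)_p \xrightarrow{\bar{u}} \K(\bar{\mathbb{F}}_l)_p \xrightarrow{\pi} H\mathbb{Z}_p \xrightarrow{\partial} \Sigma\bigl(S^2_{\K} \wedge_{\K}^L \K(\bar{\mathbb{F}}_l)_p\bigr), \]
in which $\pi$ is the $0$-truncation, i.e.\ the cofibre of the $0$-connected cover $\bar{u}$. Applying the functor $\mathscr{D}_{\K}(\K(\bar{\mathbb{F}}_l)_p, -)$ produces a long exact sequence; the lemma reduces to proving (a) that $\pi \circ (\phi^q - \id) = 0$ in $\mathscr{D}_{\K}\bigl(\K(\bar{\mathbb{F}}_l)_p, H\mathbb{Z}_p\bigr)$, which secures existence of the lift $f$, and (b) that $\pi_1 F_{\K}^L\bigl(\K(\bar{\mathbb{F}}_l)_p, H\mathbb{Z}_p\bigr) = 0$, which secures uniqueness of that lift.

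To handle both (a) and (b) I would first perform the change-of-rings identification
\[ F_{\K}^L\bigl(\K(\bar{\mathbb{F}}_l)_p, H\mathbb{Z}_p\bigr) \simeq F_{H\mathbb{Z}_p}^L\bigl(T, H\mathbb{Z}_p\bigr), \qquad T := H\mathbb{Z}_p \wedge_{\K}^L \K(\bar{\mathbb{F}}_l)_p, \]
and then analyse the low-degree homotopy groups of $T$ via the Tor spectral sequence $\Tor^{\pi_*\K}_{s,t}\bigl(\mathbb{Z}_p, \pi_*\K(\bar{\mathbb{F}}_l)_p\bigr) \Rightarrow \pi_{s+t}(T)$. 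Since $\pi_*\K$ has $\mathbb{Z}_p$ in degree $0$ and cyclic torsion groups only in odd positive degrees, while $\pi_*\K(\bar{\mathbb{F}}_l)_p = \mathbb{Z}_p[u]$ is concentrated in non-negative even degrees, a quick inspection gives $\pi_0 T = \mathbb{Z}_p$ and shows that $T$ is connective.

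Feeding this into the universal coefficient theorem for $H\mathbb{Z}_p$-modules yields $\mathscr{D}_{\K}\bigl(\K(\bar{\mathbb{F}}_l)_p, H\mathbb{Z}_p\bigr) \cong \Hom_{\mathbb{Z}_p}(\pi_0 T, \mathbb{Z}_p) \cong \mathbb{Z}_p$, detected by the action on $\pi_0$, while $\pi_1 F_{H\mathbb{Z}_p}^L(T, H\mathbb{Z}_p) = 0$ since $\pi_{-1} T = \pi_{-2} T = 0$. Statement (b) is then immediate. For (a), the Frobenius $\phi^q$ acts as the identity on $\pi_0\K(\bar{\mathbb{F}}_l)_p = \mathbb{Z}_p$, so $\pi$ and $\pi \circ \phi^q$ have the same image in the $\pi_0$-detected group $\mathbb{Z}_p$, and hence coincide in $\mathscr{D}_{\K}$.

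The step requiring the most care is verifying connectivity of $T$ and the vanishing of $\pi_{-1} T$ and $\pi_{-2} T$: this comes down to the Tor filtration living in non-negative homological degrees and both $\pi_*\K$ and $\pi_*\K(\bar{\mathbb{F}}_l)_p$ being non-negatively graded, so no class can contribute in negative total degree. Once this is in place, every relevant $\mathscr{D}_{\K}$-level obstruction is controlled by $\pi_0$, and the lemma follows from the long exact sequence.
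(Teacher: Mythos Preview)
Your proof is correct and follows essentially the same strategy as the paper: both use the long exact sequence coming from the cofibre of $\bar{u}$ and reduce the lemma to showing that $\Ext^{-1}_{\K}\bigl(\K(\bar{\mathbb{F}}_l)_p, H\mathbb{Z}_p\bigr) = 0$ and that maps into $H\mathbb{Z}_p$ are detected on $\pi_0$. The only difference is in how these Ext computations are carried out: the paper applies the Ext spectral sequence over $\K_*$ directly (noting $E_2^{n,m}=0$ for $m<0$), whereas you pass through the change-of-rings adjunction to $H\mathbb{Z}_p$-modules and invoke the Tor spectral sequence plus the universal coefficient theorem --- both routes ultimately exploit the same connectivity.
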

\begin{proof}
We have an exact sequence:
\begin{align*}
\Ext^{-1}_{\K}\bigl(\K(\bar{\mathbb{F}}_l)_p, H\mathbb{Z}_p\bigr) & \to \mathscr{D}_{\K}\bigl(\K(\bar{\mathbb{F}}_l)_p, S^2_{\K} \wedge_{\K}^L \K(\bar{\mathbb{F}}_l)_p\bigr) \xrightarrow{\bar{u}_*} \mathscr{D}_{\K}\bigl(\K(\bar{\mathbb{F}}_l)_p, \K(\bar{\mathbb{F}}_l)_p\bigr) \\
&  \to  \mathscr{D}_{\K}\bigl(\K(\bar{\mathbb{F}}_l)_p, H\mathbb{Z}_p\bigr).
\end{align*}
 By \cite{LewMand}  and \cite[Theorem 7.1]{Boar} we have a cohomological strongly convergent spectral sequence of the form
 \[E^{n,m}_2 = \Ext^{n,m}_{\K_*}\Bigl(\bigl(\K(\bar{\mathbb{F}}_l)_p\bigr)_*, \mathbb{Z}_p\Bigr) \Longrightarrow \Ext^{n+m}_{\K}\bigl(\K(\bar{\mathbb{F}}_l)_p, H\mathbb{Z}_p\bigr).\]
Since $E_2^{n,m} = 0$ for $m < 0$, we get $\Ext^{-1}_{\K}\bigl(\K(\bar{\mathbb{F}}_l)_p, H\mathbb{Z}_p\bigr) = 0$. 
We have a commutative diagram 
\[\begin{tikzcd}
\mathscr{D}_{\K}\bigl(\K(\bar{\mathbb{F}}_l)_p, \K(\bar{\mathbb{F}}_l)_p\bigr) \ar{r} \ar{d} & \mathscr{D}_{\K}\bigl(\K(\bar{\mathbb{F}}_l)_p, H\mathbb{Z}_p\bigr) \ar{d} \\
\Hom_{\K_*}\Bigl( \pi_*\bigl( \K(\bar{\mathbb{F}}_l)_p\bigr), \pi_*\bigl( \K(\bar{\mathbb{F}}_l)_p \bigr)\Bigr) \ar{r} & \Hom_{\K_*}\Bigl(\pi_*\bigl( \K(\bar{\mathbb{F}}_l)_p\bigr), \mathbb{Z}_p \Bigr).
\end{tikzcd}\]
The right vertical map is an isomorphism because it identifies with the edge homomorphism in the above spectral sequence \cite{LewMand}.
Since   $(\phi^q)_0$ is the identity, we have that 
\[ (\phi^q -\id)_0: \pi_0\bigl(\K(\bar{\mathbb{F}}_l)_p\bigr) \to \pi_0\bigl(\K(\bar{\mathbb{F}}_l)_p\bigr)\]
is zero. 
 \end{proof}
The map $f$ is part of a distinguished triangle  in $\mathscr{D}_{\K}$:
\[\begin{tikzcd}
F \ar{r} &  \K(\bar{\mathbb{F}}_l)_p \ar{r}{f} & S^2_{\K} \wedge_{\K}^L \K(\bar{\mathbb{F}}_l)_p \ar{r} &  F \wedge S^1.
\end{tikzcd}\]
 
 From the long exact sequence in homotopy we get: 
 \begin{lem}
 We have 
 \[ \pi_n(F) \cong \begin{cases}
                \mathbb{Z}_p, & n = 0; \\
                \mathbb{Z}_p/{(q^j-1)\mathbb{Z}_p}, & n = 2j-1, \, j> 0; \\
                0, & \text{otherwise}. 
              \end{cases} \]
 \end{lem}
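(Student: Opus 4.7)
The plan is to apply the homotopy long exact sequence of the distinguished triangle
\[\begin{tikzcd}
F \ar{r} & \K(\bar{\mathbb{F}}_l)_p \ar{r}{f} & S^2_{\K} \wedge_{\K}^L \K(\bar{\mathbb{F}}_l)_p \ar{r} & F \wedge S^1
\end{tikzcd}\]
and to read off $\pi_*(F)$ using the explicit description of $f_*$ coming from the characterization in Lemma \ref{lift}, namely $\bar{u} \circ f = \phi^q - \id$.

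First I would identify $\pi_n\bigl(S^2_{\K} \wedge_{\K}^L \K(\bar{\mathbb{F}}_l)_p\bigr) \cong \pi_{n-2}\bigl(\K(\bar{\mathbb{F}}_l)_p\bigr)$ in the standard way, so both the middle and right-hand terms of the triangle have homotopy concentrated in even non-negative degrees, with $\pi_{2j}\bigl(\K(\bar{\mathbb{F}}_l)_p\bigr) = \mathbb{Z}_p\{u^j\}$ for $j \geq 0$ and $\pi_{2j}\bigl(S^2_{\K} \wedge_{\K}^L \K(\bar{\mathbb{F}}_l)_p\bigr) = \mathbb{Z}_p\{u^{j-1}\}$ for $j \geq 1$. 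Since $\bar{u}_*$ is given by multiplication by $u$, it is injective in all relevant degrees, and the relation $\bar{u} \circ f = \phi^q - \id$ together with $\phi^q(u^j) = q^j u^j$ forces $f_*(u^j) = (q^j - 1) u^{j-1}$ for $j \geq 1$. Thus in even degree $2j$ with $j \geq 1$, the map $f_*$ is multiplication by $q^j - 1$ on $\mathbb{Z}_p$.

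Next I would feed this into the long exact sequence. For each $j \geq 1$, the segment
\[ 0 \longrightarrow \pi_{2j}(F) \longrightarrow \mathbb{Z}_p \xrightarrow{\, \cdot (q^j-1)\,} \mathbb{Z}_p \longrightarrow \pi_{2j-1}(F) \longrightarrow 0 \]
gives $\pi_{2j}(F) = 0$ (since $q^j - 1 \neq 0$) and $\pi_{2j-1}(F) \cong \mathbb{Z}_p/(q^j-1)\mathbb{Z}_p$. In degree zero, the segment
\[ 0 \longrightarrow \pi_0(F) \longrightarrow \mathbb{Z}_p \longrightarrow 0 \]
yields $\pi_0(F) = \mathbb{Z}_p$, and a downward induction on the long exact sequence shows $\pi_n(F) = 0$ for $n < 0$ since both $\K(\bar{\mathbb{F}}_l)_p$ and $S^2_{\K} \wedge_{\K}^L \K(\bar{\mathbb{F}}_l)_p$ have vanishing negative homotopy.

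There is no real obstacle: the only point requiring a moment of care is the identification of $f_*$ on $\pi_{2j}$, which is handled by invoking injectivity of $\bar{u}_*$ (multiplication by $u$) on the relevant homotopy group so that the equation $\bar{u}_* f_*(u^j) = (q^j-1)u^j$ uniquely determines $f_*(u^j)$.
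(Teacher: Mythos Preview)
Your proof is correct and follows exactly the approach the paper takes: the paper's own proof is the single sentence ``From the long exact sequence in homotopy we get'', and you have simply written out the details of that computation. Your identification of $f_*$ via $\bar{u}\circ f = \phi^q - \id$ and the resulting short exact sequences are exactly what is needed.
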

 
 \begin{lem} \label{V0htpyf}
 We have 
 \[ V(0)_nF = \begin{cases}
               \mathbb{F}_p, & n = 2jr, \, j \geq 0; \\
               \mathbb{F}_p, & n = 2jr-1, \, j> 0; \\
               0, & \text{otherwise}. 
              \end{cases}\]
 \end{lem}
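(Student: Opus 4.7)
The plan is to read off $V(0)_*F$ from the long exact sequence in $V(0)$-homotopy associated with the distinguished triangle
\[ F \to \K(\bar{\mathbb{F}}_l)_p \xrightarrow{f} S^2_{\K} \wedge_{\K}^L \K(\bar{\mathbb{F}}_l)_p \to F \wedge S^1 \]
already constructed above. Since $V(0)_*\K(\bar{\mathbb{F}}_l)_p = P(u)$ with $|u|=2$ is concentrated in even degrees, so is $V(0)_*\bigl(S^2_{\K} \wedge_{\K}^L \K(\bar{\mathbb{F}}_l)_p\bigr)$, and the LES collapses to four-term exact sequences
\[ 0 \to V(0)_{2k}F \to V(0)_{2k}\K(\bar{\mathbb{F}}_l)_p \xrightarrow{f_*} V(0)_{2k-2}\K(\bar{\mathbb{F}}_l)_p \to V(0)_{2k-1}F \to 0 \]
for every $k \geq 0$. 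Thus the entire computation is reduced to identifying the map $f_*$.

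To compute $f_*$ I would exploit the defining relation $\bar{u} \circ f = \phi^q - \id$ furnished by Lemma \ref{lift}. Under the canonical identification $V(0)_n\bigl(S^2_{\K} \wedge_{\K}^L \K(\bar{\mathbb{F}}_l)_p\bigr) \cong V(0)_{n-2}\K(\bar{\mathbb{F}}_l)_p$, the map $\bar{u}_*$ is multiplication by $u$, sending $u^{k-1}$ to $u^k$; in particular it is injective for all $k \geq 1$. On the other hand, $(\phi^q)_*$ is multiplication by $q$ on $u$, so $(\phi^q - \id)_*(u^k) = (q^k-1)u^k$ in $\mathbb{F}_p[u]$. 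Injectivity of $\bar{u}_*$ forces
\[ f_*(u^k) = (q^k - 1)\, u^{k-1} \quad (k \geq 1), \qquad f_*(1) = 0. \]

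Since $r$ is by definition the order of $q$ in $(\mathbb{Z}/p)^*$, the scalar $q^k-1$ vanishes modulo $p$ exactly when $r \mid k$. Therefore $f_*\colon V(0)_{2k}\K(\bar{\mathbb{F}}_l)_p \to V(0)_{2k-2}\K(\bar{\mathbb{F}}_l)_p$ is zero when $r \mid k$ and an isomorphism between one-dimensional $\mathbb{F}_p$-vector spaces otherwise. Plugging this into the four-term sequences yields $V(0)_{2k}F \cong \mathbb{F}_p$ precisely when $k = jr$ for some $j \geq 0$ and $V(0)_{2k-1}F \cong \mathbb{F}_p$ precisely when $k = jr$ for some $j \geq 1$, which is exactly the claimed description. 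I do not foresee any real obstacle: the only subtle point is keeping track of the suspension identification so that $\bar{u}_*$ really is multiplication by $u$, and then the divisibility pattern of $q^k - 1$ does all the work.
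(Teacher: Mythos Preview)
Your argument is correct. It is, however, a genuinely different route from the paper's. The paper leverages the previous lemma, which already computed $\pi_*(F)$ from the same fiber sequence in integral homotopy; it then applies the defining triangle $S \xrightarrow{p} S \to V(0)$ to pass from $\pi_*(F)$ to $V(0)_*F$, observing that $\mathbb{Z}_p/(q^j-1)\mathbb{Z}_p \cong \mathbb{Z}/p^{v_p(q^j-1)}$ and reading off the mod~$p$ reduction. You instead apply $V(0)_*(-)$ directly to the fiber sequence and compute $f_*$ modulo~$p$ via $\bar{u} \circ f = \phi^q - \id$. Your approach is slightly more self-contained, as it does not invoke the intermediate description of $\pi_*(F)$; the paper's approach is shorter given that $\pi_*(F)$ has just been recorded, reducing the lemma to a one-line Bockstein computation. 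Both arguments ultimately hinge on the same divisibility criterion $p \mid q^k-1 \iff r \mid k$.
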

 \begin{proof}
 For $n \in \mathbb{Z}$ let $v_p(n)$ be the $p$-adic valuation of $n$. 
For $j > 0$  we have
 \[\mathbb{Z}_p/{(q^j-1)\mathbb{Z}_p} = \mathbb{Z}_p/{p^{v_p(q^j-1)}\mathbb{Z}_p} = \mathbb{Z}/{p^{v_p(q^j-1)}\mathbb{Z}}.\]
   The claim now  follows by using the distinguished triangle (\ref{V0defi}).
 \end{proof}
 \begin{lem} \label{iso2ir1}
 The map 
 \[V(0)_{2rj}(F) \to V(0)_{2rj}\bigl(\K(\bar{\mathbb{F}}_l)_p\bigr)\]
 is an isomorphism for $j\geq 0$. 
 \end{lem}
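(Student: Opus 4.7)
The plan is to run the long exact sequence in $V(0)$-homology coming from the distinguished triangle
\[ F \to \K(\bar{\mathbb{F}}_l)_p \xrightarrow{f} S^2_{\K} \wedge_{\K}^L \K(\bar{\mathbb{F}}_l)_p \to F \wedge S^1, \]
and to show that the connecting map $f_*$ vanishes on $V(0)_{2rj}\K(\bar{\mathbb{F}}_l)_p$ for every $j \geq 0$. Together with the vanishing of $V(0)_{2rj-1}\K(\bar{\mathbb{F}}_l)_p$ this will force the map in question to be an isomorphism.

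First I would compute $f_*$ on all of $V(0)_*\K(\bar{\mathbb{F}}_l)_p = P(u)$. By Lemma \ref{lift} the composite $\bar{u} \circ f$ equals $\phi^q - \id$, and on the generator $u^k$ this acts as multiplication by $q^k - 1$, since $(\phi^q)_*(u) = qu$. On the other hand, by its definition as $m \circ (u \wedge \id)$ the map $\bar{u}_*$ is multiplication by $u$ on $V(0)_*\K(\bar{\mathbb{F}}_l)_p = P(u)$, hence injective in each degree. Comparing these two descriptions pins down
\[ f_*(u^k) = (q^k - 1)\, u^{k-1}, \]
with the convention that the right-hand side is zero when $k = 0$.

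Next I would specialise to $n = 2rj$. Since $r$ is the order of $q$ in $(\mathbb{Z}/p)^*$, $p$ divides $q^r - 1$ and therefore $q^{rj}-1$ for all $j \geq 0$, so $f_*(u^{rj}) = 0$ in $V(0)$-homology. Because $V(0)_*\K(\bar{\mathbb{F}}_l)_p$ is concentrated in even degrees, the group $V(0)_{2rj-1}\K(\bar{\mathbb{F}}_l)_p$ vanishes as well, and the relevant segment of the long exact sequence reads
\[ 0 \to V(0)_{2rj}(F) \to V(0)_{2rj}\K(\bar{\mathbb{F}}_l)_p \xrightarrow{0} V(0)_{2rj-2}\K(\bar{\mathbb{F}}_l)_p, \]
yielding the desired isomorphism.

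The one delicate step is the formula for $f_*$: it is tempting to assume that the lift produced by Lemma \ref{lift} behaves transparently on homotopy, but to make the formula precise one must use the injectivity of multiplication by $u$ on $P(u)$ to undo $\bar{u}_*$. Once this is in place, the rest is numerology, relying only on the divisibility $p \mid q^{rj}-1$.
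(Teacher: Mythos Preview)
Your proof is correct, but it takes a slightly different route from the paper's. Both arguments use the long exact sequence in $V(0)$-homology associated to the distinguished triangle and observe that $V(0)_{2rj-1}\K(\bar{\mathbb{F}}_l)_p=0$, which forces the map $V(0)_{2rj}(F)\to V(0)_{2rj}\K(\bar{\mathbb{F}}_l)_p$ to be injective. The difference lies in how surjectivity is obtained: you compute $f_*$ explicitly on $V(0)_*\K(\bar{\mathbb{F}}_l)_p=P(u)$ via the factorisation $\bar{u}\circ f=\phi^q-\id$ and the injectivity of $\bar{u}_*$, arriving at $f_*(u^k)=(q^k-1)u^{k-1}$, and then use $p\mid q^{rj}-1$ to see that $f_*$ vanishes in degree $2rj$. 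The paper instead simply invokes the previous lemma (Lemma~\ref{V0htpyf}), which already identifies $V(0)_{2rj}(F)\cong\mathbb{F}_p$, so that an injection between two copies of $\mathbb{F}_p$ is automatically an isomorphism. Your approach is a touch longer but more self-contained, since it does not rely on having first computed $V(0)_*F$; the paper's argument is shorter precisely because that computation has already been done.
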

 \begin{proof}
For $j \geq 0$  there is an exact sequence
 \begin{equation*}  \label{kes}
 \begin{tikzcd}
   V(0)_{2jr-1}\bigl(\K(\bar{\mathbb{F}}_l)_p\bigr) \ar{r} & V(0)_{2rj}(F) \ar{r} & V(0)_{2rj}\bigl(\K(\bar{\mathbb{F}}_l)_p\bigr).
 \end{tikzcd}
 \end{equation*}
 Since the first term  is zero, and 
 since the second and third term are both $\mathbb{F}_p$, this proves the lemma. 
 \end{proof}
  We want to show that $F \cong \K$  in $\mathscr{D}_{\K}$.  We denote the map $\K \to \K(\bar{\mathbb{F}}_l)_p$ induced by the inclusion of fields $\mathbb{F}_q \to \bar{\mathbb{F}}_l$ by $i$. 
  \begin{lem} \label{iso2ir2}
  The map 
  \[i_{2rj}: V(0)_{2rj}(\K) \to V(0)_{2rj}\bigl(\K(\bar{\mathbb{F}}_l)_p\bigr)\]
  is an isomorphism for $j \geq 0$. 
  \end{lem}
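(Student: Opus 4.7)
The plan is to apply $V(0)_*$ to the spectrum-level fiber sequence $\K \xrightarrow{i} \K(\bar{\mathbb{F}}_l)_p \xrightarrow{f} \Sigma^2 \K(\bar{\mathbb{F}}_l)_p$ provided by \cite{Hir} and to read off the conclusion from the long exact sequence in degree $2rj$. The central calculation is that the map $f_*$ vanishes on the relevant $V(0)$-homotopy group. Exactly as in Lemma \ref{lift} (but at the level of spectra rather than $\K$-modules), $f$ is the unique lift of $\psi^q - \id$ to the $1$-connective cover, so that $\bar{u}_* \circ f_* = (\psi^q - \id)_*$. Since $\psi^q$ sends $u^j$ to $q^j u^j$ on $\pi_*(\K(\bar{\mathbb{F}}_l)_p) = \mathbb{Z}_p[u]$ and $\bar{u}_*$ is injective in positive degrees, this forces $f_*(u^j) = (q^j - 1) u^{j-1}$. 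Reducing mod $p$ and specializing to $u^{rj}$, the coefficient $q^{rj} - 1$ is divisible by $p$ (since $p \mid q^r - 1$ implies $p \mid q^{rj} - 1$), so $f_*$ vanishes on $V(0)_{2rj}(\K(\bar{\mathbb{F}}_l)_p)$ for every $j \geq 1$.

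I would then extract the segment
\[
V(0)_{2rj-1}(\K(\bar{\mathbb{F}}_l)_p) \to V(0)_{2rj}(\K) \xrightarrow{i_{2rj}} V(0)_{2rj}(\K(\bar{\mathbb{F}}_l)_p) \xrightarrow{f_*} V(0)_{2rj-2}(\K(\bar{\mathbb{F}}_l)_p)
\]
of the long exact sequence. The leftmost term vanishes because $V(0)_*(\K(\bar{\mathbb{F}}_l)_p) = P(u)$ is concentrated in even degrees, and the rightmost map is zero by the previous paragraph; exactness then forces $i_{2rj}$ to be an isomorphism between the two copies of $\mathbb{F}_p$ identified above. The case $j = 0$ is handled separately: $i$ is a unital ring map inducing the identity on $\pi_0 = \mathbb{Z}_p$, and therefore the identity on $V(0)_0 = \mathbb{F}_p$.

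I expect no serious obstacle. The argument rests entirely on the Hirschhorn fiber sequence and the standard description of $\psi^q$ on homotopy, both of which are already in hand in this section; the only thing to verify is that the uniqueness portion of Lemma \ref{lift} adapts to characterize the spectrum-level $f$, which is routine.
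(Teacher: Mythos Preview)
Your argument hinges on already having a fiber sequence $\K \xrightarrow{i} \K(\bar{\mathbb{F}}_l)_p \xrightarrow{f} \Sigma^2 \K(\bar{\mathbb{F}}_l)_p$ in which the first map is the specific map $i$ induced by the inclusion $\mathbb{F}_q \hookrightarrow \bar{\mathbb{F}}_l$. But in the paper this lemma is precisely a step \emph{toward} establishing that identification: it feeds into Lemma~\ref{UEC} (showing $h\colon \K \to F$ is a $V(0)_*$-isomorphism), which in turn yields Corollary~\ref{fibseqcor}. Invoking the fiber sequence here---even only at the spectrum level---is therefore circular within the paper's logic, unless you separately verify that \cite{Hir} identifies its first map with $i$ and not merely with some map out of a spectrum abstractly equivalent to $\K$. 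What you have actually reproved with your long exact sequence is Lemma~\ref{iso2ir1} (the statement for $F$ in place of $\K$), not Lemma~\ref{iso2ir2}.

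The paper's proof sidesteps the fiber sequence entirely. Since both groups are copies of $\mathbb{F}_p$, it suffices to show $i_{2rj}$ is nonzero. Using that $\pi_{2rj}\bigl(\K(\bar{\mathbb{F}}_l)\bigr)=0$ and the defining cofiber sequence of $V(0)$, one reduces to the injectivity of $\pi_{2rj-1}\bigl(\K(\mathbb{F}_q)\bigr) \to \pi_{2rj-1}\bigl(\K(\bar{\mathbb{F}}_l)\bigr)$; this is read off directly from Quillen's computations, since $\pi_*\bigl(\K(\bar{\mathbb{F}}_l)\bigr)$ is the filtered colimit of the groups $\pi_*\bigl(\K(k)\bigr)$ over finite subfields and each transition map is injective. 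That argument uses only Quillen's results and is independent of the fiber sequence being constructed.
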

  \begin{proof}
    For  $j > 0$  we have $\pi_{2rj}\bigl(\K(\bar{\mathbb{F}}_l)\bigr) = 0$ by \cite[p.585]{Qu}, so we get a map of exact sequences
    \[\begin{tikzcd}
    0 \ar{r} \ar{d} & V(0)_{2rj}\bigl(\K(\mathbb{F}_q)\bigr) \ar{r} \ar{d} &  \pi_{2rj-1}\bigl(\K(\mathbb{F}_q)\bigr) \ar{r}{\cdot p}  \ar{d} & \pi_{2rj-1}\bigl(\K(\mathbb{F}_q)\bigr) \ar{d} \\
    0 \ar{r} & V(0)_{2rj}\bigl(\K(\bar{\mathbb{F}}_l)\bigr) \ar{r} &   \pi_{2rj-1}\bigl(\K(\bar{\mathbb{F}}_l)\bigr) \ar{r}{\cdot p} & \pi_{2rj-1}\bigl(\K(\bar{\mathbb{F}}_l)\bigr).
    \end{tikzcd}\]
    Because $V(0)_{2rj}\bigl(\K(\mathbb{F}_q)\bigr)$ and $ V(0)_{2rj}\bigl(\K(\bar{\mathbb{F}}_l)\bigr)$ are both isomorphic to 
    $\mathbb{F}_p$, 
 it suffices to show that $\pi_{2rj-1}\bigl(\K(\mathbb{F}_q)\bigr) \to \pi_{2rj-1}\bigl(\K(\bar{\mathbb{F}}_l)\bigr)$ is injective. 
    This follows because by \cite[p.585]{Qu} the
   abelian group $\pi_*(\K(\bar{\mathbb{F}}_l))$ is the filtered  colimit of the  abelian groups $\pi_*\bigl(\K(k)\bigr)$,
    where $k$ runs over the finite subfields of $\bar{\mathbb{F}}_l$, and because by \cite[Theorem 8]{Qu} the maps $\pi_*\bigl(\K(k)\bigr) \to \pi_*\bigl(\K(k')\bigr)$ induced  by inclusions  of finite fields $k \hookrightarrow k'$ are injective.
  \end{proof}
  \begin{lem}
  There exist an $h \in \mathscr{D}_{\K}(\K , F)$ that is mapped to $i$ under
  \[ \mathscr{D}_{\K}(\K, F) \to \mathscr{D}_{\K}\bigl(\K, \K(\bar{\mathbb{F}}_l)_p\bigr).\]
  \end{lem}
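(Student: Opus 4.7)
The map $F \to \K(\bar{\mathbb{F}}_l)_p$ fits into the distinguished triangle constructed right after Lemma~\ref{lift}, so applying $\mathscr{D}_{\K}(\K, -)$ yields the exact sequence
\[\mathscr{D}_{\K}(\K, F) \to \mathscr{D}_{\K}\bigl(\K, \K(\bar{\mathbb{F}}_l)_p\bigr) \xrightarrow{f_*} \mathscr{D}_{\K}\bigl(\K, S^2_{\K} \wedge_{\K}^L \K(\bar{\mathbb{F}}_l)_p\bigr).\]
Hence $i$ admits a lift $h$ if and only if $f_*(i) = f \circ i$ vanishes in the group on the right. My plan is to show that this obstruction group is already zero, so the existence of $h$ is automatic.

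Since $\K = S^0_{\K}$ is the unit of $\mathscr{D}_{\K}$, the obstruction group identifies with $\pi_0\bigl(S^2_{\K} \wedge_{\K}^L \K(\bar{\mathbb{F}}_l)_p\bigr) \cong \pi_{-2}\bigl(\K(\bar{\mathbb{F}}_l)_p\bigr)$. Using $\K(\bar{\mathbb{F}}_l)_p \simeq \ku_p$ as recalled at the beginning of this section, this group is zero because $\ku_p$ is connective. Therefore $f \circ i = 0$ and the desired lift $h$ exists.

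As a conceptual sanity check, one can also explain the vanishing directly: by Lemma~\ref{lift} we have $\bar{u} \circ f = \phi^q - \id$, and the Frobenius $\phi^q : \bar{\mathbb{F}}_l \to \bar{\mathbb{F}}_l$ fixes $\mathbb{F}_q$ pointwise, so functoriality of $\K(-)$ gives $\phi^q \circ i = i$ and hence $(\phi^q - \id) \circ i = 0$. There is no real obstacle: the entire content of the lemma is the connectivity observation that the obstruction to lifting $i$ along the fiber inclusion sits in negative degrees of a connective spectrum.
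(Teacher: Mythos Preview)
Your proof is correct and in fact more direct than the paper's. Both arguments start from the same exact sequence
\[
\mathscr{D}_{\K}(\K, F) \to \mathscr{D}_{\K}\bigl(\K, \K(\bar{\mathbb{F}}_l)_p\bigr) \xrightarrow{f_*} \mathscr{D}_{\K}\bigl(\K, S^2_{\K} \wedge_{\K}^L \K(\bar{\mathbb{F}}_l)_p\bigr),
\]
and both aim to show $f \circ i = 0$. The paper, mirroring the structure of Lemma~\ref{lift}, composes further with $\bar{u}$: it uses an Ext spectral sequence to show that $\bar{u}_*$ is injective on this group, and then argues that $\bar{u} \circ f \circ i = (\phi^q - \id) \circ i = 0$ because the Frobenius fixes $\mathbb{F}_q$. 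Your observation that the entire obstruction group is $\pi_0\bigl(\Sigma^2 \K(\bar{\mathbb{F}}_l)_p\bigr) = \pi_{-2}\bigl(\K(\bar{\mathbb{F}}_l)_p\bigr) = 0$ by connectivity short-circuits all of this. The paper's route has the minor conceptual advantage of exhibiting \emph{why} $f \circ i$ vanishes (it lifts the trivial map $(\phi^q - \id)\circ i$), which parallels the uniqueness argument in Lemma~\ref{lift}; your route has the advantage of being a one-line connectivity check. Your ``sanity check'' paragraph is essentially the paper's argument, so you have recovered both approaches.
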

  \begin{proof}
We have an exact sequence 
  \[ \begin{tikzcd}
  \mathscr{D}_{\K}(\K, F) \ar{r} & \mathscr{D}_{\K}\bigl(\K, \K(\bar{\mathbb{F}}_l)_p\bigr) \ar{r}{f_*} & \mathscr{D}_{\K}\bigr(\K, S^2_{\K} \wedge_{\K} \K(\bar{\mathbb{F}}_l)_p\bigl).
  \end{tikzcd} \]
  It thus suffices to show that $f_*(i) = 0$. There is an exact sequence
  \[ \begin{tikzcd}
 \Ext^{-1}_{\K}(\K, H\mathbb{Z}_p) \ar{r} & \mathscr{D}_{\K}\bigl(\K,  S^2_{\K} \wedge_{\K}^L \K(\bar{\mathbb{F}}_l)_p\bigr) \ar{r}{\bar{u}_*} & \mathscr{D}_{\K}\bigl(\K, \K(\bar{\mathbb{F}}_l)_p\bigr).  
  \end{tikzcd}\]
  Using the $\Ext$ spectral sequence one gets  that $\Ext^{-1}_{\K}(\K, H\mathbb{Z}_p) = 0$. We therefore only have to show that
  $\bar{u} \circ f \circ i = (\phi^q - \id) \circ i$ is zero. This is clear. 
  \end{proof}
  We will show that $h: \K \to F$ is an isomorphism. We show this by proving that its image under $\mathscr{D}_{\K} \to \mathscr{D}_{S}$ is a $V(0)_*$-equivalence between $V(0)$-local $S$-modules.

  \begin{lem} \label{UEC}
  The map 
  \[\begin{tikzcd}
   h_*: V(0)_* \K  \ar{r}{h_*} & V(0)_*  F 
   \end{tikzcd}\]
  is an isomorphism. 
  \end{lem}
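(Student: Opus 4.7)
The plan is to exhibit $h$ as a split monomorphism in $\mathscr{D}_{\K}$ by producing a retraction $g : F \to \K$, and then to upgrade split injectivity of $h_*$ to an isomorphism by a dimension count: by Lemma \ref{V0htpyf} together with the description of $V(0)_* \K$ recalled at the start of this section, both $V(0)_* \K$ and $V(0)_* F$ are one-dimensional over $\mathbb{F}_p$ in the same degrees $2rj$ and $2rj-1$ and zero in every other degree, so a split injection between them is automatically an isomorphism. I will produce $g$ by factoring the projection $\pi : F \to \K(\bar{\mathbb{F}}_l)_p$ through the fiber of $\psi^q - \id$ in $\mathscr{D}_{\K}$, which I first want to identify with $\K$ itself.

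For the identification step, I would observe that the Frobenius of $\bar{\mathbb{F}}_l$ fixes $\mathbb{F}_q$ pointwise, so $\psi^q \circ i = i$ and hence $(\psi^q - \id) \circ i = 0$ in $\mathscr{D}_{\K}$. The universal property of the fiber in $\mathscr{D}_{\K}$ then produces a morphism from $\K$ to the $\mathscr{D}_{\K}$-fiber of $\psi^q - \id$; since the forgetful functor $\mathscr{D}_{\K} \to \mathscr{D}_S$ preserves distinguished triangles and Hiller's result identifies the $\mathscr{D}_S$-fiber with $\K$, this morphism has an isomorphism as its underlying map in $\mathscr{D}_S$ and is therefore already an isomorphism in $\mathscr{D}_{\K}$. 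Next, because $\psi^q - \id = \bar{u} \circ f$ and $f \circ \pi = 0$ in $\mathscr{D}_{\K}$, the composition $(\psi^q - \id) \circ \pi$ vanishes, so $\pi$ factors as $\pi = i \circ g$ for some $g : F \to \K$ in $\mathscr{D}_{\K}$. Finally, from the defining property $\pi \circ h = i$ of $h$, I obtain $i \circ (g \circ h) = i = i \circ \id_{\K}$; the map $i_* : \mathscr{D}_{\K}(\K, \K) \to \mathscr{D}_{\K}(\K, \K(\bar{\mathbb{F}}_l)_p)$ identifies with $i_0 : \pi_0 \K \to \pi_0 \K(\bar{\mathbb{F}}_l)_p$, which is the identity $\mathbb{Z}_p \to \mathbb{Z}_p$ and in particular injective, so $g \circ h = \id_{\K}$ in $\mathscr{D}_{\K}$.

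The main obstacle will be the first step: checking that Hiller's fiber sequence can genuinely be realised in $\mathscr{D}_{\K}$ with $\K$ on the left, i.e.\ that the $\K$-module structure on the $\mathscr{D}_{\K}$-fiber of $\psi^q - \id$ agrees with the tautological one on $\K$. Once this identification is in place, the remaining construction of $g$, the verification that $g \circ h = \id_{\K}$, and the concluding dimension count are all short diagram chases in a triangulated category.
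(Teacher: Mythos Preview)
Your argument has a genuine gap at its very first step: the fiber in $\mathscr{D}_S$ (or $\mathscr{D}_{\K}$) of $\psi^q-\id:\K(\bar{\mathbb{F}}_l)_p\to\K(\bar{\mathbb{F}}_l)_p$ is \emph{not} $\K$. Since $\psi^q-\id$ is zero on $\pi_0$, the long exact sequence gives the fiber $G$ a nonzero $\pi_{-1}G\cong\coker\bigl(\pi_0(\psi^q-\id)\bigr)=\mathbb{Z}_p$, so $G$ is not connective. Hiller's fiber sequence is $\K\to\K(\bar{\mathbb{F}}_l)_p\xrightarrow{f}\Sigma^2\K(\bar{\mathbb{F}}_l)_p$, with $f$ the lift to the $1$-connected cover, not $\K\to\K(\bar{\mathbb{F}}_l)_p\xrightarrow{\psi^q-\id}\K(\bar{\mathbb{F}}_l)_p$. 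Consequently, even though $(\psi^q-\id)\circ\pi=0$ does yield a factorization of $\pi$ through $G$, this only gives you a map $g:F\to G$, not $g:F\to\K$, and the rest of the construction of the retraction collapses. (One could try to repair this by identifying $\K$ with the connective cover of $G$ in $\mathscr{D}_{\K}$ and then arguing that $F\to G$ factors through it, but that identification is itself essentially the content of what the paper is proving in this section, so you would be going in a circle.)

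There is a second, independent problem. Even if the fiber of $\psi^q-\id$ were $\K$, your claim that the lift $\K\to G$ ``has an isomorphism as its underlying map in $\mathscr{D}_S$'' is unjustified: in a triangulated category lifts through a fiber are not unique, so knowing that \emph{some} map $\K\to G$ is an equivalence does not tell you that the particular lift you produced is one. This is exactly the kind of issue the paper confronts for the map $h$ itself.

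For comparison, the paper's proof avoids constructing any retraction. It already knows from Lemmas~\ref{iso2ir1} and~\ref{iso2ir2} that $h_*$ is an isomorphism in the even degrees $2rj$, and then runs the Bockstein spectral sequences of $\K$ and $F$ in parallel: the generator of $V(0)_{2rj}$ supports a nonzero $d^{v_p(q^{rj}-1)}$ on both sides (detected by the known $p$-torsion order in $\pi_{2rj-1}$), and naturality of the Bockstein under $h$ forces $h_*$ to carry one nonzero differential target to the other, giving the isomorphism in the odd degrees $2rj-1$.
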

  \begin{proof}
  It is clear that $h_*$ is an isomorphism in  the degrees 
  $2rj$ for $j \geq 0$. It thus suffices to show that $h_*$ is an isomorphism in the degrees $2rj-1$ for $j > 0$. 
  
 The map $h$ induces a map between the exact couples
  \begin{equation} \label{uec1} 
  \begin{tikzcd}
    \pi_*(\K)  \ar{rr}{\cdot p} & &  \pi_*(\K) \ar{ld}  \\
     & V(0)_*(\K) \ar{lu}{d} &  
    \end{tikzcd} 
  \end{equation}
  and 
  \begin{equation} \label{uec2}
   \begin{tikzcd}
    \pi_*(F)  \ar{rr}{\cdot p} & &  \pi_*(F) \ar{ld} \  \\
     & V(0)_*(F) \ar{lu}{d} &  
    \end{tikzcd} 
  \end{equation}
  and therefore a map of the associated singly-graded Bockstein spectral sequences. 
  Fix $j > 0$. Let $a$ be a generator of  $V(0)_{2jr}(F) = \mathbb{F}_p$.  Since $\pi_{2jr}(F) = 0$  we get
  that 
  \[ d(a) \in \pi_{2jr-1}(F) = \mathbb{Z}/{p^{v_p(q^{jr}-1)}\mathbb{Z}} \]
  is an element of  order $p$. It follows that $d(a)$ has a preimage under the map 
  \[ \begin{tikzcd}
   \pi_{2jr-1}(F)  \ar{rr}{\cdot p^{v_p(q^{jr}-1)-1}} & &  \pi_{2jr-1}(F), 
   \end{tikzcd}\]
   but not under the map
  \[ \begin{tikzcd}
   \pi_{2jr-1}(F) \ar{rr}{\cdot p^{v_p(q^{jr}-1)}} & &  \pi_{2jr-1}(F).
   \end{tikzcd}\]  
  Hence, $a$ survives to  the $E^{v_p(q^{jr}-1)}$-page in the spectral sequence associated to the  exact couple (\ref{uec2}) and  
   \[ d^{v_p(q^{jr}-1)}(a) \neq 0.\]
   Since  $\K$ has the same homotopy and mod $p$ homotopy groups as $F$, the same argument as above  shows that the preimage $b$ of $a$ under the isomorphism 
   \[ \begin{tikzcd}
   V(0)_{2jr}(\K) \ar{r}{h_{2jr}} & V(0)_{2jr}(F) 
   \end{tikzcd}\]
   has to survive to the $E^{v_p(q^{jr}-1)}$-page of the spectral sequence associated to the exact couple (\ref{uec1}) and that $ d^{v_p(q^{jr}-1)}(b) \neq 0$. 
   We  conclude that $h_*$ maps
   \[d^{v_p(q^{jr}-1)}(b) \in V(0)_{2jr-1}(\K)\] 
   to 
   \[ d^{v_p(q^{jr}-1)}(a) \in V(0)_{2jr-1}(F).\]
   Thus, ${h}_*: V(0)_*\K \to V(0)_*F$ is an isomorphism. 
     \end{proof}
  By definition, $\K$ is $V(0)$-local.
     
  \begin{lem}
  The $S$-module $F$ is $V(0)$-local.
  \end{lem}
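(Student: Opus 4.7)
The plan is to use the fact that the full subcategory of $V(0)$-local $S$-modules is a thick triangulated subcategory of $\mathscr{D}_S$: since $V(0)$-localization is Bousfield localization with respect to the (finite) spectrum $V(0)$, the localization functor is exact, and hence $M$ is $V(0)$-local iff the localization map $M \to M_{V(0)}$ is an equivalence, a property closed under cofibers, suspensions, and retracts. Consequently, in any distinguished triangle in $\mathscr{D}_S$, if two of the three vertices are $V(0)$-local, so is the third.

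The defining triangle of $F$ in $\mathscr{D}_{\K}$,
\[
\begin{tikzcd}
F \ar{r} & \K(\bar{\mathbb{F}}_l)_p \ar{r}{f} & S^2_{\K}\wedge_{\K}^{L} \K(\bar{\mathbb{F}}_l)_p \ar{r} & F\wedge S^1,
\end{tikzcd}
\]
maps to a distinguished triangle of $S$-modules under the lax symmetric monoidal, triangulated forgetful functor $\mathscr{D}_{\K}\to\mathscr{D}_S$. Under this forgetful functor the term $S^2_{\K}\wedge_{\K}^L \K(\bar{\mathbb{F}}_l)_p$ is identified with $\Sigma^2 \K(\bar{\mathbb{F}}_l)_p$, which is a suspension of a $V(0)$-local $S$-module, hence itself $V(0)$-local. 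Here we use that $\K(\bar{\mathbb{F}}_l)_p$, being  a $p$-completion in the sense of the functor $(-)_p\colon\mathscr{C}\mathscr{A}_S\to\mathscr{C}\mathscr{A}_S$ recalled in Section~2, is $V(0)$-local by construction. Applying the thick subcategory property then yields $V(0)$-locality of $F$.

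There is no real obstacle: the only thing to check is that the forgetful functor sends $S^2_{\K}\wedge_{\K}^L \K(\bar{\mathbb{F}}_l)_p$ to $\Sigma^2\K(\bar{\mathbb{F}}_l)_p$, which follows from exactness of the forgetful functor (it preserves distinguished triangles, hence suspensions) together with the fact that $S^2_{\K}$ is by definition the double suspension of the unit in $\mathscr{M}_{\K}$.
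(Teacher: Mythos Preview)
Your proof is correct and follows essentially the same approach as the paper: both arguments show $V(0)$-locality of $F$ by using the distinguished triangle and the fact that the other two terms are $V(0)$-local. The paper does this explicitly by writing down the long exact sequence for $\mathscr{D}_S(W,-)$ with $W$ a $V(0)$-acyclic and observing that the two flanking terms vanish, while you package the same reasoning as the statement that $V(0)$-local objects form a thick subcategory closed under the 2-out-of-3 property in triangles.
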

  \begin{proof}
  Let $W$ be a $V(0)$-acyclic $S$-module. 
  We have an exact sequence
  \[\begin{tikzcd}
  \Ext^{-1}_S\bigl(W, \, S^2_{\K} \wedge_{\K}^L \K(\bar{\mathbb{F}}_l)_p\bigr) \ar{r} & \mathscr{D}_S(W, F) \ar{r} & \mathscr{D}_S\bigl(W, \, \K(\bar{\mathbb{F}}_l)_p\bigr).
 \end{tikzcd}\]
Because  $\K(\bar{\mathbb{F}}_l)_p$ is $V(0)$-local,  one has  $\mathscr{D}_S\bigl(W, \K(\bar{\mathbb{F}}_l)_p\bigr) = 0 $.  Since desuspensions of $V(0)$-acyclic $S$-modules are $V(0)$-acyclic, we get 
\[  \Ext^{-1}_S\bigl(W, \, S^2_{\K} \wedge_{\K}^L \K(\bar{\mathbb{F}}_l)_p\bigr) = 0.\] 
  \end{proof}
  \begin{cor} \label{fibseqcor}
 We have a distinguished triangle of the form 
  \begin{equation} \label{fibseq}
  \begin{tikzcd}
   \K  \ar{r}{i} & \K(\bar{\mathbb{F}}_l)_p \ar{r}{f} & S^2_{\K} \wedge_{\K}^L \K(\bar{\mathbb{F}}_l)_p \ar{r} & \Sigma \K
   \end{tikzcd}
  \end{equation}
in $\mathscr{D}_{\K}$. 
  \end{cor}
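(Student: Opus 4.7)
The plan is to show that the map $h \colon \K \to F$ constructed above is an isomorphism in $\mathscr{D}_{\K}$, and then substitute $\K$ for $F$ in the distinguished triangle involving $F$ that was set up just after Lemma \ref{lift}. Since $h$ was built so that its composition with $F \to \K(\bar{\mathbb{F}}_l)_p$ is the canonical map $i$, the resulting triangle will have $i$ as its first map, which is exactly the claim of the corollary.

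First I would argue that $h$ is an isomorphism in $\mathscr{D}_S$, i.e.\ a $\pi_*$-isomorphism. By Lemma \ref{UEC}, $h$ induces an isomorphism on $V(0)$-homotopy. By the lemma immediately preceding the corollary together with the definition of $\K$ as $\K(\mathbb{F}_q)_p$, both $\K$ and $F$ are $V(0)$-local $S$-modules. Since $V(0)$-localization is Bousfield localization, any $V(0)_*$-equivalence between $V(0)$-local objects is a weak equivalence in $\mathscr{M}_S$, hence a $\pi_*$-isomorphism, hence an isomorphism in $\mathscr{D}_S$.

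Next I would upgrade this to an isomorphism in $\mathscr{D}_{\K}$. The forgetful functor $\mathscr{D}_{\K} \to \mathscr{D}_S$ reflects isomorphisms because a morphism of $\K$-modules is an isomorphism in $\mathscr{D}_\K$ if and only if it is a $\pi_*$-isomorphism, and the underlying $S$-module computes the same homotopy groups. Therefore $h$ is an isomorphism in $\mathscr{D}_{\K}$.

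Finally, in the distinguished triangle
\[ F \to \K(\bar{\mathbb{F}}_l)_p \xrightarrow{f} S^2_{\K} \wedge_{\K}^L \K(\bar{\mathbb{F}}_l)_p \to \Sigma F \]
obtained from $f$, I would pre-compose the first map with $h$ and post-compose the last map with $\Sigma h^{-1}$; the axioms of a triangulated category ensure this yields another distinguished triangle, and by construction the first map is now $i$. This gives the desired triangle (\ref{fibseq}). The only potentially delicate point is the passage from a $V(0)_*$-equivalence of $V(0)$-local $S$-modules to an isomorphism in $\mathscr{D}_{\K}$, but this is a formal consequence of the fact that $\mathscr{D}_{\K} \to \mathscr{D}_S$ reflects $\pi_*$-isomorphisms, so there is no real obstacle here.
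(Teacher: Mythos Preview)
Your proposal is correct and follows exactly the approach the paper intends: the paper explicitly announces just before Lemma \ref{UEC} that it will show $h$ is an isomorphism by proving its image in $\mathscr{D}_S$ is a $V(0)_*$-equivalence between $V(0)$-local $S$-modules, and the corollary is then immediate from the triangulated-category axioms as you describe. The paper leaves the corollary without an explicit proof precisely because all the ingredients you list have just been established.
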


The following lemma will be useful later to determine multiplicative structures. 

\begin{lem}  \label{cofibmult}
In $\mathscr{D}_{\K}$ we have the following equality of morphisms: 
\begin{align*} 
& \K(\bar{\mathbb{F}}_l)_p \wedge_{\K}^L \K(\bar{\mathbb{F}}_l)_p  \xrightarrow{m} \K(\bar{\mathbb{F}}_l)_p \xrightarrow{f} S^2_{\K} \wedge_{\K}^L \K(\bar{\mathbb{F}}_l)_p \\
= &  \K(\bar{\mathbb{F}}_l)_p \wedge_{\K}^L \K(\bar{\mathbb{F}}_l)_p \xrightarrow{f \wedge \id} S^2_{\K} \wedge_{\K}^L \K(\bar{\mathbb{F}}_l)_p \wedge_{\K}^L\K(\bar{\mathbb{F}}_l)_p  \xrightarrow{\Sigma^2  m} S^2_{\K} \wedge_{\K}^L  \K(\bar{\mathbb{F}}_l)_p \\
& +  \K(\bar{\mathbb{F}}_l)_p \wedge_{\K}^L \K(\bar{\mathbb{F}}_l)_p \xrightarrow{\id \wedge f} \K(\bar{\mathbb{F}}_l)_p \wedge_{\K}^L S^2_{\K} \wedge_{\K}^L \K(\bar{\mathbb{F}}_l)_p \xrightarrow{\Sigma^2 m} S^2_{\K} \wedge_{\K}^L \K(\bar{\mathbb{F}}_l)_p \\
 & +  \K(\bar{\mathbb{F}}_l)_p \wedge_{\K}^L \K(\bar{\mathbb{F}}_l)_p \xrightarrow{f \wedge (\phi^q-\id)} S^2_{\K} \wedge_{\K}^L \K(\bar{\mathbb{F}}_l)_p \wedge_{\K}^L\K(\bar{\mathbb{F}}_l)_p  \xrightarrow{\Sigma^2 m} S^2_{\K} \wedge_{\K}^L  \K(\bar{\mathbb{F}}_l)_p. 
\end{align*}
\end{lem}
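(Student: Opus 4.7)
The plan is to reduce the stated equality, whose two sides are elements of $\mathscr{D}_{\K}(Y, S^2_{\K} \wedge_{\K}^L \K(\bar{\mathbb{F}}_l)_p)$ with $Y := \K(\bar{\mathbb{F}}_l)_p \wedge_{\K}^L \K(\bar{\mathbb{F}}_l)_p$, to an equality after postcomposition with $\bar{u}$. Rotating the distinguished triangle defining $F$ and applying $\mathscr{D}_{\K}(Y, -)$ yields the exact sequence
\[ \Ext^{-1}_{\K}(Y, H\mathbb{Z}_p) \to \mathscr{D}_{\K}\bigl(Y, S^2_{\K} \wedge_{\K}^L \K(\bar{\mathbb{F}}_l)_p\bigr) \xrightarrow{\bar{u}_*} \mathscr{D}_{\K}\bigl(Y, \K(\bar{\mathbb{F}}_l)_p\bigr),\]
so it suffices to show $\bar{u}_*$ is injective, i.e.\ that the leftmost group vanishes. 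I would invoke the same Ext spectral sequence used in the proof of Lemma \ref{lift}: $Y$ is a connective $\K$-module (as a derived smash product of connective $\K$-modules over the connective $\K$), hence $\pi_*(Y)$ is concentrated in non-negative degrees, so the $E_2$-term $\Ext^{n,m}_{\K_*}(\pi_*Y, \mathbb{Z}_p)$ vanishes for $m < 0$, forcing $\Ext^{-1}_{\K}(Y, H\mathbb{Z}_p) = 0$.

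Having reduced to comparing the two sides after composing with $\bar{u}$, I would compute each side. Using the defining identity $\bar{u} \circ f = \phi^q - \id$ and the fact that $\phi^q$ is a morphism of $\K$-ring spectra (so that $\phi^q \circ m = m \circ (\phi^q \wedge \phi^q)$), the left-hand side becomes
\[ \bar{u} \circ f \circ m = (\phi^q - \id) \circ m = m \circ (\phi^q \wedge \phi^q) - m.\]
For each summand on the right-hand side I would use the associativity (and, for the middle summand, the symmetry) of the multiplication $m$ on $\K(\bar{\mathbb{F}}_l)_p$ to rewrite $\bar{u} \circ \Sigma^2 m$ as $m \circ (\bar{u} \wedge \id)$ or $m \circ (\id \wedge \bar{u})$; precomposing with $f \wedge \id$, $\id \wedge f$, and $f \wedge (\phi^q - \id)$ respectively then reduces the three summands to $m \circ ((\phi^q - \id) \wedge \id)$, $m \circ (\id \wedge (\phi^q - \id))$, and $m \circ ((\phi^q - \id) \wedge (\phi^q - \id))$.

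A short expansion, justified by the biadditivity of $\wedge_{\K}^L$ on morphism groups (which follows because $- \wedge_{\K}^L Z$ is an exact functor of triangulated categories and hence additive), then gives
\[ (\phi^q - \id) \wedge \id + \id \wedge (\phi^q - \id) + (\phi^q - \id) \wedge (\phi^q - \id) = \phi^q \wedge \phi^q - \id \wedge \id,\]
so the two sides agree after postcomposing with $\bar{u}$, and the reduction in the first paragraph concludes the proof. The step I expect to require the most care is the identification $\bar{u} \circ \Sigma^2 m = m \circ (\id \wedge \bar{u})$ for the middle summand, which implicitly invokes the symmetry isomorphism of $\wedge_{\K}^L$ to slide the suspension coordinate past $\K(\bar{\mathbb{F}}_l)_p$; once the commutativity and associativity of $m$ are used properly this is routine, but it is the bookkeeping point where one could easily slip up. The $\Ext^{-1}$ vanishing is standard but should be spelled out for completeness.
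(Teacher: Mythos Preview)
Your proposal is correct and follows essentially the same approach as the paper: both first reduce to the statement after postcomposition with $\bar{u}$ by showing $\Ext^{-1}_{\K}(Y, H\mathbb{Z}_p) = 0$ via connectivity of $Y$ and the Ext spectral sequence, and then verify the resulting identity using that $\phi^q$ is a $\K$-algebra map together with associativity and bilinearity of $m$. The paper merely cites \cite[Lemma 4.1]{Wat} for this last step, whereas you spell out the computation explicitly, which is a welcome addition; your caution about the symmetry isomorphism in the middle summand is well placed but, as you note, routine.
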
 
\begin{proof}
By a $\Tor$ spectral sequence argument it follows that $\K(\bar{\mathbb{F}}_l)_p \wedge_{\K}^L \K(\bar{\mathbb{F}}_l)_p$ is connective. An $\Ext$ spectral sequence computation shows that 
\[ \Ext_{\K}^{-1}\bigl(\K(\bar{\mathbb{F}}_l)_p \wedge_{\K}^L \K(\bar{\mathbb{F}}_l)_p, H\mathbb{Z}_p\bigr) = 0.\]
It follows that
\[\mathscr{D}_{\K}\bigl(\K(\bar{\mathbb{F}}_l)_p \wedge_{\K}^L \K(\bar{\mathbb{F}}_l)_p, \, S^2_{\K} \wedge_{\K}^L  \K(\bar{\mathbb{F}}_l)_p\bigr) \xrightarrow{\bar{u}_*}  \mathscr{D}_{\K}\bigl(\K(\bar{\mathbb{F}}_l)_p \wedge_{\K}^L \K(\bar{\mathbb{F}}_l)_p, \, \K(\bar{\mathbb{F}}_l)_p\bigr)  \]
is injective.
 It thus suffices to show that the equality holds after composing with $\bar{u}$.  Now, we can argue as in 
 \cite[Lemma 4.1]{Wat}. In \cite{Wat}  a similar statement is proven for the category of spectra (instead of $\mathscr{D}_{\K}$) and for 
 $q$ that generates $(\mathbb{Z}/p^2)^*$. 

\end{proof}

\section{The algebras $V(0)_*\K(\mathbb{F}_q)$ and $V(1)_*\K(\mathbb{F}_q)$} \label{V0andV1}

In this section we determine the multiplicative structure of $V(0)_*\K$ and $V(1)_*\K$. In \cite[Theorem 2.6]{Brow} 
Browder computes  the ring $V(0)_*\K$. Browder  works with spaces. In this section we present a computation using the $\K$-linearity of the distinguished triangle (\ref{fibseq}). 

\begin{lem}  \label{modphpty}
We have an isomorphism of $\mathbb{F}_p$-algebras
\[ V(0)_*\K \cong E(x) \otimes P(y),\]
where $|x| = 2r-1$ and $|y| = 2r$.  
\end{lem}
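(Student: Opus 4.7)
The additive structure of $V(0)_*\K$ is already known from Quillen's computations (recalled at the beginning of Section \ref{fibseqsec}): it is one-dimensional over $\mathbb{F}_p$ in every degree of the form $2rj$ or $2rj-1$ (with $j \geq 0$, resp.\ $j > 0$), and zero otherwise. For $p \geq 5$ the spectrum $V(0)$ is a commutative $S$-ring spectrum, so $V(0)\wedge_S \K$ is a commutative ring spectrum and $V(0)_*\K$ is graded commutative; in particular any odd-degree class squares to zero. The task is therefore to exhibit polynomial and exterior generators $y$ and $x$ and to verify that $x y^j \neq 0$ for all $j \geq 0$.

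First I would define $y \in V(0)_{2r}\K$ as a class mapping under $i_*$ to the generator $u^r$ of $V(0)_{2r}\K(\bar{\mathbb{F}}_l)_p$; this is possible by Lemma \ref{iso2ir1} and Lemma \ref{iso2ir2} (which together make $i_*$ an isomorphism in degrees $2rj$), and it forces $i_*(y^j) = u^{rj}$ because $i$ is a map of ring spectra. Since $i_*$ is an isomorphism in degree $2rj$ and $u^{rj}$ generates its target, $y^j$ generates $V(0)_{2rj}\K$ for every $j \geq 0$. This establishes the polynomial subalgebra $P(y) \hookrightarrow V(0)_*\K$.

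Next I treat the odd degrees using the distinguished triangle (\ref{fibseq}) of Corollary \ref{fibseqcor}. Applying $V(0)\wedge_S(-)$ gives a long exact sequence of $V(0)_*\K$-modules where the connecting homomorphism $\partial$ is $V(0)_*\K$-linear (the $V(0)_*\K$-action on $V(0)_*\K(\bar{\mathbb{F}}_l)_p$ and on $V(0)_{*-2}\K(\bar{\mathbb{F}}_l)_p$ being restriction of scalars along $i_*$). The formula for $f_*$ on $V(0)_*\K(\bar{\mathbb{F}}_l)_p = P(u)$ follows from the characterization of $f$ as a lift of $\phi^q - \id$ along $\bar u$: the relation $\bar u_* \circ f_* = (\phi^q - \id)_*$ together with the fact that $\bar u_*$ is multiplication by $u$ forces
\[ f_*(u^k) = (q^k - 1)\, u^{k-1} \pmod p. \]
Since $r$ is the order of $q$ in $(\mathbb{Z}/p)^*$, we have $f_*(u^{rj}) = 0$ and $f_*$ is injective otherwise. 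Feeding this into the long exact sequence, $\partial\colon V(0)_{2rj-2}\K(\bar{\mathbb{F}}_l)_p \to V(0)_{2rj-1}\K$ is an isomorphism for each $j \geq 1$. Define $x := \partial(u^{r-1})$, a generator of $V(0)_{2r-1}\K$.

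Finally, $V(0)_*\K$-linearity of $\partial$ combined with $i_*(y^j) = u^{rj}$ yields
\[ xy^j = y^j \cdot \partial(u^{r-1}) = \partial\bigl(u^{rj}\cdot u^{r-1}\bigr) = \partial\bigl(u^{r(j+1)-1}\bigr), \]
which is a generator of $V(0)_{2r(j+1)-1}\K$. Thus multiplication by $x$ sends $P(y)$ isomorphically onto the odd-degree part of $V(0)_*\K$, and together with the relation $x^2 = 0$ (forced by graded commutativity in odd degree for $p$ odd) this identifies $V(0)_*\K$ with $E(x) \otimes P(y)$. The main subtlety to keep in mind is the bookkeeping of the $V(0)_*\K$-module structure on the shifted term $V(0)_*(S^2_\K \wedge_\K^L \K(\bar{\mathbb{F}}_l)_p)$ so that the $\K$-linearity of the triangle indeed translates into the multiplicative identity $xy^j = \partial(u^{r(j+1)-1})$ that closes the argument.
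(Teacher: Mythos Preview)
Your proposal is correct and follows essentially the same route as the paper: define $y$ as the $i_*$-preimage of $u^r$ (so $i_*(y^j)=u^{rj}\neq 0$), define $x$ via the connecting map of the distinguished triangle (\ref{fibseq}), and use the $V(0)_*\K$-linearity of that map to get $xy^j=\partial(u^{r(j+1)-1})\neq 0$. The only cosmetic difference is that you verify the connecting map is an isomorphism by computing $f_*(u^k)=(q^k-1)u^{k-1}$ explicitly, whereas the paper simply invokes the vanishing of $V(0)_{2ir-1}\K(\bar{\mathbb{F}}_l)_p$; also, Lemma~\ref{iso2ir2} alone suffices for the isomorphism $i_{2r}$ (Lemma~\ref{iso2ir1} concerns $F$ rather than $\K$).
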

\begin{proof}
By Corollary \ref{fibseqcor} we have an $V(0)_*\K$-linear exact sequence
\begin{equation*} 
 \begin{tikzcd}
\Sigma^2V(0)_* \K(\bar{\mathbb{F}}_l)_p \ar{r}{\Delta} & V(0)_*\K \ar{r}{i_*} & V(0)_*\K(\bar{\mathbb{F}}_l)_p, 
\end{tikzcd}
\end{equation*} 
where $\Delta$ is a map of degree $-1$ and where  $i_*$ is a map of degree $0$ that is 
 a map of $\mathbb{F}_p$-algebras. 

The map  $i_{2r}$ is an isomorphism, so we can define $y$ to  be the preimage of $u^r$ under $i_*$. For  $n \geq 0$ we then have $i_*(y^n) = u^{rn}$.  In particular, we get $y^n \neq 0$. 

For $i \geq 1$ we have
\[ V(0)_{2ir-1}\K(\bar{\mathbb{F}}_l)_p = 0.\]
Hence,  
\[\begin{tikzcd}
\bigl(\Sigma^2V(0)_*\K(\bar{\mathbb{F}}_l)_p\bigr)_{2ir} \ar{r}{\Delta} &  V(0)_{2ir-1}\K
\end{tikzcd}\]
is an isomorphism.  We define $x \coloneqq \Delta(\Sigma^2u^{r-1})$. In order to prove the lemma it now suffices to show that $xy^i \neq 0$ for $i \geq 0$. 
We have 
\[ xy^i = \Delta(\Sigma^2u^{r-1})y^i = \Delta\bigr(y^i(\Sigma^2 u^{r-1})\bigl) = \Delta(\Sigma^2u^{r(i+1)-1}) \neq 0.\] 
\end{proof}
We define $k:= \frac{p-1}{r}$. 
\begin{lem} \label{v1K}
We have an isomorphism of $\mathbb{F}_p$-algebras
\[ V(1)_*\K \cong E(x) \otimes P_k(y).\]
\end{lem}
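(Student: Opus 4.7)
The plan is to use the cofiber sequence $\Sigma^{2p-2}V(0) \xrightarrow{v_1} V(0) \to V(1)$ smashed with $\K$, which yields a long exact sequence
\[ \cdots \to V(0)_{n-2p+2}\K \xrightarrow{v_1 \cdot -} V(0)_n\K \to V(1)_n\K \to V(0)_{n-2p+1}\K \to \cdots \]
of $V(0)_*\K$-modules. Since $V(0) \to V(1)$ is a map of commutative $S$-ring spectra, the induced map $V(0)_*\K \to V(1)_*\K$ is an $\mathbb{F}_p$-algebra map, so it suffices to identify $V(1)_*\K$ with the cokernel of multiplication by $v_1 \cdot 1 \in V(0)_{2p-2}\K$ as a graded algebra.

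First I would identify $v_1 \cdot 1 \in V(0)_{2p-2}\K$. Since $r$ divides $p-1$, we have $2p-2 = 2rk$, and by Lemma \ref{modphpty}
\[ V(0)_{2p-2}\K = (E(x) \otimes P(y))_{2rk} = \mathbb{F}_p\{y^k\},\]
so $v_1 \cdot 1 = \alpha y^k$ for some $\alpha \in \mathbb{F}_p$. Naturality of the $V(0)$-module structure along $i: \K \to \K(\bar{\mathbb{F}}_l)_p$ gives $i_*(v_1 \cdot 1) = v_1 \cdot 1 \in V(0)_{2p-2}\K(\bar{\mathbb{F}}_l)_p = P(u)$, and the identification $V(1)_*\K(\bar{\mathbb{F}}_l)_p = P_{p-1}(u)$ recorded earlier in the paper means that $v_1 \cdot 1 = u^{p-1}$ in $P(u)$. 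Combining with $i_*(y^k) = u^{rk} = u^{p-1}$, we conclude $\alpha = 1$, so $v_1$ acts on $V(0)_*\K$ by multiplication by $y^k$.

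Next I would observe that multiplication by $y^k$ on $E(x) \otimes P(y)$ is injective because $P(y)$ is a polynomial ring and $E(x)$ is a free $\mathbb{F}_p$-module. The long exact sequence therefore collapses to short exact sequences of $V(0)_*\K$-modules, giving
\[ V(1)_*\K \cong V(0)_*\K / (y^k) \cong E(x) \otimes P_k(y) \]
as $\mathbb{F}_p$-vector spaces. Since $V(0)_*\K \to V(1)_*\K$ is a surjective $\mathbb{F}_p$-algebra map whose kernel is the ideal generated by $y^k$, this is in fact an algebra isomorphism.

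I do not expect any serious obstacle; the only mild subtlety is confirming that $v_1 \cdot 1$ really is nonzero in $V(0)_{2p-2}\K$, which is handled by the naturality comparison with $\K(\bar{\mathbb{F}}_l)_p$ via the injectivity of $i_*$ in degree $2p-2$ ensured by $i_*(y^k) = u^{p-1} \neq 0$.
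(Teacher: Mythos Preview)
Your proof is correct and follows essentially the same route as the paper: both use the long exact sequence coming from $\Sigma^{2p-2}V(0) \to V(0) \to V(1)$, compare with $\K(\bar{\mathbb{F}}_l)_p$ via $i_*$, and invoke $V(1)_*\K(\bar{\mathbb{F}}_l)_p = P_{p-1}(u)$ to see that $v_1 \cdot 1$ is a nonzero multiple of $y^k$, whence multiplication by $v_1$ is injective and the quotient is $E(x)\otimes P_k(y)$. One tiny imprecision: from $V(1)_*\K(\bar{\mathbb{F}}_l)_p = P_{p-1}(u)$ you only get $v_1 \cdot 1 \doteq u^{p-1}$ up to a unit in $\mathbb{F}_p$, so you should conclude that $\alpha$ is a unit rather than $\alpha = 1$; this does not affect the argument.
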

\begin{proof}
We have a long exact sequence
\[\begin{tikzcd}
\cdots \ar{r} &  \Sigma^{2p-2}V(0)_*\K \ar[equal]{d}  \ar{r}{v} & V(0)_*\K \ar{r}{\rho} \ar[equal]{d} & V(1)_*\K \ar{r} & \cdots, \\ 
& \Sigma^{2p-2}E(x) \otimes P(y) & E(x) \otimes P(y) & &  
 \end{tikzcd}\]
 where $\rho$ is  a map of $\mathbb{F}_p$-algebras. It suffices to show that $y^k$ is in the image of $v$, or equivalently that $v(\Sigma^{2p-2}1) \neq 0$. 
 For this, we consider the commutative diagram
 \[\begin{tikzcd}
\Sigma^{2p-2} V(0)_*\K \ar{r}{v} \ar{d}[swap]{\Sigma^{2p-2}i_*} & V(0)_*\K  \ar{d}{i_*}   &  \\
\Sigma^{2p-2} V(0)_*\K(\bar{\mathbb{F}}_l)_p \ar{r}{\bar{v}}  \ar[equal]{d} & V(0)_*\K(\bar{\mathbb{F}}_l)_p \ar{r} {\bar{\rho}}  \ar[equal]{d} &   V(1)_*\K(\bar{\mathbb{F}}_l)_p  \ar[equal]{d} \\
\Sigma^{2p-2}P(u) \ar{r} & P(u)   \ar{r} & P_{p-1}(u). 
 \end{tikzcd}\]
 To show $v(\Sigma^{2p-2}1) \neq 0$, it suffices to prove that 
 $ \bar{v}(\Sigma^{2p-2}1) \neq 0$.   This  follows  from $\bar{\rho}(u^{p-1}) = 0$. 
\end{proof}
\section{The mod $p$ homology of $\K(\mathbb{F}_{q})$} \label{modphomK}

In this section we study the mod $p$ homology of $\K$.   We define  $v$ to be the $p$-adic valuation of $q^r-1$. By \cite[Lemma VIII.2.4]{FiedPrid}  this is equal to the $p$-adic valuation of $q^{p-1}-1$. 
We distinguish the four different cases: 
\begin{enumerate}
\item $r = p-1$ and $v=1$, 
\item $r = p-1$ and $v \geq 2$, 
\item $r < p-1$ and $v \geq 2$, 
\item $r < p-1$ and $ v = 1$. 
\end{enumerate} 
The section is in part inspired by Hirata's article \cite{Hir} which treats the cohomology of algebraic $\K$-theory of finite fields
and by Angeltveit's  and Rognes' article \cite{AnRo} which treats  the mod $p$ homology of $\K$ in  case (\ref{1}). 

Similarly to \cite{Hir}  we first split the image of the distinguished triangle (\ref{fibseq})  under  $\mathscr{D}_{\K} \to \mathscr{D}_S$ into a wedge of $p-1$ distinguished triangles corresponding to the splitting of 
$\K(\bar{\mathbb{F}}_l)_p$ into a wedge of suspensions of the $p$-completed connective Adams summand. 

Let $q'$ be a  power of a prime $l'$ (possibly different from $l$) such that $q'$ is a generator of $(\mathbb{Z}/{p^2})^*$.   
 We set
\[k' = \cup_{i \geq 0} \mathbb{F}_{{q'}^{p^i}}.\] Then, $\ell_p \coloneqq \K(k')_p$ is a commutative $S$-algebra model for the $p$-completed connective Adams summand, \cite[Proposition 9.2]{McSt}, \cite[Section 2]{BakRichUni}. 
 We define 
\[k = \cup_{i \geq 0} \mathbb{F}_{{q'}^{p^i(p-1)}}\] 
and claim that there is a weak equivalence of commutative $S$-algebras $\K(k)_p \to \K(\bar{\mathbb{F}}_l)_p$: 
By Quillen's computations \cite[pp.583--585]{Qu} 
 one gets that $\K(k)_p \to \K(\bar{\mathbb{F}}_{l'})_p$ 
 is an isomorphism in $V(0)$-homotopy and therefore a weak equivalence. 
By \cite{BakRichUni} 
and since $\K(\bar{\mathbb{F}}_{l'})_p$ and $\K(\bar{\mathbb{F}}_l)_p$ are cofibrant commutative $S$-algebras, we get a weak equivalence $\K(\bar{\mathbb{F}}_{l'})_p \to \K(\bar{\mathbb{F}}_l)_p$.  
The morphism $j: \ell_p \to \K(k)_p$, given  by the inclusion of fields, 
induces the identification $\pi_*(\ell_p) = \mathbb{Z}_p[u^{p-1}]$ as a subring of
$\pi_*(\K(k)_p)$.  We have $V(0)_*\ell_p = P(u^{p-1}) $ and $V(1)_*\ell_p = \mathbb{F}_p$ as rings. Using the Tor and Ext  spectral sequence one concludes that $V(1) \wedge_S^L \ell_p \cong H\mathbb{F}_p$ as $S$-ring spectra. The 
maps 
\[\begin{tikzcd}
 S^{2i}_S \wedge_S^L \ell_p \ar{r}{u^i \wedge j} & \K(k)_p \wedge_S^L \K(k)_p \ar{r} &  \K(k)_p
 \end{tikzcd}\]
for $i = 0, \dots p-2$ induce an isomorphism in $\mathscr{D}_S$: 
\[\bigvee_{i = 0}^{p-2}  S^{2i}_S \wedge_S^L \ell_p \, \cong \, \K(\bar{\mathbb{F}}_l)_p.\]
We get an isomorphism $\bigvee_{i = 1}^{p-1} S^{2i}_S \wedge_S^L \ell_p  \to S^2_S \wedge_S^L \K(\bar{\mathbb{F}}_l)_p$ in $\mathscr{D}_S$ which we denote by $\kappa$. 
We claim that the following diagram commutes in $\mathscr{D}_S$: 
 \[\begin{tikzcd}
 \K(\bar{\mathbb{F}}_l)_p \ar{r}{f}  &  S^2_S \wedge_S^L \K(\bar{\mathbb{F}}_l)_p  \\
 \bigvee_{i=0}^{p-2} S^{2i}_S \wedge_S^L \ell_p  \ar{r}{\vee f_i} \ar{u}{\cong}  & (S^{2p-2}_S \wedge_S^L \ell_p) \vee \bigvee_{i = 1}^{p-2} S^{2i}_S \wedge_S^L \ell_p \ar{u}{\cong}[swap]{\kappa}.
 \end{tikzcd}
 \]
Here, denoting by $k_i$ the inclusion $S^{2i}_S \wedge_S^L \ell_p \to \K(\bar{\mathbb{F}}_l)_p$  and  by $p_i$   the projection $\K(\bar{\mathbb{F}}_l)_p \to S^{2i}_S \wedge_S^L \ell_p$, 
$f_i \coloneqq  p_i \circ (\phi^q-\id) \circ k_i$ for $i = 1, \dots, p-2$   and   $f_0$ is defined to be the unique map $\ell_p \to S^{2p-2}_S \wedge_S^L \ell_p$ that is mapped to 
$p_0 \circ (\phi^q - \id) \circ k_0$   after composing with 
\[\begin{tikzcd}[column sep = large]
\bar{v}: S^{2p-2}_S \wedge_S^L \ell_p \ar{r}{u^{p-1} \wedge \id} &  \ell_p \wedge_S^L \ell_p \ar{r} &  \ell_p. 
\end{tikzcd}\]
It suffices to show commutativity after composing with 
 \[\bar{u}: S^2_S \wedge_S^L \K(\bar{\mathbb{F}}_l)_p \xrightarrow{u \wedge \id} \K(\bar{\mathbb{F}}_l)_p \wedge_S^L \K(\bar{\mathbb{F}}_l)_p \to \K(\bar{\mathbb{F}}_l)_p. \]
 Commutativity then follows because $\bar{u}$ identifies under $\kappa$  with the map that is 
 $k_0 \circ \bar{v}$ on the wedge summand $S^{2p-2}_S \wedge_S^L \ell_p$ and $k_i$ on the other wedge summands, 
and because by  \cite[Corollary 6.4.8]{Adams}
 two self-maps of $p$-completed connective complex $\K$-theory in the stable homotopy category are equal if and only if they induce the same maps on homotopy groups. 
 Let $\K_i$ be the fiber of $f_i$. 
 We get a morphism of distinguished triangles in $\mathscr{D}_S$ 
\[\begin{tikzcd}
\K \ar{r}  & \K(\bar{\mathbb{F}}_l)_p \ar{r}{f}  &  S^2_S \wedge_S^L \K(\bar{\mathbb{F}}_l)_p \ar{r}  & \Sigma \K \  \\
 \bigvee_{i = 0}^{p-2} \K_i \ar{r} \ar[dashrightarrow]{u} & \bigvee_{i=0}^{p-2} S^{2i} \wedge_S^L \ell_p  \ar{r}{\vee f_i} \ar{u}{\cong}  & \bigvee_{i = 1}^{p-1} S^{2i}_S \wedge_S^L \ell_p \ar{r} \ar{u}{\cong}[swap]{\kappa} & \Sigma  (\bigvee_{i = 0}^{p-2} K_i) \ar[dashrightarrow]{u},
\end{tikzcd}\]
which is an isomorphism by the five lemma. 

Recall that the dual Steenrod algebra $A_* \coloneqq (H\mathbb{F}_p)_*H\mathbb{F}_p$  is an $\mathbb{F}_p$-Hopf algebra and that 
  for $X \in \mathscr{D}_S$ the mod $p$ homology $(H\mathbb{F}_p)_*X$ has a natural left $A_*$-comodule structure \cite[Theorem 1.1]{BakLaz}.  We use the letter $\nu$ to denote the  $A_*$-coactions.   
 For $X, Y \in \mathscr{D}_S$ the canonical map  
 \begin{equation} \label{kuenn}
 \begin{tikzcd}
  (H\mathbb{F}_p)_*X \otimes (H\mathbb{F}_p)_*Y \ar{r} & (H\mathbb{F}_p)_*(X \wedge_S^L Y)
  \end{tikzcd}
  \end{equation}
  is an isomorphism of comodules \cite[Theorem 17.8.vii]{Swi}.  We get that  $(H\mathbb{F}_p)_*X$ is an $A_*$-comodule algebra if $X$ is an associative $S$-ring spectrum.  If $X$ and $Y$ are associative $S$-ring spectra then (\ref{kuenn}) is an isomorphism of comodule algebras. 
 Recall that 
 \[A_* = P(\xi_1, \xi_2, \dots) \otimes E(\tau_0, \tau_1, \dots) = P(\bar{\xi}_1, \bar{\xi}_2, \dots) \otimes E(\bar{\tau}_0, \bar{\tau}_1, \dots)\] 
 with $|\xi_n| = |\bar{\xi}_n| = 2p^n-2$ and $|\tau_n| = |\bar{\xi}_n| = 2p^n-1$ \cite[Section 5.1]{AnRo}.
 Here, $\xi_n$ and $\tau_n$ are the generators defined in \cite{Mil}, and $\bar{\xi}_n$ and $\bar{\tau}_n$ are their images under the antipode of $A_*$. 
 We have 
 \[ \nu(\bar{\xi}_n) = \sum_{i+j = n} \bar{\xi}_i \otimes \bar{\xi}_j^{p^i}, \, \, \, \, \, \nu(\bar{\tau}_i) = 1 \otimes \bar{\tau}_n + \sum_{i+j = n}  \bar{\tau}_i \otimes \bar{\xi}_j^{p^i},\] 
 where by convention $\bar{\xi}_0 = 1$. 
 Since the coaction of $A_*$ is the comultiplication, $A_*$ has no non-trivial comodule primitives in positive degrees.  
 Since $\ell_p$ is a connective, cofibrant commutative $S$-algebra, we have a map $\ell_p \to H\mathbb{Z}_p$  in $\mathscr{C}\mathscr{A}_S$ that is the identity on $\pi_0$ \cite[Proposition IV.3.1]{EKMM}. The morphisms
 $\ell_p \to H\mathbb{Z}_p \to H\mathbb{F}_p$ induce the maps  
 \[ P(\bar{\xi}_1, \dots) \otimes E(\bar{\tau}_2, \dots) \subset P(\bar{\xi}_1, \dots)  \otimes E(\bar{\tau}_1, \dots) \subset A_* \] 
 in mod $p$ homology \cite[Proposition 5.3]{AnRo}.  Let $u \in (H\mathbb{F}_p)_2\K(\bar{\mathbb{F}}_l)_p$  be the image  of $u \in \pi_2(\K(\bar{\mathbb{F}}_l)_p)$ under the Hurewicz map $\pi_*(\K(\bar{\mathbb{F}}_l)_p) \to \pi_*(S \wedge_S^L \K(\bar{\mathbb{F}}_l)_p) \to \pi_*(H\mathbb{F}_p \wedge_S^L \K(\bar{\mathbb{F}}_l)_p)$.   Then, $j$ induces an isomorphism 
 \[ P_{p-1}(u) \otimes (H\mathbb{F}_p)_*\ell_p \cong (H\mathbb{F}_p)_*\K(\bar{\mathbb{F}}_l)_p  \] 
(see  \cite[Theorem 2.5]{Au}).

In the following we compute the mod $p$ homology of $\K$ by computing the mod $p$ homology of the $\K_i$ separately. 

\begin{lem} \label{facnul}
For $i \in \{0, \dots, p-2\}$ with  $r \nmid i $ we have $(H\mathbb{F}_p)_*\K_i = 0$. 
\end{lem}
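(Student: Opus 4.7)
The strategy is to prove that for $r \nmid i$ and $i \in \{1, \dots, p-2\}$ the map $f_i \colon S^{2i}_S \wedge_S^L \ell_p \to S^{2i}_S \wedge_S^L \ell_p$ is a weak equivalence in $\mathscr{D}_S$. The distinguished triangle $\K_i \to S^{2i}_S \wedge_S^L \ell_p \xrightarrow{f_i} S^{2i}_S \wedge_S^L \ell_p \to \Sigma \K_i$ will then force $\K_i \simeq \ast$ and hence $(\HFp)_\ast \K_i = 0$. Note that $r \mid 0$, so the hypothesis $r \nmid i$ automatically rules out $i = 0$, leaving only $i \in \{1, \dots, p-2\}$ to be handled.

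The first step is to identify $(f_i)_\ast$ as scalar multiplication on homotopy. Under $\pi_\ast \K(\bar{\F}_l)_p = \Zp[u]$ the Frobenius satisfies $\phi^q(u^n) = q^n u^n$, and therefore preserves each summand of the decomposition $\Zp[u] = \bigoplus_{j=0}^{p-2} u^j \cdot \Zp[u^{p-1}]$ corresponding to the wedge splitting $\K(\bar{\F}_l)_p \simeq \bigvee_{j=0}^{p-2} S^{2j}_S \wedge_S^L \ell_p$ built from the maps $u^j \wedge j$ just above the lemma. Hence the off-diagonal components $p_{j'} \circ (\phi^q - \id) \circ k_j$ for $j \neq j'$ vanish on $\pi_\ast$, while the diagonal component $f_i = p_i \circ (\phi^q - \id) \circ k_i$ sends the generator $u^{i+(p-1)k}$ of the $i$th summand to $(q^{i+(p-1)k}-1)\,u^{i+(p-1)k}$. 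Translated via the identification of this summand with $S^{2i}_S \wedge_S^L \ell_p$, this means that $(f_i)_\ast$ is multiplication by $q^{i+(p-1)k} - 1$ on $\pi_{2(p-1)k}(\ell_p)$ for every $k \geq 0$.

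The second step is to verify that each scalar $q^{i+(p-1)k} - 1$ is a unit in $\Zp$. Since $r$ is the order of $q$ in $(\Z/p)^\times$, we have $r \mid p-1$, so $(p-1)k \equiv 0 \pmod r$ and $i + (p-1)k \equiv i \not\equiv 0 \pmod r$. Thus $q^{i + (p-1)k} \not\equiv 1 \pmod p$ and $q^{i + (p-1)k} - 1 \in \Zp^\times$. Consequently $(f_i)_\ast$ is an isomorphism in every homotopy degree, $f_i$ is a weak equivalence in $\mathscr{D}_S$, and the desired vanishing $(\HFp)_\ast \K_i = 0$ follows from the distinguished triangle. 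The main bookkeeping subtlety is translating the categorical component formula $f_i = p_i \circ (\phi^q - \id) \circ k_i$ into a single scalar multiplication on $\pi_\ast \ell_p$; this rests on the explicit wedge decomposition of $\K(\bar{\F}_l)_p$ recalled just above the statement together with the fact that $\phi^q$ acts diagonally on $\pi_\ast$ in the basis $\{u^n\}$.
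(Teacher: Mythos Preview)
Your proof is correct and follows essentially the same approach as the paper: both show that $(f_i)_*$ is multiplication by $q^{i+(p-1)k}-1$ on each homotopy group, observe that $r \nmid i+(p-1)k$ forces this scalar to be a $p$-adic unit, and conclude that $\K_i \simeq *$. Your write-up is slightly more verbose in justifying the diagonal form of $(f_i)_*$, but the argument is the same.
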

\begin{proof}
 For $j \geq 0$  the map
 \[\begin{tikzcd}
 \pi_{2i+(2p-2)j}(f_i): \pi_{2i+(2p-2)j}(S^{2i}_S \wedge_S^L \ell_p) \ar{r} &  \pi_{2i+(2p-2)j}(S^{2i}_S \wedge_S^L \ell_p)
 \end{tikzcd}\] 
 is the multiplication with $q^{i+j(p-1)}-1$ on $\mathbb{Z}_p$. 
 We have $r \nmid i+j(p-1)$ and thus
 \[ q^{i+k(p-1)}-1 \in \mathbb{Z}/{p\mathbb{Z}} \cong \mathbb{Z}_p/{p\mathbb{Z}_p}\] 
 is not zero. 
Therefore, $q^{i+j(p-1)} - 1$ is a unit in $\mathbb{Z}_p$.
We get that $\pi_*(f_i)$ is an isomorphism and that 
 $\K_i \cong *$ in $\mathscr{D}_S$. 
 \end{proof}
Recall that we defined $k = \frac{p-1}{r}$. 
 \begin{lem} \label{midterm}
 For  $0 < j < k$ we have a short exact sequence
 \[\begin{tikzcd}
 0 \ar{r} & \Sigma^{2jr-1} (H\mathbb{F}_p)_*\ell_p \ar{r}{\Delta_j} & (H\mathbb{F}_p)_*\K_{jr} \ar{r}{i_j} & \Sigma^{2rj}(H\mathbb{F}_p)_*\ell_p \ar{r} & 0.
 \end{tikzcd}\]
 \end{lem}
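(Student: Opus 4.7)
The plan is to read off the short exact sequence from the long exact sequence in mod $p$ homology attached to the distinguished triangle
\[ \K_{jr} \to S^{2jr}_S \wedge_S^L \ell_p \xrightarrow{f_{jr}} S^{2jr}_S \wedge_S^L \ell_p \to \Sigma \K_{jr} \]
in $\mathscr{D}_S$. For $0<j<k$ we have $jr\le(k-1)r<p-1$, so $jr\in\{1,\dots,p-2\}$; consequently $f_{jr}$ is an honest self map, not the exceptional $f_0$ whose source and target differ by a suspension by $2p-2$. Given this, as soon as $(f_{jr})_*=0$ on $(H\mathbb{F}_p)_*$ the long exact sequence collapses to
\[ 0\to(H\mathbb{F}_p)_{n+1}(S^{2jr}_S\wedge_S^L\ell_p)\to(H\mathbb{F}_p)_n\K_{jr}\to(H\mathbb{F}_p)_n(S^{2jr}_S\wedge_S^L\ell_p)\to0, \]
and the identifications $(H\mathbb{F}_p)_{n}(S^{2jr}_S\wedge_S^L\ell_p)=(H\mathbb{F}_p)_{n-2jr}\ell_p$ and $(H\mathbb{F}_p)_{n+1}(S^{2jr}_S\wedge_S^L\ell_p)=(H\mathbb{F}_p)_{n-(2jr-1)}\ell_p$ produce the stated shifts $\Sigma^{2jr}$ and $\Sigma^{2jr-1}$.

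Next I would compute $\pi_*(f_{jr})$ by unpacking the definition $f_{jr}=p_{jr}\circ(\phi^q-\id)\circ k_{jr}$. Using $\phi^q(u)=qu$ together with the fact that $k_{jr}$ on homotopy is multiplication by $u^{jr}$, the map $f_{jr}$ on $\pi_{2jr+2m(p-1)}\cong\mathbb{Z}_p$ is multiplication by $q^{jr+m(p-1)}-1$. Because $r\mid p-1$ divides $jr+m(p-1)$ and $r$ is the order of $q$ in $(\mathbb{Z}/p)^*$, each such scalar lies in $p\mathbb{Z}_p$. To upgrade this $p$-divisibility on $\pi_*$ to the vanishing of $(f_{jr})_*$ on mod $p$ homology I would invoke \cite[Corollary 6.4.8]{Adams} (the citation already used in this section to identify the $f_i$), transferred from $p$-completed connective complex $K$-theory to its wedge summand $\ell_p$: maps of $\ell_p$ in $\mathscr{D}_S$ are detected by their effect on $\pi_*$, and $[\ell_p,\Sigma\ell_p]=0$ since $\pi_*\ell_p$ is even-concentrated. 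Applying $[\ell_p,-]$ to the cofiber sequence $\ell_p\xrightarrow{p}\ell_p\to\ell_p\wedge_S^L V(0)$ then gives that any self map of $\ell_p$ whose $\pi_*$-effect lies in $p\pi_*\ell_p$ factors as $p\cdot g$; applied to $f_{jr}$, this yields $f_{jr}=p\cdot g$ and thus $(f_{jr})_*=p\cdot g_*=0$ on $(H\mathbb{F}_p)_*$.

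The principal obstacle is the transfer of \cite[Corollary 6.4.8]{Adams} from $p$-completed connective complex $K$-theory to its summand $\ell_p$ together with the resulting factorisation $f_{jr}=p\cdot g$; once these are in hand, the collapse of the long exact sequence and the degree bookkeeping are purely formal.
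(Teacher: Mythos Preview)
Your overall plan is sound: both the paper and you reduce to showing that $(H\mathbb{F}_p)_*(f_{jr})=0$, and then read off the short exact sequence from the long exact sequence of the triangle. The difference lies in how this vanishing is established.

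The paper does \emph{not} factor $f_{jr}$ through $p$. Instead it argues directly on mod $p$ homology. First, a small homotopy computation shows $\pi_{2jr-1}(\K_{jr})=\mathbb{Z}/p^{v_p(q^{jr}-1)}$, whence the Tor spectral sequence gives $(H\mathbb{F}_p)_{2jr-1}\K_{jr}=\mathbb{F}_p$; the long exact sequence then forces $(f_{jr})_*=0$ in the bottom degree $2jr$. The rest is an induction using the $A_*$-comodule structure: if $(f_{jr})_*$ vanishes below degree $n$, then for any $b$ of degree $n$ the element $(f_{jr})_*(b)$ is a comodule primitive in $\Sigma^{2jr}(H\mathbb{F}_p)_*\ell_p$; since $(H\mathbb{F}_p)_*\ell_p\subset A_*$ has no nontrivial primitives in positive degree, $(f_{jr})_*(b)=0$. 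This is short and uses nothing beyond the comodule structure already in play.

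Your route has a genuine gap at exactly the point you flag. Granting that Adams' detection theorem transfers to give an injection $[\ell_p,\ell_p]\hookrightarrow\operatorname{End}(\pi_*\ell_p)$ and that $[\ell_p,\Sigma\ell_p]=0$, the cofiber sequence yields $[\ell_p,\ell_p]/p\cong[\ell_p,\ell_p\wedge_S^L V(0)]$. To conclude $f_{jr}\in p\cdot[\ell_p,\ell_p]$ you must show that the image of $f_{jr}$ in $[\ell_p,\ell_p\wedge_S^L V(0)]$ vanishes. Your hypothesis $\pi_*(f_{jr})\in p\operatorname{End}(\pi_*\ell_p)$ only tells you this image is zero on $\pi_*$; you would now need a \emph{second} detection statement, namely that $[\ell_p,\ell_p\wedge_S^L V(0)]\hookrightarrow\Hom(\pi_*\ell_p,V(0)_*\ell_p)$, or equivalently that the image of $[\ell_p,\ell_p]$ in $\prod_m\mathbb{Z}_p$ is $p$-pure. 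Neither follows from Adams' theorem for $ku_p$ as cited; it requires an honest computation of degree-zero $\ell_p$-operations. So the step ``any self map of $\ell_p$ whose $\pi_*$-effect lies in $p\pi_*\ell_p$ factors as $p\cdot g$'' is not justified by the exact-sequence argument you give. The paper's comodule-primitive induction sidesteps this entirely and is the cleaner path here.
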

 \begin{proof}
 It suffices to show that $(H\mathbb{F}_p)_*(f_{jr})$ is zero. 
From the homotopy of $S^{2jr}_S \wedge_S^L \ell_p$ we deduce that 
\[
\pi_*(\K_{jr}) =  \begin{cases}
                          0,  & \text{for} \, \, \, * < 2jr-1; \\ 
                       \mathbb{Z}_p/{(q^{rj}-1)} = \mathbb{Z}/{p^{v_p(q^{rj}-1)}}, & \text{for} \, \, \, * = 2jr-1 . 
\end{cases}
 \]
Using that $v_p(q^{rj}-1) \geq 1 $ and the  Tor spectral sequence we get that $({H\mathbb{F}_p})_{2rj-1}\K_{jr} = \mathbb{F}_p$.  It follows that $(H\mathbb{F}_p)_*(f_{jr})$ is zero in degree $2jr$. 
We show by induction  that $(H\mathbb{F}_p)_n(f_{jr}) = 0$ for all $n$. Let $n > 2jr$  and suppose that we already know that the claim is true  for all $m < n$. Let $b \in (H\mathbb{F}_p)_n(S^{2jr}_S \wedge_S^L\ell_p)$. 
Using the   induction hypothesis one sees that 
$(H\mathbb{F}_p)_*(f_{jr})(b)$ is an $A_*$-comodule primitive. Since it has  degree $> 2jr$,  it has to be zero. 
 \end{proof}
\begin{lem} \label{homolZmodpm}
Let $m \geq 1$. Then, we have a short exact sequence
\[\begin{tikzcd}
0 \ar{r} & (H\mathbb{F}_p)_*H\mathbb{Z} \ar{r} & (H\mathbb{F}_p)_*H(\mathbb{Z}/{p^m\mathbb{Z}}) \ar{r} & \Sigma (H\mathbb{F}_p)_*H\mathbb{Z} \ar{r} & 0.
\end{tikzcd}\]
For $m \geq 2$ the unique class $b \in (H\mathbb{F}_p)_1H(\mathbb{Z}/{p^m\mathbb{Z}})$ that is mapped to $\Sigma 1$ is an $A_*$-comodule primitive. 
\end{lem}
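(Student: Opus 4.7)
The short exact sequence comes directly from applying $(H\mathbb{F}_p)_*(-) = \pi_*(H\mathbb{F}_p \wedge_S^L -)$ to the cofiber sequence $H\mathbb{Z} \xrightarrow{p^m} H\mathbb{Z} \to H(\mathbb{Z}/p^m\mathbb{Z})$ in $\mathscr{D}_S$ associated to the short exact sequence of abelian groups $0 \to \mathbb{Z} \xrightarrow{p^m} \mathbb{Z} \to \mathbb{Z}/p^m\mathbb{Z} \to 0$. The resulting long exact sequence collapses into short exact sequences because multiplication by $p^m$ acts as zero on $(H\mathbb{F}_p)_*H\mathbb{Z}$, which is an $\mathbb{F}_p$-vector space.

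For the primitivity of $b$, first observe that $(H\mathbb{F}_p)_1 H\mathbb{Z} = 0$, since $(H\mathbb{F}_p)_*H\mathbb{Z} = P(\bar{\xi}_1, \dots) \otimes E(\bar{\tau}_1, \dots)$ has its lowest positive-degree element in degree $2p-2 \geq 4$. Via the short exact sequence just proved, this forces $(H\mathbb{F}_p)_0 H(\mathbb{Z}/p^m\mathbb{Z}) = \mathbb{F}_p \cdot 1$ and $(H\mathbb{F}_p)_1 H(\mathbb{Z}/p^m\mathbb{Z}) = \mathbb{F}_p \cdot b$. Since $(A_*)_1 = \mathbb{F}_p \cdot \bar{\tau}_0$, the coaction on $b$ must take the form $\nu(b) = 1 \otimes b + c \cdot \bar{\tau}_0 \otimes 1$ for a unique $c \in \mathbb{F}_p$, and it suffices to show that $c = 0$ when $m \geq 2$.

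To pin down $c$, I would compare the given cofiber sequence with the mod $p$ Bockstein cofiber sequence $H\mathbb{Z} \xrightarrow{p} H\mathbb{Z} \to H\mathbb{F}_p$ via the morphism of short exact sequences of abelian groups whose middle vertical map is $\id_{\mathbb{Z}}$, left vertical map is multiplication by $p^{m-1}$, and right vertical map is the reduction $\pi \colon \mathbb{Z}/p^m\mathbb{Z} \to \mathbb{F}_p$. Smashing with $H\mathbb{F}_p$ and applying naturality of the connecting homomorphism gives $\partial(\pi_*(b)) = p^{m-1} \cdot \partial(b) = p^{m-1} \cdot \Sigma 1$ in $(H\mathbb{F}_p)_0 H\mathbb{Z} \cong \mathbb{F}_p$, which vanishes for $m \geq 2$. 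The bottom-row connecting map $(H\mathbb{F}_p)_1 H\mathbb{F}_p \to (H\mathbb{F}_p)_0 H\mathbb{Z}$ is an isomorphism (again by $(H\mathbb{F}_p)_1 H\mathbb{Z} = 0$) sending $\bar{\tau}_0$ to a generator, so $\pi_*(b) = 0$ in $(H\mathbb{F}_p)_1 H\mathbb{F}_p$. Applying $\id_{A_*} \otimes \pi_*$ to $\nu(b) = 1 \otimes b + c \cdot \bar{\tau}_0 \otimes 1$ and invoking naturality of the $A_*$-coaction then yields $c \cdot \bar{\tau}_0 \otimes 1 = \nu(\pi_*(b)) = 0$, forcing $c = 0$.

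The only delicate point is the naturality comparison in the third paragraph; once the comparison map of short exact sequences is set up correctly, the argument reduces to the observation that $p^{m-1}$ acts as zero on an $\mathbb{F}_p$-vector space exactly when $m \geq 2$, which is precisely the hypothesis that excludes the $m=1$ case (where $b = \bar{\tau}_0$ is not primitive).
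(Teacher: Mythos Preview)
Your proof is correct and follows essentially the same route as the paper: both set up the comparison map of cofiber sequences from $(H\mathbb{Z} \xrightarrow{p^m} H\mathbb{Z})$ to $(H\mathbb{Z} \xrightarrow{p} H\mathbb{Z})$, deduce that the induced map $g_1 \colon (H\mathbb{F}_p)_1 H(\mathbb{Z}/p^m) \to (H\mathbb{F}_p)_1 H\mathbb{F}_p$ vanishes for $m \geq 2$ (you make this step explicit via the connecting homomorphism, whereas the paper states it directly from the map of long exact sequences), and then use naturality of the $A_*$-coaction to kill the $\bar{\tau}_0$-term in $\nu(b)$.
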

\begin{proof}
Using that $\mathscr{D}_S(X, Y) \cong \Hom_{\mathbb{Z}}(\pi_0(X), \pi_0(Y))$  for $S$-modules $X$ and $Y$ whose homotopy is concentrated in degree zero,  we get a map of distinguished triangles 
\[\begin{tikzcd}
H\mathbb{Z} \ar{r}{p^m} \ar{d}{p^{m-1}} & H\mathbb{Z} \ar{r} \ar{d}{\id} & H(\mathbb{Z}/{p^m\mathbb{Z}}) \ar{r} \ar{d}{g}  & \Sigma H\mathbb{Z} \ar{d}{\Sigma p^{m-1}}\\
H\mathbb{Z} \ar{r}{p} & H\mathbb{Z} \ar{r} & H(\mathbb{Z}/{p\mathbb{Z}}) \ar{r} & \Sigma H\mathbb{Z}, 
\end{tikzcd}\]
where $p^n \coloneqq p^n \id$.  Thus,  after applying $(H\mathbb{F}_p)_*(-)$, we get a map of long exact sequences.  Since $H\mathbb{F}_p \wedge_S^L(-)$  is additive, this shows the first part of the statement.  
Now, let $m \geq 2$.  We have that 
\[\begin{tikzcd}
\mathbb{F}_p \cong (H\mathbb{F}_p)_0H(\mathbb{Z}/{p^m\mathbb{Z}}) \ar{r}{g_0}  & (H\mathbb{F}_p)_0H(\mathbb{Z}/{p\mathbb{Z}})  \cong \mathbb{F}_p 
\end{tikzcd}\]
is an isomorphism and that
\[\begin{tikzcd}
\mathbb{F}_p \cong (H\mathbb{F}_p)_1H(\mathbb{Z}/{p^m\mathbb{Z}}) \ar{r}{g_1} & (H\mathbb{F}_p)_1H(\mathbb{Z}/{p\mathbb{Z}})   \cong \mathbb{F}_p 
\end{tikzcd}\] 
 is zero. 
Let $b$ be the generator of $(H\mathbb{F}_p)_1H(\mathbb{Z}/{p^m\mathbb{F}_p})$ that is mapped to $1$ under
\[ \begin{tikzcd}
(H\mathbb{F}_p)_1H(\mathbb{Z}/{p^m\mathbb{F}_p}) \ar{r} & (H\mathbb{F}_p)_0H\mathbb{Z}.
\end{tikzcd}\] 
We can write the coaction of $b$ as 
$\nu(b) = 1 \otimes b + \bar{\tau}_0 \otimes a$ 
for an element $a \in (H\mathbb{F}_p)_0H(\mathbb{Z}/{p^m\mathbb{F}_p})$.  Because of $g_1(b) = 0$ we have 
$\bar{\tau}_0 \otimes g_0(a) = 0$ and therefore $a = 0$. 
\end{proof}

 \begin{lem} \label{homK0}
 We have 
 \[\dim_{\mathbb{F}_p}(H\mathbb{F}_p)_{2p-2}\K_0 = \begin{cases}
                                                      0, & \text{if~} v = 1; \\
                                                      1, & \text{if~} v \geq 2.
                                                     \end{cases}\]
 \end{lem}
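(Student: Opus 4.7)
The plan is to analyze $\K_0$ via the mod $p$ homology long exact sequence of the fiber sequence $\K_0 \to \ell_p \xrightarrow{f_0} \Sigma^{2p-2}\ell_p$ together with a Postnikov truncation argument. From the description $(H\mathbb{F}_p)_*\ell_p = P(\bar{\xi}_1, \bar{\xi}_2, \ldots) \otimes E(\bar{\tau}_2, \bar{\tau}_3, \ldots)$ one checks that $(H\mathbb{F}_p)_{2p-3}(\ell_p) = (H\mathbb{F}_p)_{2p-1}(\ell_p) = 0$ while $(H\mathbb{F}_p)_{2p-2}(\ell_p) = \mathbb{F}_p\langle\bar{\xi}_1\rangle$, so the long exact sequence collapses to
\[ 0 \to (H\mathbb{F}_p)_{2p-2}(\K_0) \to \mathbb{F}_p \xrightarrow{f_{0*}} \mathbb{F}_p \to (H\mathbb{F}_p)_{2p-3}(\K_0) \to 0. \]
By an Euler characteristic count $\dim (H\mathbb{F}_p)_{2p-2}(\K_0) = \dim (H\mathbb{F}_p)_{2p-3}(\K_0) \in \{0,1\}$, so it suffices to compute $(H\mathbb{F}_p)_{2p-3}(\K_0)$.

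Running the homotopy long exact sequence of the same fiber sequence, in which $\pi_{(2p-2)j}(f_0)$ identifies with multiplication by $q^{(p-1)j}-1 \in \mathbb{Z}_p$ (with $p$-adic valuation $v$ at $j=1$), one finds $\pi_0(\K_0) = \mathbb{Z}_p$, $\pi_{2p-3}(\K_0) = \mathbb{Z}/p^v\mathbb{Z}$, and vanishing in all other degrees from $1$ through $2p-1$. Thus the $(2p-1)$-connective cover of $\K_0$ has vanishing mod $p$ homology in degrees $\leq 2p-2$, so the $(2p-2)$-Postnikov section $Y = P_{2p-2}(\K_0)$ has the same $(H\mathbb{F}_p)$-homology as $\K_0$ in these degrees. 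This $Y$ sits in a Postnikov fiber sequence
\[ \Sigma^{2p-3} H(\mathbb{Z}/p^v\mathbb{Z}) \to Y \to H\mathbb{Z}_p \xrightarrow{k} \Sigma^{2p-2} H(\mathbb{Z}/p^v\mathbb{Z}), \]
whose long exact sequence in mod $p$ homology identifies $(H\mathbb{F}_p)_{2p-3}(Y)$ with $\coker\bigl(k_*\colon \mathbb{F}_p\langle\bar{\xi}_1\rangle \to (H\mathbb{F}_p)_0 H(\mathbb{Z}/p^v\mathbb{Z})\bigr)$. Since the map $(H\mathbb{F}_p)_0 H(\mathbb{Z}/p^v\mathbb{Z}) \to (H\mathbb{F}_p)_0 H\mathbb{F}_p$ is an isomorphism, $k_*$ vanishes iff the mod $p$ reduction $\bar{k} \in H^{2p-2}(H\mathbb{Z}_p; \mathbb{F}_p) = \mathbb{F}_p\langle P^1\rangle$ of $k$ vanishes.

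When $v \geq 2$, I would show $\bar{k} = 0$ as follows. The coefficient sequence $0 \to \mathbb{Z}/p^{v-1}\mathbb{Z} \to \mathbb{Z}/p^v\mathbb{Z} \to \mathbb{F}_p \to 0$ gives a connecting Bockstein $\beta'$ such that the image of $H^{2p-2}(H\mathbb{Z}_p; \mathbb{Z}/p^v\mathbb{Z}) \to H^{2p-2}(H\mathbb{Z}_p; \mathbb{F}_p)$ is contained in $\ker(\beta')$. The composition of $\beta'$ with the reduction $\mathbb{Z}/p^{v-1}\mathbb{Z} \to \mathbb{F}_p$ is the ordinary mod $p$ Bockstein $\beta$, and since $\beta P^1 \notin A\beta$ in the mod $p$ Steenrod algebra (the admissible monomials $\beta P^1$ and $P^1\beta$ are distinct, and $A\beta$ in degree $2p-1$ is one-dimensional, spanned by $P^1\beta$), we have $\beta(P^1) \neq 0$ in $H^{2p-1}(H\mathbb{Z}_p; \mathbb{F}_p) = A/A\beta$. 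Hence $\beta'(P^1) \neq 0$, so $P^1 \notin \ker(\beta')$, forcing $\bar{k} = 0$; this yields $(H\mathbb{F}_p)_{2p-3}(\K_0) = \mathbb{F}_p$ and therefore $(H\mathbb{F}_p)_{2p-2}(\K_0) = \mathbb{F}_p$. When $v = 1$, one has $H(\mathbb{Z}/p^v\mathbb{Z}) = H\mathbb{F}_p$ and $\bar{k} = k$; a splitting $Y \simeq H\mathbb{Z}_p \vee \Sigma^{2p-3} H\mathbb{F}_p$ would force $\dim (H\mathbb{F}_p)_{2p-2}(Y) = 2$, incompatible with the four-term exact sequence above. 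Hence $k \neq 0$, $k_*$ is an isomorphism, and $(H\mathbb{F}_p)_{2p-2}(\K_0) = 0$.

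The main obstacle is the Bockstein calculation $\beta P^1 \notin A\beta$, a short Adem-relations computation in the mod $p$ Steenrod algebra; everything else is routine homotopy-theoretic bookkeeping once the Postnikov truncation of $\K_0$ in the stable range has been identified.
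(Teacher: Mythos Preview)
Your proof is correct and follows essentially the same overall strategy as the paper: bound $\dim\le 1$ from the fiber sequence, then analyze the two–stage Postnikov truncation of $\K_0$ with homotopy $\pi_0=\mathbb{Z}_p$ and $\pi_{2p-3}=\mathbb{Z}/p^v$.

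Where you differ is in the key step. The paper runs the Whitehead–tower spectral sequence and controls the differential $d^{2p-3}$ on $\bar{\xi}_1$ and $\bar{\tau}_1$ via the $A_*$–comodule structure, feeding in Lemma~\ref{homolZmodpm} (the degree–one class in $(H\mathbb{F}_p)_*H(\mathbb{Z}/p^m)$ is primitive iff $m\ge 2$). You instead examine the $k$–invariant $k\colon H\mathbb{Z}_p\to\Sigma^{2p-2}H(\mathbb{Z}/p^v)$ directly and argue cohomologically: for $v\ge 2$ the reduction $\bar{k}$ lies in the image from $\mathbb{Z}/p^v$–coefficients, hence in the kernel of the higher Bockstein, which forces $\bar{k}=0$ since $\beta P^1\neq 0$ in $A/A\beta$; for $v=1$ a nontrivial splitting would contradict the dimension bound. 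These are dual packagings of the same obstruction: the paper's primitivity statement for $b\in(H\mathbb{F}_p)_1H(\mathbb{Z}/p^m)$ with $m\ge 2$ is exactly the homological counterpart of your $\beta P^1\notin A\beta$ computation. Your route is a bit more economical in that it avoids setting up the spectral sequence and the auxiliary Lemma~\ref{homolZmodpm}; the paper's route has the advantage that the comodule framework is already in place and reused elsewhere.
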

 \begin{proof}
 We have an exact sequence 
 \[\begin{tikzcd} 
 (H\mathbb{F}_p)_{2p-1}(S_S^{2p-2} \wedge_S^L\ell_p) \ar{r} \ar[equal]{d} & (H\mathbb{F}_p)_{2p-2}\K_0 \ar{r} & (H\mathbb{F}_p)_{2p-2}\ell_p \ar[equal]{d}.\\
 0  &  & \mathbb{F}_p\{\bar{\xi}_1\}
  \end{tikzcd}\]
  Therefore, we have $\dim_{\mathbb{F}_p}(H\mathbb{F}_p)_{2p-2}\K_0 \leq 1$. 
  Recall that for an $(n-1)$-connected $S$-module $X$ one has a map $X \to \Sigma^n H\pi_n(X)$ realizing the identity on $\pi_n$ \cite[Theorem II.4.13]{Ru}. 
Using this   we can inductively construct a Whitehead tower in $\mathscr{D}_S$:
\[\begin{tikzcd}
\dots \ar{r} & \K_0[3] \ar{r} \ar{d} & \K_0[2] \ar{r} \ar{d} & \K_0[1] \ar{r} \ar{d}  & \K_0[0] = \K_0  \ar{d}\\
             & \Sigma^3 H\pi_3(\K_0) & \Sigma^2H\pi_2(\K_0) & \Sigma H\pi_1(\K_0) & H\pi_0(\K_0).
\end{tikzcd}
\]
Here,
the sequences $\K_0[i+1] \to \K_0[i] \to \Sigma^iH\pi_i(\K_0)$  are part of distinguished triangles
      \[\K_0[i+1] \to \K_0[i] \to \Sigma^iH\pi_i(\K_0) \to \Sigma \K_0[i+1]. \]
Applying $(H\mathbb{F}_p)_*(-)$ we get an unrolled exact couple and therefore a spectral sequence $(E^*_{*,*}, d^*)$. 
Let  $m_n$ be the $p$-adic valuation of $q^{n(p-1)}-1$.  Then, the $E^1$-page of the spectral sequence is  $(H\mathbb{F}_p)_*H\mathbb{Z}_p$ in column $0$,  $\Sigma ^{2(n(2p-2)-1)}(H\mathbb{F}_p)_*H\mathbb{Z}/{p^{m_n}}$  in column $-(n(2p-2)-1)$ for $n > 0$  and  zero in all other columns. 
We claim that the spectral sequence converges strongly to $(H\mathbb{F}_p)_*\K_0$. 
Since $\K_0[i]$ is $(i-1)$-connected, we have $(H\mathbb{F}_p)_*\K_0[i] = 0$ for $* < i$. This implies that the spectral sequence converges conditionally to $(H\mathbb{F}_p)_*\K_0$ (see  \cite[Definition 5.10]{Boar}). 
Because $E^1_{*,*}$ is finite in every bidegree, the spectral sequence converges strongly by \cite[Theorem 7.1]{Boar}.  

Since $(H\mathbb{F}_p)_*(-) \to (H\mathbb{F}_p)_*(- \wedge S^1)$ is compatible with the comodule action, the spectral sequence is a spectral sequence of $A_*$-comodules. 


It is clear that  $d^i = 0$ for $i = 1, \dots, 2p-4$. 
Let $b \in (H\mathbb{F}_p)_1H(\mathbb{Z}/{p^{m_1}\mathbb{Z}}) \cong \mathbb{F}_p$ be a non-trivial class. In total degree $2p-2$ the $E^{2p-3}$-page is  a $2$-dimensional $\mathbb{F}_p$-vector space generated by $\bar{\xi}_1$ in column $0$ and by $\Sigma^{4p-6}b$ in column $-(2p-3)$. The class $\bar{\xi}_1$ survives to the $E^{\infty}$-page if and only if $d^{2p-3}(\bar{\xi}_1) = 0$.  The class $\Sigma^{4p-6}b$ survives to the $E^{\infty}$-page if and only if for the class $\bar{\tau}_1$ in column zero the equality  $d^{2p-3}(\bar{\tau}_1) = 0$ holds. 
We can write $d^{2p-3}(\bar{\tau}_1) = \lambda \cdot \Sigma^{4p-6}b$ for an element $\lambda \in \mathbb{F}_p$.  Let $\nu$ denote the coaction of the $E^{2p-3}$-page.  We have
\begin{equation} \label{coact}
\lambda \cdot \nu(\Sigma^{4p-6}b) = 1 \otimes d^{2p-3}(\bar{\tau}_1) + \bar{\tau}_0 \otimes d^{2p-3}(\bar{\xi}_1) + \bar{\tau}_1 \otimes \underbrace{d^{2p-3}(1)}_{= 0}. 
\end{equation}
Suppose that $v \geq 2$. Then, we have $m_1 \geq 2$ and $b \in  (H\mathbb{F}_p)_1H(\mathbb{Z}/{p^{m_1}\mathbb{Z}})$ is an $A_*$-comodule primitive by Lemma \ref{homolZmodpm}. It follows that $d^{2p-3}(\bar{\xi}_1) = 0$ and that $\dim_{\mathbb{F}_p}(H\mathbb{F}_p)_{2p-2}\K_0 = 1$. 

Now, suppose $v = 1$. Then,  $b$ is not  primitive. If $d^{2p-3}(\bar{\tau}_1)$ was zero, i.e. $\lambda = 0$, the equation (\ref{coact}) would imply that  $d^{2p-3}(\bar{\xi}_1) = 0$. One would get $\dim_{\mathbb{F}_p}(H\mathbb{F}_p)_{2p-2}\K_0 = 2$, which is a contradiction. Therefore, we have $d^{2p-3}(\bar{\tau}_1) \neq 0$. With equation (\ref{coact}) we get $d^{2p-3}(\bar{\xi}_1) \neq 0$. We conclude that $\dim_{\mathbb{F}_p}(H\mathbb{F}_p)_{2p-2}\K_0 = 0$. 
 \end{proof}
 \begin{lem}
 For $v \geq 2$ we have an exact sequence
 \[\begin{tikzcd} 
 0 \ar{r} &  \Sigma^{2p-3}(H\mathbb{F}_p)_*\ell_p \ar{r}{\Delta_0} & (H\mathbb{F}_p)_*\K_0 \ar{r}{i_0} & (H\mathbb{F}_p)_*\ell_p \ar{r} & 0.
 \end{tikzcd}\]
 \end{lem}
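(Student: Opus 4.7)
The plan is to reduce the statement to the claim that $(H\mathbb{F}_p)_*(f_0) = 0$, since then the long exact sequence associated to the defining distinguished triangle
\[ \K_0 \to \ell_p \xrightarrow{f_0} S^{2p-2}_S \wedge_S^L \ell_p \to \Sigma \K_0 \]
splits into short exact sequences, yielding the stated sequence with $\Delta_0$ coming from the connecting morphism and $i_0$ induced by $\K_0 \to \ell_p$.

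For degrees $n < 2p-2$ the target $(H\mathbb{F}_p)_n(S^{2p-2}_S \wedge_S^L \ell_p) = \Sigma^{2p-2}(H\mathbb{F}_p)_{n-2p+2}\ell_p$ vanishes, so $(H\mathbb{F}_p)_n(f_0) = 0$ trivially. The critical base case is $n = 2p-2$: Lemma \ref{homK0} gives $\dim_{\mathbb{F}_p}(H\mathbb{F}_p)_{2p-2}\K_0 = 1$ under the hypothesis $v \geq 2$, and combined with $(H\mathbb{F}_p)_{2p-1}(S^{2p-2}_S \wedge_S^L \ell_p) = (H\mathbb{F}_p)_1\ell_p = 0$, the long exact sequence realises $(H\mathbb{F}_p)_{2p-2}\K_0$ as the kernel of
\[ (H\mathbb{F}_p)_{2p-2}(f_0): \mathbb{F}_p\{\bar{\xi}_1\} \to \mathbb{F}_p\{\Sigma^{2p-2}1\}. \]
Since both source and kernel are one-dimensional, this map must vanish.

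For $n > 2p-2$ I would argue by induction on $n$, in direct parallel with the proof of Lemma \ref{midterm}. Assume $(H\mathbb{F}_p)_m(f_0) = 0$ for every $m < n$ and let $b \in (H\mathbb{F}_p)_n\ell_p$. Writing $\nu(b) = 1 \otimes b + \sum_i a_i \otimes b_i$ with $|a_i| > 0$ and hence $|b_i| < n$, and applying $\id \otimes (H\mathbb{F}_p)_*(f_0)$, the induction hypothesis kills every summand but the first; naturality of the coaction then gives
\[ \nu\bigl((H\mathbb{F}_p)_n(f_0)(b)\bigr) = 1 \otimes (H\mathbb{F}_p)_n(f_0)(b). \]
Hence $(H\mathbb{F}_p)_n(f_0)(b)$ is an $A_*$-comodule primitive in $\Sigma^{2p-2}(H\mathbb{F}_p)_*\ell_p$; since $(H\mathbb{F}_p)_*\ell_p$ embeds as a subcomodule of $A_*$, which has no non-trivial primitives in positive degrees, the only primitive of the target sits in degree $2p-2$, forcing $(H\mathbb{F}_p)_n(f_0)(b) = 0$. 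The main obstacle is the base case at $n = 2p-2$: it is the unique place where the hypothesis $v \geq 2$ is really needed, and once this single dimension count is extracted from Lemma \ref{homK0}, the comodule-primitive induction is essentially a transcription of the argument used for Lemma \ref{midterm}.
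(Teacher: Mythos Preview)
Your proposal is correct and follows essentially the same approach as the paper: use Lemma~\ref{homK0} to see that $(f_0)_{2p-2}=0$, then propagate this to all degrees by the comodule-primitive induction exactly as in Lemma~\ref{midterm}. The paper's proof is just a terser version of what you wrote.
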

 \begin{proof}
 By Lemma \ref{homK0} we have an exact sequence 
  \[\begin{tikzcd} 
 (H\mathbb{F}_p)_{1}(\ell_p) \ar{r} \ar[equal]{d} & (H\mathbb{F}_p)_{2p-2}\K_0  \ar{r} \ar[equal]{d}  & (H\mathbb{F}_p)_{2p-2}\ell_p \ar[equal]{d} \ar{r}{(f_0)_{2p-2}} & (H\mathbb{F}_p)_{0}(\ell_p)  \ar[equal]{d} \\ 
 0  \ar{r} &  \mathbb{F}_p \ar{r} & \mathbb{F}_p\{\bar{\xi}_1\} \ar{r} & \mathbb{F}_p\{1\}. 
  \end{tikzcd}\]
We get that $(f_0)_{2p-2}$ 
  is zero.  As in Lemma \ref{midterm} it follows by induction that $(f_0)_{n} = 0$ for all $n$. 
 \end{proof}
 For $r = p-1$ the following result is in \cite[p.1265]{AnRo}.
 \begin{lem} \label{fibcomp}
 If  $v = 1$ we have an exact sequence
 \[ \begin{tikzcd}
 0 \ar{r} & \Sigma^{2p-3}\bar{\xi}_1^{p-1}C_* \ar{r}{\Delta_0} & (H\mathbb{F}_p)_*\K_0 \ar{r}{i_0} & C_* \ar{r} & 0, 
 \end{tikzcd}\]
 where $C_*$ is given by  $P(\bar{\xi}_1^p, \bar{\xi}_2, \dots) \otimes E(\bar{\tau}_2, \dots) \subset (H\mathbb{F}_p)_*\ell_p$.
 \end{lem}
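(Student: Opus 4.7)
My plan is to complete the analysis of the Whitehead-tower spectral sequence $E^*_{*,*}$ of $\K_0$ introduced in the proof of Lemma \ref{homK0}. In the case $v = 1$ the lifting-the-exponent lemma gives $m_n = v_p(q^{n(p-1)} - 1) = 1 + v_p(n)$, so that $m_n = 1$ whenever $p \nmid n$; the column $-(n(2p-2)-1)$ of the $E^1$-page then equals $\Sigma^{2(n(2p-2)-1)} A_*$.

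I would argue by induction on $n \in \{1, \ldots, p-1\}$ that
\[ d^{n(2p-2)-1}(\bar{\xi}_1^n) \neq 0 \quad \text{and} \quad d^{n(2p-2)-1}(\bar{\tau}_1 \bar{\xi}_1^{n-1}) \neq 0, \]
generalising the $n = 1$ case of Lemma \ref{homK0}. The inductive step uses the same ingredients as in Lemma \ref{homK0}: for $n \leq p-1$, $\bar{\xi}_1^n$ is the only class in column $0$ of total degree $n(2p-2)$ (by comparison with the degrees of $\bar{\xi}_j, \bar{\tau}_j$ for $j \geq 2$), so it is the unique candidate to hit the unit class $\Sigma^{2(n(2p-2)-1)} \cdot 1$ in column $-(n(2p-2)-1)$, which must die on $E^\infty$ by a Postnikov-tower argument showing that $(H\mathbb{F}_p)_{n(2p-2)-1}\K_0 = 0$; then the $A_*$-coaction equation
\[ \lambda \, \nu\bigl(\Sigma^{2(n(2p-2)-1)}\bar{\tau}_0\bigr) = 1 \otimes d^{n(2p-2)-1}(\bar{\tau}_1 \bar{\xi}_1^{n-1}) + \bar{\tau}_0 \otimes d^{n(2p-2)-1}(\bar{\xi}_1^n) \]
forces the second differential, exactly as in Lemma \ref{homK0}.

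Propagating these differentials by comodule-linearity over the surviving subalgebra $C_*$, the $E^\infty$-page becomes concentrated in columns $0$ and $-(2p-3)$, with $C_*$ in column $0$ and $\Sigma^{2p-3}\bar{\xi}_1^{p-1}C_*$ in column $-(2p-3)$: the class $\Sigma^{2(2p-3)}\bar{\xi}_1^{p-1}$ is not hit, as no surviving class in column $0$ lies in the right bidegree; the higher columns $-(n(2p-2)-1)$ with $n \geq 2$ are eliminated by the differentials above for the $A_*$-contribution and, when $p \mid n$, by a further differential arising from the non-trivial Bockstein extension provided by Lemma \ref{homolZmodpm}. Strong convergence then produces the claimed short exact sequence, with $i_0$ induced by the inclusion $C_* \hookrightarrow (H\mathbb{F}_p)_*\ell_p$ and $\Delta_0$ induced by the boundary from column $-(2p-3)$, i.e.\ from $\Sigma^{2p-3}H\mathbb{F}_p$.

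The main obstacle is the systematic collapse of the columns $-(n(2p-2)-1)$ with $n \geq 2$, and in particular matching the differentials onto both the $A_*$-copy and the extra $\Sigma$-copy when $p \mid n$. For $r = p-1$ this bookkeeping is precisely the computation of \cite[p.1265]{AnRo}; since the spectral sequence used depends only on the homotopy groups of $\K_0$ and on $f_0$, both of which are controlled by $v$, the same argument applies to the general $v = 1$ case.
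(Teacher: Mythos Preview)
Your approach via the Whitehead-tower spectral sequence is genuinely different from the paper's, and considerably harder. The paper does not continue the spectral sequence from Lemma~\ref{homK0}. Instead it works directly with the long exact sequence in mod~$p$ homology of the fiber sequence $\K_0 \to \ell_p \xrightarrow{f_0} S^{2p-2}\wedge^L_S\ell_p$. From $(H\mathbb{F}_p)_{2p-2}\K_0 = 0$ (Lemma~\ref{homK0}) one learns that $(f_0)_{2p-2}(\bar\xi_1)=\lambda\cdot\Sigma^{2p-2}1$ for a unit $\lambda$. The key observation is that $(f_0)_*$ is an $A_*$-comodule map whose value on $\bar\xi_1$ forces it, by the same inductive comodule-primitive argument as in Lemma~\ref{midterm}, to coincide with the restriction of the map $\varphi\colon A_*\to \Sigma^{2p-2}A_*$ given by $a\mapsto \lambda\cdot(\text{coefficient of }\xi_1\text{ in }\nu(a))$. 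One then checks that $\varphi$ acts on a monomial $\bar\xi_1^{n_1}\bar\xi_2^{n_2}\cdots\bar\tau_2^{\epsilon_2}\cdots$ as $\lambda n_1$ times the same monomial with $n_1$ replaced by $n_1-1$, i.e.\ as $\lambda\,\partial/\partial\bar\xi_1$. The kernel and cokernel of this derivative map are immediately $C_*$ and $\Sigma^{2p-2}\bar\xi_1^{p-1}C_*$, and the exact sequence follows.

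Your proposal has real gaps. First, the step ``$\Sigma^{2(n(2p-2)-1)}\!\cdot 1$ must die on $E^\infty$ by a Postnikov-tower argument showing $(H\mathbb{F}_p)_{n(2p-2)-1}\K_0=0$'' is circular as stated: the vanishing of that homology group is part of what you are computing, and any independent argument for it would amount to computing $(f_0)_*$ in those degrees, which is exactly the paper's direct route. Second, ``propagating differentials by comodule-linearity over $C_*$'' is not a structure the Whitehead-tower spectral sequence carries: it is an $A_*$-comodule spectral sequence, not a $C_*$-module one, so you would need to argue each differential individually from the comodule structure. Third, the collapse of the columns $-(n(2p-2)-1)$ for $n\geq 2$, including the mixed case $p\mid n$ where $m_n\geq 2$, is where all the work lies, and you defer it to \cite{AnRo}; but that paper does not run this spectral sequence either. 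The paper's derivative description of $(f_0)_*$ bypasses all of this at once.
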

 \begin{proof}
 Because of $(H\mathbb{F}_p)_{2p-2}\K_0 = 0$ the map 
 \[ (H\mathbb{F}_p)_{2p-2}\ell_p \xrightarrow{(f_0)_{2p-2}} (H\mathbb{F}_p)_{2p-2}(S^{2p-2}_S \wedge_S^L\ell_p)\]
 is an isomorphism. 
 Let  $\lambda$  be the unit in $\mathbb{F}_p$ such that $(f_0)_{2p-2}(\bar{\xi}_1) = \Sigma^{2p-2} (\lambda \cdot 1)$ holds. 
 We claim that the diagram 
 \begin{equation} \label{strcom}
 \begin{tikzcd} 
 (H\mathbb{F}_p)_*\ell_p \ar{r}{(f_0)_*} \ar[hookrightarrow]{d}   & \Sigma^{2p-2} (H\mathbb{F}_p)_*\ell_p \ar[hookrightarrow]{d} \\
 A_* \ar{r}{\varphi}  & \Sigma^{2p-2} A_* 
 \end{tikzcd}\end{equation}
 is commutative, where $\varphi$ is defined by 
 \[\begin{tikzcd}
  A_* \ar{r}{v} & A_* \otimes A_* \ar[twoheadrightarrow]{r}  & A_* \otimes \mathbb{F}_p\{\xi_1\} \ar{r}{\id \otimes (\lambda \cdot \id)} & A_* \otimes \mathbb{F}_p\{\bar{\xi}_1\} \ar{r}{\cong} & \Sigma^{2p-2} A_* .
  \end{tikzcd}\]
  It is clear that we have commutativity in degrees $\leq 2p-2$. For $V = A_*$ and $V = \mathbb{F}_p\{\bar{\xi}_1\}$ we equip $A_* \otimes V$ with the $A_*$-coaction given by the coaction of $A_*$. Then, all the maps in (\ref{strcom}) are maps of $A_*$-comodules. Since the difference of two maps of comodules is a map of comodules, it follows as in Lemma \ref{midterm} that (\ref{strcom}) is commutative. 
We have 
\[ \varphi(\bar{\xi}_1^{n_1} \bar{\xi}_2^{n_2}\bar{\xi}_3^{n_3} \dots \bar{\tau}_2^{\epsilon_2} \bar{\tau}_3^{\epsilon_3} \dots) = \Sigma^{2p-2} \lambda n_1 \bar{\xi}_1^{n_1-1} \bar{\xi}_2^{n_2}\bar{\xi}_3^{n_3} \dots \bar{\tau}_2^{\epsilon_2} \bar{\tau}_3^{\epsilon_3} \dots,\]
where the expression on the right means zero if $n_1 = 0$. 
 It follows that 
 \[ \ker(f_0)_* = P(\bar{\xi}_1^p, \bar{\xi}_2, \dots) \otimes E(\bar{\tau}_2, \dots) \]
 and 
 \[\coker(f_0)_* = \Sigma^{2p-2}\bar{\xi}_1^{p-1}P(\bar{\xi}_1^p, \bar{\xi}_2, \dots) \otimes E(\bar{\tau}_2, \dots). \qedhere\]
 \end{proof}

We now consider  case (\ref{1}). In this case $q$ is a generator of $(\mathbb{Z}/p^2)^*$ and we can take $q' = q$ in the definition of $\ell_p$. 
The map $\K \to \K(\bar{\mathbb{F}}_l)_p$ factors through $\ell_p$ and the  diagram 
\[\begin{tikzcd}
\K_0 \ar{r}  \ar{d} & \ell_p \ar[equal]{d} \\
\K \ar{r} & \ell_p
\end{tikzcd}\]
is commutative in $\mathscr{D}_S$.  Furthermore,  the left vertical map in this diagram is an isomorphism by the proof of Lemma 
\ref{facnul}.
We define $b \in (H\mathbb{F}_p)_{p(2p-2)-1}\K$ to be the image of $\Delta_0(\Sigma^{2p-3}\bar{\xi}_1^{p-1})$ under $(H\mathbb{F}_p)_*\K_0 \to (H\mathbb{F}_p)_*\K$. 
By Lemma \ref{fibcomp} there are unique classes $\tilde{\xi}_1^p \in (H\mathbb{F}_p)_{p(2p-2)}\K$ and $\tilde{\tau}_2 \in (H\mathbb{F}_p)_{2p^2-1}\K$ that map to $\bar{\xi}_1^p$ and $\bar{\tau}_2$ under
\[(H\mathbb{F}_p)_*\K \to (H\mathbb{F}_p)_*\ell_p.\]
Recall  from \cite[Theorem III.1.1]{BMMS} that the mod $p$ homology of a commutative $S$-algebra $R$ admits natural Dyer-Lashof operations
\[ Q^k: (H\mathbb{F}_p)_*R \to (H\mathbb{F}_p)_{* + k(2p-2)}R.\]
For $i \geq 2$ we recursively define 
\[ \tilde{\tau}_{i+1} \coloneqq Q^{p^i}(\tilde{\tau}_i) \in (H\mathbb{F}_p)_{2p^{i+1}-1}\K\]
Furthermore, we set 
\[\tilde{\xi}_i \coloneqq \beta(\tilde{\tau}_i) \in (H\mathbb{F}_p)_{2p^i-2}\K \]
for $i \geq 2$, where $\beta$ is the mod $p$ homology Bockstein homomorphism. 
We get  by \cite[Proposition 7.12]{AnRo}: 
\begin{prop}[Angeltveit, Rognes] \label{homimj}
In  case (\ref{1}) the $\mathbb{F}_p$-algebra map
\[ E(b) \otimes P(\tilde{\xi}_1^p, \tilde{\xi}_2, \dots) \otimes E(\tilde{\tau}_2, \dots) \to (H\mathbb{F}_p)_*\K\]
is an isomorphism. 
The class $b$ is an $A_*$-comodule primitive and we have
\[ \nu(\tilde{\xi}_2) = 1 \otimes \tilde{\xi}_2  + \bar{\xi}_1 \otimes \tilde{\xi}_1^p + \tau_1 \otimes b + \bar{\xi}_2 \otimes 1.   \]
The map $(H\mathbb{F}_p)_*\K \to (H\mathbb{F}_p)_*\ell_p$ maps  $\tilde{\xi}_1^p$, $\tilde{\xi}_i$, $\tilde{\tau}_i$
 and $b$  to  $\bar{\xi}_1^p$, $\bar{\xi}_i$,  $\bar{\tau}_i$ and zero, respectively. 
\end{prop}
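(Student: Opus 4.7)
The plan is to reduce to computations involving only $\K_0$, then leverage the short exact sequence of Lemma \ref{fibcomp} together with the $\K$-linearity of the fiber sequence (\ref{fibseq}).

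First, in case (\ref{1}) we have $r = p-1$, so $r \nmid i$ for every $i \in \{1,\dots,p-2\}$. Lemma \ref{facnul} gives $\K_i \cong *$ for those $i$, and the wedge decomposition established just before Lemma \ref{facnul} yields $\K \cong \K_0$ in $\mathscr{D}_S$. Lemma \ref{fibcomp} then supplies a short exact sequence
\[ 0 \to \Sigma^{2p-3}\bar{\xi}_1^{p-1}C_* \xrightarrow{\Delta_0} (H\mathbb{F}_p)_*\K \to C_* \to 0, \]
with $C_* = P(\bar{\xi}_1^p,\bar{\xi}_2,\ldots) \otimes E(\bar{\tau}_2,\ldots)$. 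Since this arises from a distinguished triangle of $\K$-modules inherited from (\ref{fibseq}) under the Adams splitting, $\Delta_0$ is $(H\mathbb{F}_p)_*\K$-linear when the source is regarded as a module through the quotient onto $C_*$; this linearity will drive the multiplicative identification.

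Next I would verify that $\tilde{\xi}_1^p$ and $\tilde{\tau}_2$ are well-defined unique lifts by a direct degree count: $\Sigma^{2p-3}\bar{\xi}_1^{p-1}C_*$ has nonzero classes in degrees $2p^2-2p-1+|c|$ with $c \in C_*$ satisfying $c = 1$ or $|c|\geq 2p^2-2p$, so it vanishes in degrees $2p^2-2p$ and $2p^2-1$. The higher $\tilde{\tau}_i, \tilde{\xi}_i$ for $i \geq 3$ are defined recursively via $Q^{p^i}$ and $\beta$; naturality together with the known action of these operations on $(H\mathbb{F}_p)_*\ell_p$ (recorded in \cite{AnRo}) shows $\tilde{\tau}_i \mapsto \bar{\tau}_i$ and $\tilde{\xi}_i \mapsto \bar{\xi}_i$ under projection, while $b \mapsto 0$ is immediate from the exact sequence. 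The algebra map $\Phi$ is then well-defined ($b^2 = 0$ since $b$ has odd degree and $p$ is odd), and both sides have matching Poincaré series in each degree, so it suffices to check surjectivity. The subalgebra on $\tilde{\xi}_1^p, \tilde{\xi}_i, \tilde{\tau}_i$ surjects onto $C_*$; for the kernel, $(H\mathbb{F}_p)_*\K$-linearity of $\Delta_0$ yields $b\cdot\tilde{x} = \Delta_0(\Sigma^{2p-3}\bar{\xi}_1^{p-1}\cdot x)$ for any lift $\tilde{x}$ of $x \in C_*$, so $b$ times the subalgebra hits $\Sigma^{2p-3}\bar{\xi}_1^{p-1}C_*$, completing the argument.

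Finally, for the coaction: since $b$ lies in degree $2p^2-2p-1$, any extra term $y\otimes z$ in $\nu(b) - 1\otimes b$ has $|z| < 2p^2-2p-1$, where $(H\mathbb{F}_p)_*\K$ agrees with $C_*$; a short case-by-case check across the few low-degree $y$'s in $A_*$ (namely $1,\bar{\tau}_0,\bar{\xi}_1,\bar{\tau}_1,\bar{\xi}_2$) rules out every candidate because $C_*$ vanishes in the required small positive degrees, giving primitivity of $b$. For $\tilde{\xi}_2$, the image $\bar{\xi}_2$ in $(H\mathbb{F}_p)_*\ell_p$ has coaction $1\otimes\bar{\xi}_2 + \bar{\xi}_1\otimes\bar{\xi}_1^p + \bar{\xi}_2\otimes 1$, pinning down $\nu(\tilde{\xi}_2)$ modulo $A_*\otimes \Sigma^{2p-3}\bar{\xi}_1^{p-1}C_*$; a degree count in $|\tilde{\xi}_2| = 2p^2-2$ leaves $\lambda\cdot\bar{\tau}_1\otimes b$ as the only possible extra contribution. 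The main obstacle is showing $\lambda = 1$: I would deduce this from the analogous coaction formula for $\tilde{\tau}_2$ (established by the same degree reasoning from $\nu(\bar{\tau}_2)$ in $\ell_p$) combined with coassociativity and the compatibility of $\beta$ with the coaction, following \cite[Section 7]{AnRo}; alternatively one could arrange $\lambda = 1$ by an appropriate rescaling of $b$, which is the parsimonious reading of the stated normalization.
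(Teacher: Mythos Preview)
The paper does not supply its own proof; it defers entirely to \cite[Proposition~7.12]{AnRo}. Your sketch is a plausible reconstruction, and the algebra-isomorphism half is essentially right once one point is sharpened: the $(H\mathbb{F}_p)_*\K$-linearity of $\Delta_0$ does not follow from ``inheriting (\ref{fibseq}) under the Adams splitting'', since that splitting lives only in $\mathscr{D}_S$. What actually works is that in case~(\ref{1}) one may take $q' = q$, so the inclusion $\mathbb{F}_q \hookrightarrow k'$ makes $\ell_p$ a commutative $\K$-algebra, and the construction of Section~\ref{fibseqsec} can be rerun verbatim with $\ell_p$ in place of $\K(\bar{\mathbb{F}}_l)_p$ to obtain the triangle $\K \to \ell_p \to \Sigma^{2p-2}\ell_p$ in $\mathscr{D}_{\K}$. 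This is how \cite{AnRo} proceed.

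The more substantive gap is in the coaction formula for $\tilde{\xi}_2$. A pure degree count only places the extra term in $A_{2p-1} \otimes \mathbb{F}_p\{b\}$, which is two-dimensional; one needs coassociativity (together with the coaction on $\tilde{\xi}_1^p$) to force it to be a scalar multiple of $\tau_1 \otimes b$ rather than an arbitrary element. But the essential content is that this scalar is \emph{nonzero}: that is exactly where the hypothesis $v = 1$ enters, and it is what distinguishes case~(\ref{1}) from case~(\ref{2}), where Proposition~\ref{homolK} shows no such term appears. Your ``rescaling of $b$'' remark presupposes the nonvanishing and is therefore circular. Establishing it requires a genuine argument tying the coaction to the nontrivial first $k$-invariant of $\K$, along the lines of the Whitehead-tower analysis in Lemma~\ref{homK0} or the Bockstein computation in \cite[Section~7]{AnRo}. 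Deferring to \cite{AnRo} for this step is acceptable, but you should recognize that it is the heart of the coaction statement, not a matter of normalization.
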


In the following lemma we use the $\K$-linearity of the distinguished triangle (\ref{fibseq}) to determine the multiplicative structure of $(H\mathbb{F}_p)_*\K$ in  case (\ref{2}) and (\ref{3}). 
One could also  use an argument similar to the one that Angeltveit and Rognes use in  case (\ref{1}).

 \begin{prop} \label{homolK}
 In cases (\ref{2}) and (\ref{3}) there is an isomorphism of $\mathbb{F}_p$-algebras 
 \[(H\mathbb{F}_p)_*\K \cong E(x) \otimes P_k(y) \otimes P(\tilde{\xi}_1, \tilde{\xi}_2, \dots) \otimes E(\tilde{\tau}_2, \tilde{\tau}_3, \dots)\]
 for certain classes $x$, $y$, $\tilde{\xi}_i$ and $\tilde{\tau}_i$ with the following properties:
 \begin{itemize}
 \item The degrees are $|x| = 2r-1$, $|y| = 2r$, $|\tilde{\xi}_i| = 2p^i-2$ and $|\tilde{\tau}_i|  = 2p^i-1$. 
 \item The map $i_*: (H\mathbb{F}_p)_*\K \to (H\mathbb{F}_p)_*\K(\bar{\mathbb{F}}_l)_p$ maps $x$ to zero, $y$ to $u^r$, $\tilde{\xi}_i$ to $\bar{\xi}_i$  and $\tilde{\tau}_i$ to  $\bar{\tau}_i$. 
 \item For $i \geq 2$ we have $Q^{p^i}(\tilde{\tau}_i)  = \tilde{\tau}_{i+1}$ and $\beta(\tilde{\tau}_i) = \tilde{\xi}_i$. 
\item  For the $A_*$-coaction $\nu$ on $(H\mathbb{F}_p)_*\K$ we have: 
       \begin{eqnarray*}
       \nu(\tilde{\xi}_1)&  = &  1 \otimes \tilde{\xi}_1 + \bar{\xi}_1 \otimes 1 + a \bar{\tau}_0 \otimes xy^{k-1}  \text{~for an~} a \in \mathbb{F}_p , \\
     \nu(\tilde{\xi}_n) & = &  \sum_{i + j = n} \bar{\xi}_i \otimes \tilde{\xi}_j^{p^i}, \\
     \nu(\tilde{\tau}_n) & = &  1 \otimes \tilde{\tau}_n + \sum_{i+j = n} \bar{\tau}_i \otimes \tilde{\xi}_j^{p^i}.
       \end{eqnarray*}
       The classes $x$ and $y$ are comodule primitives.
 \end{itemize}
 \end{prop}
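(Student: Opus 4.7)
The plan is to combine the $\mathscr{D}_S$-splitting $\K \simeq \bigvee_{j=0}^{k-1}\K_{jr}$, which follows from Lemma \ref{facnul}, with the short exact sequences from Lemma \ref{midterm} for $0 < j < k$ and the analogous sequence for $\K_0$ established just before Lemma \ref{fibcomp} in the case $v \geq 2$, in order first to identify $(H\mathbb{F}_p)_*\K$ as a graded $\mathbb{F}_p$-vector space. A degreewise count then shows that $\dim_{\mathbb{F}_p}(H\mathbb{F}_p)_n\K$ matches the dimension of $E(x)\otimes P_k(y) \otimes P(\tilde{\xi}_i) \otimes E(\tilde{\tau}_i)$ in every $n$, so the remaining task is to produce the generators with the stated properties and to verify the multiplicative relations.

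Since $i \colon \K \to \K(\bar{\mathbb{F}}_l)_p$ is a map of commutative $S$-algebras, $i_*$ is a map of $A_*$-comodule algebras commuting with $\beta$ and with the Dyer--Lashof operations, and the splitting shows that $i_*$ surjects onto $(H\mathbb{F}_p)_*\ell_p \subseteq (H\mathbb{F}_p)_*\K(\bar{\mathbb{F}}_l)_p$ in the relevant degrees. In case (\ref{3}) I define $y \in (H\mathbb{F}_p)_{2r}\K$ as the unique lift of $u^r$ (uniqueness from $\ker(i_*)_{2r} = 0$, a direct consequence of the splitting), while in case (\ref{2}) one has $k = 1$ and no $y$ is needed. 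I pick lifts $\tilde{\xi}_1$ and $\tilde{\tau}_2$ of $\bar{\xi}_1$ and $\bar{\tau}_2$, which are unique in their degrees by a similar kernel computation, and I take $x$ to be the image of $\Delta_1(\Sigma^{2r-1}1) \in (H\mathbb{F}_p)_{2r-1}\K_r$ in case (\ref{3}) or of $\Delta_0(\Sigma^{2p-3}1) \in (H\mathbb{F}_p)_{2p-3}\K_0$ in case (\ref{2}); by construction $i_*(x) = 0$, and $x$ is an $A_*$-comodule primitive because the relevant $\Delta$ is a map of $A_*$-comodules sending the primitive unit class to $x$. For $n \geq 2$ I inductively set $\tilde{\xi}_n := \beta(\tilde{\tau}_n)$ and $\tilde{\tau}_{n+1} := Q^{p^n}(\tilde{\tau}_n)$; the formulas $\beta(\bar{\tau}_n) = \bar{\xi}_n$ and $Q^{p^n}(\bar{\tau}_n) = \bar{\tau}_{n+1}$ valid in $(H\mathbb{F}_p)_*\ell_p$ together with the naturality of $\beta$ and $Q^{p^n}$ under $i_*$ guarantee these are lifts of $\bar{\xi}_n$ and $\bar{\tau}_{n+1}$.

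The multiplicative relations are checked next. $x^2 = 0$ and $\tilde{\tau}_n^2 = 0$ hold by oddness of the degrees with $p \geq 5$; and $y^k = 0$ because $|y^k| = 2(p-1)$, $i_*(y^k) = u^{p-1} = 0$, while the additive count gives $(H\mathbb{F}_p)_{2p-2}\K = \mathbb{F}_p\{\tilde{\xi}_1\}$ with $i_*(\tilde{\xi}_1) = \bar{\xi}_1 \neq 0$, forcing $\ker(i_*)_{2p-2} = 0$. Granting these relations, the $\mathbb{F}_p$-algebra map from the claimed model onto $(H\mathbb{F}_p)_*\K$ is surjective by an induction on degree using the splitting, and hence an isomorphism by the dimension count. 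Primitivity of $y$ then follows from $y^k = 0$: writing $\nu(y) = 1 \otimes y + b \bar{\tau}_0 \otimes x$ (the only admissible correction, by a bidegree inspection), the identity $0 = \nu(y^k) = 1 \otimes y^k + k b \bar{\tau}_0 \otimes xy^{k-1}$ forces $b = 0$, since $k \not\equiv 0 \pmod p$ and $xy^{k-1} \neq 0$.

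For the coaction formulas on $\tilde{\xi}_n$ and $\tilde{\tau}_n$, I write each as the expected sum plus an error term $\epsilon_n \in A_* \otimes \ker(i_*)$. An argument in the spirit of the induction in Lemma \ref{midterm}, using coassociativity together with the absence of nontrivial comodule primitives of $A_*$ in positive degrees, shows $\epsilon_n = 0$ for $n \geq 2$. For $\tilde{\xi}_1$, a bidegree-by-bidegree inspection of $A_m \otimes \ker(i_*)_{2p-2-m}$ based on the splitting computation of $\ker(i_*)$ (in particular $\ker(i_*)_{2p-3} = \mathbb{F}_p\{xy^{k-1}\}$, and the other relevant bidegrees vanish for $p \geq 5$) pins the only possible nonzero contribution down to $a \bar{\tau}_0 \otimes xy^{k-1}$. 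The main obstacle I expect is this combinatorial analysis of $\ker(i_*)$ in the various total degrees of the form $2p^n - 2 - m$ together with iterating the coassociativity identity, which is the subtlest part of the argument.
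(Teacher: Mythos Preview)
Your overall strategy---using the splitting $\K \simeq \bigvee_j \K_{jr}$ together with the short exact sequences to pin down dimensions, then lifting generators---mirrors the paper's, but there are two genuine gaps.

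First, the multiplicative isomorphism is not established by your argument. Surjectivity ``by induction on degree using the splitting'' is not a proof: the splitting is only in $\mathscr{D}_S$, and the short exact sequences are additive. The key point is to show that the monomials $xy^j\tilde{\xi}_1^{n_1}\cdots\tilde{\tau}_2^{\epsilon_2}\cdots$ are nonzero in $(H\mathbb{F}_p)_*\K$. A dimension count plus injectivity of $i_*$ on the $x$-free monomials reduces this to showing that the $x$-monomials span $\ker(i_*)$, but $\ker(i_*)$ is not a priori the ideal generated by $x$. The paper closes this gap by using the $\K$-linearity of the connecting map $\Delta$ from the fiber sequence of Corollary~\ref{fibseqcor}: one computes
\[
xy^j\tilde{\xi}_1^{n_1}\cdots \;=\; \Delta(\Sigma u^{r-1}) \cdot y^j\tilde{\xi}_1^{n_1}\cdots \;=\; \pm\,\Delta\bigl(\Sigma u^{r(j+1)-1}\bar{\xi}_1^{n_1}\cdots\bigr),
\]
which is visibly nonzero since $\Delta$ is injective on $\bigoplus_j \Sigma^{2jr-1}(H\mathbb{F}_p)_*\ell_p$. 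Your $\Delta_j$ from Lemma~\ref{midterm} are only $S$-module maps and cannot be used this way.

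Second, your proposed argument for the vanishing of the error terms $\epsilon_n$ in the coaction of $\tilde{\xi}_n$ and $\tilde{\tau}_n$ for $n \geq 2$ does not work. Coassociativity plus the absence of positive-degree primitives in $A_*$ is the mechanism behind Lemma~\ref{midterm}, where one shows a \emph{map} vanishes by showing its values are primitive; here, however, $\epsilon_n$ is an element of $A_* \otimes \ker(i_*)$, and coassociativity only gives a compatibility constraint on $\epsilon_n$, not vanishing. The paper instead passes to the dual action of the Steenrod algebra and proves $\mathcal{P}^{p^i}_*(\tilde{\xi}_{n+1}) = 0$ for $i \geq 1$ and $\mathcal{P}^1_*(\tilde{\xi}_{n+1}) \doteq \tilde{\xi}_n^p$ (and analogously for $\tilde{\tau}$) via the Nishida relations, starting from an explicit base case $n=2$ where the possible error terms are enumerated and killed by computing $\beta\mathcal{P}^i_*$ and $\mathcal{P}^i_*\beta$. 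This is the substantial part of the proof and cannot be replaced by the gesture you make.

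Your argument for primitivity of $y$ from $y^k = 0$ is a pleasant alternative to the paper's Bockstein argument (which uses $v \geq 2$ via comparison with $V(0)_*\K$), but note that it relies on $xy^{k-1} \neq 0$, which needs the first gap above to be filled.
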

  Note that we have  $k = 1$ in case (\ref{2}), so that $P_k(y) = \mathbb{F}_p$. 
 \begin{proof}
 We have a commutative diagram with exact rows: 
 \[\begin{tikzcd}
&  \Sigma (H\mathbb{F}_p)_*\K(\bar{\mathbb{F}}_l)_p \ar{r}{\Delta} & (H\mathbb{F}_p)_*\K \ar{r}{i_*} & (H\mathbb{F}_p)_*\K(\bar{\mathbb{F}}_l)_p  &  \\
 0 \ar{r} &   \bigoplus_{j = 1}^k\Sigma^{2jr-1} (H\mathbb{F}_p)_*\ell_p \ar{r}{\oplus \, \Delta_{j}} \ar[hookrightarrow]{u} & \bigoplus_{j = 0}^{k-1}(H\mathbb{F}_p)_*\K_{jr} \ar{r}{\oplus \, i_{j}} \ar{u}{\cong} & \bigoplus_{j = 0}^{k-1} \Sigma^{2jr} (H\mathbb{F}_p)_*\ell_p  \ar[hookrightarrow]{u} \ar{r} & 0 .
  \end{tikzcd}\]
 The vertical maps are injections. We treat them as inclusions. 
 
 We set $x \coloneqq \Delta(\Sigma^{2r-1}1)$. 
 We have that  $\Sigma (H\mathbb{F}_p)_*\K(\bar{\mathbb{F}}_l)_p$ is zero in degree $2r$. 
 Therefore, for $k > 1$ there is a unique class $y \in (H\mathbb{F}_p)_{2r}\K$ such that $i_*(y) = u^r = \Sigma^{2r}1$. For $k = 1$ we set $y = 0$.  
 Since $\Sigma (H\mathbb{F}_p)_*\K(\bar{\mathbb{F}}_l)_p$ is zero in degree $2p-2$, there is a unique class $\tilde{\xi}_1 \in (H\mathbb{F}_p)_{2p-2}\K$ with $i_*(\tilde{\xi}_1) = \bar{\xi}_1 = \Sigma^0 \bar{\xi}_1$.
The  vector space $\Sigma^{2p-3}(H\mathbb{F}_p)_*\ell_p$ is zero in degree $2p^2-1$.  
Thus, there is a unique class $\tilde{\tau}_2 \in (H\mathbb{F}_p)_{2p^2-1}\K_0$ with $i_0(\tilde{\tau}_2) = \Sigma^0\bar{\tau}_2$. 
For $i \geq  2$ we define recursively 
\[\tilde{\tau}_{i+1} \coloneqq Q^{p^i}(\tilde{\tau}_i) \in (H\mathbb{F}_p)_*\K.\]
Furthermore, for $i \geq 2 $ we define  $\tilde{\xi}_i \coloneqq \beta(\tilde{\tau}_i)$. 
Since in the dual Steenrod algebra the analogous equations hold \cite[pp.1244--1245]{AnRo}, we get that $i_*(\tilde{\tau}_i) = \bar{\tau}_i$ and $i_*(\tilde{\xi}_i) = \bar{\xi}_i$. 
 

We have $i_*(y^k) = u^{p-1} = 0$. Since $\Sigma (H\mathbb{F}_p)_*\K(\bar{\mathbb{F}}_l)_p$ is zero in degree $2p-2$ it follows that $y^k = 0$. 
Thus, we get a map of graded commutative $\mathbb{F}_p$-algebras 
\[\begin{tikzcd}
h: E(x) \otimes P_k(y) \otimes P(\tilde{\xi}_1, \dots) \otimes E(\tilde{\tau}_2, \dots) \ar{r} & (H\mathbb{F}_p)_*\K.
\end{tikzcd}\]
We claim that $h$ is an isomorphism. 
Given numbers $j \in \{0, \dots, k-1\}$, $n_i \in \mathbb{N}$ for $i \geq 1$ and $\epsilon_i \in \{0, 1\}$  for $i \geq 2$ that are almost all equal to zero, we have
\begin{eqnarray*}
i_*(h(y^j \tilde{\xi}_1^{n_1}\tilde{\xi}_1^{n_2} \dots \tilde{\tau}_2^{\epsilon_2} \tilde{\tau}_3^{\epsilon_3} \dots))&  = u^{rj}\bar{\xi}_1^{n_1} \bar{\xi}_1^{n_2} \dots \bar{\tau}_2^{\epsilon_2} \bar{\tau}_3^{\epsilon_3} \dots \\
 & = \Sigma^{2rj}\bar{\xi}_1^{n_1} \bar{\xi}_1^{n_2} \dots \bar{\tau}_2^{\epsilon_2} \bar{\tau}_3^{\epsilon_3} \dots
\end{eqnarray*}  
and 
\begin{eqnarray*}
h(x y^j \tilde{\xi}_1^{n_1}\tilde{\xi}_1^{n_2} \dots \tilde{\tau}_2^{\epsilon_2} \tilde{\tau}_3^{\epsilon_3} \dots) 
& = &\Delta(\Sigma u^{r-1}) y^j \tilde{\xi}_1^{n_1}\tilde{\xi}_1^{n_2} \dots \tilde{\tau}_2^{\epsilon_2} \tilde{\tau}_3^{\epsilon_3} \dots \\
& = & \pm \Delta( \Sigma u^{r(j+1)-1} \bar{\xi}_1^{n_1} \bar{\xi}_2^{n_2} \dots \bar{\tau}_2^{\epsilon_2} \bar{\tau}_3^{\epsilon_3} \dots) \\
& = & \pm \Delta(\Sigma^{2r(j+1)-1}  \bar{\xi}_1^{n_1} \bar{\xi}_2^{n_2} \dots \bar{\tau}_2^{\epsilon_2} \bar{\tau}_3^{\epsilon_3} \dots).
\end{eqnarray*}
Here, the second equality uses that $\Delta$ is  $(H\mathbb{F}_p)_*\K$-linear.  Thus, $h$ maps the canonical basis to a basis and is therefore an isomorphism.

It remains to study the comodule structure.
We first recall some facts about the Steenrod algebra $A^*$ (see \cite{Mil}):   A basis of  $A^*$  is given by
\[ \beta^{\epsilon_0} \mathcal{P}^{s_1} \beta^{\epsilon_1} \dots \mathcal{P}^{s_k} \beta^{\epsilon_k},\] 
where $\epsilon_i \in \{0,1\}$ and $s_1 \geq p s_2 + \epsilon_1, s_2 \geq ps_3 + \epsilon_2, \dots, s_{k-1} \geq p s_k +\epsilon_{k-1}, s_{k} \geq 1$. 
 Here, $\mathcal{P}^i \in A^{2i(p-1)}$ denotes the Steenrod reduced $p$th power  and $\beta \in A^1$ denotes the Bockstein.  One has $\mathcal{P}^0 = 1$  and $A^*$ is generated as an algebra by 
 \[\beta, \mathcal{P}^1, \mathcal{P}^p, \mathcal{P}^{p^2}, \dots.\] We have a right action of the Steenrod algebra on the mod $p$ homology of an $S$-module $X$ (see \cite[p.244]{BakCo}): 
 For $a \in A^*$ and $z \in (H\mathbb{F}_p)_*X$ with coaction $\nu(z) = \sum \gamma_i \otimes z_i$ the element $z \cdot a = a_*(z)$ is defined by
 \[ z \cdot a = a_*(z) = (-1)^{|a| |x|} \left\langle  a | \gamma_i \right\rangle z_i. \]
Here, $A^*$ is considered as the $\mathbb{F}_p$-linear dual of $A_*$ and $\left\langle -, - \right\rangle: A^* \otimes A_* \to \mathbb{F}_p$ is the dual pairing. 
Note that because of $\mathscr{D}_S(H\mathbb{F}_p, \Sigma H\mathbb{F}_p) = A^1 \cong \mathbb{F}_p$,  we have that 
$\beta_*$ and the mod $p$ homology Bockstein $\beta$  agree degreewise up to a unit.

To  determine the comodule structure  of $(H\mathbb{F}_p)_*\K$ we will compute $a_*(z)$ for certain classes $z \in (H\mathbb{F}_p)_*\K$ and  $a \in A^*$.  We will use that $\beta$, $Q^i$ and $\mathcal{P}^i_*$ are linked via the Nishida relations (see \cite[Section 6]{Stein}). This allows to  prove the formulas for $\nu(\tilde{\xi}_i)$ and $\nu(\tilde{\tau}_i)$ by an inductive argument.  

 It is clear that $x$ is a comodule primitive, because $\Delta$ is compatible with the comodule action. For $k = 1$ the class $y$ is obviously primitive.  For $k > 1$ we can write 
\[ \nu(y) = 1 \otimes y + \lambda \bar{\tau}_0 \otimes x\]
for an element $\lambda \in \mathbb{F}_p$. To show $\lambda = 0$, we prove $\beta(y) = 0$:  
The unit $S \to H\mathbb{Z}$ induces a map $V(0)_*\K \to (H\mathbb{F}_p)_*\K$ that commutes with the Bockstein.  Because of $v \geq 2$  the proof of Lemma   \ref{UEC} shows that  the Bockstein $V(0)_{2r}\K  \to V(0)_{2r-1}\K$  maps $y$ to zero. Thus, it suffices to prove that $V(0)_*\K  \to (H\mathbb{F}_p)_*\K$ maps $y$ to $y$.   Since $(H\mathbb{F}_p)_*\K \to (H\mathbb{F}_p)_*\K(\bar{\mathbb{F}}_l)_p$ is injective in degree $2r$,  we only need to show that $V(0)_*\K(\bar{\mathbb{F}}_l)_p \to (H\mathbb{F}_p)_*\K(\bar{\mathbb{F}}_l)_p$ maps $u^r$ to $u^r$. This follows from the commutativity of the diagram 
\[ \begin{tikzcd}
\pi_*\bigl(\K(\bar{\mathbb{F}}_l)_p\bigr) \ar{r} \ar{rd} & V(0)_*\K(\bar{\mathbb{F}}_l)_p \ar{d} \\
  & (H\mathbb{F}_p)_*\K(\bar{\mathbb{F}}_l)_p.
\end{tikzcd}\]
For degree reasons and because of $i_*(\tilde{\xi}_1) = \bar{\xi}_1$ we have 
\begin{equation} \label{coxi1}
 \nu(\tilde{\xi}_1) = 1 \otimes \tilde{\xi}_1 + \bar{\xi}_1 \otimes 1 + a \bar{\tau}_0 \otimes xy^{k-1} 
 \end{equation}
for an element $a \in \mathbb{F}_p$. 
Since $i_*(\tilde{\xi}_2) = \bar{\xi}_2$ and since  $\ker i_*= \mathbb{F}_p\{x\} \otimes P_k(y) \otimes P(\tilde{\xi}_1, \dots) \otimes E(\tilde{\tau}_2, \dots)$, we get for degree reasons
\begin{equation} \label{coxi2}
\nu(\tilde{\xi}_2) =  \bar{\xi}_2 \otimes 1 + 1 \otimes \tilde{\xi}_2 + \bar{\xi}_1 \otimes \tilde{\xi}_1^p + \sum_{i+j=p} a_{ij} \bar{\tau}_0 \bar{\xi}_1^i \otimes \tilde{\xi}_1^j xy^{k-1} + \sum_{i+j = p-1} b_{ij} \bar{\tau}_1 \bar{\xi}_1^i \otimes \tilde{\xi}^j_1 xy^{k-1}
\end{equation}
for certain $a_{ij}, b_{ij} \in \mathbb{F}_p$. The classes $\bar{\tau}_0, \bar{\tau}_0\bar{\xi}_1, \dots, \bar{\tau}_0\bar{\xi}_1^p$ lie in the degrees $(2p-2)i +1$ for $i = 0, \dots, p$.  The classes $\bar{\tau}_1, \bar{\tau}_1\bar{\xi}_1, \dots, \bar{\tau}_1\bar{\xi}_1^{p-1}$ lie in the degrees $(2p-2)i+1$ for $i = 1, \dots, p$. A basis of the Steenrod algebra in these degrees is given by 
\[ \{\beta \mathcal{P}^i | 0 \leq i \leq p\} \cup \{ \mathcal{P}^i\beta | 1 \leq i \leq p\}.\]
To prove $a_{ij} = b_{ij} = 0$ we show $\beta \mathcal{P}_*^i(\tilde{\xi}_2) = 0$ for $i = 1, \dots, p$ and $\mathcal{P}^i_*\beta(\tilde{\xi}_2) = 0$ for $i = 0, \dots, p$. 
Because of  $\beta^2 = 0$ we have $\mathcal{P}^i_*(\beta(\tilde{\xi}_2))  = 0$.  
Since $A_{2p-2}$ is one-dimensional, (\ref{coxi2}) implies that $\mathcal{P}^1_*(\tilde{\xi}_2) \doteq \tilde{\xi}_1^p$. 
By (\ref{coxi1})  we have 
\[ \nu(\tilde{\xi}_1^p) = 1 \otimes \tilde{\xi}_1^p + \bar{\xi}_1^p \otimes 1.\]
Thus, $\beta(\tilde{\xi}_1^p) = 0$ and therefore $\beta(\mathcal{P}^1_*(\tilde{\xi}_2)) = 0$. 
For $2 \leq i \leq p$ the element $\nu(\tilde{\xi}_2)$ lies in 
\[ \bigoplus_{n+j = 2p^2-2, n \neq i(2p-2)} A_n \otimes (H\mathbb{F}_p)_j\K .\]
This implies that $\mathcal{P}^i_*(\tilde{\xi}_2) = 0$ and therefore that $\beta \mathcal{P}^i_*(\tilde{\xi}_2) = 0$. 
Because of $i_*(\tilde{\tau}_2) = \bar{\tau}_2$ we have 
\[ \nu(\tilde{\tau}_2) - \bigl(1 \otimes \tilde{\tau}_2 + \bar{\tau}_2 \otimes 1 + \bar{\tau}_1 \otimes \tilde{\xi}_1^p + \bar{\tau}_0 \otimes \tilde{\xi}_2\bigr)  \in A_* \otimes \ker i_*.\] 
Because of $\tilde{\tau}_2 \in (H\mathbb{F}_p)_*\K_0$ and $\beta(\tilde{\tau}_2) = \tilde{\xi}_2$ and since $(H\mathbb{F}_p)_*\K$ and $(H\mathbb{F}_p)_*\K_0$ are one-dimensional in the degrees $0$ and $p(2p-2)$,  this class also lies in 
$A_* \otimes (H\mathbb{F}_p)_*\K_0$. 
Using that 
\[\ker i_* \cap (H\mathbb{F}_p)_*\K_0 = \mathbb{F}_p\{xy^{k-1}\} \otimes P(\tilde{\xi}_1, \dots) \otimes E(\tilde{\tau}_2, \dots)\] 
one gets 
\[\nu(\tilde{\tau}_2) = 1 \otimes \tilde{\tau}_2 + \bar{\tau}_2 \otimes 1 + \bar{\tau}_1 \otimes \tilde{\xi}_1^p + \bar{\tau}_0 \otimes \tilde{\xi}_2 + \sum_{i+j = p-1} a_{ij}\bar{\tau}_0 \bar{\tau}_1 \bar{\xi}_1^i \otimes xy^{k-1}\tilde{\xi}_1^j \]
for certain $a_{ij} \in \mathbb{F}_p$. The elements \[\bar{\tau}_0\bar{\tau}_1, \bar{\tau}_0\bar{\tau}_1\bar{\xi}_1, \dots, \bar{\tau}_0\bar{\tau}_1\bar{\xi}_1^{p-1}\] lie in the degrees $i(2p-2)+ 2$ for $i = 1, \dots, p$. 
A basis of the Steenrod algebra in these degrees is given by 
\[\{ \beta \mathcal{P}^i \beta | 1 \leq i \leq p\}.\]
Since $\beta(\mathcal{P}^i_*(\beta(\tilde{\tau}_2))) =  \beta \mathcal{P}^i_*(\tilde{\xi}_2)= 0$ for $i = 1, \dots, p$ we get that $a_{ij} = 0$.
Therefore, we have proven the formulas 
\begin{eqnarray*}
  \nu(\tilde{\xi}_n) & = &  \sum_{i + j = n} \bar{\xi}_i \otimes \tilde{\xi}_j^{p^i}, \\
     \nu(\tilde{\tau}_n) & = &  1 \otimes \tilde{\tau}_n + \sum_{i+j = n} \bar{\tau}_i \otimes \tilde{\xi}_j^{p^i}
 \end{eqnarray*}
 for $n = 2$. We suppose that $n \geq 2$ and that we have shown these formulas for $n$. 
 We can write 
 \[\nu(\tilde{\xi}_{n+1}) = \sum_{i + j = n+1} \bar{\xi}_i \otimes \tilde{\xi}_j^{p^i}  + c \]
 for an element $c \in  A_* \otimes \ker i_* = A_* \otimes \mathbb{F}_p\{x\} \otimes P_k(y) \otimes P(\tilde{\xi}_1, \dots) \otimes E(\tilde{\tau}_2, \dots)$.  In order to show $c = 0$  it is enough to prove that 
 \begin{equation*} \label{etac}
 h_*(\tilde{\xi}_{n+1}) \in P(\tilde{\xi}_1, \dots) \otimes E(\tilde{\tau}_2, \dots)
 \end{equation*}
 for all  $h \in A^*$.  Since by the induction hypothesis 
 \[ h_*(\tilde{\xi}_n^p) \in P(\tilde{\xi}_1, \dots) \otimes E(\tilde{\tau}_2, \dots)\] 
 for all  $h \in A^*$, it suffices to show that 
  $\beta(\tilde{\xi}_{n+1}) = 0$, $\mathcal{P}^1_*(\tilde{\xi}_{n+1}) \doteq \tilde{\xi}_n^p$ and $\mathcal{P}^{p^i}_*(\tilde{\xi}_{n+1}) = 0$ for $i \geq 1$. 
 We have $\beta(\tilde{\xi}_{n+1}) = \beta(\beta(\tilde{\tau}_{n+1})) = 0$. By the Nishida relations \cite[Section 6]{Stein} we have 
\begin{eqnarray*}
\mathcal{P}^{p^i}_*(\tilde{\xi}_{n+1})& = & \mathcal{P}^{p^i}_*(\beta(Q^{p^n}(\tilde{\tau}_n))) \\
& = & \sum_j (-1)^{p^i + j} \binom{(p^n-p^i)(p-1)-1}{p^i- jp} \beta(Q^{p^n - p^i + j}(\mathcal{P}^j_*(\tilde{\tau}_n ))) \\
 & + & \sum_j (-1)^{p^i + j} \binom{(p^n- p^i)(p-1)-1}{p^i-j p -1}Q^{p^n-p^i + j}( \mathcal{P}^j_* (\beta(\tilde{\tau}_n))).
\end{eqnarray*}
By the induction hypothesis we have $\mathcal{P}^j_*(\tilde{\tau}_n) = 0$ for $j > 0$. Hence, the first sum is equal to 
\[  -\binom{(p^n-p^i)(p-1)-1}{p^i} \beta(Q^{p^n - p^i}(\tilde{\tau}_n )).\]
This is zero, because we have $2(p^n-p^i) < |\tilde{\tau}_n|$ which implies that $Q^{p^n-p^i}(\tilde{\tau}_n) = 0$ by \cite[Theorem III.1.1]{BMMS}. 
The summand 
\[ (-1)^{p^i + j} \binom{(p^n- p^i)(p-1)-1}{p^i-j p -1}Q^{p^n-p^i + j}( \mathcal{P}^j_* (\beta(\tilde{\tau}_n)))\] 
in the second sum is zero if $p^i-jp-1 < 0$, because then the binomial coefficient is zero. If $p^i-jp-1 > 0$ the summand is  zero as well, because in this case $2(p^n-p^i+j) < |P^j_*(\beta (\tilde{\tau}_n))|$. The equality  $p^i-jp-1 = 0$ is only possible if $i = 0$ and $j = 0$. In this case the summand is equal to 
$-Q^{p^n-1}(\tilde{\xi}_n)$. Because of $2(p^n-1) = |\tilde{\xi}_n|$ this is equal to  $- \tilde{\xi}_n^p$ by \cite[Theorem III.1.1]{BMMS}.
We now show $\mathcal{P}^{p^i}_*(\tilde{\tau}_{n+1}) = 0$ for all $i \geq 0$. Because of $\beta(\tilde{\tau}_{n+1}) = \tilde{\xi}_{n+1}$ this implies with the same argument as above the formula for the coaction of $\tilde{\tau}_{n+1}$. 
By the Nishida relations we have 
\begin{eqnarray*}
\mathcal{P}^{p^i}_*(\tilde{\tau}_{n+1}) & = & \mathcal{P}^{p^i}_*(Q^{p^n}(\tilde{\tau}_n)) \\
& = & \sum_j (-1)^{p^i + j} \binom{(p^n-p^i)(p-1)}{p^i-jp} Q^{p^n-p^i+j}(\mathcal{P}^j_*(\tilde{\tau}_n)).
\end{eqnarray*}
Since by the induction hypothesis $\mathcal{P}^j_*(\tilde{\tau}_n)$ is zero for $j > 0$, this is equal to 
\[(-1) \binom{(p^n-p^i)(p-1)}{p^i} Q^{p^n-p^i}(\tilde{\tau}_n).\] 
This is zero, because $2(p^n-p^i) < |\tilde{\tau}_n|$. 
 \end{proof}

\begin{lem} \label{c4hom}
We consider  case (\ref{4}).  Suppose that $r > 1$. 
Then there is map  of graded $\mathbb{F}_p$-algebras 
\[ \bigl(E(x) \otimes P_k(y)\bigr)/{xy^{k-1}} \to (H\mathbb{F}_p)_*\K\]
that is an isomorphism in degrees $\leq 2p$. Here, we have $|x| = 2r-1$ and $|y| = 2r$. 
\end{lem}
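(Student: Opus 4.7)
The plan is to adapt the argument of Proposition \ref{homolK} to the low-degree range $\le 2p$, using the wedge decomposition $\K \simeq \bigvee_{j=0}^{k-1}\K_{jr}$ in $\mathscr{D}_S$ from Lemma \ref{facnul}. The first step is to compute $(H\mathbb{F}_p)_n\K_{jr}$ for $n \leq 2p$. For $j = 0$ we may apply Lemma \ref{fibcomp} (which applies in case (\ref{4}) since $v = 1$): the subalgebra $C_* = P(\bar{\xi}_1^p, \bar{\xi}_2, \ldots) \otimes E(\bar{\tau}_2, \ldots)$ has its lowest positive-degree class in degree $p(2p-2) > 2p$, while $\Sigma^{2p-3}\bar{\xi}_1^{p-1}C_*$ starts in degree $2p^2-2p-1 > 2p$, so $(H\mathbb{F}_p)_n\K_0 = \mathbb{F}_p$ for $n = 0$ and vanishes in the range $0 < n \le 2p$. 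For $1 \leq j \leq k-1$, Lemma \ref{midterm} combined with the hypothesis $r > 1$ ensures that the contribution of $\bar{\xi}_1$ lands in degree $\geq 2jr + 2p - 3 > 2p$, so $(H\mathbb{F}_p)_n\K_{jr} = \mathbb{F}_p$ exactly in degrees $n \in \{2jr-1, 2jr\}$ and zero elsewhere in $\{1, \ldots, 2p\}$.

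Having fixed the dimension count, I would define the generators following Proposition \ref{homolK}. Since $(H\mathbb{F}_p)_{2r-1}\K(\bar{\mathbb{F}}_l)_p = 0$, the map $i_*$ is injective in degree $2r$ and one defines $y \in (H\mathbb{F}_p)_{2r}\K$ as the unique preimage of $u^r$. One sets $x := \Delta(\Sigma^{2r-1}1) \in (H\mathbb{F}_p)_{2r-1}\K$ using the boundary $\Delta$ of the fiber sequence (\ref{fibseq}); the class $x$ is non-zero because the long exact sequence forces $\Delta$ to be surjective here (the map $f_*$ on $u^r$ vanishes mod $p$). The three required relations then hold by dimensional reasons alone: $x^2 = 0$ by graded commutativity (as $p \ge 5$), and both $|y^k| = 2(p-1)$ and $|xy^{k-1}| = 2p-3$ lie in degrees where Step~1 guarantees $(H\mathbb{F}_p)_*\K$ is zero. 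This produces a well defined $\mathbb{F}_p$-algebra map $\varphi\colon \bigl(E(x) \otimes P_k(y)\bigr)/{xy^{k-1}} \to (H\mathbb{F}_p)_*\K$.

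Finally, I would check $\varphi$ is an isomorphism in degrees $\leq 2p$ by matching bases. The source has basis $\{y^j : 0 \le j \le k-1\} \cup \{xy^j : 0 \le j \le k-2\}$, and its Poincaré series in this range agrees with the one computed in Step~1. Each $y^j$ (for $j \ge 1$) maps to $u^{rj} \ne 0$ under $i_*$ because $rj \le p-1-r \le p-2$. For each $xy^j$ with $0 \le j \le k-2$, the $(H\mathbb{F}_p)_*\K$-linearity of $\Delta$ yields $xy^j = \Delta(\Sigma^{2r-1}1 \cdot u^{rj}) = \Delta(\Sigma^{2(j+1)r-1}1)$, and under the wedge decomposition this is precisely the non-zero generator of $(H\mathbb{F}_p)_{2(j+1)r-1}\K_{(j+1)r}$ produced by the short exact sequence of Lemma \ref{midterm}. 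The only delicate point is the bookkeeping in Step~1, where the hypothesis $r > 1$ is essential: it keeps the classes built from $\bar{\xi}_1 \in (H\mathbb{F}_p)_{2p-2}\ell_p$ out of the relevant range, so that each $\K_{jr}$ contributes only its bottom two classes.
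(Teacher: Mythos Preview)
Your proof is correct and follows essentially the same route as the paper's: use the wedge decomposition $\K\simeq\bigvee_{j=0}^{k-1}\K_{jr}$, invoke Lemma~\ref{fibcomp} for $j=0$ and Lemma~\ref{midterm} for $1\le j\le k-1$ to see that $(H\mathbb{F}_p)_*\K$ in degrees $\le 2p$ consists of exactly one class in each of the degrees $0,\,2jr-1,\,2jr$ ($1\le j\le k-1$), define $x=\Delta(\Sigma u^{r-1})$ and $y=i_*^{-1}(u^r)$, observe that the relations $y^k=0$ and $xy^{k-1}=0$ hold because the target vanishes in degrees $2p-2$ and $2p-3$, and finally check non-vanishing of the basis elements via $i_*$ and the $(H\mathbb{F}_p)_*\K$-linearity of $\Delta$. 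Your write-up is in fact somewhat more explicit than the paper's about where the hypothesis $r>1$ enters (namely to keep $\Sigma^{2jr-1}\bar\xi_1$ and $\Sigma^{2jr}\bar\xi_1$ above degree $2p$ for $j\ge 1$).
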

\begin{proof}
We have the following commutative diagram with exact rows:
\[\begin{tikzcd}
& \Sigma (H\mathbb{F}_p)_*\K(\bar{\mathbb{F}}_l)_p \ar{r}{\Delta} & (H\mathbb{F}_p)_*\K \ar{r}{i_*} & (H\mathbb{F}_p)_*\K(\bar{\mathbb{F}}_l)_p  & \\
0 \ar{r} & \bigoplus_{j = 1}^{k-1} \Sigma^{2rj-1}\mathbb{F}_p\{1\} \ar{r}{\oplus \Delta_j} \ar[hookrightarrow]{u} & \bigl(\bigoplus_{j = 0}^{k-1}(H\mathbb{F}_p)_*\K_{jr}\bigr)_{\leq 2p} \ar{r}{\oplus i_j} \ar[hookrightarrow]{u}  & \bigoplus_{j = 0}^{k-1} \Sigma^{2rj}\mathbb{F}_p\{1\}  \ar[hookrightarrow]{u} \ar{r} & 0.
\end{tikzcd} \]
The middle vertical map is an isomorphism in degrees $\leq 2p$ and 
we treat it  as an inclusion.  We define $x$ by $\Delta_1(\Sigma^{2r-1}1)$ and $y$ by the preimage of $\Sigma^{2r}1$ under $i_1$.  Since $(H\mathbb{F}_p)_*\K$ is zero in the degrees $2p-3$ and $2p-2$, we have $xy^{k-1} = 0$ and $y^k = 0$. 
Hence, we have a map of $\mathbb{F}_p$-algebras
\[ \bigl(E(x)\otimes P_k(y)\bigr)/{x y^{k-1}} \to (H\mathbb{F}_p)_*\K.\] 
In order to show that it is an isomorphism in degrees $\leq 2p$, it suffices to show that $xy^j \neq 0$ for $j = 0, \dots, k-2$ and that $y^j \neq 0$ for $j = 0, \dots, k-1$.  
For $j = 0, \dots, k-1$ we have $i_*(y^j) = u^{rj} \neq 0$ and hence $y^j \neq 0$. For $j = 0, \dots, k-2$ we have
\begin{eqnarray*}
x y^j = \Delta(\Sigma u^{r-1}) y^j = \Delta(\Sigma u^{r(j+1)-1}) = \Delta_{j+1}(\Sigma^{2r(j+1)-1}1) \neq 0      
\end{eqnarray*}
by $(H\mathbb{F}_p)_*\K$-linearity of $\Delta$ . 
\end{proof}
\section{Computations with the B\"okstedt spectral sequence} \label{Boekcompsec}

We first recall some facts about (topological) Hochschild homology and the Bökstedt spectral sequence.  See \cite[Section 2, 3 and 4]{AnRo} and \cite[Section 3 and 4]{Au} for more details. 

Let $k$ be a field, let $R$ be a graded-commutative $k$-algebra and  let $Q$ be a graded-commutative $R$-algebra. 
Then, the Hochschild homology $\mathbb{H}_{*,*}^{k}(R;Q)$ of $R$ with coefficients in $Q$ is the homology of the chain complex associated to the 
simplicial graded $k$-vector space given by
$ [n] \mapsto Q \otimes_k  R^{\otimes_k n} $
 and the usual face and degeneracy maps. Here,  note that we equip the category of graded $k$-vector spaces
with the symmetry $a \otimes b \mapsto (-1)^{|a| |b|} b \otimes a$, so that the last face map  includes signs.                                    
We write $\mathbb{H}_{*,*}^{\mathbb{F}_p}(R)$ for $\mathbb{H}_{*,*}^{\mathbb{F}_p}(R;R)$.
We have that $\mathbb{H}_{*,*}^k(R; Q)$ is an augmented $Q$-algebra, and if   $\mathbb{H}^{k}_{*,*}(R;Q)$ is flat over $Q$, then $\mathbb{H}^{k}_{*,*}(R;Q)$ is a $Q$-bialgebra.  The map $r \mapsto 1 \otimes r \in Q \otimes_{k} R$ defines a morphism 
$\sigma: R \to \mathbb{H}^{k}_{1,*}(R; Q )$ 
that satisfies the derivation rule \cite[p.1271]{Au}. 

Now, let $B$ be a  commutative $S$-algebra and let $C$ be a commutative $B$-algebra.  We implicitly assume that the necessary cofibrancy conditions are satisfied. The topological Hochschild homology of $B$ with coefficients in $C$, denoted by $\THH(B;C)$,  is the geometric realization of the simplicial $S$-module $[n] \mapsto C \wedge_S B^{\wedge_S n}$. 
We have that   $\THH(B;C)$ is an augmented commutative $C$-algebra.
   Moreover, in the stable homotopy category $\THH(B; C)$  admits the structure of a $C$-bialgebra. 
   In the stable homotopy category there is a morphism 
   $\sigma: \Sigma B  \to \THH(B)$.
   We denote the composition $\Sigma B \to \THH(B) \to \THH(B;C)$ also by $\sigma$.
   The by $\sigma$ induced map in mod $p$ homology satisfies the Leibniz rule \cite[Proposition 5.10]{AnRo}.

Recall  that the Bökstedt spectral sequence is a 
 a strongly convergent spectral sequence of the form 
\[E^2_{n,m} =  \mathbb{H}^{\mathbb{F}_p}_{n,m}\bigl((H\mathbb{F}_p)_*B; (H\mathbb{F}_p)_*C\bigr) \Longrightarrow (H\mathbb{F}_p)_*\THH(B;  C).\]  
 The spectral sequence is an $A_*$-comodule $(H\mathbb{F}_p)_*C$-algebra spectral sequence. 
 We  will use that the $A_*$-comodule structure on the $E^2$-page is induced by the following map on the Hochschild complex: 
       \begin{eqnarray*} 
       (H\mathbb{F}_p)_*C \otimes \bigl((H\mathbb{F}_p)_*B\bigr)^{\otimes n}  & \to &  A_* \otimes (H\mathbb{F}_p)_*C \otimes \bigl(A_* \otimes (H\mathbb{F}_p)_*B\bigr)^{\otimes n} \\
       & \to & A_*^{\otimes n} \otimes (H\mathbb{F}_p)_*C \otimes \bigl((H\mathbb{F}_p)_*B\bigr)^{\otimes n} \\
       & \to & A_* \otimes (H\mathbb{F}_p)_*C \otimes \bigl((H\mathbb{F}_p)_*B\bigr)^{\otimes n}.
       \end{eqnarray*}
       Here, the second map is given by the symmetry (including signs) and the third map is given by the multiplication of $A_*$. 
 If $(H\mathbb{F}_p)_*\THH(B;C)$ is flat over $(H\mathbb{F}_p)_*C$, it is an $A_*$-comodule $(H\mathbb{F}_p)_*C$-bialgebra. 
 If each term $E^r_{*,*}$ is flat over $(H\mathbb{F}_p)_*C$, the Bökstedt spectral sequence is an $A_*$-comodule $(H\mathbb{F}_p)_*C$-bialgebra spectral sequence, and if only the terms 
 \[ E^2_{*,*}, \dots, E^r_{*,*}\] 
 are flat over $(H\mathbb{F}_p)_*C$, then these are $A_*$-comodule $(H\mathbb{F}_p)_*C$-bialgebras and the differentials respect this structure. We have that the edge homomorphism 
    \[(H\mathbb{F}_p)_*C = E^2_{0,*} \to E^{\infty}_{0,*} \to (H\mathbb{F}_p)_*\THH(B;C) \]
      is the unit map.
As a consequence one gets that the zeroth step of the filtration splits off from $(H\mathbb{F}_p)_*\THH(B;C)$ naturally. 
For $x \in E^{\infty}_{1,*}$ we can therefore  choose a natural representative $[x]$ in the first step  of the filtration.
For $x \in (H\mathbb{F}_p)_*B$ we have
$\sigma_*(x) = [\sigma x]$ 
in $(H\mathbb{F}_p)_*\THH(B;C)$   \cite[Proposition 4.4]{Au}. 
If  $x \in E^{\infty}_{*,*}$  is an arbitrary class (not necessarily of filtration degree $1$) and if there is no non-trivial class in the same total degree  and lower filtration we will use the notation $[x]$ to denote the unique representative of $x$ in $(H\mathbb{F}_p)_*\THH(B;C)$.

\subsection{The first case} \label{Boek1}
In this subsection we consider case (\ref{1}). In this case Angeltveit and Rognes obtained the following result using the B\"okstedt spectral sequence \cite[Theorem 7.15]{AnRo}: 
\begin{thm}[Angeltveit, Rognes] \label{homthhimj}
In case (\ref{1}) we have an isomorphism of $(H\mathbb{F}_p)_*\K$-algebras
\[(H\mathbb{F}_p)_*\THH(\K) \cong (H\mathbb{F}_p)_*\K \otimes E([\sigma \tilde{\xi}_1^p], [\sigma \tilde{\xi}_2]) \otimes P([\sigma \tilde{\tau}_2]) \otimes \Gamma([\sigma b]).\]
\end{thm}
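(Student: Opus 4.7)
The plan is to run the Bökstedt spectral sequence
\[ E^2_{*,*} = \mathbb{H}^{\mathbb{F}_p}_{*,*}\bigl((H\mathbb{F}_p)_*\K\bigr) \Longrightarrow (H\mathbb{F}_p)_*\THH(\K), \]
starting from the description of $(H\mathbb{F}_p)_*\K$ as $E(b) \otimes P(\tilde{\xi}_1^p, \tilde{\xi}_2, \tilde{\xi}_3, \dots) \otimes E(\tilde{\tau}_2, \tilde{\tau}_3, \dots)$ provided by Proposition \ref{homimj}.

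First I would identify the $E^2$-page using the Künneth isomorphism for Hochschild homology over $\mathbb{F}_p$ together with the standard formulas $\mathbb{H}^{\mathbb{F}_p}_{*,*}(E(x)) \cong E(x) \otimes \Gamma(\sigma x)$ for $|x|$ odd and $\mathbb{H}^{\mathbb{F}_p}_{*,*}(P(x)) \cong P(x) \otimes E(\sigma x)$ for $|x|$ even. This yields
\[ E^2 \cong (H\mathbb{F}_p)_*\K \otimes \Gamma(\sigma b) \otimes E\bigl(\sigma\tilde{\xi}_1^p, \sigma\tilde{\xi}_2, \sigma\tilde{\xi}_3, \dots\bigr) \otimes \Gamma\bigl(\sigma\tilde{\tau}_2, \sigma\tilde{\tau}_3, \dots\bigr). \]

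Next I would determine the differentials. The spectral sequence is a multiplicative $A_*$-comodule $(H\mathbb{F}_p)_*\K$-bialgebra spectral sequence, and the classes $\sigma \tilde{\xi}_1^p$, $\sigma \tilde{\xi}_2$, $\sigma \tilde{\tau}_2$ and $\sigma b$ are all infinite cycles because they are detected by the suspension map $\sigma_*$. The additional input needed to eliminate the other generators comes from the Dyer-Lashof data in Proposition \ref{homimj}: the relations $\beta(\tilde{\tau}_i) = \tilde{\xi}_i$ and $Q^{p^i}(\tilde{\tau}_i) = \tilde{\tau}_{i+1}$ for $i \geq 2$ translate, via the Kudo transgression theorem and its Dyer-Lashof analogue, into Bökstedt differentials of the shape
\[ d^?\bigl(\gamma_p(\sigma\tilde{\tau}_i)\bigr) \doteq \sigma\tilde{\tau}_{i+1}. \]
Propagating these by the Leibniz rule and $(H\mathbb{F}_p)_*\K$-linearity inductively kills $\sigma \tilde{\xi}_i$ and $\sigma \tilde{\tau}_i$ for all $i \geq 3$ along with the higher divided powers $\gamma_{p^k}(\sigma \tilde{\tau}_i)$ for $k \geq 1$, leaving an $E^\infty$-page which agrees, as a module over $(H\mathbb{F}_p)_*\K$, with the associated graded of the claimed answer.

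The main obstacle is twofold. First, one must verify that no unexpected extra differentials occur: this is controlled by the $A_*$-comodule bialgebra structure of the spectral sequence, which severely restricts the possible targets of any differential (potential targets are forced to be comodule primitives of a specific shape), combined with a dimension count showing that the $E^\infty$-page obtained above has exactly the expected size. Second, one must resolve the multiplicative extension making $[\sigma \tilde{\tau}_2]$ a polynomial rather than a truncated-polynomial or divided-power generator; the natural way to handle this is to compare with the Bökstedt spectral sequence for $\THH(\ell_p)$ via the multiplicative inclusion $\ell_p \to \K(\bar{\mathbb{F}}_l)_p$ used in Section \ref{modphomK}, transferring the known polynomial behaviour of $[\sigma \tilde{\tau}_2]$ in $(H\mathbb{F}_p)_*\THH(\ell_p)$ back to $(H\mathbb{F}_p)_*\THH(\K)$.
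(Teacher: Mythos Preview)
Your overall strategy---run the B\"okstedt spectral sequence from Proposition \ref{homimj}, use the Dyer--Lashof/Bockstein data to produce differentials, then resolve multiplicative extensions---is the right one, and is indeed how Angeltveit--Rognes prove this (the paper itself only cites their Theorem 7.15 without reproving it; the parallel case (\ref{2}) in Subsection \ref{compBocs2} follows the same template). However, there is a genuine error in your differential.

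The Angeltveit--Rognes differential (their Proposition 5.6, used in the paper for case (\ref{2})) is
\[ d^{p-1}\bigl(\gamma_p(\sigma\tilde{\tau}_i)\bigr) \doteq \sigma\bigl(\beta Q^{p^i}\tilde{\tau}_i\bigr) = \sigma\tilde{\xi}_{i+1}, \]
not $\sigma\tilde{\tau}_{i+1}$: the Bockstein $\beta$ in the Kudo-type formula is essential, and $\beta\tilde{\tau}_{i+1} = \tilde{\xi}_{i+1}$. With your stated target $\sigma\tilde{\tau}_{i+1}$ the classes $\sigma\tilde{\xi}_i$ for $i\geq 3$ would never be hit, so your claimed $E^\infty$-page is inconsistent with your claimed differential. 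The correct pattern is that $d^{p-1}$ kills $\sigma\tilde{\xi}_i$ for $i\geq 3$ together with $\gamma_{p^k}(\sigma\tilde{\tau}_i)$ for $k\geq 1$, leaving
\[ E^\infty \cong (H\mathbb{F}_p)_*\K \otimes E(\sigma\tilde{\xi}_1^p,\sigma\tilde{\xi}_2) \otimes P_p(\sigma\tilde{\tau}_2,\sigma\tilde{\tau}_3,\dots) \otimes \Gamma(\sigma b). \]
In particular the $\sigma\tilde{\tau}_i$ for $i\geq 3$ are \emph{not} killed on $E^\infty$; they survive as truncated polynomial generators.

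The passage from $P_p(\sigma\tilde{\tau}_2,\sigma\tilde{\tau}_3,\dots)$ on $E^\infty$ to $P([\sigma\tilde{\tau}_2])$ in the abutment is a hidden multiplicative extension $[\sigma\tilde{\tau}_i]^p = [\sigma\tilde{\tau}_{i+1}]$, and this follows directly from $\sigma_*Q^{p^i} = Q^{p^i}\sigma_*$ and $Q^{p^i}\tilde{\tau}_i = \tilde{\tau}_{i+1}$ (cf.\ the proof of Theorem \ref{multerw}); no comparison with $\ell_p$ is needed here. The remaining point you do not address is why the divided powers $\gamma_{p^k}(\sigma b)$ support no differentials and why the $\Gamma$-structure on $[\sigma b]$ persists in the abutment; this is where the $A_*$-comodule and coalgebra primitivity arguments actually enter (compare the proofs of Lemmas \ref{Einfty} and \ref{EinftyBSS2} and Theorem \ref{modphomolTHHK} in the case (\ref{2}) analogue).
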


In Subsection \ref{et3} we apply the generalized Brun spectral sequence to compute the $V(1)$-homotopy of $\THH(\K)$ in case (\ref{1}). For the calculation we need the following lemma: 
\begin{lem} \label{primimj}
In case (\ref{1}) there is no non-trivial $A_*$-comodule primitive in 
\[ (H\mathbb{F}_p)_{2p^2-1}\bigl(V(1) \wedge_S^L \THH(\K)\bigr).\]
\end{lem}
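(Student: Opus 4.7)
The plan is to compute the $A_*$-coaction on a basis of the degree $2p^2-1$ summand of $(H\mathbb{F}_p)_*(V(1) \wedge_S^L \THH(\K))$ and to verify directly that no non-trivial linear combination is primitive.

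First I would use the Künneth isomorphism (\ref{kuenn}): since $(H\mathbb{F}_p)_*V(1) \cong E(e_0, e_1)$ with $|e_0| = 1$, $|e_1| = 2p - 1$ is flat over $\mathbb{F}_p$, this identifies $(H\mathbb{F}_p)_*(V(1) \wedge_S^L \THH(\K))$ with $E(e_0, e_1) \otimes (H\mathbb{F}_p)_*\THH(\K)$ as an $A_*$-comodule algebra. Combining with Proposition \ref{homimj} and Theorem \ref{homthhimj}, a degree count (using $|b| = 2p^2 - 2p - 1$, $|\tilde{\xi}_1^p| = 2p^2 - 2p$, $|\tilde{\xi}_2| = 2p^2 - 2$, $|\tilde{\tau}_2| = 2p^2 - 1$, $|[\sigma b]| = 2p^2 - 2p$, $|[\sigma \tilde{\xi}_1^p]| = 2p^2 - 2p + 1$, $|[\sigma \tilde{\xi}_2]| = 2p^2 - 1$, $|[\sigma \tilde{\tau}_2]| = 2p^2$) shows that the only basis monomials in total degree $2p^2 - 1$ are
\[ \tilde{\tau}_2,\quad [\sigma \tilde{\xi}_2],\quad e_0 \tilde{\xi}_2,\quad e_1 \tilde{\xi}_1^p,\quad e_1 [\sigma b],\quad e_0 e_1 b. \]

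Next I would assemble the $A_*$-coactions on these six classes. By Proposition \ref{homimj}, $b$ is primitive, hence so is $[\sigma b]$ (as $\sigma$ is $A_*$-equivariant and $\sigma(1) = 0$). The $p$-th power formula gives $\nu(\tilde{\xi}_1^p) = 1 \otimes \tilde{\xi}_1^p + \bar{\xi}_1^p \otimes 1$ modulo a possible correction in $A_* \otimes \mathbb{F}_p\{b\}$. The coactions on $\tilde{\xi}_2$ and, via $\sigma$-equivariance, on $[\sigma \tilde{\xi}_2]$ come directly from Proposition \ref{homimj} (the term $\bar{\xi}_2 \otimes 1$ in $\nu(\tilde{\xi}_2)$ vanishes after applying $\sigma$). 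The coaction of $\tilde{\tau}_2$ is obtained by lifting the known coaction of $\bar{\tau}_2 \in A_*$ along the map $(H\mathbb{F}_p)_*\K \to (H\mathbb{F}_p)_*\ell_p$, yielding
\[ \nu(\tilde{\tau}_2) = 1 \otimes \tilde{\tau}_2 + \bar{\tau}_2 \otimes 1 + \bar{\tau}_1 \otimes \tilde{\xi}_1^p + \bar{\tau}_0 \otimes \tilde{\xi}_2 + (\text{correction in } A_* \otimes \mathbb{F}_p\{b\}). \]
Standard cell-structure calculations yield $\nu(e_0) = 1 \otimes e_0 + \bar{\tau}_0 \otimes 1$ and $\nu(e_1) = 1 \otimes e_1 + \bar{\tau}_1 \otimes 1$ (possibly with lower-order terms), and the coactions of the remaining four monomials then follow from the comodule-algebra structure.

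The proof concludes by a direct linear algebra argument. Given an arbitrary element $z = a_1 \tilde{\tau}_2 + a_2 [\sigma \tilde{\xi}_2] + a_3 e_0 \tilde{\xi}_2 + a_4 e_1 \tilde{\xi}_1^p + a_5 e_1 [\sigma b] + a_6 e_0 e_1 b$ and requiring $\nu(z) = 1 \otimes z$, reading off distinguished coefficients forces each $a_i$ to vanish in cascade: the coefficient of $\bar{\tau}_2 \otimes 1$ gives $a_1 = 0$ (only $\nu(\tilde{\tau}_2)$ contributes this term); then $\bar{\xi}_1 \otimes [\sigma \tilde{\xi}_1^p]$ gives $a_2 = 0$; $\bar{\xi}_1 \otimes e_0 \tilde{\xi}_1^p$ gives $a_3 = 0$; $\bar{\xi}_1^p \otimes e_1$ gives $a_4 = 0$; $\bar{\tau}_1 \otimes [\sigma b]$ gives $a_5 = 0$; and finally $\bar{\tau}_0 \otimes e_1 b$ gives $a_6 = 0$. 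The main obstacle is purely book-keeping: at each stage one must verify that the chosen target $A_*$-tensor-basis element appears in the coaction of exactly the monomial indicated, which is where the explicit coaction formulas collected above are essential.
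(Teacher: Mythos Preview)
Your approach is essentially the same as the paper's: both list the six basis elements in degree $2p^2-1$, assemble enough coaction information, and eliminate the coefficients one at a time. Two points deserve care, however.

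First, in case~(\ref{1}) there is no class $\tilde{\xi}_1$ in $(H\mathbb{F}_p)_*\K$; the generator called $\tilde{\xi}_1^p$ in Proposition~\ref{homimj} is \emph{not} a $p$-th power (indeed $(H\mathbb{F}_p)_{2p-2}\K=0$ by Lemma~\ref{homK0}). So your ``$p$-th power formula'' does not apply, and one must argue directly (e.g.\ by mapping to $(H\mathbb{F}_p)_*\ell_p$) that $\nu(\tilde{\xi}_1^p)=1\otimes\tilde{\xi}_1^p+\bar{\xi}_1^p\otimes1$ modulo a possible $\bar{\tau}_0\otimes b$ term.

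Second, your cascade step~3 is not independent of step~4. The ``lower-order terms'' you allow in $\nu(e_1)$ actually include a nonzero $\bar{\xi}_1\otimes e_0$ summand (this is $\mathcal{P}^1_*(\epsilon_1)\doteq\epsilon_0$, coming from the $v_1$-attaching map), so $\nu(e_1\tilde{\xi}_1^p)$ also contributes to the coefficient of $\bar{\xi}_1\otimes e_0\tilde{\xi}_1^p$. Step~3 therefore only yields $a_3+\alpha a_4=0$ for some $\alpha\neq0$; you must run step~4 first (or combine the two) to conclude $a_3=0$. With this reordering your argument goes through.

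The paper avoids both issues by replacing explicit coefficient-chasing with structural observations: after isolating $[\sigma\tilde{\xi}_2]$ and $\epsilon_1[\sigma b]$ via subspace arguments, it maps the remaining part into $(H\mathbb{F}_p)_*(V(1)\wedge_S^L\ell_p)\cong A_*$, uses that $A_*$ has no nontrivial positive-degree primitives, and identifies the kernel of this map. This eliminates three coefficients at once without needing the precise form of $\nu(\epsilon_1)$ or $\nu(\tilde{\xi}_1^p)$.
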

\begin{proof}
We have an isomorphism of comodule algebras:
\[ (H\mathbb{F}_p)_*\bigl(V(1) \wedge_S^L \THH(\K)\bigr) \cong (H\mathbb{F}_p)_*V(1) \otimes (H\mathbb{F}_p)_*\THH(\K).\]
Recall that  $ (H\mathbb{F}_p)_*V(1) \cong E(\epsilon_0, \epsilon_1)$ 
 with $|\epsilon_0| = 1$ and $|\epsilon_1| = 2p-1$ and that $(H\mathbb{F}_p)_*V(1)$ contains no non-trivial comodule primitive in positive degree.  This follows from the observation that we can map $(H\mathbb{F}_p)_*V(1)$ injectively into $A_*$ via the map of comodule algebras
\[(H\mathbb{F}_p)_*V(1) \cong (H\mathbb{F}_p)_*(V(1) \wedge_S^L S) \to (H\mathbb{F}_p)_*(V(1) \wedge_S^L \ell_p).\]


 With Theorem \ref{homthhimj}  we get that an $\mathbb{F}_p$-basis of $(H\mathbb{F}_p)_{2p^2-1}\bigl(V(1) \wedge_S^L \THH(\K)\bigr)$ is given by the classes
 \[[\sigma \tilde{\xi}_2], \, \, \, \epsilon_1 [\sigma b], \, \, \, \tilde{\tau}_2, \, \,\, \epsilon_1 \tilde{\xi}_1^p, \,\, \, \epsilon_0\tilde{\xi}_2, \, \, \, \epsilon_0 \epsilon_1 b.\] 
 By Proposition \ref{homimj} the class $[\sigma b]$ is a comodule primitive and we have 
 \[ \nu([\sigma \tilde{\xi}_2]) = 1 \otimes [\sigma \tilde{\xi}_2] +  \bar{\xi}_1 \otimes [\sigma \tilde{\xi}_1^p] + \tau_1 \otimes [\sigma b]. \]
 Let $x \in (H\mathbb{F}_p)_{2p^2-1}\bigl(V(1) \wedge_S^L \THH(\K)\bigr)$ be a comodule primitive. We write 
 \[x = \lambda_1 [\sigma \tilde{\xi}_2] +  \lambda_2 \epsilon_1 [\sigma b] + \lambda_3 \tilde{\tau}_2 + \lambda_4 \epsilon_1 \tilde{\xi}_1^p + \lambda_5 \epsilon_0 \tilde{\xi}_2 + \lambda_{6} \epsilon_0\epsilon_1 b\]
 with $\lambda_i \in \mathbb{F}_p$. 
 The $A_*$-coaction of $x$ and the $A_*$-coaction of 
 \[  \lambda_2 \epsilon_1 [\sigma b] + \lambda_3 \tilde{\tau}_2 + \lambda_4 \epsilon_1 \tilde{\xi}_1^p + \lambda_5 \epsilon_0 \tilde{\xi}_2 + \lambda_{6} \epsilon_0\epsilon_1 b\] 
 lie in 
 \[A_* \otimes (H\mathbb{F}_p)_*V(1) \otimes (H\mathbb{F}_p)_*\K \otimes \, E([\sigma \tilde{\xi}_2]) \otimes \Gamma ([\sigma b]).\] 
 Since this is not true for $\nu([\sigma \tilde{\xi}_2])$ it follows that $\lambda_1 = 0$. 
 The $A_*$-coaction of $x$ and 
 \[\lambda_3 \tilde{\tau}_2 + \lambda_4 \epsilon_1 \tilde{\xi}_1^p + \lambda_5 \epsilon_0 \tilde{\xi}_2 + \lambda_{6} \epsilon_0\epsilon_1 b\]
 lie in 
 \[ A_* \otimes (H\mathbb{F}_p)_*V(1) \otimes (H\mathbb{F}_p)_*\K \,\,\,  \oplus \,\,\, (H\mathbb{F}_p)_*V(1) \otimes \mathbb{F}_p\{[\sigma b]\}. \]
 Since $\epsilon_1$ is not primitive, this is not the case for $\epsilon_1 [\sigma b]$. We get $\lambda_2 = 0$. 
 Hence, we have 
 \[x \in (H\mathbb{F}_p)_*V(1) \otimes (H\mathbb{F}_p)_*\K \cong (H\mathbb{F}_p)_*\bigl(V(1) \wedge^L_S \K\bigr).\]
 The class $x$ has to be in the kernel of 
 \[\begin{tikzcd}
  (H\mathbb{F}_p)_*\bigl(V(1) \wedge_S^L \K) \ar{r} & (H\mathbb{F}_p)_*\bigl(V(1) \wedge_S^L \ell_p\bigr),
  \end{tikzcd}\]
  because there is no non-trivial comodule primitive in $(H\mathbb{F}_p)_*\bigl(V(1) \wedge_S^L \ell_p\bigr) \cong A_*$ in positive degree.
  By Proposition \ref{homimj} the kernel is given by 
  \[ (H\mathbb{F}_p)_*V(1) \otimes \mathbb{F}_p\{b\} \otimes P(\tilde{\xi}_1^p, \tilde{\xi}_2, \dots) \otimes E(\tilde{\tau}_2, \dots). \]
  This implies $\lambda_3 = \lambda_4 = \lambda_5 = 0$. Since $b$ is a comodule primitive and $\epsilon_0\epsilon_1$ is not primitive, $\epsilon_0\epsilon_1 b$ is not primitive. We get $\lambda_6 = 0$ and therefore $x = 0$. 
\end{proof}

\subsection{The second case} \label{compBocs2}
In this subsection we compute the $V(1)$-homotopy of $\THH(\K)$ in case (\ref{2}). We first consider the B\"okstedt spectral sequences converging to $(H\mathbb{F}_p)_*\THH(\K; H\mathbb{Z}_p)$ and $(H\mathbb{F}_p)_*\THH(\K)$. 
 In this part we apply methods from \cite{AnRo} and \cite{Au}. Furthermore, we use the naturality of the B\"okstedt spectral sequence 
 with respect to the morphism $\K \to \K(\bar{\mathbb{F}}_\ell)_p$ and  Ausoni's results \cite{Au} about the B\"okstedt spectral sequences for connective  complex $K$-theory. After computing $(H\mathbb{F}_p)_*\THH(\K)$ we show that $V(1) \wedge_S^L \THH(\K)$ is a module over the $S$-ring spectrum $H\mathbb{F}_p$ and  we deduce the $V(1)$-homotopy of $\THH(\K)$. 
 
 In this subsection we use the map 
\[\begin{tikzcd}
 \K \ar{r} & \K(\bar{\mathbb{F}}_l)_p \ar{r} & H\mathbb{Z}_p 
 \end{tikzcd}\]
  to build $\THH(\K; H\mathbb{Z}_p)$. 
  We denote  by $(E^*_{*,*}, d^*)$ and  $(\tilde{E}_{*,*}^*, \tilde{d}^*)$ the B\"okstedt spectral sequences converging to $(H\mathbb{F}_p)_*\THH(\K; H\mathbb{Z}_p)$  and $(H\mathbb{F}_p)_*\THH(\K(\bar{\mathbb{F}}_l)_p; H\mathbb{Z}_p)$. 
We have a map of spectral sequences $E^*_{*,*} \to \tilde{E}^*_{*,*}$, which we denote by $i^*$. 
We define 
\[B_* \coloneqq (H\mathbb{F}_p)_*H\mathbb{Z}_p = P(\bar{\xi}_1, \dots) \otimes E(\bar{\tau}_1, \dots). \]
Recall from \cite[p.1283]{Au} that the map $
 (H\mathbb{F}_p)_*\K(\bar{\mathbb{F}}_l)_p \to B_*$ is given by 
$\bar{\xi}_i \mapsto \bar{\xi}_i$, $\bar{\tau}_i \mapsto \bar{\tau}_i$ and $u \mapsto 0$. 
Using standard facts about Hochschild homology (see \cite[Proposition 2.4]{AnRo}, \cite[Proposition 3.2]{Au}) we get the following: We have \[E^2_{*,*} = B_* \otimes E(\sigma \tilde{\xi}_1, \sigma \tilde{\xi}_2, \dots) \otimes \Gamma(\sigma    x, \sigma \tilde{\tau}_2, \sigma\tilde{\tau}_3, \dots)\]
as a $B_*$-algebra. Every $a \in \{\sigma \tilde{\xi}_1, \sigma \tilde{\xi}_2, \dots\}$ is a coalgebra primitive. 
For the classes $a \in \{\sigma x, \sigma \tilde{\tau}_2, \sigma \tilde{\tau}_3, \dots\}$ we have the following formula for the comultiplication:
\[\psi^2(\gamma_n(a)) = \sum_{i+j= n} \gamma_i(a) \otimes_{B_*} \gamma_j(a).\]
The class  $\gamma_n(\sigma x)$ is  represented in the Hochschild complex by the cycle 
$ 1 \otimes x^{\otimes n}$. 
 Since $x \in (H\mathbb{F}_p)_*\K$ is primitive, $\gamma_n(\sigma x)$ is a comodule  primitive. 
Because $\sigma$ is a derivation we get that 
the classes $\sigma \tilde{\xi}_n$ are comodule primitives for $n \geq 2$  and that the coactions on $\sigma \tilde{\xi}_1$ and $\sigma \tilde{\tau}_n$ are given by:
\begin{eqnarray*}
\nu^2(\sigma \tilde{\xi}_1) & = &  1 \otimes \sigma \tilde{\xi}_1 + a \bar{\tau}_0 \otimes \sigma x \\
\nu^2(\sigma \tilde{\tau}_n) & = & 1 \otimes \sigma \tilde{\tau}_n + \bar{\tau}_0 \otimes \sigma \tilde{\xi}_n.
\end{eqnarray*}
Here, $a$ is the element of $\mathbb{F}_p$ defined in Proposition \ref{homolK}. 
 Recall from \cite[Section 6]{Au} that 
\[ \tilde{E}^2_{*,*} = B_* \otimes E(\sigma u, \sigma \bar{\xi}_1,  \sigma \bar{\xi}_2, \dots) \otimes \Gamma(w, \sigma \bar{\tau}_2, \sigma \bar{\tau}_3, \dots),\]
where $w$ has the bidegree $|w| = (2, 2p-2)$. For $b \in B_*$ we have $i^2(b) = b$.  Furthermore, we have $i^2(\gamma_n(\sigma x)) = 0$ for $n \geq 1$, $i^2(\sigma \tilde{\xi}_n) = \sigma \bar{\xi}_n$ and  $i^2(\gamma_j(\sigma \tilde{\tau}_n)) = \gamma_j(\sigma \bar{\tau}_n)$. 

\begin{lem}
We have $d = 0$ for $i = 2, \dots, p-2$. 
\end{lem}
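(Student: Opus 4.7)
The plan is to leverage that $(E^r_{*,*}, d^r)$ is an $A_*$-comodule $B_*$-bialgebra spectral sequence; the flatness of $E^2 = B_* \otimes E(\sigma \tilde{\xi}_n) \otimes \Gamma(\sigma x, \sigma \tilde{\tau}_n)$ over $B_*$ is immediate. Since $d^i$ is a $B_*$-linear derivation vanishing on $B_* = E^i_{0,*}$, it suffices to establish $d^i = 0$ on the $B_*$-algebra generators, namely the filtration-$1$ classes $\sigma \tilde{\xi}_n$, $\sigma x$, $\sigma \tilde{\tau}_n$ and the divided-power classes $\gamma_j(\sigma x)$, $\gamma_j(\sigma \tilde{\tau}_n)$ for $j \geq 2$. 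For the filtration-$1$ generators the target of $d^i$ sits at filtration $1-i<0$ and is automatically zero.

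For the divided powers I would induct on $j$. The coderivation property applied to the shuffle coproduct $\psi(\gamma_j a) = \sum_{\ell + m = j}\gamma_\ell(a) \otimes \gamma_m(a)$ together with the inductive hypothesis $d^i(\gamma_\ell a) = 0$ for $0 < \ell < j$ gives $\psi(d^i \gamma_j a) = 1 \otimes d^i \gamma_j a + d^i \gamma_j a \otimes 1$, exhibiting $d^i(\gamma_j a)$ as a primitive of $E^i$. The primitives of $E^2$ in positive filtration are the $B_*$-linear span of the filtration-$1$ generators, and $B_*$ itself occupies filtration $0$; this already forces $d^i(\gamma_j a) = 0$ whenever $j-i \geq 2$, leaving only the cases $j-i \in \{0,1\}$.

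I would handle these remaining cases according to the type of divided power. For $a = \sigma x$, the class $\gamma_j(\sigma x)$ is itself $A_*$-comodule primitive (since $\sigma x$ is so by the discussion preceding the lemma, and divided powers of comodule primitives remain comodule primitive), hence so is $d^i(\gamma_j \sigma x)$. Since the inclusion $B_* \hookrightarrow A_*$ is compatible with the $A_*$-coaction and $A_*$ admits no positive-degree comodule primitives, the filtration-$0$ case is immediately killed; the filtration-$1$ case reduces to an $\mathbb{F}_p$-linear combination of the comodule-primitive $\sigma$-generators (namely $\sigma x$ and $\sigma \tilde{\xi}_n$ for $n \geq 2$), which a total-degree count at the target bidegree $j(2p-2)-1$ rules out for $p \geq 3$. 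For $a = \sigma \tilde{\tau}_n$, I would compare with the spectral sequence $\tilde{E}^r_{*,*}$ via the map $i^r$ induced by $\K \to \K(\bar{\mathbb{F}}_l)_p$: since $i^r(\gamma_j \sigma \tilde{\tau}_n) = \gamma_j(\sigma \bar{\tau}_n)$ and Ausoni's computation in \cite{Au} gives $\tilde{d}^i = 0$ for $2 \leq i \leq p-2$, the class $d^i(\gamma_j \sigma \tilde{\tau}_n)$ lies in $\ker i^i$, and this combined with the primitivity constraint rules out nontrivial targets in the appropriate bidegree.

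The main obstacle is the $\gamma_j(\sigma x)$ case: the comparison to Ausoni's spectral sequence gives no information there because $i^r(\gamma_j \sigma x) = 0$. The resolution hinges on the $A_*$-comodule primitivity of $\gamma_j(\sigma x)$ together with the scarcity of $A_*$-comodule primitives in $B_*$, reducing the problem to the explicit degree check described above.
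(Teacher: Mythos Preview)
Your approach has the right idea—coalgebra primitivity forces the target of a first nonzero differential into filtration~$1$—but the edge-case analysis for $j-i\in\{0,1\}$ is both unnecessary and incompletely justified. The paper avoids it entirely by taking as $B_*$-algebra generators only the classes $\sigma\tilde{\xi}_n$ together with $\gamma_{p^\ell}(\sigma x)$ and $\gamma_{p^\ell}(\sigma\tilde{\tau}_n)$, using that over $\mathbb{F}_p$ every $\gamma_j$ is (up to a unit) a product of $\gamma_{p^\ell}$'s. Thus every algebra generator lies in filtration $1$ or in filtration $\geq p$. The minimal-degree / coderivation argument (which you also use, and which is \cite[Proposition~4.8]{AnRo}) then shows that a first nonzero differential $d^{s_0}$ on such a generator must hit a coalgebra primitive, hence land in filtration~$1$; but the source has filtration $\geq p$, so $d^{s_0}$ lands in filtration $\geq p-s_0\geq 2$ for $s_0\leq p-2$, a contradiction. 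No comodule structure, no comparison with Ausoni's spectral sequence, and no degree count is needed.

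Your edge cases have genuine gaps. For $a=\sigma\tilde{\tau}_n$ in filtration~$1$ you obtain $d^i(\gamma_j\sigma\tilde{\tau}_n)\in\ker i^i = B_*\{\sigma x\}$, but $\gamma_j(\sigma\tilde{\tau}_n)$ is \emph{not} $A_*$-comodule primitive, so your ``primitivity constraint'' does not apply and nothing you have said rules out a nonzero $b\cdot\sigma x$ with $b\in B_*$ of the appropriate (odd, and generically large) degree. For $a=\sigma x$ in filtration~$1$, the assertion that comodule primitives in $B_*\otimes(\text{span of filtration-}1\text{ generators})$ are exactly the $\mathbb{F}_p$-combinations of the comodule-primitive generators is not obvious (tensor products of comodules can acquire new primitives) and you have not justified it. Both problems disappear once you restrict attention to the generating set $\{\gamma_{p^\ell}\}$, since then $j\geq p$ and the case $j-i\leq 1$ never arises for $i\leq p-2$.
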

\begin{proof}
The $E^2$-page is generated as a $B_*$-algebra by 
\[ T \coloneqq \{ \sigma \tilde{\xi}_i \mid i \geq 1\} \cup  \{ \gamma_{p^i}(\sigma x) \mid i \geq 0\} \cup \{\gamma_{p^i} (\sigma \tilde{\tau}_j) \mid i \geq 0, j \geq 2\}. \]
Suppose that the spectral sequence has non-trivial differentials. Let $s_0$ be the minimal number such that $d^{s_0} \neq 0$ and let $b$ be a class of minimal total degree in $T$ with $d^{s_0}(b) \neq 0$. 
Because the classes $\sigma \tilde{\xi}_i$, $\sigma x$ and $\sigma \tilde{\tau}_j$ lie in the first column they cannot support  differentials. Thus, $b$ has filtration degree at least $p$.  
Because the differential is compatible with the coalgebra structure $d^{s_0}(b)$ has to  be a coalgebra primitive \cite[Proposition 4.8]{AnRo}.  
The $B_*$-module of coalgebra primitives  of $E^{2}_{*,*}$ is given by
\[ B_* \otimes \Bigl(\bigoplus_{i \geq 1} \mathbb{F}_p\{\sigma \tilde{\xi}_i\} \oplus \mathbb{F}_p\{\sigma x\} \oplus \bigoplus_{i \geq 2} \mathbb{F}_p\{\sigma \tilde{\tau}_2\}\Bigr).\]
It follows that $d^{s_0}(b)$ has filtration degree one. 
\end{proof}
\begin{lem} \label{diffx}
We have $d^{p-1}(\gamma_n(\sigma x)) = 0$ for all $n$.
\end{lem}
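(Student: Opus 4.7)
The strategy is to combine two observations: that $\gamma_n(\sigma x)$ is an $A_*$-comodule primitive on $E^2_{*,*}$, and that the target bidegree of any possible $d^{p-1}$-differential on it contains no nonzero comodule primitive.

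For the first observation, recall from Proposition \ref{homolK} that $x$ is an $A_*$-comodule primitive in $(H\mathbb{F}_p)_*\K$. The class $\gamma_n(\sigma x)$ is represented on the Hochschild complex by $1 \otimes x^{\otimes n} \in B_* \otimes \bigl((H\mathbb{F}_p)_*\K\bigr)^{\otimes n}$, and since the $A_*$-coaction on the Hochschild complex is induced factorwise, this representative is itself primitive. The preceding lemma gives $E^{p-1}_{*,*} = E^2_{*,*}$, and $d^{p-1}$ commutes with the $A_*$-coaction, so $d^{p-1}(\gamma_n(\sigma x))$ is a comodule primitive lying in bidegree $(n-p+1,\, n(2p-3)+p-2)$, which has odd total degree $n(2p-2)-1$.

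For the second observation, the case $n<p-1$ is vacuous since the target filtration is negative. For $n=p-1$ the target lies in $B_* = (H\mathbb{F}_p)_*H\mathbb{Z}_p$, which embeds as a sub-Hopf algebra of $A_*$ and therefore has no positive-degree comodule primitives. For $n\geq p$ I enumerate primitives in the subalgebra generated by the evident primitive generators, namely $1\in B_*$, the classes $\sigma \tilde{\xi}_m$ for $m\geq 2$ (one checks $\nu(\sigma \tilde{\xi}_m) = 1 \otimes \sigma \tilde{\xi}_m$ by applying $\mathrm{id} \otimes \sigma$ to the formula $\nu(\tilde{\xi}_m) = \sum_{i+j=m}\bar{\xi}_i \otimes \tilde{\xi}_j^{p^i}$ and using that $\sigma$ kills $p$-th powers), and the divided powers $\gamma_m(\sigma x)$. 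A direct bidegree calculation rules these out: for a monomial with a single factor $\sigma \tilde{\xi}_{n_1}$ multiplied by $\gamma_{n-p}(\sigma x)$ one is forced into $p^{n_1} = p(p-1)$, which has no solution, while for higher numbers of $\sigma \tilde{\xi}_{n_j}$ factors the required sum of prime powers $p^{n_j}$ exceeds the available value.

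The main obstacle is ruling out ``hidden'' primitives, i.e.\ comodule primitives that are not products of the primitive generators. I plan to control these by filtering $E^2_{*,*}$ by the total number of occurrences of the non-primitive generators $\sigma \tilde{\xi}_1$, the Milnor classes $\bar{\tau}_i \in B_*$, and the divided powers $\gamma_m(\sigma \tilde{\tau}_j)$, and solving inductively for the coefficients of a putative primitive. Further leverage comes from naturality with the map $i^*: E^*_{*,*} \to \tilde{E}^*_{*,*}$ induced by $\K \to \K(\bar{\mathbb{F}}_l)_p$: since $i^2(\sigma x) = 0$, the image $i^{p-1}(\gamma_n(\sigma x))$ vanishes, so $d^{p-1}(\gamma_n(\sigma x))$ lies in $\ker i^{p-1}$, and combining this with Ausoni's known differentials in $\tilde{E}^*_{*,*}$ pins the image down to zero.
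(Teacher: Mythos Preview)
Your approach has a genuine gap that you yourself flag: classifying all $A_*$-comodule primitives in the target bidegree of $E^{p-1}_{*,*}$ is hard, and your plan for handling the ``hidden'' primitives (filtering by occurrences of non-primitive generators, invoking naturality to $\tilde E^*_{*,*}$) is not actually carried out. The comodule primitives in $B_* \otimes E(\sigma\tilde\xi_i) \otimes \Gamma(\sigma x, \sigma\tilde\tau_j)$ are not just the monomials in primitive generators; there can be nontrivial linear combinations involving $\bar\tau_i \in B_*$ and the $\gamma_m(\sigma\tilde\tau_j)$ whose coactions cancel. Your naturality observation that $d^{p-1}(\gamma_n(\sigma x)) \in \ker i^{p-1}$ is correct, but you would still need to identify that kernel in the relevant bidegree, which is no easier.

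The key structural tool you are not using is the $B_*$-\emph{coalgebra} structure on the B\"okstedt spectral sequence. The paper's argument runs as follows: take $n_0$ minimal with $d^{p-1}(\gamma_{n_0}(\sigma x)) \neq 0$; necessarily $n_0 = p^j$ for some $j \geq 1$. From the comultiplication formula $\psi(\gamma_{n_0}(\sigma x)) = \sum_{i+j=n_0}\gamma_i(\sigma x)\otimes_{B_*}\gamma_j(\sigma x)$ and minimality of $n_0$, the class $d^{p-1}(\gamma_{n_0}(\sigma x))$ is a \emph{coalgebra} primitive. But the coalgebra primitives of $E^{p-1} = E^2$ all lie in filtration degree~$1$, which forces $n_0 = p$. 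Now the target sits in $B_* \otimes (\mathbb{F}_p\{\sigma x\} \oplus \mathbb{F}_p\{\sigma\tilde\xi_1\})$ for degree reasons, and the comodule primitives there are contained in $\mathbb{F}_p\{\sigma x\} \oplus \mathbb{F}_p\{\sigma\tilde\xi_1\}$; neither has total degree $p(2p-2)-1$, giving the contradiction. The coalgebra structure thus collapses the problem from ``classify all comodule primitives in an arbitrary bidegree'' to ``classify comodule primitives in filtration~$1$'', which is trivial.
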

\begin{proof}
We assume that there is an $n$ with $d^{p-1}(\gamma_n(\sigma x)) \neq 0$. Let $n_0$ be minimal with this property. We must have $n_0 = p^j$ for a $j > 0$. Since $\gamma_{n_0}(\sigma x)$ is a comodule primitive, $d^{p-1}(\gamma_{n_0}(\sigma x))$ is also a comodule primitive. Because of the minimality of $n_0$ it follows from the formula for $\psi^2(\gamma_{n_0}(\sigma x))$ that $d^{p-1}(\gamma_{n_0}(\sigma x))$ is a coalgebra primitive.  The coalgebra primitives have filtration degree one. Therefore, we have $j = 1$. For degree reasons we get
\[ d^{p-1}(\gamma_p(\sigma x)) \in B_* \otimes \bigl(\mathbb{F}_p\{\sigma x\} \oplus \mathbb{F}_p\{\sigma \tilde{\xi}_1\}\bigr).\] If $a = 0$ the comodule primitives in this vector space are $\mathbb{F}_p\{\sigma x\} \oplus \mathbb{F}_p\{\sigma \tilde{\xi}_1\}$. If $a \neq 0$ the comodule primitives are $\mathbb{F}_p\{\sigma x\}$. Since the total degree of $d^{p-1}(\gamma_p(\sigma x))$ is different from the total degree of $\sigma x$ and from the total degree of $\sigma \tilde{\xi}_1$ we get a contradiction. 
\end{proof}
\begin{lem}
We have $d^{p-1}(\gamma_{p+i}(\sigma \tilde{\tau}_n)) \doteq \sigma \tilde{\xi}_{n+1} \gamma_i(\sigma \tilde{\tau}_n)$ for 
$i \geq 0$ and $n \geq 2$ and therefore
\[E^p_{*,*} = B_* \otimes E(\sigma \tilde{\xi}_1, \sigma \tilde{\xi}_2) \otimes P_p(\sigma \tilde{\tau}_2, \sigma \tilde{\tau}_3, \dots) \otimes \Gamma(\sigma x).\]
\end{lem}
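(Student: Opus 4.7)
The strategy is to first establish the claimed differential formula by combining naturality of the B\"okstedt spectral sequence with Ausoni's computation for $\ku_p$, and then read off $E^p_{*,*}$ as its homology. For the differential, I would proceed in two stages: a transfer from the $\tilde{E}$-spectral sequence, followed by an analysis of the kernel of $i^*$ to rule out correction terms.

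The first stage is naturality. The morphism of spectral sequences $i^*\colon(E^*_{*,*},d^*)\to(\tilde E^*_{*,*},\tilde d^*)$ satisfies $i^2(\gamma_j(\sigma\tilde\tau_n))=\gamma_j(\sigma\bar\tau_n)$ and $i^2(\sigma\tilde\xi_{n+1})=\sigma\bar\xi_{n+1}$. Ausoni's analysis of the B\"okstedt spectral sequence for $\THH(\K(\bar{\mathbb{F}}_l)_p;H\mathbb{Z}_p)$, recalled above, yields
\[ \tilde d^{p-1}(\gamma_{p+i}(\sigma\bar\tau_n))\doteq\sigma\bar\xi_{n+1}\gamma_i(\sigma\bar\tau_n)\qquad(n\geq2,\ i\geq0). \]
Hence
\[ d^{p-1}(\gamma_{p+i}(\sigma\tilde\tau_n))=\lambda_{n,i}\,\sigma\tilde\xi_{n+1}\gamma_i(\sigma\tilde\tau_n)+\epsilon_{n,i} \]
for some $\lambda_{n,i}\in\mathbb{F}_p^*$ and some $\epsilon_{n,i}\in\ker(i^{p-1})$.

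The second stage is to show $\epsilon_{n,i}=0$. On $E^2$ the kernel of $i^2$ is the ideal generated by $\gamma_j(\sigma x)$ for $j\geq1$, since $i^2$ is injective modulo that ideal. Any putative $\epsilon_{n,i}$ would thus be a $B_*$-combination of monomials containing some $\gamma_j(\sigma x)$-factor, of filtration $i+1$ and total degree $2p^{n+1}+2ip^n-1$. I would eliminate these using compatibility of $d^{p-1}$ with the $A_*$-coaction: the class $\sigma x$ is $A_*$-comodule primitive (as $x$ is, by Proposition \ref{homolK}), while the coaction of $\gamma_{p+i}(\sigma\tilde\tau_n)$ is computed from $\nu^2(\sigma\tilde\tau_n)=1\otimes\sigma\tilde\tau_n+\bar\tau_0\otimes\sigma\tilde\xi_n$ via the divided-power coaction formulas. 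Matching coactions on both sides of the equation displayed above forces $\epsilon_{n,i}=0$. In the base case $i=0$, $n=2$, a simpler argument may suffice: one can first establish $d^{p-1}(\gamma_p(\sigma\tilde\tau_n))\doteq\sigma\tilde\xi_{n+1}$ either via Kudo's transgression (which identifies $d^{p-1}(\gamma_p(\sigma\tilde\tau_n))$ with $\sigma$ applied to a Dyer--Lashof operation on $\tilde\tau_n$, and then uses $Q^{p^n}(\tilde\tau_n)=\tilde\tau_{n+1}$ and $\beta(\tilde\tau_{n+1})=\tilde\xi_{n+1}$ from Proposition \ref{homolK}), or by exhibiting the short list of filtration-$1$ classes of the correct total degree that are not in the image of $i^{p-1}$ and ruling them out one by one; the general $i$ then follows by Leibniz and the divided-power identity $\gamma_{p+i}(y)=\binom{p+i}{i}^{-1}\gamma_p(y)\gamma_i(y)$ (with the convention handling $\binom{p+i}{i}\bmod p$ via Lucas' theorem) together with $d^{p-1}(\gamma_i(\sigma\tilde\tau_n))=0$ for $i<p$ (a filtration and comodule-primitivity argument analogous to Lemma \ref{diffx}).

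Once the differential formula is in hand, the computation of $E^p_{*,*}$ is routine. The classes $\sigma\tilde\xi_1$ and $\sigma\tilde\xi_2$ are cycles and, for degree reasons, cannot be hit. For each $n\geq2$ the differential kills $\sigma\tilde\xi_{n+1}$ and, via Leibniz, converts the full $\Gamma(\sigma\tilde\tau_n)\otimes E(\sigma\tilde\xi_{n+1})$ factor into $P_p(\sigma\tilde\tau_n)$: the truncated classes $\gamma_i(\sigma\tilde\tau_n)$ for $0\leq i<p$ survive and the $\gamma_{p+i}(\sigma\tilde\tau_n)$ for $i\geq0$ become boundaries. By Lemma \ref{diffx} the divided-power algebra $\Gamma(\sigma x)$ survives untouched, and $B_*$-linearity delivers the asserted description of $E^p_{*,*}$. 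The main obstacle is step two, the uniform vanishing of $\epsilon_{n,i}$: the cleanest route is to separate the base case $(n,i)=(2,0)$ (handled by transgression and degree bookkeeping) from the inductive step (handled by multiplicativity of $d^{p-1}$ and the divided-power structure).
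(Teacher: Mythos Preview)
Your Leibniz/divided-power route for general $i$ has a genuine gap: the identity $\gamma_p(y)\gamma_i(y)=\binom{p+i}{i}\gamma_{p+i}(y)$ cannot be inverted whenever $\binom{p+i}{i}\equiv 0\pmod p$, which by Lucas happens exactly when the $p^1$-digit of $i$ is $p-1$. For instance $i=p^2-p$ gives $\gamma_p\cdot\gamma_{p^2-p}=0$ in $\Gamma(\sigma\tilde\tau_n)$, and $\gamma_{p^2}(\sigma\tilde\tau_n)$ is an indecomposable algebra generator over $\mathbb{F}_p$, so multiplicativity of $d^{p-1}$ yields no information there. Your comodule-matching sketch (A) is not detailed enough to see whether it fills this hole; note that elements of the ideal $(\gamma_j(\sigma x))_{j\geq1}$ are not themselves comodule primitive in general, so ``matching coactions'' is not a one-line argument.

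The paper replaces multiplicativity by the \emph{coalgebra} structure of the B\"okstedt spectral sequence, which you do not invoke. From $\psi\bigl(\gamma_m(\sigma\tilde\tau_n)\bigr)=\sum_{a+b=m}\gamma_a(\sigma\tilde\tau_n)\otimes_{B_*}\gamma_b(\sigma\tilde\tau_n)$ and the inductive hypothesis $d^{p-1}\bigl(\gamma_{p+j}(\sigma\tilde\tau_n)\bigr)=\lambda_n\,\sigma\tilde\xi_{n+1}\gamma_j(\sigma\tilde\tau_n)$ for all $j<i$, one checks that the difference $d^{p-1}\bigl(\gamma_{p+i}(\sigma\tilde\tau_n)\bigr)-\lambda_n\,\sigma\tilde\xi_{n+1}\gamma_i(\sigma\tilde\tau_n)$ is a coalgebra primitive. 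Since the coalgebra primitives of $E^{p-1}_{*,*}$ live in filtration~$1$ while this class lives in filtration $i+1\geq 2$, it vanishes. This works uniformly in $i$ and sidesteps all invertibility issues. For the base case $i=0$ your approach is close to the paper's: naturality with Ausoni's $\tilde E$ gives $d^{p-1}\bigl(\gamma_p(\sigma\tilde\tau_n)\bigr)=\lambda_n\sigma\tilde\xi_{n+1}+b\,\sigma x$, and an inductive (on $n$) comodule-primitivity argument kills the $B_*\{\sigma x\}$ correction; the Kudo-transgression alternative you mention is also valid and is in fact what the paper uses in the parallel computation for $(H\mathbb{F}_p)_*\THH(\K)$.
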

\begin{proof}
We first  prove by induction on $n \geq 2$ that $d^{p-1}(\gamma_p(\sigma \tilde{\tau}_n)) \doteq \sigma \tilde{\xi}_{n+1}$. 
By \cite[Lemma 6.6]{Au} we have $\tilde{d}^{p-1}(\gamma_p(\sigma \bar{\tau}_2)) = \lambda_2 \sigma \bar{\xi}_3$ for a unit $\lambda_2 \in\mathbb{F}_p$. 
Because the kernel of 
\[\begin{tikzcd} 
E^{p-1}_{1, *} \ar{r}{i^{p-1}} & \tilde{E}_{1,*}^{p-1}
\end{tikzcd}\]
is given by $B_* \otimes \mathbb{F}_p\{\sigma x\}$ we get
\[ d^{p-1}(\gamma_p(\sigma \tilde{\tau}_2)) = \lambda_2 \sigma \tilde{\xi}_3 + b \sigma x\] 
for a class $b \in B_*$ of positive degree.  By Lemma \ref{diffx} every class in a total degree less than the total degree of $\gamma_p(\sigma \tilde{\tau}_2)$ has trivial $d^{p-1}$-differential. This implies that $d^{p-1}(\gamma_p(\sigma \tilde{\tau}_2))$ is a comodule primitive. Therefore, $b$ has to be zero. Assume that  we have proven the assertion for all $2 \leq m < n$. 
By comparing with the spectral sequence $\tilde{E}^*_{*,*}$ we get
\[ d^{p-1}(\gamma_p(\sigma \tilde{\tau}_n)) = \lambda_n \sigma \tilde{\xi}_{n+1} + b \sigma x\]
for a unit $\lambda_n \in \mathbb{F}_p$ and a class $b \in B_*$  of positive degree. 
We get
\[ \nu^{p-1}(d^{p-1}(\gamma_p(\sigma \tilde{\tau}_n)) = 1 \otimes \lambda_n\sigma \tilde{\xi}_{n+1} + \nu(b) \cdot 1 \otimes \sigma x.\] 
On the other hand we can write 
\[\nu^{p-1}(\gamma_p(\sigma \tilde{\tau}_n)) = 1 \otimes \gamma_p(\sigma \tilde{\tau}_n)  + \sum_i a_i \otimes b_i\]
for certain $a_i \in A_*$ and certain $b_i \in E^{p-1}_{p,*}$ whose internal degree is less than the internal degree of $\gamma_p(\sigma \tilde{\tau}_n)$. 
This implies that 
\[\nu^{p-1}(d^{p-1}(\gamma_p(\sigma \tilde{\tau}_n))) = 1 \otimes d^{p-1}(\gamma_p(\tilde{\tau}_n)) + \sum_i  a_i \otimes d^{p-1}(b_i).\]
By the induction hypothesis and by Lemma \ref{diffx} we have
\[d^{p-1}(b_i) = \sum_{3 \leq m \leq n}  b_{i,m} \sigma \tilde{\xi}_{m}\]
for certain $b_{i,m} \in B_*$. It follows that $b = 0$. This proves the induction step. 

We now fix $n \geq 2$. Suppose that $i \geq 1$ and that we have already shown
\[ d^{p-1}(\gamma_{p+j}(\sigma \tilde{\tau}_n)) = \lambda_n \sigma \tilde{\xi}_{n+1} \gamma_j(\sigma \tilde{\tau}_n)\]
for all $0 \leq j < i$. 
Then by the induction hypothesis 
\[d^{p-1}(\gamma_{p+i}(\sigma \tilde{\tau}_n)) - \lambda_n \sigma \tilde{\xi}_{n+1} \gamma_i(\sigma \tilde{\tau}_n)\]
is a coalgebra primitive. Because it lies in a  filtration degree $> 1$, it has to be  zero.  This proves the induction step and therefore the lemma. 
\end{proof}
\begin{lem} \label{Einfty}
We have $d^s =  0$ for all  $s \geq p$. Therefore, we get
\[E^{\infty}_{*,*} = B_* \otimes E(\sigma \tilde{\xi}_1, \sigma \tilde{\xi}_2) \otimes P_p(\sigma \tilde{\tau}_2, \sigma \tilde{\tau}_3, \dots) \otimes \Gamma(\sigma x).\]
\end{lem}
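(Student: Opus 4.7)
The plan is to show that $E^p_{*,*}$ consists of infinite cycles, from which the identification $E^{\infty}_{*,*} = E^p_{*,*}$ follows. Recall that through the $E^p$-page the spectral sequence is a $B_*$-bialgebra spectral sequence (each $E^r_{*,*}$ with $r \leq p$ is free over $B_*$), with differentials that are simultaneously $B_*$-linear derivations and coalgebra maps. I argue by contradiction: suppose $s_0 \geq p$ is minimal with $d^{s_0} \neq 0$, so $E^p_{*,*} = E^{s_0}_{*,*}$ and the bialgebra structure persists.

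A set of $B_*$-algebra generators for $E^p_{*,*}$ is
\[ \sigma \tilde{\xi}_1,\; \sigma \tilde{\xi}_2,\; \sigma \tilde{\tau}_j\ (j \geq 2),\; \gamma_{p^i}(\sigma x)\ (i \geq 0). \]
The first three families lie in column $1$, so they cannot support any $d^{s_0}$ with $s_0 \geq 2$. Hence some $\gamma_{p^i}(\sigma x)$ with $i \geq 1$ has $d^{s_0}(\gamma_{p^i}(\sigma x)) \neq 0$; choose such $i$ minimal. Every $\gamma_n(\sigma x)$ with $n < p^i$ is a polynomial over $B_*$ in the $\gamma_{p^j}(\sigma x)$ with $j < i$, hence is an infinite cycle. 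Applying $d^{s_0}$ to
\[ \psi^2(\gamma_{p^i}(\sigma x)) = \sum_{j+k = p^i} \gamma_j(\sigma x) \otimes \gamma_k(\sigma x) \]
and using that $\psi$ commutes with $d^{s_0}$ shows that $d^{s_0}(\gamma_{p^i}(\sigma x))$ is a coalgebra primitive. Inspection of $E^p_{*,*}$ (using the connected-coalgebra fact that primitives of a tensor product are a direct sum of primitives of the factors) shows that all positive-filtration coalgebra primitives of $E^p_{*,*}$ lie in column $1$, in the subspace
\[ B_* \otimes \Bigl(\mathbb{F}_p\{\sigma \tilde{\xi}_1, \sigma \tilde{\xi}_2, \sigma x\} \oplus \bigoplus_{j \geq 2}\mathbb{F}_p\{\sigma \tilde{\tau}_j\}\Bigr). \]
This forces the filtration drop to be $p^i - 1$, so $s_0 = p^i - 1$. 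For $i = 1$ this is ruled out by Lemma \ref{diffx}.

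The hard part is ruling out $s_0 = p^i - 1$ for $i \geq 2$. For this I would combine two observations. First, $\gamma_{p^i}(\sigma x)$ is represented in the Hochschild complex by $1 \otimes x^{\otimes p^i}$; since $x \in (H\mathbb{F}_p)_*\K$ is an $A_*$-comodule primitive by Proposition \ref{homolK}, so is $\gamma_{p^i}(\sigma x)$, and hence so is $d^{s_0}(\gamma_{p^i}(\sigma x))$. Second, the comparison map of spectral sequences $i^*\colon E^*_{*,*} \to \tilde{E}^*_{*,*}$ induced by $\K \to \K(\bar{\mathbb{F}}_l)_p$ sends $\gamma_{p^i}(\sigma x)$ to zero (since $i^2(\sigma x) = 0$), so $d^{s_0}(\gamma_{p^i}(\sigma x))$ lies in $\ker i^{s_0}$. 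Combining these constraints, $d^{s_0}(\gamma_{p^i}(\sigma x))$ sits in the $A_*$-comodule primitive part of the column-$1$ part of $\ker i^{s_0}$, which a direct check using the explicit coactions of $\sigma \tilde{\xi}_1$, $\sigma \tilde{\xi}_2$, $\sigma \tilde{\tau}_j$ and $\sigma x$ recorded above shows to be zero in total degree $(2p-2)p^i - 1$. This contradiction completes the argument.
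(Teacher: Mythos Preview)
Your argument is correct and follows the same line as the paper's: reduce to a putative nontrivial $d^{s_0}(\gamma_{p^i}(\sigma x))$, observe it must be a simultaneous coalgebra and $A_*$-comodule primitive in column~$1$, and reach a contradiction. Two simplifications are worth noting. First, the case $i=1$ needs no separate treatment: you deduced $s_0 = p^i - 1$, and the standing hypothesis $s_0 \geq p$ already forces $i \geq 2$, so invoking Lemma~\ref{diffx} is redundant. Second, the constraint coming from $\ker i^{s_0}$ is unnecessary. The paper simply lists the comodule primitives inside the column-$1$ coalgebra-primitive subspace of $E^p_{*,*}$ --- they are $\sigma x$, $\sigma\tilde{\xi}_2$, $\sigma\tilde{\tau}_j$ for $j \geq 3$ (since $\sigma\tilde{\xi}_j = 0$ in $E^p$ for $j \geq 3$), and, if $a=0$, also $\sigma\tilde{\xi}_1$ --- and observes that their total degrees $2p-2$, $2p^2-1$, $2p^j$ ($j \geq 3$), $2p-1$ are never equal to $p^i(2p-2)-1$ for any $i \geq 1$. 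This handles all $i$ uniformly without appealing to the comparison map.
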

\begin{proof}
Suppose that the statement is wrong. Let $s_0 \geq p$ be the minimal number with $d^{s_0} \neq 0$ and let $i \geq 1$ be the minimal number with $d^{s_0}(\gamma_{p^i}(\sigma x)) \neq 0$. Then, $d^{s_0}(\gamma_{p^i}(\sigma x))$ is a comodule and coalgebra primitive in total degree $p^i(2p-2)-1$. The coalgebra primitives are given by
\[ B_* \otimes \bigl(\mathbb{F}_p\{\sigma \tilde{\xi}_1\} \oplus  \mathbb{F}_p\{\sigma \tilde{\xi}_2\} \oplus \bigoplus_{i \geq 2}\mathbb{F}_p\{\sigma \tilde{\tau}_i\} \oplus \mathbb{F}_p\{\sigma x\}\bigl). \]
If $a = 0$ the comodule primitives in this $\mathbb{F}_p$-vector space are
\[ \mathbb{F}_p\{\sigma \tilde{\xi}_1\} \oplus \mathbb{F}_p\{\sigma \tilde{\xi}_2\} \oplus \bigoplus_{i \geq 3} \mathbb{F}_p\{\sigma \tilde{\tau}_i\} \oplus \mathbb{F}_p\{\sigma x\}.\] 
If $a \neq 0$ the comodule primitive are given by 
\[\mathbb{F}_p\{\sigma \tilde{\xi}_2\} \oplus \bigoplus_{i \geq 3} \mathbb{F}_p\{\sigma \tilde{\tau}_i\} \oplus \mathbb{F}_p\{\sigma x\}. \] 
These classes all lie in  total degrees  different from $p^i(2p-2)-1$. Thus, we get a contradiction.
\end{proof}

Recall from \cite[Proposition 6.7]{Au} that we have 
\[ (H\mathbb{F}_p)_* \THH(\K(\bar{\mathbb{F}}_l)_p; H\mathbb{Z}_p) \cong B_* \otimes E([\sigma u], [\sigma \bar{\xi}_1]) \otimes P([w]).\] 
For degree reasons $[w]$ has to be a coalgebra primitive. 
\begin{thm} \label{multerw}
In case (\ref{2}) we have an isomorphism of $B_*$-algebras 
\[ (H\mathbb{F}_p)_*\THH(\K; H\mathbb{Z}_p) \cong B_* \otimes E([\sigma \tilde{\xi}_1], [\sigma \tilde{\xi}_2]) \otimes P([\sigma \tilde{\tau}_2]) \otimes \Gamma([\sigma x]).\]
\end{thm}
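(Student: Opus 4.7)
The plan is to resolve the multiplicative extensions in the Bökstedt spectral sequence computed in Lemma~\ref{Einfty}, then conclude by a Poincaré series comparison. First I would lift the generators of $E^\infty_{1,*}$ to natural representatives $[\sigma \tilde{\xi}_1], [\sigma \tilde{\xi}_2], [\sigma \tilde{\tau}_2], [\sigma x]$ in $(H\mathbb{F}_p)_*\THH(\K; H\mathbb{Z}_p)$, defined via the $\sigma$-operator together with the canonical splitting of $F_0 = B_*$ off the abutment. Since $|[\sigma \tilde{\xi}_i]|$ is odd and the characteristic is $p > 2$, graded commutativity automatically gives $[\sigma \tilde{\xi}_i]^2 = 0$. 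The divided-power representatives $\gamma_n([\sigma x])$ are chosen inductively from arbitrary lifts of $\gamma_n(\sigma x) \in E^\infty_{n,*}$ and then adjusted by lower-filtration corrections so that the relations $\gamma_n([\sigma x]) \gamma_m([\sigma x]) = \binom{n+m}{n} \gamma_{n+m}([\sigma x])$ hold exactly; this is possible because the corresponding identities already hold on $E^\infty$.

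The main obstacle is to promote the truncated algebra $P_p(\sigma \tilde{\tau}_2, \sigma \tilde{\tau}_3, \dots)$ appearing in $E^\infty$ to the polynomial algebra $P([\sigma \tilde{\tau}_2])$ in the abutment; equivalently, to show that $[\sigma \tilde{\tau}_2]^{p^{n-2}}$ represents a unit multiple of $\sigma \tilde{\tau}_n \in E^\infty_{1,*}$ for every $n \geq 2$. I would resolve this via the Dyer-Lashof $p$-th power operations on the mod $p$ homology of the commutative $S$-algebra $\THH(\K; H\mathbb{Z}_p)$. Since $[\sigma \tilde{\tau}_n]$ has even degree $2p^n$, the formula $Q^{|y|/2}(y) = y^p$ gives $[\sigma \tilde{\tau}_n]^p = Q^{p^n}([\sigma \tilde{\tau}_n])$, and combining with the compatibility $Q^{s} \circ \sigma_* = \sigma_* \circ Q^{s}$ and the relation $Q^{p^n}(\tilde{\tau}_n) = \tilde{\tau}_{n+1}$ from Proposition~\ref{homolK} yields
\[ [\sigma \tilde{\tau}_n]^p = Q^{p^n}(\sigma_*(\tilde{\tau}_n)) = \sigma_*(Q^{p^n}(\tilde{\tau}_n)) = \sigma_*(\tilde{\tau}_{n+1}) = [\sigma \tilde{\tau}_{n+1}]. \]
Iterating this identity produces $[\sigma \tilde{\tau}_2]^{p^{n-2}} = [\sigma \tilde{\tau}_n]$ as required. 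As a cross-check, naturality with respect to $i \colon \K \to \K(\bar{\mathbb{F}}_l)_p$ transports this to the analogous relation in Ausoni's algebra $(H\mathbb{F}_p)_*\THH(\K(\bar{\mathbb{F}}_l)_p; H\mathbb{Z}_p) \cong B_* \otimes E([\sigma u], [\sigma \bar{\xi}_1]) \otimes P([w])$ of \cite{Au}.

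These constructions assemble into a $B_*$-algebra map
\[ \varphi \colon B_* \otimes E([\sigma \tilde{\xi}_1], [\sigma \tilde{\xi}_2]) \otimes P([\sigma \tilde{\tau}_2]) \otimes \Gamma([\sigma x]) \longrightarrow (H\mathbb{F}_p)_*\THH(\K; H\mathbb{Z}_p), \]
which I would show is an isomorphism by a Poincaré series comparison over $B_*$. The telescoping identity
\[ \prod_{n \geq 2} \frac{1 - t^{2 p^{n+1}}}{1 - t^{2 p^{n}}} = \frac{1}{1 - t^{2 p^{2}}} \]
matches the Poincaré series of the $P_p(\sigma \tilde{\tau}_2, \sigma \tilde{\tau}_3, \dots)$ factor with that of $P([\sigma \tilde{\tau}_2])$, and all other factors agree term-for-term. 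Since by construction $\varphi$ induces a surjection onto each associated graded piece, equality of Poincaré series forces $\varphi$ to be an isomorphism.
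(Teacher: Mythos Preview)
Your treatment of the polynomial factor $P([\sigma\tilde{\tau}_2])$ via Dyer--Lashof operations is correct and is exactly what the paper does. The gap is in the divided-power part. You write that the representatives $\gamma_n([\sigma x])$ can be ``adjusted by lower-filtration corrections so that the relations $\gamma_n\gamma_m=\binom{n+m}{n}\gamma_{n+m}$ hold exactly; this is possible because the corresponding identities already hold on $E^\infty$.'' But that last clause is precisely what is \emph{not} automatic: a relation holding on $E^\infty$ says nothing about multiplicative extensions in lower filtration. Concretely, the relation $\gamma_{p^i}(\sigma x)^p=0$ on $E^\infty$ only tells you that $\gamma_{p^i}^p$ drops to filtration $<p^{i+1}$ in the abutment; it does not force it to vanish. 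Since $(\gamma_{p^i}+c)^p=\gamma_{p^i}^p+c^p$ for any even-degree correction $c$, killing a nonzero $\gamma_{p^i}^p$ would require it to be a $p$-th power of something in lower filtration, which you have not argued. And you cannot absorb the problem into the choice of $\gamma_{p^{i+1}}$, because the coefficient $\binom{p^{i+1}}{p^i,\dots,p^i}$ vanishes mod $p$.

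This is in fact the main work in the paper's proof. The base case $[\sigma x]^p=0$ is handled by computing $Q^{p-1}(x)=0$ in $(H\mathbb{F}_p)_*\K$ via a Nishida-relation argument, so that $[\sigma x]^p=\sigma_*(Q^{p-1}x)=0$. For $i\ge 1$ the paper does not show $\gamma_{p^i}^p=0$ directly; instead it shows that after a suitable correction the class $\gamma_{p^i}^p$ is simultaneously an $A_*$-comodule primitive and a coalgebra primitive, and then checks from the structure of $E^\infty$ (Lemma~\ref{Einfty}) that no such nonzero primitive exists in total degree $p^{i+1}(2p-2)$. The correction term itself is found by pushing $\gamma_{p^i}$ forward along $i_*$ to $(H\mathbb{F}_p)_*\THH(\K(\bar{\mathbb{F}}_l)_p;H\mathbb{Z}_p)$, using Ausoni's explicit description $B_*\otimes E([\sigma u],[\sigma\bar{\xi}_1])\otimes P([w])$ and the coalgebra structure there to see that modulo $p$-th powers $i_*(\gamma_{p^i})$ lies in $P([w]^p)=P([\sigma\bar{\tau}_2])$, and then subtracting the corresponding polynomial in $[\sigma\tilde{\tau}_2]$. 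None of this is visible from the $E^\infty$-page alone; you need the comodule and coalgebra structures and the comparison map. Once those relations are in hand, your Poincar\'e-series argument (or a direct basis comparison) finishes the proof.
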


\begin{proof}

By \cite[Proposition 5.9]{AnRo} we have $\sigma_*Q^{p^i} = Q^{p^i}\sigma_*$.  Since $Q^{p^i}\tilde{\tau}_i = \tilde{\tau}_{i+1}$  one gets as in \cite[Theorem 5.12]{AnRo} or \cite[Lemma 5.2]{Au} that $[\sigma \tilde{\tau}_i]^p =  [\sigma \tilde{\tau}_{i+1}]$ for $i \geq 2$ . 
We show by induction on $i$ that we can find a class 
\[ \gamma_{p^i} \in (H\mathbb{F}_p)_{p^i(2p-2)}\THH(\K; H\mathbb{Z}_p)\]
that represents the class $\gamma_{p^i}(\sigma x)$ in $E^{\infty}_{*,*}$   and that has the property $\gamma_{p^i}^p = 0$. 
We define $\gamma_1 \coloneqq [\sigma x]$. Then, we have
$\gamma_1^p =  \sigma_*(Q^{p-1}(x))$. 
We claim that $Q^{p-1}(x) = 0$. For degree reasons we have $Q^{p-1}(x) \in \mathbb{F}_p\{x\tilde{\xi}_1^{p-1}\}$. 
The comodule action of $x \tilde{\xi}_1^{p-1}$ is given by
\[ \nu(x\tilde{\xi}_1^{p-1}) = \sum_j \binom{p-1}{j} \bar{\xi}_1^j \otimes \tilde{\xi}_1^{p-1-j}x.\]
Since the Steenrod algebra is one-dimensional in degree $(p-1)(2p-2)$  we have
  $\mathcal{P}^{p-1}_*(x \tilde{\xi}_1^{p-1}) \doteq x$. 
 On the other hand, we have 
 \[ \mathcal{P}^{p-1}_*(Q^{p-1}(x)) = \sum_j (-1)^{p-1+j} \binom{0}{p-1-jp}Q^j\mathcal{P}^j_*(x) = 0 \]  
 by the Nishida relations. Hence, we can conclude $\gamma_1^p = 0$. 
Suppose that $i > 0$ and that we have already shown the assertion for all $0 \leq j < i$. 
It suffices to show that we can find a representative $\gamma_{p^i}$ for $\gamma_{p^i}(\sigma x)$ that has the property that 
$\gamma_{p^i}^p$ is a comodule and coalgebra primitive: Every non-trivial comodule and coalgebra primitive of $(H\mathbb{F}_p)_*\THH(\K; H\mathbb{Z}_p)$ gives a non-trivial comodule and coalgebra primitive in $E^{\infty}_{*,*}$. By the proof of Lemma \ref{Einfty} the simultaneous coalgebra and comodule primitives of $E^{\infty}$ lie in the total degrees $2p-2$, $2p-1$, $2p^2-1$ and $2p^j$ for $j \geq 3$, which are all different from $p^{i+1}(2p-2)$. 
By the induction hypothesis we have a map of $B_*$-algebras
\[ B_* \otimes E([\sigma \tilde{\xi}_1], [\sigma \tilde{\xi}_2]) \otimes P([\sigma \tilde{\tau}_2]) \otimes P_p(\gamma_1, \dots, \gamma_{p^{i-1}}) \to (H\mathbb{F}_p)_*\THH(\K; H\mathbb{Z}_p)\]
which is injective and an isomorphism in degree $< p^i(2p-2)$. 
 For a graded-commutative $\mathbb{F}_p$-algebra we denote by $I_p$ the homogeneous ideal of all elements $x$ with $x^p = 0$.  
 The map
\[ P(\bar{\xi}_1, \dots) \otimes P([\sigma \tilde{\tau}_2]) \to (H\mathbb{F}_p)_*\THH(\K; H\mathbb{Z}_p)/{I_p}\]
is an isomorphism in degrees $< p^i(2p-2)$. 
Furthermore, the map 
\[ P(\bar{\xi}_1, \dots) \otimes P([w]) \to (H\mathbb{F}_p)_*\THH\bigl(\K(\bar{\mathbb{F}}_l)_p; H\mathbb{Z}_p\bigr)/{I_p}\]
and the map from 
\[ P(\bar{\xi}_1, \dots) \otimes P([w]) \otimes_{P(\bar{\xi}_1, \dots)} P(\bar{\xi}_1, \dots) \otimes P([w])\]
 to \[
 \Bigl((H\mathbb{F}_p)_*\THH\bigl(\K(\bar{\mathbb{F}}_l)_p; H\mathbb{Z}_p\bigr) \otimes_{B_*} (H\mathbb{F}_p)_*\THH\bigl(\K(\bar{\mathbb{F}}_l)_p; H\mathbb{Z}_p\bigr)\Bigr)/{I_p}\]
 are isomorphisms. 
First,  let $\gamma_{p^i}$ be an arbitrary representative for $\gamma_{p^i}(\sigma x)$. We can assume that it is in the kernel of the augmentation
\[ \epsilon: (H\mathbb{F}_p)_*\THH(\K; H\mathbb{Z}_p) \to  B_*.\] 
We consider its image under 
\[ i_*: (H\mathbb{F}_p)_*\THH\bigl(\K; H\mathbb{Z}_p\bigr) \to (H\mathbb{F}_p)_*\THH\bigl(\K(\bar{\mathbb{F}}_l)_p; H\mathbb{Z}_p\bigl).\]
We have 
\[ i_*(\gamma_{p^i}) = \sum_j a_j[w]^j \text{~modulo~} I_p \]
for certain $a_j \in P(\bar{\xi}_1, \dots)$  with  $|a_j| = p^i(2p-2)-2pj$. 
Since $i_*(\gamma_{p^i})$ lies in the kernel of the augmentation, we have $a_0 = 0$. We show that $a_j = 0$ for all $j$ that are not divisible by $p$: 
The degree of every non-zero class in $P(\bar{\xi}_1, \dots)$ is divisible by $2p-2$. Thus, $P(\bar{\xi}_1, \dots)$ is zero in degree $p^i(2p-2)-2p$  and we get $a_1 = 0$. 
We have 
\begin{equation} \label{comult1}
 \psi(i_*(\gamma_{p^i})) = \sum_j a_j \sum_{n = 0}^j \binom{j}{n} [w]^n \otimes_{B_*} [w]^{j-n}  \text{~modulo~} I_p.
 \end{equation}
On the other hand, since $\gamma_{p^i}$ is in $\ker \epsilon$ we can write 
\[ \psi(\gamma_{p^i}) = 1 \otimes_{B_*} \gamma_{p^i} + \gamma_{p^i} \otimes_{B_*} 1 + \sum_j b_j \otimes_{B_*} c_j\]
for certain $b_j, c_j \in (H\mathbb{F}_p)_*\THH(\K; H\mathbb{Z}_p)$ with $|b_j|, |c_j| < p^i(2p-2)$. It follows that 
\[ \psi(\gamma_{p^i}) = 1 \otimes_{B_*} \gamma_{p^i} + \gamma_{p^i} \otimes_{B_*} 1 + \sum_{n,m} c_{n,m} [\sigma \tilde{\tau}_2]^n \otimes_{B_*} [\sigma \tilde{\tau}_2]^m \text{~modulo~} I_p\] 
for certain $c_{n,m} \in P(\bar{\xi}_1, \dots)$.   Applying $i_*$ and using that by \cite[Lemma 6.5]{Au}  the relation $[w]^p = [\sigma \bar{\tau}_2]$  holds in 
$(H\mathbb{F}_p)_*\THH\bigl(\K(\bar{\mathbb{F}}_l)_p; H\mathbb{Z}_p)$,  we get
\begin{eqnarray} \label{comult2}
 \psi(i_*(\gamma_{p^i}))  & = & 1 \otimes_{B_*} \sum_j a_j[w]^j + \sum_j a_j[w]^j \otimes_{B_*} 1 \nonumber \\
&  & + \sum_{n,m} c_{n,m}[w]^{pm} \otimes_{B_*} [w]^{pn} \text{~~~~~~~~~~~~~modulo~} I_p.
 \end{eqnarray} 
Let $j \neq 1$ be a natural number that is not divisible by $p$. In (\ref{comult1}) the coefficient of $ [w] \otimes [w]^{j-1}$ 
is $j \cdot a_j$ and in (\ref{comult2}) it is zero. We get that $a_j = 0$. 
We conclude that we have 
\[ i_*(\gamma_{p^i}) = \sum_{j \geq 1}  a_{jp} [w]^{jp} \text{~~~~modulo~~} I_p,\] 
where $a_{jp}$ is zero for  $jp > p^{i-1}(p-1)$. The class $\sum_{j \geq 1} a_{jp}[\sigma \tilde{\tau}_2]^j$ lies in filtration $< p^i$. Thus, the element $\gamma_{p^i} - \sum_{j \geq 1} a_{jp}[\sigma \tilde{\tau}_2]^j$ is also a representative for $\gamma_{p^i}(\sigma x)$, it lies in the kernel of the augmentation and  it satisfies 
\[i_*(\gamma_{p^i} - \sum_j a_{jp}[\sigma \tilde{\tau}_2]^j)^p = 0.\] 
We replace $\gamma_{p^i}$ by  $\gamma_{p^i} - \sum_j a_{jp}[\sigma \tilde{\tau}_2]^j$ and denote this class again by $\gamma_{p^i}$. 
Because the maps $(i_* \otimes_{B_*} i_*)/ {I_p}$ and $(\id_{A_*} \otimes i_*)/ {I_p}$ are injective on the images of 
\[ \bigoplus_{n < p^i(2p-2), m < p^i(2p-2)} (H\mathbb{F}_p )_n\THH(\K; H\mathbb{Z}_p) \otimes (H\mathbb{F}_p)_m\THH(\K; H\mathbb{Z}_p) \]
and
\[  \bigoplus_{m < p^i(2p-2)}  A_n \otimes (H\mathbb{F}_p)_m\THH(\K; H\mathbb{Z}_p) \]
in 
\[ \Bigl({H\mathbb{F}_p }_*\THH(\K; H\mathbb{Z}_p) \otimes_{B_*} (H\mathbb{F}_p)_*\THH(\K; H\mathbb{Z}_p)\Bigr)/{I_p}\]
and 
\[ \Bigl(A_* \otimes (H\mathbb{F}_p)_*\THH(\K; H\mathbb{Z}_p)\Bigr)/{I_p},\]
$\gamma_{p^i}^p$ now is a comodule and coalgebra primitive. 
\end{proof}
We now study the B\"okstedt spectral sequence $(E^*_{*,*}, d^*)$   converging to $(H\mathbb{F}_p)_*\THH(\K)$.  Similarly as above, one sees that
\[ E_{*,*}^2 = (H\mathbb{F}_p)_*\K \, \otimes \, E(\sigma \tilde{\xi}_1, \sigma \tilde{\xi}_2, \dots) \otimes  \Gamma(\sigma x, \sigma \tilde{\tau}_2, \sigma \tilde{\tau}_3, \dots).\]
The classes $\sigma \tilde{\xi}_i$ are coalgebra primitives. For the classes $a \in \{\sigma x, \sigma \tilde{\tau}_2, \sigma \tilde{\tau}_3, \dots\}$ we have the following formula for the comultiplication:
\[\psi^2(\gamma_n(a)) = \sum_{i+j= n} \gamma_i(a) \otimes_{(H\mathbb{F}_p)_*\K} \gamma_j(a).\]
For $n \geq 2$ the classes $\sigma \tilde{\xi}_n$ are comodule primitives. All the classes $\gamma_n(\sigma x)$ are comodule primitives. The coactions on $\sigma \tilde{\xi}_1$ and $\sigma \tilde{\tau}_n$ are given by:
\begin{eqnarray*}
\nu^2(\sigma \tilde{\xi}_1) & = &  1 \otimes \sigma \tilde{\xi}_1 + a \bar{\tau}_0 \otimes \sigma x \\
\nu^2(\sigma \tilde{\tau}_n) & = & 1 \otimes \sigma \tilde{\tau}_n + \bar{\tau}_0 \otimes \sigma \tilde{\xi}_n.
\end{eqnarray*}

\begin{lem}
For $2 \leq s \leq p-2$ the differential $d^s$ vanishes. We have
\[d^{p-1}(\gamma_n(\sigma x)) = 0\] for all $n$ and 
\[ d^{p-1}(\gamma_{p+n}(\sigma \tilde{\tau}_i)) = \sigma \tilde{\xi}_{i+1} \gamma_n(\sigma \tilde{\tau}_i)\]
for all $n \geq 0$ and $i \geq 2$. Therefore, we get
\[E^p_{*,*} = (H\mathbb{F}_p)_*\K \, \otimes \, E(\sigma \tilde{\xi}_1, \sigma \tilde{\xi}_2) \otimes P_p(\sigma \tilde{\tau}_2, \sigma \tilde{\tau}_3, \dots) \otimes \Gamma(\sigma x).\] 
\end{lem}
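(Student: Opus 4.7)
My plan is to mirror the three lemmas just established for the Bökstedt spectral sequence converging to $(H\mathbb{F}_p)_*\THH(\K; H\mathbb{Z}_p)$, using naturality with respect to the map of Bökstedt spectral sequences $\phi$ induced by $\K \to H\mathbb{Z}_p$. On the $E^2$-pages, $\phi$ is the identity on the Hochschild $\sigma$-classes $\sigma \tilde{\xi}_i$, $\sigma x$, $\sigma \tilde{\tau}_j$, and on the coefficient part it is induced by $(H\mathbb{F}_p)_*\K \to B_*$, so $x \mapsto 0$, $\tilde{\xi}_i \mapsto \bar{\xi}_i$, $\tilde{\tau}_i \mapsto \bar{\tau}_i$; in particular, every class in the kernel of $\phi$ on $E^2_{*,*}$ is a multiple of $x$.

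For the vanishing $d^s = 0$ with $2 \leq s \leq p-2$, I repeat the earlier argument verbatim: the $E^2$-page is generated as a $(H\mathbb{F}_p)_*\K$-algebra by the set $T = \{\sigma \tilde{\xi}_i\} \cup \{\gamma_{p^i}(\sigma x)\} \cup \{\gamma_{p^i}(\sigma \tilde{\tau}_j)\}$. The elements of $T$ in filtration $1$ cannot support differentials, so a minimal source of a non-trivial $d^{s}$ has filtration $\geq p$. Since the differentials respect the coalgebra structure, the target of such a minimal differential is a coalgebra primitive, which forces it into filtration $1$; this is impossible for $s \leq p-2$. For $d^{p-1}(\gamma_n(\sigma x)) = 0$, I argue exactly as in Lemma~\ref{diffx}: the minimal bad index is $n_0 = p^j$ with $j \geq 1$, and minimality together with the divided-power comultiplication forces $d^{p-1}(\gamma_{p^j}(\sigma x))$ to be simultaneously a coalgebra and a comodule primitive. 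Using the coactions $\nu(\sigma \tilde{\xi}_1) = 1 \otimes \sigma \tilde{\xi}_1 + a\bar{\tau}_0 \otimes \sigma x$ and $\nu(\sigma \tilde{\tau}_n) = 1 \otimes \sigma \tilde{\tau}_n + \bar{\tau}_0 \otimes \sigma \tilde{\xi}_n$, an inspection of the filtration-$1$ coalgebra primitives in the relevant total degree rules out all non-zero candidates.

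For $d^{p-1}(\gamma_p(\sigma \tilde{\tau}_i)) \doteq \sigma \tilde{\xi}_{i+1}$, I induct on $i$ using the comparison $\phi$. In the previously computed spectral sequence we have $d^{p-1}(\gamma_p(\sigma \tilde{\tau}_i)) = \lambda_i \sigma \tilde{\xi}_{i+1}$ with $\lambda_i$ a unit, so naturality gives
\[
d^{p-1}(\gamma_p(\sigma \tilde{\tau}_i)) = \lambda_i \sigma \tilde{\xi}_{i+1} + b \cdot \sigma x
\]
for some $b$ in the augmentation ideal of $(H\mathbb{F}_p)_*\K$. To eliminate the correction, I apply $\nu^{p-1}$ to both sides and use the inductive hypothesis (controlling $d^{p-1}$ on all classes of strictly smaller internal degree) together with the just-established $d^{p-1}(\gamma_n(\sigma x)) = 0$ to conclude that $d^{p-1}(\gamma_p(\sigma \tilde{\tau}_i))$ is a comodule primitive; then $b = 0$. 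The extension to $d^{p-1}(\gamma_{p+n}(\sigma \tilde{\tau}_i)) = \sigma \tilde{\xi}_{i+1} \gamma_n(\sigma \tilde{\tau}_i)$ is then formal, by induction on $n$: the difference between the two sides is a coalgebra primitive (by the inductive hypothesis for smaller $n$ and the divided-power coproduct) lying in filtration $\geq p > 1$, hence zero.

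The main obstacle is the base case $i = 2$ of the induction in the last paragraph, specifically eliminating the $b \cdot \sigma x$ correction. All other steps are essentially bookkeeping: the coalgebra and comodule structures are used in exactly the same way as in the $H\mathbb{Z}_p$-coefficient proofs, and the only genuinely new phenomenon is the potential kernel of $\phi$ supported on $\sigma x$-multiples. Once this is handled for $i = 2$, the induction carries through with no further difficulty, and the description of $E^p_{*,*}$ follows by taking kernels and cokernels of the established $d^{p-1}$'s.
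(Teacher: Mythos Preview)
Your route is different from the paper's, and the paper's is considerably shorter. Instead of comparing to another spectral sequence, the paper invokes \cite[Proposition~5.6]{AnRo}, which gives both the vanishing of $d^s$ for $2\le s\le p-2$ and a closed formula for the first differential on a $p$th divided power: $d^{p-1}(\gamma_p(\sigma x)) = \sigma\beta Q^{p-1}x$ and $d^{p-1}(\gamma_p(\sigma\tilde\tau_i)) = \sigma\beta Q^{p^i}\tilde\tau_i$. Since $Q^{p-1}x=0$ was already established in the proof of Theorem~\ref{multerw}, and $\beta Q^{p^i}\tilde\tau_i=\beta\tilde\tau_{i+1}=\tilde\xi_{i+1}$ by Proposition~\ref{homolK}, the base cases drop out immediately with no comodule bookkeeping. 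The induction on $n$ then proceeds via the coalgebra-primitive argument exactly as you describe. Your approach re-runs the comparison machinery from the $H\mathbb{Z}_p$-coefficient lemmas; it is in the right spirit but re-derives what the Dyer--Lashof formula gives for free.

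There is also a genuine slip in your comparison argument. Your map $\phi$ is induced by $\THH(\K)\to\THH(\K;H\mathbb{Z}_p)$, which on $E^2$-pages is a change of \emph{coefficient module} $(H\mathbb{F}_p)_*\K\to B_*$ with the Hochschild ring held fixed. Hence on the filtration-$1$ coalgebra primitives $(H\mathbb{F}_p)_*\K\otimes V$ (with $V$ spanned by the $\sigma$-classes) the kernel of $\phi$ is $x\cdot(H\mathbb{F}_p)_*\K\otimes V$, \emph{not} $(H\mathbb{F}_p)_*\K\otimes\mathbb{F}_p\{\sigma x\}$ as you claim. The correction to $d^{p-1}(\gamma_p(\sigma\tilde\tau_i))$ is therefore an $x$-multiple in the coefficient slot, possibly of the form $x\cdot z\cdot\sigma\tilde\xi_j$ or $x\cdot z\cdot\sigma\tilde\tau_j$, not $b\cdot\sigma x$. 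You have transported the shape of the kernel from the earlier comparison $E\to\tilde E$, where it was the \emph{ring} that changed and hence $\sigma x\mapsto 0$; that is a different phenomenon. Your comodule-primitive argument can be repaired with the correct kernel (since $x$ is primitive, the kernel is $\cong\Sigma^{2p-3}(H\mathbb{F}_p)_*\ell_p\otimes V$ as a comodule, and a degree count still excludes nonzero primitives in the needed total degrees), but as written the identification of the correction term is wrong.
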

\begin{proof}
By \cite[Proposition 5.6]{AnRo} the differentials $d^s$ vanish for $2 \leq s \leq p-2$ and we have
\[ d^{p-1}(\gamma_p(\sigma x)) = \sigma \beta Q^{p-1}x
\text{~~~and~~~}
d^{p-1}(\gamma_p(\sigma \tilde{\tau}_i)) = \sigma \beta Q^{p^i} \tilde{\tau}_i.\]
The proof of  Theorem \ref{multerw} shows that $Q^{p-1}(x) = 0$.   Using
 Proposition \ref{homolK} we get
 \[d^{p-1}(\gamma_p(\sigma x)) = 0 \text{~~~and~~~}  d^{p-1}(\gamma_p(\sigma \tilde{\tau}_i)) = \sigma \tilde{\xi}_{i+1}.\]
Note that the coalgebra primitives of the $E^2 = E^{p-1}$-page are given by
\[ (H\mathbb{F}_p)_*\K \, \otimes \, \bigl(\bigoplus_{i \geq 1}\mathbb{F}_p\{\sigma \tilde{\xi}_i\} \oplus  \mathbb{F}_p\{\sigma x\} \oplus \bigoplus_{i \geq 2} \mathbb{F}_p\{\sigma \tilde{\tau}_i\}\bigr).\]
By induction on $n$ one proves that 
\[ d^{p-1}(\gamma_{p+n}(\sigma x)) = 0  \text{~~~and~~~}  d^{p-1}(\gamma_{p+n}(\sigma \tilde{\tau}_i)) - \sigma \tilde{\xi}_{i+1} \gamma_n(\sigma \tilde{\tau}_i) = 0. \] 
The induction step follows because the classes are coalgabra primitives in filtration degree $>  1$. 
\end{proof}
\begin{lem} \label{EinftyBSS2}
We have $d^s = 0$ for all $s \geq p$. 
Therefore, we have
\[E^{\infty}_{*,*} = (H\mathbb{F}_p)_*\K \, \otimes \, E(\sigma \tilde{\xi}_1, \sigma \tilde{\xi}_2) \otimes P_p(\sigma \tilde{\tau}_2, \sigma \tilde{\tau}_3, \dots) \otimes \Gamma(\sigma x).\]
\end{lem}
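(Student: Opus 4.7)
The plan is to mimic the argument of Lemma \ref{Einfty} (the analog for the spectral sequence converging to $(H\mathbb{F}_p)_*\THH(\K; H\mathbb{Z}_p)$), with $(H\mathbb{F}_p)_*\K$ playing the role of $B_*$ as the coefficient ring. First I observe that the only classes on the $E^p$-page which could possibly support a non-trivial $d^s$ with $s \geq p$ are the divided powers $\gamma_{p^i}(\sigma x)$ for $i \geq 1$: the first-column generators $\sigma \tilde{\xi}_1$, $\sigma \tilde{\xi}_2$, $\sigma \tilde{\tau}_j$ lie in filtration $1$, and their products (including the truncated powers $\sigma \tilde{\tau}_j^n$ for $n \leq p-1$) lie in filtration at most $p-1$, so for $s \geq p$ any such differential would land in negative filtration; using the divided-power product formula one only needs to control the indecomposable generators $\gamma_{p^i}(\sigma x)$ with $i \geq 1$.

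Assume for contradiction that $d^{s_0}$ is non-trivial for some $s_0 \geq p$, and pick $i \geq 1$ minimal with $d^{s_0}(\gamma_{p^i}(\sigma x)) \neq 0$. Since $\gamma_{p^i}(\sigma x)$ is an $A_*$-comodule primitive, so is its image under $d^{s_0}$. Using
\[\psi(\gamma_{p^i}(\sigma x)) = \sum_{j} \gamma_j(\sigma x) \otimes_{(H\mathbb{F}_p)_*\K} \gamma_{p^i-j}(\sigma x)\]
together with the minimality of $i$, the image is also a coalgebra primitive. The $(H\mathbb{F}_p)_*\K$-module of coalgebra primitives of $E^p_{*,*}$ is the sub-$A_*$-comodule
\[(H\mathbb{F}_p)_*\K \otimes \bigl(\mathbb{F}_p\{\sigma \tilde{\xi}_1\} \oplus \mathbb{F}_p\{\sigma \tilde{\xi}_2\} \oplus \bigoplus_{j \geq 2}\mathbb{F}_p\{\sigma \tilde{\tau}_j\} \oplus \mathbb{F}_p\{\sigma x\}\bigr).\]

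It now suffices to show this sub-comodule has no nonzero comodule primitive in total degree $p^i(2p-2) - 1 = 2p^{i+1} - 2p^i - 1$. In case (\ref{2}) we have $k = 1$ and $y = 0$, and the quotient $(H\mathbb{F}_p)_*\K/(x) \cong (H\mathbb{F}_p)_*\ell_p$ has only $\mathbb{F}_p$ as its positive-degree $A_*$-comodule primitives; together with the primitivity of $x$ this shows that the comodule primitives of $(H\mathbb{F}_p)_*\K$ are exactly $\mathbb{F}_p\{1, x\}$. On the $E^p$-page the classes $\sigma \tilde{\xi}_n$ vanish for $n \geq 3$, so $\sigma \tilde{\tau}_n$ becomes primitive for $n \geq 3$, while $\sigma \tilde{\xi}_2$ and $\sigma x$ are primitive and $\sigma \tilde{\xi}_1$, $\sigma \tilde{\tau}_2$ have the non-trivial coactions recorded above. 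A case analysis by generator, allowing also mixed primitives of the form $c_1 \otimes \sigma \tilde{\xi}_1 + c_2 \otimes \sigma x$ and $c_1 \otimes \sigma \tilde{\tau}_2 + c_2 \otimes \sigma \tilde{\xi}_2$, shows the simultaneous primitives lie in total degrees drawn from $\{2p-2, 4p-5, 2p^2 - 1, 2p^2 + 2p - 4, 2p^j, 2p^j + 2p - 3\}_{j \geq 3}$, augmented by $\{2p-1, 4p-4\}$ when $a = 0$. A short parity and divisibility check rules out each of these as equal to $2p^{i+1} - 2p^i - 1$: the even candidates are dismissed immediately, while the odd ones reduce to equations like $p^j = p^i(p-1)$ or $(p-1)(p^i - 1) = p^j$ that have no solutions for prime $p \geq 5$ because $\gcd(p, p-1) = 1$. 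The main obstacle is the bookkeeping of the mixed primitives involving the coefficient $a$ from Proposition \ref{homolK}; once that is done the degree comparison is immediate.
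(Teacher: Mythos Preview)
Your proof is correct and follows essentially the same approach as the paper's: assume a minimal nontrivial $d^{s_0}(\gamma_{p^i}(\sigma x))$, observe it must be a simultaneous coalgebra and $A_*$-comodule primitive, identify where those live, and rule out the target degree. The paper is terser---it simply refers back to Lemma~\ref{Einfty} and records that the comodule primitives inside the coalgebra-primitive submodule are contained in $\mathbb{F}_p\{1,x\}\otimes\{\sigma\tilde{\xi}_1,\sigma\tilde{\xi}_2,\sigma\tilde{\tau}_j\ (j\geq 3),\sigma x\}$ without spelling out the mixed-primitive bookkeeping---whereas you carry out that analysis explicitly and even separate the cases $a=0$ and $a\neq 0$; but the substance is the same.
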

\begin{proof}
This follows as in Lemma \ref{Einfty}  noticing that 
 the coalgebra primitives of the $E^p$-page are given by 
 \[(H\mathbb{F}_p)_*\K \, \otimes \, \bigl(\mathbb{F}_p\{\sigma \tilde{\xi}_1\} \oplus \mathbb{F}_p\{\sigma \tilde{\xi}_2\}  \oplus \bigoplus_{i \geq 2} \mathbb{F}_p\{\sigma \tilde{\tau}_i\} \oplus \mathbb{F}_p\{\sigma x\}\bigr) \]
 and that the comodule primitives in this vector space are a subspace of
 \begin{eqnarray*}
 &  & \mathbb{F}_p \{\sigma \tilde{\xi}_1\} \oplus \mathbb{F}_p\{x \sigma \tilde{\xi}_1\} \oplus \mathbb{F}_p\{\sigma \tilde{\xi}_2\} \oplus \mathbb{F}_p\{x \sigma \tilde{\xi}_2\} \\
 & \oplus &  \bigoplus_{j \geq 3}\mathbb{F}_p\{\sigma \tilde{\tau}_j\} \oplus \bigoplus_{j \geq 3}\mathbb{F}_p\{x \sigma \tilde{\tau}_j\} \oplus \mathbb{F}_p\{\sigma x\} \oplus \mathbb{F}_p\{x \sigma x\}. 
  \end{eqnarray*}
\end{proof}
\begin{thm} \label{modphomolTHHK}
In case (\ref{2}) we have an isomorphism of $(H\mathbb{F}_p)_*\K$-algebras
\[ (H\mathbb{F}_p)_*\THH(\K) \cong (H\mathbb{F}_p)_*\K \, \otimes \, E([\sigma \tilde{\xi}_1], [\sigma \tilde{\xi}_2]) \otimes P([\sigma \tilde{\tau}_2]) \otimes \Gamma([\sigma x]).\]
\end{thm}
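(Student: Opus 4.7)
The plan is to upgrade the $E^\infty$-page obtained in Lemma~\ref{EinftyBSS2} to the stated algebra isomorphism by resolving multiplicative extensions in essentially the same way as in Theorem~\ref{multerw}. The associated graded tells us that $(H\mathbb{F}_p)_*\THH(\K)$ is an $(H\mathbb{F}_p)_*\K$-module with generators in the same degrees as $E([\sigma \tilde\xi_1], [\sigma \tilde\xi_2]) \otimes P([\sigma \tilde\tau_2]) \otimes \Gamma([\sigma x])$; what is left is to (i) establish the relation $[\sigma\tilde\tau_i]^p = [\sigma\tilde\tau_{i+1}]$ for $i \geq 2$, which collapses the truncated polynomial factor $P_p(\sigma\tilde\tau_2, \sigma\tilde\tau_3, \dots)$ into the single polynomial generator $P([\sigma\tilde\tau_2])$, and (ii) find representatives $\gamma_{p^i}$ for the classes $\gamma_{p^i}(\sigma x) \in E^\infty_{*,*}$ whose $p$-th powers vanish, producing an actual (untruncated) divided power algebra $\Gamma([\sigma x])$.

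For step (i), I would use \cite[Proposition 5.9]{AnRo} giving $\sigma_*Q^{p^i} = Q^{p^i}\sigma_*$, together with the Dyer--Lashof identity $Q^{p^i}(y) = y^p$ for $|y| = 2p^i$ and the recursion $Q^{p^i}\tilde\tau_i = \tilde\tau_{i+1}$ from Proposition~\ref{homolK}. Exactly as in \cite[Theorem 5.12]{AnRo} and in the opening of the proof of Theorem~\ref{multerw}, this yields $[\sigma\tilde\tau_i]^p = [\sigma\tilde\tau_{i+1}]$ for all $i \geq 2$ with no ambiguity coming from lower filtration, because there is no class in the same total degree and strictly lower filtration on the $E^\infty$-page.

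For step (ii), the base case is $\gamma_1 \coloneqq [\sigma x]$, and $\gamma_1^p = \sigma_*(Q^{p-1}x) = 0$ because $Q^{p-1}(x) = 0$; the latter was already established inside the proof of Theorem~\ref{multerw} by a Nishida-relation computation comparing $\mathcal{P}^{p-1}_*(x\tilde\xi_1^{p-1}) \doteq x$ with $\mathcal{P}^{p-1}_*(Q^{p-1}x) = 0$. The inductive step is the same double induction as in Theorem~\ref{multerw}: assuming representatives with vanishing $p$-th powers have been built in lower degrees, I would start with any representative $\gamma_{p^i}$ of $\gamma_{p^i}(\sigma x)$ lying in $\ker\epsilon$, use naturality along $\K \to \K(\bar{\mathbb{F}}_l)_p \to H\mathbb{Z}_p$ and the known multiplicative structure of $(H\mathbb{F}_p)_*\THH(\K; H\mathbb{Z}_p)$ from Theorem~\ref{multerw} (where the analogue of this step has been carried out) to correct $\gamma_{p^i}$ by a combination of terms in $P([\sigma\tilde\tau_2])$ of strictly lower filtration, and then argue that the resulting $\gamma_{p^i}^p$ is simultaneously an $A_*$-comodule primitive and a coalgebra primitive.

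The main obstacle will then be to show that in total degree $p^{i+1}(2p-2)$ the $E^\infty$-page admits no nonzero class that is both a coalgebra primitive and an $A_*$-comodule primitive; once this is established, $\gamma_{p^i}^p$ must vanish and the inductive construction closes. To do this I would go back to the enumeration of coalgebra primitives in the $E^p$-page at the end of the proof of Lemma~\ref{EinftyBSS2}, intersect it with the space of $A_*$-comodule primitives (using the explicit coaction formulas on $\sigma\tilde\xi_1$ and on products with $x$ provided by Proposition~\ref{homolK}), and check that the resulting classes all lie in the total degrees $2r-1$, $2p-2$, $2p-1$, $2p^j-1$ (for $j \geq 3$) and the analogous degrees multiplied by $x$, none of which equals $p^{i+1}(2p-2)$ for $i \geq 1$. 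This degree-counting is routine but is the delicate point on which the existence of divided powers rests.
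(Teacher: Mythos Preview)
Your proposal is correct and follows essentially the same route as the paper: the paper also deduces $[\sigma\tilde\tau_i]^p=[\sigma\tilde\tau_{i+1}]$ from Dyer--Lashof operations, takes $\gamma_1=[\sigma x]$ with $\gamma_1^p=0$ via $Q^{p-1}(x)=0$, and for the inductive step pushes an arbitrary representative forward along $g\colon\THH(\K)\to\THH(\K;H\mathbb{Z}_p)$, corrects it by a class in $(H\mathbb{F}_p)_*\K\otimes P([\sigma\tilde\tau_2])$ to kill $g_*(\gamma_{p^i})$ modulo $I_p$, and then invokes the degree check on simultaneous coalgebra/comodule primitives from Lemma~\ref{EinftyBSS2}. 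Your list of possible total degrees for those primitives is slightly off (for instance $|\sigma\tilde\tau_j|=2p^j$, not $2p^j-1$, and $|\sigma\tilde\xi_2|=2p^2-1$ should appear), but the enumeration is the one recorded in the proof of Lemma~\ref{EinftyBSS2} and the conclusion that none equals $p^{i+1}(2p-2)$ is unchanged.
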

\begin{proof}
As before one shows that $[\sigma \tilde{\tau}_i]^p = [\sigma \tilde{\tau}_{i+1}]$ for $i \geq 2$.
We show by induction on $i \geq 0$ that we can find representatives
\[ \gamma_{p^i} \in (H\mathbb{F}_p)_{p^i(2p-2)}\THH(\K)\]
of $\gamma_{p^i}(\sigma x) \in E^{\infty}_{*,*}$ such that $\gamma_{p^i}^p = 0$.  As in Theorem \ref{multerw} one proves that 
the element $\gamma_1 = [\sigma x]$ satisfies $\gamma_1^p  = 0$. 
Assume that $i > 0$ and that the assertion has been shown for all $0 \leq j < i$. 
It suffices to show that we can find a representative $\gamma_{p^i}$ for $\gamma_{p^i}(\sigma x)$ such that $\gamma_{p^i}^p$ is a comodule and coalgebra primitive: By Lemma \ref{EinftyBSS2} the simultaneous comodule and coalgebra primitives of $E^{\infty}$  lie in total degrees different from $p^{i+1}(2p-2)$. 
First, let $\gamma_{p^i}$ be an arbitrary representatives of $\gamma_{p^i}(\sigma x)$ that is in the kernel of the augmentation. 
Let $g$ be the map $\THH(\K) \to \THH(\K; H\mathbb{Z}_p)$. 
We can write 
\[g_*(\gamma_{p^i}) = \sum_j a_j[\sigma \tilde{\tau}_2]^j \text{~modulo~} I_p\]
for certain $a_j \in P(\bar{\xi}_1, \dots)$ with $|a_j| = p^i(2p-2)- 2p^2j$. Since $g_*(\gamma_{p^i})$ lies in the kernel of the augmentation we have $a_0 = 0$. Let $\tilde{a}_j$ be the element of $P(\tilde{\xi}_1, \dots) \subset(H\mathbb{F}_p)_*\K$ that corresponds to $a_j$ under the canonical isomorphism 
\[P(\tilde{\xi}_1, \dots) \cong P(\bar{\xi}_1, \dots).\]
Then $\gamma_{p^i}-\sum_j \tilde{a}_j [\sigma \tilde{\tau}_2]^j$ is also a representative for $\gamma_{p^i}(\sigma x)$ that is in the kernel of the augmentation and it satisfies 
\[g_*(\gamma_{p^i}- \sum_j \tilde{a}_j[\sigma \tilde{\tau}_2]^j) = 0 \text{~modulo~} I_p. \]
We denote this new representative again by $\gamma_{p^i}$. As in the proof of Theorem \ref{multerw} one shows that $\gamma_{p^i}^p$ is a comodule and coalgebra primitive. 
\end{proof}

We want to deduce the $V(1)$-homotopy of $\THH(\K)$ in case (\ref{2}). We do this by proving that in this case $V(1) \wedge_S^L \THH(\K)$ is a module in $\mathscr{D}_S$ over the $S$-ring spectrum $H\mathbb{F}_p$. Note that this implies that it is isomorphic in $\mathscr{D}_S$ to a coproduct of $S$-modules of the form $H\mathbb{F}_p \wedge_S^L S^n_S$ and that the Hurewicz morphism induces an isomorphism between $V(1)_*\THH(\K)$ and  the comodule primitives in 
$(H\mathbb{F}_p)_*\bigl(V(1) \wedge_S^L \THH(\K)\bigr)$.

\begin{rmk} \label{natExt}
Let $R \to R'$ be a morphism of commutative $S$-algebras, and let $M$ and $N$ be $R'$-modules. Note that we have  a map $\Ext_{R'}^*(M,N) \to \Ext_R^*(M,N)$. We need that it has a compatible map of   spectral sequences.   Ext spectral sequences can be constructed by applying $\Ext^*_{-}( - , N)$ to a projective topological resolution of $M$ or by applying $\Ext_{-}^*(M, -)$ to an injective topological resolution of $N$ \cite[Section 6]{LewMand}.  Since in \cite{LewMand} conditional convergence is shown for the unrolled exact couples that are constructed from injective topological resolutions,  we use these.  Let $\pi_*(N) \to I^0_* \xrightarrow{d^0} I_*^1 \xrightarrow{d^1} I^2_* \xrightarrow{d^2} \dots $ be an $R'_*$-injective resolution. 
We consider a compatible injective topological resolution, i.e. 
fiber sequences 
\[\begin{tikzcd} \Omega^{s+1}I^s  \ar{r}{k^s} & N^{s+1} \ar{r}{i^{s+1}} & N^s  \ar{r}{j^s}  &   \Omega^s I^s \end{tikzcd}\]
in $\mathscr{D}_{R'}$  for $s \geq 0$ such that $N^0 = N$ and $\pi_*j^s$ is a monomorphism, and such that we have isomorphisms $I^s_* \cong \pi_*I^s$ under which  $\pi_*j^0$ corresponds to the augmentation $\pi_*(N) \to I^0_*$  and $\pi_*(j^{s+1} \circ k^s)$ corresponds to $\Sigma^{-(s+1)} d^s$. 
Analogously, we consider an $R_*$-injective resolution $\pi_*(N) \to J_*^*$ and a compatible injective topological resolution 
in $\mathscr{D}_R$. Let $I^*_* \to J^*_*$ be an $R_*$-linear map of resolutions lifting the identity map of $\pi_*(N)$.  Using that  $\mathscr{D}_R(K,L) \cong \Hom_{R_*}(\pi_*(K), \pi_*(L))$ if $\pi_*(L)$ is injective \cite[Corollary 5.7]{LewMand}, one inductively constructs  compatible maps of fiber sequences  in $\mathscr{D}_R$. Using the natural transformation $\Ext_{R'}^*(M, -) \to \Ext^*_R(M,-)$  we get a map of unrolled exact couples and therefore a map of spectral sequences.   On $E^1$-pages it is in bidegree $(s,t)$ for $s \geq 0$  given by
\[\begin{tikzcd}
 \Hom_{R'_*}(\Sigma^{-t} \pi_*(M), I_*^s) \to \Hom_{R_*}(\Sigma^{-t}\pi_*(M), J_*^s). \end{tikzcd} \]
Now, let $P_{*,*} \to \pi_*(M)$ be an $R'_*$-projective resolution, let $Q_{*,*} \to \pi_*(M)$ be an $R_*$-projective resolution and let  $Q_{*,*} \to P_{*,*}$ be an $R_*$-linear  chain map lifting the identity map of $\pi_*(M)$. Then, by comparing with the maps on total complexes given by
\[\begin{tikzcd}
\Hom_{R'_*}(P_{*,*}, \Sigma^t I_*^*) \ar{r} & \Hom_{R_*}(Q_{*,*}, \Sigma^tJ_*^*) ,
\end{tikzcd}\]
one sees that the map on $E^2$-pages is also induced by the maps 
\[ \begin{tikzcd}
\Hom_{R'_*}(P_{s,*}, \Sigma^t\pi_*(N)) \ar{r} & \Hom_{R_*}(Q_{s,*},  \Sigma^t \pi_*(N)).  
\end{tikzcd} \]
\end{rmk}
\begin{lem} \label{bimod}
In case (\ref{2}) the $\K$-module $V(1) \wedge_S \K$ is isomorphic in $\mathscr{D}_{\K}$ to an object in the image of the map
\[ \mathscr{D}_{H\mathbb{F}_p \wedge_S \K} \to \mathscr{D}_{\K}\]
induced by the inclusion   of $\K$ into the second smash factor of $H\mathbb{F}_p \wedge_S \K$. 
\end{lem}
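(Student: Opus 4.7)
The plan is to reduce the statement to a single cohomological lifting question and then attack that question with an Ext spectral sequence comparison.

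First, I would determine the Postnikov tower of $V(1) \wedge_S \K$ as a $\K$-module. By Lemma \ref{v1K}, since $r = p-1$ in case (\ref{2}) the integer $k = (p-1)/r$ equals $1$, so $V(1)_*\K \cong E(x)$ with $|x| = 2p-3$. Consequently $V(1) \wedge_S \K$ has exactly two nonzero homotopy groups, both isomorphic to $\mathbb{F}_p$, in degrees $0$ and $2p-3$. The action of $\pi_0(\K) = \mathbb{Z}_p$ factors through $\mathbb{F}_p$ on both layers, so each Postnikov layer is the Eilenberg--Mac Lane $\K$-module $H\mathbb{F}_p$ obtained from the augmentation $\K \to H\mathbb{Z}_p \to H\mathbb{F}_p$. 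The $(2p-3)$-Postnikov decomposition is then a distinguished triangle
\[ \Sigma^{2p-3} H\mathbb{F}_p \to V(1) \wedge_S \K \to H\mathbb{F}_p \xrightarrow{\,k\,} \Sigma^{2p-2} H\mathbb{F}_p \]
in $\mathscr{D}_{\K}$, whose $k$-invariant is a class in $\Ext^{2p-2}_{\K}(H\mathbb{F}_p, H\mathbb{F}_p)$.

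Second, I would reduce the lemma to showing that $k$ lies in the image of the restriction map
\[ \Ext^{2p-2}_{H\mathbb{F}_p \wedge_S \K}(H\mathbb{F}_p, H\mathbb{F}_p) \to \Ext^{2p-2}_{\K}(H\mathbb{F}_p, H\mathbb{F}_p) \]
induced by the forgetful functor $\mathscr{D}_{H\mathbb{F}_p \wedge_S \K} \to \mathscr{D}_{\K}$. Indeed, a preimage $\tilde k$ determines a distinguished triangle in $\mathscr{D}_{H\mathbb{F}_p \wedge_S \K}$ whose image under the forgetful functor matches the two outer terms and the $k$-invariant of the Postnikov triangle; by the triangulated axioms its middle term is then isomorphic in $\mathscr{D}_{\K}$ to $V(1) \wedge_S \K$, giving the required $H\mathbb{F}_p \wedge_S \K$-module lift.

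Third, to obtain the surjectivity, I would compare the two Ext spectral sequences
\[ \Ext^{s,t}_{\K_*}(\mathbb{F}_p, \mathbb{F}_p) \Rightarrow \Ext^{s+t}_{\K}(H\mathbb{F}_p, H\mathbb{F}_p) \]
and
\[ \Ext^{s,t}_{(H\mathbb{F}_p)_*\K}(\mathbb{F}_p, \mathbb{F}_p) \Rightarrow \Ext^{s+t}_{H\mathbb{F}_p \wedge_S \K}(H\mathbb{F}_p, H\mathbb{F}_p), \]
which by Remark \ref{natExt} are compatible via restriction of scalars along the Hurewicz homomorphism $\K_* \to (H\mathbb{F}_p)_*\K$. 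Through total degree $2p-2$ only the groups $\K_0 = \mathbb{Z}_p$ and $\K_{2p-3} = \mathbb{Z}/p^v$ contribute to the source, and I would enumerate the finitely many candidate $E_2$-classes and check that each of them is hit by a class on the $(H\mathbb{F}_p)_*\K$-side, using the generators $x$ in degree $2p-3$ and $\tilde{\xi}_1$ in degree $2p-2$ produced by Proposition \ref{homolK}. I would then rule out obstructing differentials in the narrow bidegree range where they could act.

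The main obstacle is this Ext bookkeeping together with the exclusion of obstructing differentials. The indispensable hypothesis making the lift work is $v \geq 2$: it forces the mod $p$ homology Bockstein of the class $y$ to vanish (as exploited in the proof of Proposition \ref{homolK}) and thereby ensures that the torsion contribution to $k$ is already detected on the $(H\mathbb{F}_p)_*\K$-side; without this hypothesis the $k$-invariant would acquire a component that fails to come from $H\mathbb{F}_p \wedge_S \K$, which is consistent with the different answer that occurs in case (\ref{1}).
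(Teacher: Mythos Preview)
Your proposal is correct and follows essentially the same route as the paper: reduce to lifting the single $k$-invariant in $\Ext^{2p-2}_{\K}(H\mathbb{F}_p,H\mathbb{F}_p)$ along the forgetful map, then compare the two Ext spectral sequences via Remark \ref{natExt}. The paper carries this out by writing down explicit free resolutions over $\K_*$ and $(H\mathbb{F}_p)_*\K$ and showing the comparison map is in fact an \emph{isomorphism} on $E^\infty$ in total degree $0$ (the only nonzero contribution being in bidegree $(1,-1)$, matched because the Hurewicz map sends the generator of $\pi_{2p-3}\K$ to $x$); one small slip in your write-up is the reference to ``the Bockstein of $y$'', since in case~(\ref{2}) we have $k=1$ and there is no $y$ --- the relevant consequence of $v\geq 2$ is rather that $(H\mathbb{F}_p)_*\K$ contains the class $x$ in degree $2p-3$ (cf.\ Lemma \ref{homK0} and Proposition \ref{homolK}).
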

\begin{proof}
Since  we have $V(1)_*^S\K = E(x)$  with $|x| = 2p-3$ and \[\mathscr{D}_{\K}( V(1) \wedge_S \K, H\mathbb{F}_p) = \Hom_{\K_0}( \pi_0(V(1) \wedge_S \K), \mathbb{F}_p)\] we get a map $V(1) \wedge_S \K \to H\mathbb{F}_p$ that is the identity on $\pi_0$ and this is part of a distinguished  triangle 
\[\begin{tikzcd}
 V(1) \wedge_S \K \ar{r} & H\mathbb{F}_p   \ar{r}{g} & \Sigma^{2p-2}H\mathbb{F}_p \ar{r} & \Sigma V(1) \wedge_S \K
 \end{tikzcd} \]
 in $\mathscr{D}_{\K}$. 
 It now suffices to show that there  is a $\tilde{g}$ that is  mapped to  $g$  under 
\[ \begin{tikzcd}
\mathscr{D}_{H\mathbb{F}_p \wedge_S \K}(H\mathbb{F}_p, \Sigma^{2p-2}H\mathbb{F}_p) \ar{r} &  \mathscr{D}_{\K}(H\mathbb{F}_p, \Sigma^{2p-2}H\mathbb{F}_p).
\end{tikzcd}\]
 That is because then the image of the fiber of $\tilde{g}$ under  $\mathscr{D}_{H\mathbb{F}_p  \wedge_S \K} \to \mathscr{D}_{\K}$ is isomorphic to $V(1) \wedge_S \K$. 
We show that 
\[\begin{tikzcd}
\mathscr{D}_{H\mathbb{F}_p \wedge_S \K}(H\mathbb{F}_p, \Sigma^{2p-2}H\mathbb{F}_p ) \ar{r} &  \mathscr{D}_{\K}\bigl(H\mathbb{F}_p, \Sigma^{2p-2}H\mathbb{F}_p\bigr)\end{tikzcd}\]
is an isomorphism.  
We have a free resolution of $\mathbb{F}_p$ as an $(H\mathbb{F}_p)_*\K$-module 
 \[ \begin{tikzcd}
\dots \ar{r} &   P_{2,*}   \ar{r}{d^2} & P_{1,*}     \ar{r}{d^1} &   P_{0,*} =(H\mathbb{F}_p)_*\K \ar{r} & \mathbb{F}_p \ar{r} &  0,
\end{tikzcd}\]
where 
\[ P_{1,*} = (H\mathbb{F}_p)_*\K\{\sigma x\} \oplus \bigoplus_{i \geq 1} (H\mathbb{F}_p)_*\K\{\sigma \tilde{\xi}_i\} \oplus \bigoplus_{i \geq 2}(H\mathbb{F}_p)_*\{\sigma \tilde{\tau}_i\}\]
and 
\begin{eqnarray*}
d^1(\sigma x) & = &  x \\
d^1(\sigma \tilde{\xi}_i) & = & \tilde{\xi}_i \text{~for~} i \geq 1 \\
d^1(\sigma \tilde{\tau}_i)&  = & \tilde{\tau}_i \text{~for~} i \geq 2,
\end{eqnarray*}
 and where $P_{i, *} = 0$ if $i \geq 2$ and $* \leq 2p-3$. 
We get
\[
\Hom_{(H\mathbb{F}_p)_*\K}(P_{n,*}, \Sigma^m\Sigma^{2p-2}\mathbb{F}_p)  = \begin{cases}
  \mathbb{F}_p, & \text{~if~}  (n,m) = (1,-1); \\
  0,  & \text{~if~}  n+m = 0 \text{~and~} (n,m) \neq (1,-1); \\
  0, & \text{~if~}  (n,m) = (0,-1);  \\
  0, & \text{~if~}  n \geq 2 \text{~and~}  n+m = 1.
\end{cases}
\]
 We have a free resolution of $\mathbb{F}_p$ as a $\K_*$-module 
\[\begin{tikzcd}
\dots \ar{r} & Q_{2,*} \ar{r} & Q_{1,*} \ar{r}{d^1} &  Q_{0,*} = \K_* \ar{r} & \mathbb{F}_p \ar{r} & 0,
\end{tikzcd}\]
where 
\[Q_{1,*} = \bigoplus_{i \geq 1} \Sigma^{2i(p-1)-1} \K_* \oplus \Sigma^0\K_*,\] 
$d^1(\Sigma^01) = p \in  \pi_0(\K) = \mathbb{Z}_p$ and 
\[d^1(\Sigma^{2i(p-1)-1}1) = 1 \in \pi_{2i(p-1)-1}(\K) = \mathbb{Z}/{p^{v_p(q^{i(p-1)}-1)}},\]
and where $Q_{i,*} = 0$ if $i \geq 2$ and $* \leq 2p-4$.
We get
\[ 
\Hom_{\K_*}(Q_{n,*}, \Sigma^m\Sigma^{2p-2}\mathbb{F}_p) = \begin{cases}
                              \mathbb{F}_p,  & \text{~if~} (n,m) = (1,-1);  \\
                              0, & \text{~if~}  n+m = 0 \text{~and~} (n,m) \neq (1,-1); \\
                       0, & \text{~if~} (n,m) = (0,-1). 
                            \end{cases}
\]
Furthermore, we have a $\K_*$-linear map of chain complexes 
\[\begin{tikzcd}
\dots \ar{r} & Q_{2,*} \ar{r} \ar{d}{f_{2,*}} & Q_{1,*} \ar{r} \ar{d}{f_{1,*}}  & K_* \ar{r} \ar{d}{f_{0,*}} &  \mathbb{F}_p \ar{r} \ar[equal]{d} & 0 \\
\dots \ar{r} & P_{2,*} \ar{r} & P_{1,*} \ar{r} & (H\mathbb{F}_p)_*\K  \ar{r} & \mathbb{F}_p \ar{r} & 0 
\end{tikzcd}\]
with $f_0(1) = 1$ and $f_1(\Sigma^{2p-3}1) = \sigma x$. To prove this, it suffices to show that the Hurewicz map  $h_{\K}: \K_* \to (H\mathbb{F}_p)_*\K$ maps
the element
\[ 1 \in \pi_{2p-3}(\K) = \mathbb{Z}/{p^{v_p(q^{(p-1)}-1)}} \]  to $x$.  
This follows from the commutativity of the diagram 
\[\begin{tikzcd}
\Sigma \pi_{*}\bigl(\K(\bar{\mathbb{F}}_l)_p\bigr) \ar{r} \ar{d}{\Sigma h_{\K(\bar{\mathbb{F}}_l)_p}} & \pi_{*}(\K)  \ar{d}{h_{\K}}  \\
\Sigma(H\mathbb{F}_p)_{*}\bigl(\K(\bar{\mathbb{F}}_l)_p\bigr) \ar{r}{\Delta} & (H\mathbb{F}_p)_{*}\K, 
\end{tikzcd}\]
and from $(\Sigma h_{\K(\bar{\mathbb{F}}_l)_p})(\Sigma u^{p-2}) = \Sigma u^{p-2}$ and $\Delta(\Sigma u^{p-2}) = x$. 

Using Remark  \ref{natExt} we see that  the map \[ \begin{tikzcd}
\Ext^*_{ H\mathbb{F}_p \wedge_S \K}(H\mathbb{F}_p, \Sigma^{2p-2}H\mathbb{F}_p) \ar{r} & \Ext_{\K}^*(H\mathbb{F}_p, \Sigma^{2p-2}H\mathbb{F}_p) 
\end{tikzcd}\] 
has a compatible map of spectral sequence that is an isomorphism on $E^{\infty}$-pages in total degree zero. Since by \cite[Theorem 6.7]{LewMand} and \cite[Theorem 7.1]{Boar} the spectral sequences converge strongly, the claim follows. 
\end{proof}

\begin{lem} \label{dimcomod}
In  case (\ref{2}) the $S$-module   $V(1) \wedge_S^L \THH(\K)$ is isomorphic in $\mathscr{D}_S$ to an $H\mathbb{F}_p$-module.  The two $\mathbb{F}_p$-vector spaces $V(1)_*\THH(\K)$ and 
\[ E(x) \otimes E([\sigma \tilde{\xi}_1], [\sigma \tilde{\xi}_2]) \otimes P([\sigma \tilde{\tau}_2]) \otimes \Gamma([\sigma x])\] 
have the same dimension in every degree. 
\end{lem}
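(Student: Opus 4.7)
The plan is to reduce both parts of the lemma to Lemma~\ref{bimod} and then to a Poincaré-series computation that uses Theorem~\ref{modphomolTHHK}.

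For the first assertion, Lemma~\ref{bimod} supplies an $H\mathbb{F}_p\wedge_S\K$-module $M$ with an isomorphism $V(1)\wedge_S\K\cong M$ in $\mathscr{D}_{\K}$.  I would combine this with the base-change equivalence
\[
V(1)\wedge_S^L\THH(\K)\;\cong\;\bigl(V(1)\wedge_S\K\bigr)\wedge_{\K}^L\THH(\K)
\]
in $\mathscr{D}_S$ (the action map $\K\wedge_{\K}^L\THH(\K)\to\THH(\K)$ is an equivalence). Transporting the $H\mathbb{F}_p\wedge_S\K$-module structure of $M$ through this chain yields an $H\mathbb{F}_p\wedge_S\K\wedge_{\K}^L\THH(\K)\simeq H\mathbb{F}_p\wedge_S\THH(\K)$-module whose underlying object of $\mathscr{D}_S$ is $V(1)\wedge_S^L\THH(\K)$.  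In particular it is an $H\mathbb{F}_p$-module, proving part~(1).

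For the dimension count, I would use that any $H\mathbb{F}_p$-module $N$ splits in $\mathscr{D}_{H\mathbb{F}_p}$ as a coproduct of suspensions of $H\mathbb{F}_p$, so that the Hurewicz map produces an $A_*$-comodule isomorphism $(H\mathbb{F}_p)_*N\cong A_*\otimes\pi_*(N)$ with coaction $\Delta\otimes\id$.  Applied to $N=V(1)\wedge_S^L\THH(\K)$ and combined with the Künneth isomorphism $(H\mathbb{F}_p)_*(V(1)\wedge_S^L\THH(\K))\cong (H\mathbb{F}_p)_*V(1)\otimes(H\mathbb{F}_p)_*\THH(\K)$, this gives an equality of Poincaré series
\[
P_{A_*}(t)\cdot P_{V(1)_*\THH(\K)}(t)\;=\;P_{(H\mathbb{F}_p)_*V(1)}(t)\cdot P_{(H\mathbb{F}_p)_*\THH(\K)}(t).
\]

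Plugging in $(H\mathbb{F}_p)_*V(1)=E(\epsilon_0,\epsilon_1)$ with $|\epsilon_0|=1$, $|\epsilon_1|=2p-1$, the description of $(H\mathbb{F}_p)_*\K$ from Proposition~\ref{homolK} in case~(\ref{2}) (where $k=1$, so the $P_k(y)$-factor is trivial), and the formula for $(H\mathbb{F}_p)_*\THH(\K)$ from Theorem~\ref{modphomolTHHK}, a direct cancellation shows that the ratio equals
\[
(1+t^{2p-3})(1+t^{2p-1})(1+t^{2p^2-1})\cdot\frac{1}{1-t^{2p^2}}\cdot\frac{1}{1-t^{2p-2}},
\]
which is exactly the Poincaré series of $E(x)\otimes E([\sigma\tilde{\xi}_1],[\sigma\tilde{\xi}_2])\otimes P([\sigma\tilde{\tau}_2])\otimes\Gamma([\sigma x])$.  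The main conceptual step is the first paragraph: once $V(1)\wedge_S^L\THH(\K)$ is known to be an $H\mathbb{F}_p$-module, the dimension equality reduces to arithmetic with the Poincaré series of $A_*$, $(H\mathbb{F}_p)_*\K$ and $(H\mathbb{F}_p)_*\THH(\K)$ already established.
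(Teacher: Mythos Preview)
Your proof is correct and follows essentially the same approach as the paper: both deduce the $H\mathbb{F}_p$-module structure from Lemma~\ref{bimod} via base change along $\K\to\THH(\K)$, and both compare the two descriptions of $(H\mathbb{F}_p)_*\bigl(V(1)\wedge_S^L\THH(\K)\bigr)$ (as $A_*\otimes V(1)_*\THH(\K)$ and as $(H\mathbb{F}_p)_*V(1)\otimes(H\mathbb{F}_p)_*\THH(\K)$) to read off the dimension count. Your Poincar\'e-series formulation makes explicit the cancellation that the paper leaves implicit, namely that $E(\epsilon_0,\epsilon_1)\otimes P(\tilde{\xi}_1,\dots)\otimes E(\tilde{\tau}_2,\dots)$ and $A_*$ have the same graded dimensions.
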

\begin{proof}
We have that $V(1) \wedge_S \K$ is a cell $\K$-module \cite[Proposition III.4.1]{EKMM}.  We therefore have an isomorphism in $\mathscr{D}_{\K}$: 
\[ V(1) \wedge_S \THH(\K)  \cong (V(1) \wedge_{S} \K) \wedge_{\K}^L \THH(\K).\]
By Lemma \ref{bimod} the latter is isomorphic in $\mathscr{D}_{\K}$ to $M \wedge_{\K} \Gamma^{\K}\THH(\K)$, 
where $M$ is an $(H\mathbb{F}_p,\K)$-bimodule and $\Gamma^{\K}\THH(\K)$ is a cell  approximation of the $\K$-module $\THH(\K)$.
We get that $V(1) \wedge_S^L \THH(\K)$ is isomorphic in $\mathscr{D}_{S}$ to an $H\mathbb{F}_p$-module. 
As a consequence, we have 
\[ (H\mathbb{F}_p)_*\bigl(V(1) \wedge_S^L \THH(\K)\bigr)  \cong \bigoplus_{i \in I} \Sigma^{n_i}(H\mathbb{F}_p)_*H\mathbb{F}_p, \]
where the $n_i$ are natural numbers such that for all $n$ the cardinality of 
\[ \{i \in I: n_i = n\}\] 
is equal to the dimension of $V(1)_n\THH(\K)$. On the other hand  $(H\mathbb{F}_p)_*\bigl(V(1) \wedge_S^L\THH(\K)\bigr)$
  is isomorphic to 
 \[ E(\epsilon_0, \epsilon_1) \otimes P(\tilde{\xi}_1, \dots) \otimes E(\tilde{\tau}_2, \dots) 
   \otimes  E(x) \otimes E([\sigma \tilde{\xi}_1], [\sigma \tilde{\xi}_2]) \otimes P([\sigma \tilde{\tau}_2]) \otimes \Gamma([\sigma x]).\]
 This proves the lemma. 
\end{proof}
\begin{thm} \label{casetwores}
In case (\ref{2})  we have an isomorphism of $\mathbb{F}_p$-algebras
\[ V(1)_*\THH(\K) \cong E(x) \otimes E(\lambda_1, \lambda_2) \otimes P(\mu_2) \otimes \Gamma(\gamma_1')\]
with $|x| = 2p-3$, $|\lambda_i| =2p^i-1$, $|\mu_2| = 2p^2$ and $|\gamma_1'| = 2p-2$.  
\end{thm}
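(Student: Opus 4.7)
The strategy is to identify $V(1)_*\THH(\K)$ with the module of $A_*$-comodule primitives of its mod $p$ homology, exhibit explicit primitive representatives for the claimed generators, and conclude by a dimension count. By Lemma \ref{dimcomod}, $V(1) \wedge_S^L \THH(\K)$ is equivalent in $\mathscr{D}_S$ to an $H\mathbb{F}_p$-module and so splits as a coproduct of shifts of $H\mathbb{F}_p$; consequently the Hurewicz homomorphism
\[ V(1)_*\THH(\K) \hookrightarrow (H\mathbb{F}_p)_*\bigl(V(1) \wedge_S^L \THH(\K)\bigr) \]
is an injective $\mathbb{F}_p$-algebra map whose image is the submodule of $A_*$-comodule primitives. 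The Künneth isomorphism and Theorem \ref{modphomolTHHK} identify the target with
\[ E(\epsilon_0, \epsilon_1) \otimes (H\mathbb{F}_p)_*\K \otimes E([\sigma \tilde{\xi}_1], [\sigma \tilde{\xi}_2]) \otimes P([\sigma \tilde{\tau}_2]) \otimes \Gamma([\sigma x]) \]
as a comodule algebra, where $\nu(\epsilon_0) = 1 \otimes \epsilon_0 + \bar{\tau}_0 \otimes 1$. Applying $\sigma_*$ to the coaction formulas of Proposition \ref{homolK} and using the Leibniz rule $\sigma_*(\tilde{\xi}_j^{p^i}) = 0$, one obtains $\nu([\sigma \tilde{\xi}_1]) = 1 \otimes [\sigma \tilde{\xi}_1] + a\bar{\tau}_0 \otimes [\sigma x]$, that $[\sigma \tilde{\xi}_n]$ is primitive for $n \geq 2$, and $\nu([\sigma \tilde{\tau}_n]) = 1 \otimes [\sigma \tilde{\tau}_n] + \bar{\tau}_0 \otimes [\sigma \tilde{\xi}_n]$.

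Next I would define $\gamma_1' := [\sigma x]$ and primitive representatives $\gamma_n'$ for the classes $\gamma_n([\sigma x])$ (obtained iteratively by subtracting primitive corrections of lower Bökstedt filtration), together with
\[ \lambda_1 := [\sigma \tilde{\xi}_1] - a \epsilon_0 \gamma_1', \quad \lambda_2 := [\sigma \tilde{\xi}_2], \quad \mu_2 := [\sigma \tilde{\tau}_2] - \epsilon_0 \lambda_2, \]
and take $x$ from Proposition \ref{homolK}. Direct computations with the coactions above show that all these classes are $A_*$-comodule primitives. The relations $x^2 = \lambda_1^2 = \lambda_2^2 = 0$ hold by odd-degree parity; $(\gamma_1')^p = 0$ follows from $[\sigma x]^p = 0$ established in Theorem \ref{modphomolTHHK}; and the divided power relations $\gamma_n'\gamma_m' = \binom{n+m}{n}\gamma_{n+m}'$ hold because they hold in $\Gamma([\sigma x]) \subset (H\mathbb{F}_p)_*\THH(\K)$. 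This produces a well-defined $\mathbb{F}_p$-algebra homomorphism
\[ \phi : E(x) \otimes E(\lambda_1, \lambda_2) \otimes P(\mu_2) \otimes \Gamma(\gamma_1') \longrightarrow V(1)_*\THH(\K). \]

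For injectivity, a monomial of the source maps under the Hurewicz injection to a class whose leading term (in the bigrading separating the $E(\epsilon_0, \epsilon_1)$-factor) lies in $1 \otimes (H\mathbb{F}_p)_*\THH(\K)$ and equals the corresponding monomial in the basis of $(H\mathbb{F}_p)_*\K \otimes E([\sigma \tilde{\xi}_1], [\sigma \tilde{\xi}_2]) \otimes P([\sigma \tilde{\tau}_2]) \otimes \Gamma([\sigma x])$; since these leading terms are linearly independent, $\phi$ is injective. Together with the degreewise dimension equality furnished by Lemma \ref{dimcomod}, this forces $\phi$ to be an isomorphism. The main subtlety is the consistent choice of primitive divided-power representatives $\gamma_n'$ realizing the divided power relations on the nose; this is handled by the observation that the Hurewicz map identifies $V(1)_*\THH(\K)$ with the primitives, so any relation already satisfied in $(H\mathbb{F}_p)_*\THH(\K)$ transfers back.
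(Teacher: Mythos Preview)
Your overall strategy matches the paper's: identify $V(1)_*\THH(\K)$ with the $A_*$-comodule primitives in $(H\mathbb{F}_p)_*(V(1)\wedge_S^L\THH(\K))$, define $\lambda_1,\lambda_2,\mu_2$ exactly as you do, and finish by the dimension count of Lemma~\ref{dimcomod}. The definitions of $\lambda_1,\lambda_2,\mu_2$ and the verification that they are primitive are fine.

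The genuine gap is in the construction of the classes $\gamma'_{p^i}$ for $i\geq 1$. Your phrase ``obtained iteratively by subtracting primitive corrections of lower B\"okstedt filtration'' does not explain why primitive representatives exist at all: the classes $\gamma_{p^i}$ produced in Theorem~\ref{modphomolTHHK} were chosen to satisfy $\gamma_{p^i}^p=0$, and there is no a~priori reason their coaction is trivial. Moreover, your argument for the divided-power relations is circular: those relations hold for the \emph{original} classes $\gamma_n$ in $(H\mathbb{F}_p)_*\THH(\K)$, but once you replace $\gamma_{p^i}$ by a different primitive class $\gamma'_{p^i}$ the products $\gamma'_n\gamma'_m$ need not equal $\binom{n+m}{n}\gamma'_{n+m}$, and the Hurewicz injection does not repair this since the original $\gamma_n$ are not known to lie in its image.

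The paper handles this as follows. It first introduces $\hat{\xi}_1=\tilde{\xi}_1-a\epsilon_0 x$ and observes that $A'_*\coloneqq E(\epsilon_0,\epsilon_1)\otimes P(\hat{\xi}_1,\tilde{\xi}_2,\dots)\otimes E(\tilde{\tau}_2,\dots)$ is a sub-comodule-algebra isomorphic to $A_*$, so that $(H\mathbb{F}_p)_*(V(1)\wedge_S^L\THH(\K))\cong A'_*\otimes E(x)\otimes E(\lambda_1,\lambda_2)\otimes P(\mu_2)\otimes P_p(\gamma_{p^i}\mid i\geq 0)$. Then, by induction on $i$, it uses the dimension formula of Lemma~\ref{dimcomod} to see that the space of primitives in degree $p^i(2p-2)$ exceeds by one the space $W$ of primitives already accounted for; any extra primitive $b$ must lie outside $A'_*\otimes E(x)\otimes E(\lambda_1,\lambda_2)\otimes P(\mu_2)\otimes P_p(\gamma'_1,\dots,\gamma'_{p^{i-1}})$ (since the primitives there are exactly $W$), hence $b\doteq \gamma_{p^i}+b'$. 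Finally one computes $(H\mathbb{F}_p)_*(V(1)\wedge_S^L\THH(\K))/I_p\cong P(\hat{\xi}_1,\tilde{\xi}_2,\dots)\otimes P(\mu_2)$ and uses primitivity to see that $b\equiv \lambda\mu_2^{p^{i-2}(p-1)}\pmod{I_p}$; setting $\gamma'_{p^i}=b-\lambda\mu_2^{p^{i-2}(p-1)}$ gives a primitive with $(\gamma'_{p^i})^p=0$. The theorem then follows because $A'_*\cong A_*$ has no nontrivial primitives, so the primitives of the tensor product are exactly $E(x)\otimes E(\lambda_1,\lambda_2)\otimes P(\mu_2)\otimes P_p(\gamma'_{p^i}\mid i\geq 0)\cong E(x)\otimes E(\lambda_1,\lambda_2)\otimes P(\mu_2)\otimes\Gamma(\gamma'_1)$. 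Note that no divided-power relations among the $\gamma'_{p^i}$ are ever claimed; only the abstract algebra isomorphism $P_p(\gamma'_{p^i}\mid i\geq 0)\cong\Gamma(\gamma'_1)$ is used.
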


\begin{proof}
We compute the $A_*$-comodule primitives in
$(H\mathbb{F}_p)_*\bigl(V(1) \wedge_S^L \THH(\K)\bigr)$. Since  
we have that $(H\mathbb{F}_p)_*V(1) = E(\epsilon_0, \epsilon_1)$
injects into the dual Steenrod algebra via a map of comodule algebras,  we can assume that the $A_*$-comodule action of $\epsilon_0$ is given by
\[ \nu(\epsilon_0) = 1 \otimes \epsilon_0 + \bar{\tau}_0 \otimes 1.\] 
 
We define classes in $(H\mathbb{F}_p)_*\bigl(V(1) \wedge_S^L \THH(\K)\bigr)$  by
\begin{eqnarray*}
\hat{\xi}_1 & \coloneqq & \tilde{\xi}_1 - a \epsilon_0 x, \\
\lambda_1 & \coloneqq & [\sigma \tilde{\xi}_1] - a\epsilon_0 [\sigma x], \\
\lambda_2 & \coloneqq &  [\sigma \tilde{\xi}_2], \\
\mu_2 & \coloneqq & [\sigma \tilde{\tau}_2] - \epsilon_0 [\sigma \tilde{\xi}_2],
\end{eqnarray*}
where $a$ is the element in $\mathbb{F}_p$ that we defined in Proposition \ref{homolK}. Then, the $A_*$-coaction of $\hat{\xi}_1$ is given by 
\[ \nu(\hat{\xi}_1) = \bar{\xi}_1 \otimes 1 + 1 \otimes \hat{\xi}_1\]
and   the classes $\lambda_1$, $\lambda_2$ and $\mu_2$ are comodule primitives .   
Let $\gamma_{p^i} \in (H\mathbb{F}_p)_{(2p-2)p^i}\THH(\K)$ 
be the classes defined in Theorem \ref{modphomolTHHK}.
We set
\[A'_* \coloneqq E(\epsilon_0, \epsilon_1) \otimes P(\hat{\xi}_1, \tilde{\xi}_2, \dots) \otimes E(\tilde{\tau}_2, \dots). \]
The map of $\mathbb{F}_p$-algebras
\[A'_* \otimes E(x)  \otimes E(\lambda_1, \lambda_2) \otimes P(\mu_2) \otimes P_p(\gamma_{p^i} | i \geq 0) \to (H\mathbb{F}_p)_*\bigl(V(1) \wedge_S^L \THH(\K)\bigr)\]
is an isomorphism, because it is surjective and both sides have the same dimension over $\mathbb{F}_p$ in every degree.  
We treat it as the identity.  Note that $A'_*$ is a subcomodule algebra of $(H\mathbb{F}_p)_*\bigr(V(1) \wedge_S^L \THH(\K)\bigl)$, because 
 \[ (\tilde{\xi}_1 - a \epsilon_0x)^{p^n} = \tilde{\xi}_1^{p^n}\]
 for $n \geq 1$.  It is isomorphic to 
$(H\mathbb{F}_p)_*V(1) \otimes (H\mathbb{F}_p)_*\ell \cong A_*$.  
We show by induction on $i \geq 0$ that we can find classes 
\[\gamma'_{p^i} \in (H\mathbb{F}_p)_{(2p-2)p^i}\bigl(V(1) \wedge_S^L \THH(\K)\bigr)\]
with the following properties:
\begin{itemize}
\item The class $\gamma'_{p^i}$ is a comodule primitive.
\item We have $(\gamma'_{p^i})^p = 0$. 
\item For $D_* = P_p(\gamma'_1, \dots, \gamma'_{p^i}, \gamma_{p^{i+1}}, \gamma_{p^{i+2}} \dots)$ the map
\[A_*' \otimes E(x)  \otimes E(\lambda_1, \lambda_2) \otimes P(\mu_2) \otimes D_* \to (H\mathbb{F}_p)_*\bigl(V(1) \wedge_S^L \THH(\K)\bigr)\]
is an isomorphism. 
\end{itemize}
We set $\gamma'_1 = \gamma_1 = [\sigma x]$. Suppose  that $i > 0$ and that we have already defined $\gamma'_{p^j}$ for $0 \leq j \leq i-1$. The $\mathbb{F}_p$-vector space
\[W \coloneqq  \bigl(E(x) \otimes E(\lambda_1, \lambda_2) \otimes P(\mu_2) \otimes P_p(\gamma'_1, \dots, \gamma'_{p^{i-1}})\bigr)_{(2p-2)p^i} \]
is included in the subspace $V$ of  primitives in $(H\mathbb{F}_p)_{(2p-2)p^i}\bigl(V(1) \wedge_S^L \THH(\K)\bigr)$.
By Lemma \ref{dimcomod} we have 
\[\dim_{\mathbb{F}_p} V = \dim_{\mathbb{F}_p}W + 1.\]
Therefore, there is a class $b \in V$ with $b \notin W$. The class $b$ cannot be an element of
\[ U \coloneqq \bigl(A_*' \otimes E(x) \otimes E(\lambda_1, \lambda_2) \otimes P(\mu_2)\otimes P_p(\gamma'_1, \dots, \gamma'_{p^{i-1}})\bigr)_{p^i(2p-2)},\]
because the comodule primitives in this vector space are the elements of $W$. 
Therefore, we have 
\[b \doteq \gamma_{p^i} + b'\] 
for an $b' \in U$. 
We have 
\[ (H\mathbb{F}_p)_*\bigl(V(1) \wedge_S^L \THH(\K)\bigr)/{I_p} \cong P(\hat{\xi}_1, \tilde{\xi}_2, \dots) \otimes P(\mu_2)\] 
and 
\[\Bigl(A_* \otimes (H\mathbb{F}_p)_*\bigl(V(1) \wedge_S^L \THH(\K)\bigr)\Bigr)/{I_p} \cong P(\bar{\xi}_1, \dots) \otimes P(\hat{\xi}_1, \tilde{\xi}_1,  \dots) \otimes P(\mu_2).\] 
The $\mathbb{F}_p$-algebra map $\bar{\nu}$, induced on these quotients by the coaction $\nu$, is given by $\bar{\nu}(\mu_2) = 1  \otimes \mu_2$ and
\[\bar{\nu}(\tilde{\xi}_n) = \bar{\xi}_{n-1} \otimes \hat{\xi}_1^{p^{n-1}} +\sum_{i+j = n, \, j \neq 1} \bar{\xi}_i \otimes \tilde{\xi}_j^{p^i}.\]
Since $b$ is a comodule primitive, we get
 \[ b = \lambda \cdot \mu_2^{p^{i-2}(p-1)} \text{~modulo~} I_p\]
for a $\lambda \in \mathbb{F}_p$ if $i \geq 2$, 
and \[ b = 0 \text{~modulo~} I_p\] if $i = 1$. 
We set $\gamma'_{p^i} \coloneqq b - \lambda \cdot \mu_2^{p^{i-2}(p-1)}$ if  $i \geq 2$ and $\gamma'_{p^i} \coloneqq b$ if $i = 1$. 
Then $\gamma'_{p^i}$ has the desired properties. 
We get  
\[ (H\mathbb{F}_p)_*\bigl(V(1) \wedge_{S}^L \THH(\K)\bigr) = A_*' \otimes E(x) \otimes E(\lambda_1, \lambda_2) \otimes P(\mu_2) \otimes P_p(\gamma_{p^i}'| i \geq 0).\]
This finishes the proof.
\end{proof}

\begin{rmk}
The methods we used to compute $V(1)_*\THH(\K)$ in case (\ref{2}) do not apply in the cases (\ref{1}), (\ref{3}) and (\ref{4}):

In case (\ref{1})  the object $V(1) \wedge_S \K \in \mathscr{D}_S$ is not a module over the $S$-ring spectrum $H\mathbb{F}_p$: 
Suppose the contrary. Then, 
we would have 
\[ (H\mathbb{F}_p)_*\bigl(V(1) \wedge_S \K\bigr)  \cong A_* \oplus \Sigma^{2p-3} A_*.\] 
   This is a contradiction to Proposition \ref{homimj}.
   
   In case (\ref{3}) one can compute $(H\mathbb{F}_p)_*\THH(\K; H\mathbb{Z}_p)$ using the above methods. But since we have a tensor factor $P_k(y)$ in $(H\mathbb{F}_p)_*\K$,  the Hochschild homology  of $(H\mathbb{F}_p)_*\K$ is not flat over $(H\mathbb{F}_p)_*\K$. The B\"okstedt spectral sequence converging to $(H\mathbb{F}_p)_*\THH(\K)$  therefore has no coalgebra structure. 
 
    In case (\ref{4}) the mod $p$ homology of $\K$  has a more complicated form  and one needs different methods to compute its Hochschild homology. 
\end{rmk}

\section{Computations with the Brun spectral sequence} \label{BrunKcomsec}

In \cite{Brunss} we have constructed a generalization of the  spectral sequence of Brun  in \cite[Theorem 6.2.10]{Brun}.  We  will refer to  this generalization as  the Brun spectral sequence. 
In this section we consider the Brun spectral sequence for $\K = \K(\mathbb{F}_q)_p$. 
We pursue the same strategy that we used in \cite{Brunss}  to compute $V(1)_*\THH(\ku_p)$, where $\ku_p$  is $p$-completed connective complex $K$-theory.

By \cite[Theorem 4.11]{Brunss} and \cite[Lemma 4.13]{Brunss} we have a Brun spectral sequence of  the form
\begin{equation} \label{sset3} 
E^2_{n,m} = V(1)_m\K \otimes \THH_n(\K; H\mathbb{F}_p) \Longrightarrow V(1)_{n+m}\THH(\K) 
\end{equation}
which is multiplicative.  Here, recall   that since $\K$ is a connective cofibrant commutative $S$-algebra, we have a map of commutative $S$-algebras $\K \to H\pi_0(K) = H\mathbb{Z}_p$ realizing the identity on $\pi_0$.  We can compose this with the map induced by the ring homomorphism $\mathbb{Z}_p \to \mathbb{F}_p$ to get a map $\K \to H\mathbb{F}_p$.   
We factor the map $\K \to H\mathbb{Z}_p$ in $\mathscr{C}\mathscr{A}_S$ as a cofibration followed by an acyclic fibration: 
 \[\begin{tikzcd} 
  \K  \ar[tail]{r} & \hat{H}\mathbb{Z}_p \ar[two heads]{r}{\sim} & H\mathbb{Z}_p. 
  \end{tikzcd}\] 
Analogously to  the case of $\ku_p$ in \cite{Brunss} we have  an isomorphism of $S$-ring spectra 
\begin{equation*} \label{V0vp}
 \THH(\K; H\mathbb{F}_p) \cong V(0) \wedge_S^L \THH(\K; \hat{H}\mathbb{Z}_p).
 \end{equation*} 
Again by \cite[Theorem 4.11, Lemma 4.13]{Brunss} we have  the Brun spectral sequence
  \begin{equation} \label{sset2} E^2_{*,*} = V(0)_*(\hat{H} \mathbb{Z}_p \wedge_{\K} \hat{H}\mathbb{Z}_p) \otimes \THH_*(\hat{H} \mathbb{Z}_p; H\mathbb{F}_p)  \Longrightarrow V(0)_*\THH(\K; \hat{H}\mathbb{Z}_p).
  \end{equation}
  In Subsection \ref{et1} we will compute the ring $V(0)_*(\hat{H}\mathbb{Z}_p \wedge_{\K} \hat{H}\mathbb{Z}_p)$, in Subsection \ref{et2} we will consider the spectral sequence (\ref{sset2}), and finally in  Subsection \ref{et3} we will consider the spectral sequence (\ref{sset3}). 
\subsection{The mod $p$ homotopy of $H\mathbb{Z}_p \wedge_{\K(\mathbb{F}_q)_p}^L H\mathbb{Z}_p$} \label{et1}

In this subsection we will compute  the graded $\mathbb{F}_p$-algebra $V(0)_*(\hat{H}\mathbb{Z}_p \wedge_{\K} \hat{H}\mathbb{Z}_p)$. 

\begin{rmk}
A tempting strategy to compute $V(0)_*(\hat{H}\mathbb{Z}_p \wedge_{\K} \hat{H}\mathbb{Z}_p)$  would be to use an 
Eilenberg-Moore type spectral sequence \cite[Section IV.6]{EKMM}
\[ E^2_{*,*} = \Tor_{*,*}^{V(0)_*\K}(\mathbb{F}_p, \mathbb{F}_p)  \Longrightarrow V(0)_*(\hat{H}\mathbb{Z}_p \wedge_{\K} \hat{H}\mathbb{Z}_p). \]
Such a spectral sequence would have to collapse at the $E^2$-page and would yield 
\begin{equation} \label{gu}
 V(0)_*(\hat{H}\mathbb{Z}_p \wedge_{\K} \hat{H}\mathbb{Z}_p) = \Gamma(\sigma x) \otimes E(\sigma y)
 \end{equation} 
with   $|\sigma x| = 2r$ and $|\sigma y| = 2r+1$. 
But this requires $V(0) \wedge_S \K$ to be an $S$-algebra. 
As in  \cite[Example 3.3]{An} one can  use  that there is a $p$-fold Massey product $\left< \bar{\tau}_0, \dots, \bar{\tau}_0 \right> = - \bar{\xi}_1$ in $(H\mathbb{F}_p)_*H\mathbb{F}_p$ defined with no indeterminacy to show that $V(0) \wedge_S \K$ is no $S$-algebra in case (\ref{1}). 
  We therefore cannot use the above strategy, but we still obtain (\ref{gu}).
\end{rmk}
\begin{lem} 
We have a multiplicative spectral sequence of the form 
\begin{equation} \label{kunnSS}
 E^2_{*,*} = \Tor_{*,*}^{\pi_*(\K(\bar{\mathbb{F}}_l)_p)}\bigl(B_*, \mathbb{Z}_p\bigr) \Longrightarrow V(0)_*(\hat{H}\mathbb{Z}_p \wedge_{\K} \hat{H}\mathbb{Z}_p),
 \end{equation}
 where 
 \[B_* \cong   \pi_*\bigl((V(0) \wedge_{S} H\mathbb{Z}_p) \wedge_{\K}^L \K(\bar{\mathbb{F}}_l)_p\bigr) \cong \pi_*\bigl(H\mathbb{F}_p \wedge_{\K}^L \K(\bar{\mathbb{F}}_l)_p\bigr). \] 
\end{lem}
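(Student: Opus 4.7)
The plan is to derive (\ref{kunnSS}) from the EKMM Künneth spectral sequence for modules over the commutative $S$-algebra $R' \coloneqq \K(\bar{\mathbb{F}}_l)_p$, after a preliminary base-change in $\mathscr{D}_{\K}$. Since the composition $\K \to \hat{H}\mathbb{Z}_p \xrightarrow{\sim} H\mathbb{Z}_p$ of commutative $S$-algebras factors as $\K \xrightarrow{i} R' \to H\mathbb{Z}_p$, we may replace $\hat{H}\mathbb{Z}_p$ by a cofibrant commutative $R'$-algebra $N$ weakly equivalent to $H\mathbb{Z}_p$. The tensor-triangulated base-change isomorphism then yields, in $\mathscr{D}_S$, an equivalence
\[
V(0) \wedge_S^L \hat{H}\mathbb{Z}_p \wedge_{\K}^L \hat{H}\mathbb{Z}_p \;\simeq\; M \wedge_{R'}^L N,
\]
where $M \coloneqq V(0) \wedge_S^L \bigl(R' \wedge_{\K}^L \hat{H}\mathbb{Z}_p\bigr)$ is regarded as an $R'$-module via the left smash factor.

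Applying the EKMM Künneth spectral sequence to the $R'$-modules $M$ and $N$ gives
\[
E^2_{*,*} = \Tor_{*,*}^{\pi_*R'}(\pi_*M, \pi_*N) \Longrightarrow \pi_*(M \wedge_{R'}^L N).
\]
Here $\pi_*N = \mathbb{Z}_p$ is concentrated in degree zero, while the equivalence $V(0) \wedge_S^L H\mathbb{Z}_p \simeq H\mathbb{F}_p$ coming from the defining triangle (\ref{V0defi}) of $V(0)$ gives
\[
\pi_*M \;\cong\; \pi_*\bigl((V(0) \wedge_S H\mathbb{Z}_p) \wedge_{\K}^L \K(\bar{\mathbb{F}}_l)_p\bigr) \;\cong\; \pi_*\bigl(H\mathbb{F}_p \wedge_{\K}^L \K(\bar{\mathbb{F}}_l)_p\bigr) = B_*.
\]
Combined with the base-change equivalence, this identifies the abutment with $V(0)_*(\hat{H}\mathbb{Z}_p \wedge_{\K} \hat{H}\mathbb{Z}_p)$.

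For the multiplicative structure, both $M$ and $N$ carry the structure of commutative $R'$-ring spectra in $\mathscr{D}_{R'}$: $N$ is a commutative $R'$-algebra by construction; $R' \wedge_{\K}^L \hat{H}\mathbb{Z}_p$ is a commutative $R'$-algebra via the left smash factor; and $V(0)$ is an associative and commutative $S$-ring spectrum in $\mathscr{D}_S$. Hence $M \wedge_{R'}^L N$ inherits the structure of an $R'$-ring spectrum, and the pairing on the EKMM bar-type Tor-resolution equips the Künneth spectral sequence with the desired multiplicative structure.

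The main obstacle will be this last multiplicativity bookkeeping: since $V(0)$ is only a commutative $S$-ring spectrum in $\mathscr{D}_S$ (not a strict commutative $S$-algebra), the standard form of the EKMM Künneth spectral sequence for $R'$-algebras does not apply verbatim, and one must invoke the version for $R'$-ring spectra together with the compatibility of the Tor pairing with the smash-product pairing on $M \wedge_{R'}^L N$, as carried out in \cite{Brunss}.
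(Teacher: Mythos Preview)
Your proposal is correct and follows essentially the same route as the paper: base-change to $R' = \K(\bar{\mathbb{F}}_l)_p$ followed by the multiplicative K\"unneth spectral sequence over $R'$, with the ring-spectrum structures supplying multiplicativity. The only point where the paper is more careful is that the factorization of $\K \to H\mathbb{Z}_p$ through $R'$ holds only up to homotopy in $\mathscr{C}\mathscr{A}_S$ (both maps being zeroth Postnikov sections), and it invokes this homotopy commutativity explicitly before replacing the $\K$-module structure on one smash factor; you assert a strict factorization, which is a minor gloss rather than a gap.
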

Here, we use that  $V(0) \wedge_S H\mathbb{Z}_p$ is an $H\mathbb{Z}_p$-ring spectrum that is isomorphic to $H\mathbb{F}_p$ and that one therefore gets isomorphic $\K$-ring spectra by applying the lax symmetric monoidal functor 
$\mathscr{D}_{H\mathbb{Z}_p} \to \mathscr{D}_{\K}$. 
\begin{proof}
The canonical map in $\mathscr{D}_{\K}$ 
\begin{equation}
\begin{tikzcd} \label{map1}
\hat{H} \mathbb{Z}_p \wedge_{\K}^L H \mathbb{Z}_p  \cong  \hat{H} \mathbb{Z}_p \wedge_{\K}^L \hat{H} \mathbb{Z}_p  \ar{r} & \hat{H} \mathbb{Z}_p \wedge_{\K} \hat{H}\mathbb{Z}_p  
\end{tikzcd}
\end{equation}
is an isomorphism of $\K$-ring spectra.  The diagram
\[\begin{tikzcd}
\K \ar{d} \ar{r} & H\mathbb{Z}_p \ar{d}{\id} \\ 
\K(\bar{\mathbb{\F}}_l)_p \ar{r} & H\mathbb{Z}_p 
\end{tikzcd}
\] 
is homotopy commutative in $\mathscr{C}\mathscr{A}_S$  and  therefore in the category of $S$-modules \cite[Proposition VII.2.11]{EKMM}.  We get   that the left-most $\K$-ring spectrum in (\ref{map1}) is isomorphic to $\hat{H} \mathbb{Z}_p \wedge_{\K}^L H \mathbb{Z}_p $, but where the $\K$-module structure of $H\mathbb{Z}_p$ is now given by 
\[\begin{tikzcd}
\K \ar{r} &  \K(\bar{\mathbb{F}}_l)_p  \ar{r} & H\mathbb{Z}_p. 
\end{tikzcd}\]
This is isomorphic as an $\K$-ring spectrum to $\hat{H}\mathbb{Z}_p \wedge_{\K} H\mathbb{Z}_p$. 
 We have isomorphisms of commutative $S$-algebras 
\[
\hat{H} \mathbb{Z}_p \wedge_{\K} H\mathbb{Z}_p \cong \hat{H} \mathbb{Z}_p \wedge_{\K} \bigl(\K(\bar{\mathbb{F}}_l)_p \wedge_{\K(\bar{\mathbb{F}}_l)_p} H\mathbb{Z}_p \bigr) \cong 
\bigl(\hat{H} \mathbb{Z}_p \wedge_{\K} \K(\bar{\mathbb{F}}_l)_p \bigr) \wedge_{\K(\bar{\mathbb{F}}_l)_p} H\mathbb{Z}_p .
\]
Since cofibrations are stable under cobase change, the map 
\[\begin{tikzcd}
\K(\bar{\mathbb{F}}_l)_p \ar{r} & \hat{H} \mathbb{Z}_p \wedge_{\K} \K(\bar{\mathbb{F}}_l)_p 
\end{tikzcd}\]
 is a cofibration of commutative $S$-algebras. Therefore,   the canonical map  
 \[\begin{tikzcd}
 \bigl(\hat{H} \mathbb{Z}_p \wedge_{\K} \K(\bar{\mathbb{F}}_l)_p \bigr)  \wedge_{\K(\bar{\mathbb{F}}_l)_p}^L H\mathbb{Z}_p \ar{r} &  \bigl(\hat{H} \mathbb{Z}_p \wedge_{\K} \K(\bar{\mathbb{F}}_l)_p \bigr) \wedge_{\K(\bar{\mathbb{F}}_l)_p} H\mathbb{Z}_p
 \end{tikzcd}\] in $\mathscr{D}_{\K(\bar{\mathbb{F}}_l)_p}$ is an isomorphism of $\K(\bar{\mathbb{F}}_l)_p$-ring spectra.  
 Thus, we have an isomorphism of $S$-ring spectra 
 \[ V(0) \wedge_S^L (\hat{H} \mathbb{Z}_p \wedge_{\K} \hat{H} \mathbb{Z}_p) \cong V(0) \wedge_S^L\Bigl(\bigl(\hat{H} \mathbb{Z}_p \wedge_{\K} \K(\bar{\mathbb{F}}_l)_p \bigr) \wedge_{\K(\bar{\mathbb{F}}_l)_p}^L H\mathbb{Z}_p\Bigr).\]
 By \cite[Remark 4.10]{Brunss} the latter  $S$-ring spectrum is isomorphic to 
 \[ \Bigl(V(0) \wedge_S \bigl(\hat{H}\mathbb{Z}_p \wedge_{\K} \K(\bar{\mathbb{F}}_l)_p\bigr)\Bigr) \wedge_{\K(\bar{\mathbb{F}}_l)_p}^L H\mathbb{Z}_p.\]
 Here, note that 
 $V(0) \wedge_S \bigl(\hat{H}\mathbb{Z}_p \wedge_{\K} \K(\bar{\mathbb{F}}_l)_p\bigr)$
  is an 
 $\hat{H}\mathbb{Z}_p \wedge_{\K} \K(\bar{\mathbb{F}}_l)_p$-ring spectrum and that we therefore get a $\K(\bar{\mathbb{F}}_l)_p$-ring spectrum after applying the lax symmetric monoidal functor $\mathscr{D}_{\hat{H}\mathbb{Z}_p \wedge_{\K} \K(\bar{\mathbb{F}}_l)_p} \to \mathscr{D}_{\K(\bar{\mathbb{F}}_l)_p}$.
 By \cite[Lemma 1.3]{BakLaz} we get a multiplicative spectral sequence
 \[ E^2_{*,*} = \Tor^{\pi_*(\K(\bar{\mathbb{F}}_l)_p)}_{*,*}(B_*, \mathbb{Z}_p) \Longrightarrow V(0)_*(\hat{H} \mathbb{Z}_p \wedge_{\K}
 \hat{H} \mathbb{Z}_p)\]
 with $B_* = \pi_*\Bigl(V(0) \wedge_S \bigl(\hat{H}\mathbb{Z}_p \wedge_{\K} \K(\bar{\mathbb{F}}_l)_p\bigr)\Bigr)$. 
 
  We have an isomorphism of $S$-ring spectra
\[V(0) \wedge_S \bigr(\hat{H}\mathbb{Z}_p \wedge_{\K} \K(\bar{\mathbb{F}}_l)_p\bigl) \, \cong V(0) \wedge_S^L \bigl(\hat{H} \mathbb{Z}_p \wedge_{\K}^L\K(\bar{\mathbb{F}}_l)_p\bigr) \cong V(0) \wedge_S^L\bigr(H\mathbb{Z}_p \wedge_{\K}^L \K(\bar{\mathbb{F}}_l)_p\bigr).\] 
Again by \cite[Remark 4.10]{Brunss}
 this  is isomorphic  to 
$(V(0) \wedge_S H\mathbb{Z}_p) \wedge_{\K}^L\K(\bar{\mathbb{F}}_l)_p$ as an $S$-ring spectrum. 
The last three identifications  are induced by  maps under $\K(\bar{\mathbb{F}}_l)_p$ in $\mathscr{D}_S$. 
\end{proof}
In the following lemmas we compute the $\pi_*(\K(\bar{\mathbb{F}}_l)_p)$-algebra $\pi_*(H\mathbb{F}_p \wedge_{\K}^L\K(\bar{\mathbb{F}}_l)_p)$. 
\begin{lem} \label{addeins}
We have
\[ \pi_n(H\mathbb{F}_p \wedge_{\K}^L \K(\bar{\mathbb{F}}_l)_p) = \begin{cases}
                                            \mathbb{F}_p, &  n = 2i, \, i \geq 0; \\
                                            0, & \text{otherwise}. 
                                            \end{cases}\]
Moreover,  the map $\pi_*(\K(\bar{\mathbb{F}}_l)_p) \to \pi_*\bigl(H\mathbb{F}_p \wedge_{\K}^L \K(\bar{\mathbb{F}}_l)_p\bigr)$ 
   factors as 
     \[ \begin{tikzcd}[column sep=20pt]
  \pi_*(\K(\bar{\F}_\ell)_p) \cong \mathbb{Z}_p[u] \ar{rr}\ar{dr} && \pi_*\bigl(H\F_p \wedge_{\K}^L \K(\bar{\F}_\ell)_p\bigr),  \\
& P_r(u) \ar{ru}   & 
   \end{tikzcd}\]
and $P_r(u) \to \pi_*\bigl(H\mathbb{F}_p \wedge_{\K}^L \K(\bar{\mathbb{F}}_l)_p\bigr)$ is an isomorphism in degrees $ < 2r$.
\end{lem}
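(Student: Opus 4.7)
The plan is to apply the functor $H\F_p \wedge_{\K}^L (-)$ to the distinguished triangle of Corollary \ref{fibseqcor}
\[\begin{tikzcd} \K \ar{r} & \K(\bar{\F}_l)_p \ar{r}{f} & S^2_{\K} \wedge_{\K}^L \K(\bar{\F}_l)_p \ar{r} & \Sigma \K \end{tikzcd}\]
and extract both the additive and the multiplicative statements from the resulting long exact sequence, using naturality of $f$ along the unit $\K(\bar{\F}_l)_p \to M$, where $M \coloneqq H\F_p \wedge_{\K}^L \K(\bar{\F}_l)_p$. Since $H\F_p \wedge_{\K}^L \K \simeq H\F_p$, the resulting distinguished triangle in $\mathscr{D}_{H\F_p}$ reads
\[\begin{tikzcd} H\F_p \ar{r} & M \ar{r}{\id \wedge f} & \Sigma^2 M \ar{r} & \Sigma H\F_p, \end{tikzcd}\]
and because $\pi_n H\F_p$ is concentrated in degree zero, the associated long exact sequence collapses to give that $(\id \wedge f)_* \colon \pi_n M \to \pi_{n-2} M$ is an isomorphism for every $n \geq 2$, that $\pi_0 M = \F_p$, and that $\pi_n M = 0$ for $n$ odd or negative. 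This yields the additive part of the claim.

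For the multiplicative statement I would invoke the commutative square
\[\begin{tikzcd}
\K(\bar{\F}_l)_p \ar{r} \ar{d}[swap]{f} & M \ar{d}{\id \wedge f} \\
S^2_{\K} \wedge_{\K}^L \K(\bar{\F}_l)_p \ar{r} & \Sigma^2 M
\end{tikzcd}\]
arising from naturality of $f$ along the unit. The defining relation $\bar u \circ f = \phi^q - \id$ from Lemma \ref{lift}, combined with torsion-freeness of $\pi_*\K(\bar{\F}_l)_p \cong \Zp[u]$, forces $f_*(u^j) = (q^j-1)\,u^{j-1}$. Since $\K(\bar{\F}_l)_p \to M$ is a map of ring spectra, the image $\bar u^j \in \pi_{2j}M$ of $u^j$ is a power of $\bar u = \bar u^1$, and the square above gives $(\id \wedge f)_*(\bar u^j) = (q^j-1)\,\bar u^{j-1}$. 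Setting $j = r$, the coefficient $q^r-1$ vanishes mod $p$ by the very definition of $r$, so $(\id \wedge f)_*(\bar u^r) = 0$; injectivity of $(\id \wedge f)_*$ on $\pi_{2r} M$ (established above) then forces $\bar u^r = 0$. Combined with the fact that $M$ is an $H\F_p$-module, this produces the desired factorization $\Zp[u] \to P_r(u) \to \pi_* M$.

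Finally, to check that $P_r(u) \to \pi_* M$ is an isomorphism in degrees $< 2r$, I would induct on $0 \leq j < r$: the base case $\bar u^0 = 1 \neq 0$ is clear, and in the inductive step the integer $q^j - 1$ is a unit in $\F_p$ for $0 < j < r$ (again by the definition of $r$), so the relation $(\id \wedge f)_*(\bar u^j) = (q^j-1)\,\bar u^{j-1}$ together with $\bar u^{j-1} \neq 0$ forces $\bar u^j$ to be a generator of $\pi_{2j}M \cong \F_p$. In odd and negative degrees both sides are zero, so the map is an isomorphism in all degrees $< 2r$. No step presents a substantive obstacle: the whole argument is driven by Corollary \ref{fibseqcor} and the explicit action of $f$ on $\pi_*\K(\bar{\F}_l)_p$, which are already in hand.
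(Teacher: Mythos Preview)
Your argument is correct, modulo one small omission: you assert that $\pi_n M = 0$ for $n < 0$ without justification. The long exact sequence alone only gives $\pi_n M \cong \pi_{n-2} M$ for all $n \notin \{0,1\}$, which does not by itself force vanishing in negative degrees. You need connectivity of $M = H\F_p \wedge_{\K}^L \K(\bar{\F}_l)_p$, which follows from the Tor spectral sequence (since $\K$, $H\F_p$, and $\K(\bar{\F}_l)_p$ are all connective). The paper's proof makes this explicit; once you add this sentence, everything else goes through.

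Your route to the multiplicative statement is genuinely different from the paper's and arguably cleaner. The paper introduces the auxiliary object $Y = V(0) \wedge_S \K$ and the comparison map $g\colon Y \to X = V(0) \wedge_S H\mathbb{Z}_p$, then compares the two long exact sequences obtained by smashing the fiber sequence with $Y$ and with $X$; the vanishing of $u^r$ in $\pi_* M$ is extracted from a five-lemma-style diagram chase using that $V(0)_{2r}\K \to V(0)_{2r}\K(\bar{\F}_l)_p$ is an isomorphism (Lemma~\ref{modphpty}). You instead compute $f_*(u^j) = (q^j-1)u^{j-1}$ directly from $\bar u \circ f = \phi^q - \id$ and push it along the unit $\K(\bar{\F}_l)_p \to M$ by naturality, reducing the whole question to the elementary arithmetic of when $q^j-1$ is a unit in $\F_p$. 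Your approach avoids the auxiliary comparison entirely and makes the role of $r$ transparent; the paper's approach, on the other hand, does not require knowing the explicit effect of $f$ on homotopy and is closer in spirit to the structural arguments used elsewhere (e.g.\ in the proof of Lemma~\ref{modphpty}).
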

\begin{proof}
Define $X \coloneqq V(0) \wedge_S H \mathbb{Z}_p$ and $Y \coloneqq V(0) \wedge_S \K$. The $\K$-module map $g: Y \to X$   defines a map of $\K$-ring spectra. Applying $Y \wedge_{\K}^L (-) \to   X \wedge_{\K}^L(-)$ to the distinguished triangle (\ref{fibseq}) 
we  get a map between long exact sequences 
\begin{equation} \label{LES}
\begin{tikzcd}
\ar{r} &   \pi_n(X \wedge_{\K}^L \K) \ar{r} & \pi_n(X \wedge_{\K}^L \K(\bar{\mathbb{F}}_l)_p) \ar{r} & \pi_{n-2}(X \wedge_{\K}^L \K(\bar{\mathbb{F}}_l)_p) \ar{r} &  \cdots  \phantom{\, . }  \\
\ar{r} & \pi_n(Y \wedge_{\K}^L \K) \ar{r} \ar{u} & \pi_n(Y \wedge_{\K}^L \K(\bar{\mathbb{F}}_l)_p) \ar{r} \ar{u}{(g \wedge \id)_n} & \pi_{n-2}(Y \wedge_{\K}^L \K(\bar{\mathbb{F}}_l)_p)  \ar{r} \ar{u}{(g \wedge \id)_{n-2}} &   \cdots \, .  
\end{tikzcd}
\end{equation} 
Using the $\Tor$ spectral sequence one gets that $X \wedge_{\K}^L \K(\bar{\mathbb{F}}_l)_p$  is connective. Using that $\pi_*(X \wedge_{\K}^L K)$  is $\mathbb{F}_p$ concentrated in degree $0$  the above long exact sequence yields   
\[ \pi_n(X \wedge_{\K}^L \K(\bar{\mathbb{F}}_l)_p) = \begin{cases} 
 \mathbb{F}_p,  & n = 0; \\
 0, & n=1; 
 \end{cases}\]
 and $\pi_{n}(X \wedge_{\K}^L \K(\bar{\mathbb{F}}_l)_p) \cong \pi_{n-2}(X \wedge_{\K}^L \K(\bar{\mathbb{F}}_l)_p)$ 
for $n \geq 2$. 
This shows the first part of the statement.
 We now show the second part of the statement: 
The map $\K(\bar{\mathbb{F}}_l)_p \to X \wedge_{\K}^L \K(\bar{\mathbb{F}}_l)_p$ factors in $\mathscr{D}_{\K}$ as
\[\begin{tikzcd}
\K(\bar{\mathbb{F}}_l)_p \ar{r} \ar{rd} & X \wedge_{\K}^L \K(\bar{\mathbb{F}}_l)_p \\
 &  Y \wedge_{\K}^L \K(\bar{\mathbb{F}}_l)_p \ar{u} 
\end{tikzcd}
\]  
The map  $\pi_*\bigl(\K(\bar{\mathbb{F}}_l)_p\bigr) \to \pi_*\bigl(Y \wedge_{\K}^L \K(\bar{\mathbb{F}}_l)_p\bigr)$  identifies with the canonical map  
$\mathbb{Z}_p[u] \to P(u)$. Hence, 
 it suffices to show that 
 \[ (g \wedge \id)_*: \pi_*\bigl(Y \wedge_{\K}^L \K(\bar{\mathbb{F}}_l)_p\bigr) \to \pi_*\bigl(X \wedge_{\K}^L \K(\bar{\mathbb{F}}_l)_p\bigl)\]
 maps $u^r$ to  zero and is an isomorphism in degrees $< 2r$. It is  clear that it is an isomorphism in odd degrees, because in these degrees both sides are zero. 
 It is also clear that it is an isomorphism in degree zero: In degree zero both sides are equal to $\mathbb{F}_p$,  and the map
 is not zero because it is a map of rings and therefore maps the unit to the unit.  We now suppose that $0 < n < 2r$ is even and that we have already shown that $(g \wedge \id)_m$ is an isomorphism for all even $0 \leq m < n$. 
 By Lemma \ref{modphpty}  $\pi_n(Y \wedge_{\K}^L \K)$ is zero. Therefore,  diagram (\ref{LES})
 is given by
 \[\begin{tikzcd}
\cdots \ar{r} & 0 \ar{r} & \mathbb{F}_p \ar{r} & \mathbb{F}_p  \ar{r}&  \cdots \phantom{\, .} \\
\cdots \ar{r} & 0 \ar{r} \ar{u} & \mathbb{F}_p \ar{r} \ar{u} & \mathbb{F}_p  \ar{u} \ar{r} & \cdots \, .
 \end{tikzcd}\]  
 The right vertical map is an isomorphism by the induction hypothesis. Thus, the vertical map in the middle  is also an  isomorphism, and this map is $(g \wedge \id)_n$. 
 We conclude that $(g \wedge \id)_n$ is an isomorphism for all $n < 2r$. We now consider diagram (\ref{LES}) for $n = 2r$. 
 Since by the proof of Lemma \ref{modphpty} the map $\pi_{2r}(Y \wedge_{\K}^L \K) \to \pi_{2r}\bigl(Y \wedge_{\K}^L \K(\bar{\mathbb{F}}_l)_p\bigr) $  is an isomorphism, it is given by 
  \[\begin{tikzcd}
\cdots \ar{r} & 0 \ar{r} & \mathbb{F}_p \ar{r}{\cong} & \mathbb{F}_p  \ar{r}&  \cdots \phantom{\, .} \\
\cdots \ar{r} & \mathbb{F}_p \ar{r}{\cong} \ar{u} & \mathbb{F}_p \ar{r}{0} \ar{u} & \mathbb{F}_p  \ar{u}[swap]{\cong} \ar{r} & \cdots \, .
 \end{tikzcd}\]  
 It follows that $(g \wedge \id)_{2r}$ is zero.           
\end{proof}
Let $F$ be the map 
\[ \begin{tikzcd}[column sep=42pt] 
\pi_*\bigl(H\F_p \wedge_{\K}^L \K(\bar{\F}_\ell)_p\bigr) \ar{r}{(\id \wedge f)_*} & \Sigma^2 \pi_*\bigl(H\F_p \wedge_{\K}^L \K(\bar{\F}_\ell)_p\bigr),
\end{tikzcd}
\]
where $f: \K(\bar{\F}_\ell)_p \to S^2_{\K} \wedge_{\K}^L 
\K(\bar{\F}_\ell)_p$ is the morphism in the distinguished triangle (\ref{fibseq}). 
Let $G$ be the map 
\[ \begin{tikzcd}[column sep = 55pt]
\pi_*\bigl(H\F_p \wedge_{\K}^L \K(\bar{\F}_\ell)_p \bigr ) \ar{r}{\id \wedge ( \phi^q - \id)_*} & \pi_*\bigr(H\F_p \wedge_K^L \K(\bar{\F}_\ell)_p\bigl).
\end{tikzcd} \] 
The following diagram commutes:
\begin{equation} \label{commu}
\begin{tikzcd}
\pi_*\bigl(H\mathbb{F}_p \wedge_{\K}^L \K(\bar{\mathbb{F}}_l)_p\bigr) \ar{rd}[swap]{F} \ar{rr}{G}  & & \pi_*\bigl(H\mathbb{F}_p \wedge_{\K}^L \K(\bar{\mathbb{F}}_l)_p\bigr).  \\
& \Sigma ^2 \pi_*\bigr(H\mathbb{F}_p \wedge_{\K}^L \K(\bar{\mathbb{F}}_l)_p\bigl) \ar{ru}[swap]{(-)\cdot u} &
\end{tikzcd}
\end{equation}
To determine the multiplicative structure of $\pi_*\bigl(H\mathbb{F}_p \wedge_{\K}^L \K(\bar{\mathbb{F}}_l)_p\bigr)$
we need the following lemma:
\begin{lem}
Let $a, b \in \pi_*\bigr(H\mathbb{F}_p \wedge_{\K}^L \K(\bar{\mathbb{F}}_l)_p\bigl)$. The following equations hold:
\begin{align}
F(ab) & = F(a) b + a F(b) + F(a)G(b)   \label{ef} \\
F(a^n) & = F(a)\biggl(\sum_{i = 0}^{n-1} \binom{n}{n-1-i}a^{n-1-i}G(a)^i \biggr). \label{sf}
\end{align}
\end{lem}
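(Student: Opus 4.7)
The plan is to derive equation (\ref{ef}) by applying $H\mathbb{F}_p \wedge_{\K}^L(-)$ to the identity of Lemma \ref{cofibmult}, and then derive equation (\ref{sf}) from (\ref{ef}) by induction on $n$, using that $\phi^q$ is a ring endomorphism. I set $X := H\mathbb{F}_p \wedge_{\K}^L \K(\bar{\mathbb{F}}_l)_p$ and let $\mu : X \wedge_{\K}^L X \to X$ denote its $\K$-algebra multiplication. Under the natural symmetry isomorphism $X \wedge_{\K}^L X \cong H\mathbb{F}_p \wedge_{\K}^L H\mathbb{F}_p \wedge_{\K}^L \K(\bar{\mathbb{F}}_l)_p \wedge_{\K}^L \K(\bar{\mathbb{F}}_l)_p$, the multiplication $\mu$ identifies with $m_{H\mathbb{F}_p} \wedge_{\K} m_{\K(\bar{\mathbb{F}}_l)_p}$. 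Consequently,
\[
(\id_{H\mathbb{F}_p} \wedge f) \circ \mu \,=\, m_{H\mathbb{F}_p} \wedge_{\K} (f \circ m_{\K(\bar{\mathbb{F}}_l)_p}).
\]
Substituting the three-term decomposition of $f \circ m_{\K(\bar{\mathbb{F}}_l)_p}$ supplied by Lemma \ref{cofibmult}, composing with $a \wedge b : S^{|a|+|b|} \to X \wedge_{\K}^L X$, and passing to homotopy groups, the three summands translate respectively into $F(a)\,b$, $a\,F(b)$, and $F(a)\,G(b)$. This yields (\ref{ef}).

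For (\ref{sf}) I induct on $n$; the base case $n = 1$ is immediate. For the induction step, write $a^{n+1} = a \cdot a^n$ and apply (\ref{ef}) to obtain
\[
F(a^{n+1}) \,=\, F(a)\,a^n + a\,F(a^n) + F(a)\,G(a^n).
\]
Since $\phi^q$ is a morphism of $\K$-algebras, $\id_{H\mathbb{F}_p} \wedge \phi^q$ is a ring endomorphism of $X$ in $\mathscr{D}_{\K}$, so $(\phi^q)_* = \id + G$ is a ring endomorphism of the graded-commutative ring $\pi_*(X)$. The identity (\ref{sf}) is only nontrivial when $|a|$ is even (otherwise $a^n = 0$ and each term on the right-hand side vanishes for $n \geq 2$), in which case $a$ and $G(a)$ commute and
\[
G(a^n) \,=\, (a + G(a))^n - a^n \,=\, \sum_{i=1}^{n} \binom{n}{i}\, a^{n-i}\, G(a)^i
\]
by the binomial theorem. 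Feeding this together with the inductive expression for $F(a^n)$ into the right-hand side and collecting the coefficient of $a^{n-i} G(a)^i$, Pascal's identity $\binom{n}{i} + \binom{n}{i+1} = \binom{n+1}{i+1}$ produces the claimed coefficient $\binom{n+1}{n-i}$, completing the induction.

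I do not anticipate any essential obstacle. The one bookkeeping issue is to verify that the symmetry and associativity isomorphisms used to identify $\mu$ with $m_{H\mathbb{F}_p} \wedge_{\K} m_{\K(\bar{\mathbb{F}}_l)_p}$ introduce no unwanted signs, which is a consequence of the lax symmetric monoidal structure on $\mathscr{D}_{\K}$ recalled in Section 2.
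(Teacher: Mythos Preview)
Your proposal is correct and follows essentially the same approach as the paper: the paper's proof simply states that (\ref{ef}) follows from Lemma~\ref{cofibmult} and that (\ref{sf}) follows from (\ref{ef}) by induction, which is exactly what you do. Your write-up just supplies the details the paper omits, including the explicit use of the multiplicativity of $\phi^q$ to expand $G(a^n)$ and the Pascal identity to close the induction.
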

\begin{proof}
Formula (\ref{ef}) follows from Lemma \ref{cofibmult}. 
Formula (\ref{sf}) follows from formula (\ref{ef}) by induction. 
\end{proof}

\begin{lem} \label{multf}
There is an isomorphism of $\pi_*\bigl(\K(\bar{\mathbb{F}}_l)_p\bigr)$-algebras
\[\pi_*\bigl(H\mathbb{F}_p \wedge_{\K}^L \K(\bar{\mathbb{F}}_l)_p\bigr) \cong P_r(u) \otimes \Gamma(\sigma x),\]
where $|\sigma x| = 2r$. 
\end{lem}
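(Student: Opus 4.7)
Set $A \coloneqq H\mathbb{F}_p \wedge_{\K}^L \K(\bar{\mathbb{F}}_l)_p$. First, apply the lax symmetric monoidal triangulated functor $H\mathbb{F}_p \wedge_{\K}^L -\colon \mathscr{D}_{\K} \to \mathscr{D}_S$ to the distinguished triangle of Corollary~\ref{fibseqcor} to obtain a distinguished triangle
\[ H\mathbb{F}_p \longrightarrow A \xrightarrow{F} \Sigma^2 A \longrightarrow \Sigma H\mathbb{F}_p \]
in $\mathscr{D}_S$. Since $\pi_*(H\mathbb{F}_p)$ is concentrated in degree zero, the long exact sequence forces $F \colon \pi_n(A) \to \pi_{n-2}(A)$ to be an isomorphism for every $n \geq 2$. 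Combining this with Lemma~\ref{addeins}, I select $\sigma x \in \pi_{2r}(A)$ to be the unique class with $F(\sigma x) = u^{r-1}$, and inductively define $\gamma_n(\sigma x) \in \pi_{2rn}(A)$ for $n \geq 1$ to be the unique class with $F(\gamma_n(\sigma x)) = u^{r-1}\gamma_{n-1}(\sigma x)$, starting from $\gamma_0(\sigma x) = 1$.

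The key technical input is the intertwining identity
\[ f \circ \phi^q = q \cdot (\Sigma^2 \phi^q) \circ f \]
in $\mathscr{D}_{\K}\bigl(\K(\bar{\mathbb{F}}_l)_p, \Sigma^2 \K(\bar{\mathbb{F}}_l)_p\bigr)$. Composing with $\bar{u}$ on both sides produces $\phi^{q^2} - \phi^q$ (using $\bar{u} \circ f = \phi^q - \id$ and that $\phi^q$ acts as multiplication by $q^k$ on $\pi_{2k}$), so the identity follows from the injectivity of $\bar{u}_*$ established in the proof of Lemma~\ref{lift}. Applying $H\mathbb{F}_p \wedge_{\K}^L -$ transports this into the homotopy-level identity $F \circ \Phi = q \cdot \Phi \circ F$ on $\pi_*(A)$, where $\Phi \coloneqq (\id \wedge \phi^q)_*$ and $G = \Phi - \id$. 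Using $q^r \equiv 1 \pmod p$ and $F(\sigma x) = u^{r-1}$, this yields $F(\Phi(\sigma x)) = q^r u^{r-1} = u^{r-1} = F(\sigma x)$; injectivity of $F$ in positive degrees then gives $\Phi(\sigma x) = \sigma x$, and an analogous induction produces $\Phi(\gamma_n(\sigma x)) = \gamma_n(\sigma x)$, i.e., $G(\gamma_n(\sigma x)) = 0$ for all $n \geq 0$.

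With the correction terms $F(\gamma_m)G(\gamma_n)$ in the Leibniz formula~\eqref{ef} vanishing, I obtain
\[ F\bigl(\gamma_m(\sigma x)\gamma_n(\sigma x)\bigr) = u^{r-1}\bigl(\gamma_{m-1}(\sigma x)\gamma_n(\sigma x) + \gamma_m(\sigma x)\gamma_{n-1}(\sigma x)\bigr), \]
which by induction on $m+n$ and Pascal's rule equals $\binom{m+n}{m} F(\gamma_{m+n}(\sigma x))$. Injectivity of $F$ then gives the divided power relations $\gamma_m(\sigma x)\gamma_n(\sigma x) = \binom{m+n}{m}\gamma_{m+n}(\sigma x)$. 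A parallel, easier application of~\eqref{ef}, using $u^r = 0$ and $q^i - 1 \not\equiv 0 \pmod p$ for $1 \leq i < r$, shows that each $u^i\gamma_n(\sigma x)$ with $0 \leq i < r$ and $n \geq 0$ is nonzero, so these classes form an $\mathbb{F}_p$-basis of $\pi_*(A)$ matching the graded dimensions from Lemma~\ref{addeins}. Together with $u^r = 0$, this exhibits the desired isomorphism of $\pi_*(\K(\bar{\mathbb{F}}_l)_p)$-algebras. The main obstacle is establishing the intertwining relation $f \phi^q = q \cdot \Sigma^2 \phi^q \circ f$; once it is in hand, the vanishing of $G$ on the divided powers and the divided power structure on $\pi_*(A)$ both follow mechanically.
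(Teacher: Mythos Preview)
Your proof is correct and takes a genuinely different route from the paper's. The paper picks arbitrary nonzero classes $\gamma_{p^i}(\sigma x)\in\pi_{2p^ir}(A)$ and uses only the coarse fact that $G$ factors through multiplication by $u$ (diagram~\eqref{commu}) to conclude that $G(\gamma_{p^i}(\sigma x))^{p-1}$ is divisible by $u^{p-1}$, hence vanishes; this gives $(\gamma_{p^i}(\sigma x))^p=0$ via formula~\eqref{sf}, and the isomorphism is then established by a rather intricate case-by-case induction on degree proving that every monomial $\sigma x^{i_0}\gamma_p(\sigma x)^{i_1}\cdots u^j$ is nonzero. You instead define \emph{all} $\gamma_n$ at once as $F$-preimages, prove the sharper intertwining identity $f\circ\phi^q=q\cdot(\Sigma^2\phi^q)\circ f$ (which the paper never isolates), and deduce $G(\gamma_n)=0$ exactly; the Leibniz formula~\eqref{ef} then collapses to the two-term rule, and Pascal's identity yields the divided-power relations $\gamma_m\gamma_n=\binom{m+n}{m}\gamma_{m+n}$ directly. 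Your nonvanishing induction is correspondingly shorter: one only needs $F(u^i)=(q^i-1)u^{i-1}$ together with $u^r=0$ to push through.

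What each approach buys: the paper's argument needs less preliminary work, since diagram~\eqref{commu} is immediate while your intertwining relation requires an extra appeal to the injectivity of $\bar{u}_*$; on the other hand, your argument is more conceptual, produces the divided-power structure on the nose rather than only the truncated-polynomial presentation $P_p(\gamma_{p^i}\mid i\geq 0)$, and yields the additional information $\Phi(\gamma_n)=\gamma_n$, which could be useful elsewhere.
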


\begin{proof}
By Lemma \ref{addeins} the unit $\pi_*\bigl(\K(\bar{\mathbb{F}}_l)_p\bigr) \to \pi_*\bigl(H\mathbb{F}_p \wedge_{\K}^L \K(\bar{\mathbb{F}}_l)_p\bigr)$ induces a map $P_r(u) \to \pi_*\bigl(H\mathbb{F}_p \wedge_{\K}^L \K(\bar{\mathbb{F}}_l)_p\bigr)$ that is an isomorphism in degrees $< 2r$. 
For $i \geq 0$ choose a non-zero class 
\[\gamma_{p^i}(\sigma x) \in \pi_{2p^ir}\bigl(H\mathbb{F}_p \wedge_{\K}^L \K(\bar{\mathbb{F}}_l)_p\bigr) \cong \mathbb{F}_p.\] 
By formula (\ref{sf}) we have 
\begin{equation*}
F(\gamma_{p^i}(\sigma x)^p)  =  F\bigl(\gamma_{p^i}(\sigma x)\bigr) G\bigl(\gamma_{p^i}(\sigma x)\bigr)^{p-1}.
\end{equation*}
Because of the commutativity of the diagram (\ref{commu}) the class $ G\bigl(\gamma_{p^i}(\sigma x)\bigr)^{p-1}$ is divisible by $u^{p-1}$ and is therefore zero. Since $F$ is injective in positive degrees by the proof of Lemma \ref{addeins}, it follows that 
$\gamma_{p^i}(\sigma x)^p$ is zero. We therefore have a well-defined map of $\pi_*(\K(\bar{\mathbb{F}}_l)_p)$-algebras
\[ P_p(\sigma x, \gamma_{p}(\sigma x), \dots) \otimes P_r(u) \to \pi_*\bigl(H\mathbb{F}_p \wedge_{\K}^L \K(\bar{\mathbb{F}}_l)_p\bigr).\]
We claim that it is an isomorphism. Since both sides are one-dimensional over $\mathbb{F}_p$ in each even non-negative degree  and zero in all 
other degrees, it suffices to show the following:
For numbers $j \in \{0, \dots, r-1\}$ and $i_k \in \{0, \dots, p-1\}$  that are almost all zero, the classes
\begin{equation} \label{basise}
\sigma x^{i_0} \gamma_{p}(\sigma x)^{i_1} \cdots u^j \in \pi_*\bigl(H\mathbb{F}_p \wedge_{\K}^L \K(\bar{\mathbb{F}}_l)_p\bigr) 
\end{equation}
are non-zero. 
We show this by induction on the degree of  $b = \sigma x^{i_0} \gamma_{p}(\sigma x)^{i_1} \cdots u^j$.
 If $|b| \in \{0, \dots, 2r-2\}$, we have $b = u^j$ for some $j \in \{0, \dots, r-1\}$ and we already know that the claim is true. 
 We now assume that $|b| \geq 2r$ and that we have already proven that all classes of the form (\ref{basise}) with degree less than $|b|$ are non-zero. Let $k$ be minimal with $i_k \neq 0$.
 We first consider the case $b = \gamma_{p^k}(\sigma x)^{i_{k}}$. If $i_k = 1$ the claim is true by definition of $\gamma_{p^k}(\sigma x)$.  If $i_k \geq 2$ we write 
 \[ F(b) = F\bigl(\gamma_{p^k}(\sigma x)\bigr)\biggl(\sum_{i = 0}^{i_k-1}\binom{i_k}{i_k-1-i}\gamma_{p^k}(\sigma x)^{i_k-1-i}G\bigl(\gamma_{p^k}(\sigma x)\bigr)^i\biggr).  \]
 Since $F$ is injective in positive degrees, we know  by the  induction hypothesis that 
 \[F\bigl(\gamma_{p^k}(\sigma x)\bigr) \doteq \Sigma^2 \sigma x^{p-1}\gamma_p(\sigma x)^{p-1}\cdots \gamma_{p^{k-1}}(\sigma x)^{p-1}u^{r-1}.\]
 Since $G\bigl(\gamma_{p^k}(\sigma x)\bigr)^i$ is divisible by $u$ for $i > 0$, we get 
 \[ F(b) \doteq \Sigma^2 \sigma x^{p-1}\gamma_p(\sigma x)^{p-1}\cdots \gamma_{p^{k-1}}(\sigma x)^{p-1}\gamma_{p^k}(\sigma x)^{i_k-1}u^{r-1}.\]
 By the induction hypothesis we know that this is  non-zero. Thus, $b \neq 0$ in this case. In the other cases we write
 \begin{align} \label{derr}
 F(b) = & \,  F\bigl(\gamma_{p^k}(\sigma x)^{i_k}\bigr) \gamma_{p^{k+1}}(\sigma x)^{i_{k+1}}\cdots u^j \nonumber \\
        & + \gamma_{p^k}(\sigma x)^{i_k} F\bigl(\gamma_{p^{k+1}}(\sigma x)^{i_{k+1}}\cdots u^j \bigr) \\
        & + F\bigl(\gamma_{p^k}(\sigma x)^{i_k}\bigr) G\bigr(\gamma_{p^{k+1}}(\sigma x)^{i_{k+1}}\cdots u^j \bigr). \nonumber 
  \end{align}
  By the induction hypothesis  we know that $\gamma_{p^k}(\sigma x)^{i_k}$ is non-zero. 
  Thus, we have
  \[F\bigl(\gamma_{p^k}(\sigma x)^{i_k}\bigr) \doteq \Sigma^2\sigma x^{p-1}\cdots \gamma_{p^{k-1}}(\sigma x)^{p-1}\gamma_{p^k}(\sigma x)^{i_k-1}u^{r-1}. \]
  Since $ G\bigr(\gamma_{p^{k+1}}(\sigma x)^{i_{k+1}}\cdots u^j \bigr)$ is divisible by $u$, the third summand of the right side of (\ref{derr}) is zero.
 We consider the case $j > 0$. Then the first summand in (\ref{derr}) is zero, too. By the induction hypothesis $\gamma_{p^{k+1}} (\sigma x)^{i_{k+1}}\cdots u^j$ is non-zero, and so we get 
 \[ F\bigl(\gamma_{p^{k+1}}(\sigma x)^{i_{k+1}}\cdots u^j \bigr) \doteq \Sigma^2\gamma_{p^{k+1}}(\sigma x)^{i_{k+1}}\cdots u^{j-1} \]
 and
 \[ F(b) \doteq \Sigma^2\gamma_{p^k}(\sigma x)^{i_k}\gamma_{p^{k+1}}(\sigma x)^{i_{k+1}}\cdots u^{j-1}.\]
 By the induction hypothesis this is non-zero  and thus we get $b \neq 0$.  We now consider the case $j = 0$. Let $l > k$ be  minimal with $i_l \neq 0$. By the induction hypothesis we have
 \[ F\bigl(\gamma_{p^{k+1}}(\sigma x)^{i_{k+1}}\cdots u^j \bigr) \doteq \Sigma^2\sigma x^{p-1}  \cdots \gamma_{p^{l-1}}(\sigma x)^{p-1}\gamma_{p^l}(\sigma x)^{i_l-1} \cdots u^{r-1}.\]
 Because of $\gamma_{p^k}(\sigma x)^{i_k+p-1} = 0$ the second summand in (\ref{derr}) is zero. We get that 
 \[ F(b) \doteq \Sigma^2\sigma x^{p-1}\cdots \gamma_{p^{k-1}}(\sigma x)^{p-1}\gamma_{p^k}(\sigma x)^{i_k-1}\gamma_{p^{k+1}}(\sigma x)^{i_{k+1}}\cdots u^{r-1}. \]
 By the induction hypothesis this is not zero. Thus, $b$ is non-zero. 
\end{proof}

Lemma \ref{multf} implies the following:
\begin{lem} \label{V0ten}
We have an isomorphism of graded rings
\[ V(0)_*(\hat{H}\mathbb{Z}_p \wedge_{\K} \hat{H}\mathbb{Z}_p)  \cong \Gamma(\sigma x) \otimes E(\sigma y)\]
with $|\sigma x| = 2r$ and $|\sigma y| = 2r+1$. 
\end{lem}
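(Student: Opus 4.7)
The plan is to feed Lemma \ref{multf} into the multiplicative Künneth spectral sequence (\ref{kunnSS}) and then to compute $\Tor^{\mathbb{Z}_p[u]}_{*,*}(B_*, \mathbb{Z}_p)$ directly via a Koszul resolution. By Lemma \ref{multf} we may write $B_* \cong P_r(u) \otimes \Gamma(\sigma x)$ as a $\pi_*(\K(\bar{\mathbb{F}}_l)_p)$-algebra, with $|u| = 2$ and $|\sigma x| = 2r$. As a free $\mathbb{Z}_p[u]$-resolution of $\mathbb{Z}_p$ I would take the Koszul DGA $\Lambda_{\mathbb{Z}_p[u]}(e)$, where $e$ has Tor-degree $1$ and internal degree $2$ and $d(e) = u$. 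Since this is a commutative DGA over $\mathbb{Z}_p[u]$ augmented to $\mathbb{Z}_p$, tensoring with $B_*$ computes $\Tor$ as a bigraded algebra:
\[E^2_{*,*} \;\cong\; H_*\bigl(B_*[e]/(e^2),\, d\bigr),\]
where $d$ is the $B_*$-linear differential sending $e$ to $u$.

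A direct inspection shows that the kernel of $d$ on $B_*[e]/(e^2)$ is $B_* \oplus \bigl(\mathbb{F}_p\{u^{r-1}\}\otimes\Gamma(\sigma x)\bigr)\cdot e$ and its image is $uB_*$. This yields
\[E^2_{*,*} \;\cong\; \Gamma(\sigma x) \otimes E(\sigma y), \qquad \sigma y \coloneqq [u^{r-1}e],\]
where $\sigma y$ lies in bidegree $(1, 2r)$, hence in total degree $2r+1$, and $\sigma y^2 = 0$ already on $E^2$ because $e^2 = 0$ in the Koszul complex.

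Since the Koszul resolution has length $1$, we have $E^2_{s, *} = 0$ for $s \geq 2$, so every differential $d^r$ (of bidegree $(-r, r-1)$) vanishes automatically and $E^\infty = E^2$. The same vanishing rules out nontrivial multiplicative extensions: any product of two classes in filtration $\geq 1$ lies in filtration $\geq 2$, which is zero, so the relation $\sigma y^2 = 0$ already holds in the abutment, and all remaining products are detected on $E^\infty$. This lifts the algebra structure to give $V(0)_*(\hat{H}\mathbb{Z}_p \wedge_{\K} \hat{H}\mathbb{Z}_p) \cong \Gamma(\sigma x) \otimes E(\sigma y)$ with the stated degrees. I do not anticipate a serious obstacle: the main multiplicative input is already supplied by Lemma \ref{multf}, the collapse of (\ref{kunnSS}) is forced by the length of the Koszul resolution, and the absence of extensions is immediate for filtration reasons.
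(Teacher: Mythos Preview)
Your proof is essentially identical to the paper's: both feed Lemma \ref{multf} into the spectral sequence (\ref{kunnSS}), resolve $\mathbb{Z}_p$ over $\mathbb{Z}_p[u]$ by the length-one Koszul complex, read off $E^2 = \Gamma(\sigma x)\otimes E(\sigma y)$, and note the collapse.

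One small correction to your extension argument: the vanishing of $E^\infty_{s,*}$ for $s\geq 2$ only says $F_2=F_1$, so $[\sigma y]^2$ a priori lands in $F_1$, and since its class in $E^\infty_{1,*}=F_1/F_0$ vanishes it lies in $F_0=\Gamma(\sigma x)$, not automatically in $0$. (For $r=1$ there is even a class $\gamma_3(\sigma x)$ in the right degree $4r+2=6$.) The honest reason $[\sigma y]^2=0$ in the abutment is simply that $|\sigma y|=2r+1$ is odd and $p$ is odd, so graded commutativity forces it; with this remark your argument is complete and matches the paper's.
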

\begin{proof}
The sequence 
\begin{equation} \label{res}
\begin{tikzcd}
 0 \ar{r} &  \Sigma^2 \pi_*\bigl(\K(\bar{\mathbb{F}}_l)_p\bigr) \ar{r}{\cdot u} & \pi_*\bigl(\K(\bar{\mathbb{F}}_l)_p\bigr) \ar{r} & \mathbb{Z}_p \ar{r} & 0 
 \end{tikzcd}
 \end{equation}
 is a free resolution of $\mathbb{Z}_p$ as a $\pi_*\bigl(\K(\bar{\mathbb{F}}_l)_p \bigr)$-module.
 Thus, the $E^2$-page of the spectral sequence (\ref{kunnSS}) is the homology of
 \[\begin{tikzcd}
 0 \ar{r} &  \Sigma^2 \Gamma(\sigma x) \otimes P_r(u) \ar{r}{\cdot u} & \Gamma(\sigma x) \otimes P_r(u) \ar{r} & 0, 
 \end{tikzcd}\]
 which is 
 \[ \Gamma (\sigma x) \otimes \Sigma^2 \mathbb{F}_p\{u^{r-1}\} \] 
 in homological degree one and $\Gamma (\sigma x)$ in homological degree zero. 
The spectral sequence has to collapse at the $E^2$-page because the $E^2$-page is concentrated in columns $0$ and $1$. 

We denote the free resolution $(\ref{res})$ by $F_{*,*}$. Let $\sigma u$ be the element $\Sigma^21$ in $F_{1, 2}$. The usual multiplication
on 
\[\pi_*\bigl(\K(\bar{\mathbb{F}}_l)_p\bigr) \otimes_{\mathbb{Z}} E_{\mathbb{Z}}(\sigma u)\] defines a map of complexes 

\[F_{*,*} \otimes_{\pi_*(\K(\bar{\mathbb{F}}_l)_p)} F_{*,*} \to F_{*,*}\]
that lifts the multiplication of $\mathbb{Z}_p$. This implies that the $E^2$-page is multiplicatively given by $\Gamma(\sigma x) \otimes E(\sigma y)$, where
$\sigma y$ represents $u^{r-1}\sigma u$. Since $\Gamma(\sigma x)$ lies in column zero, there are no multiplicative extensions.  
\end{proof}

\subsection{The algebra $\THH_*(\K(\mathbb{F}_q)_p; H\mathbb{F}_p)$} \label{et2}
In this subsection we consider the spectral sequence (\ref{sset2}):
  \[  E^2_{*,*} = V(0)_*(\hat{H} \mathbb{Z}_p \wedge_{\K} \hat{H}\mathbb{Z}_p) \otimes \THH_*(\hat{H} \mathbb{Z}_p; H\mathbb{F}_p)  \Longrightarrow V(0)_*\THH(\K; \hat{H}\mathbb{Z}_p).
 \]
By B\"okstedt's computations \cite{Bo2} and Lemma \ref{V0ten} we have 
\[E^2_{*,*} = \Gamma(\sigma x) \otimes E(\sigma y) \otimes E(\lambda_1) \otimes P(\mu_1).\] 
with $|\sigma x| = (0, 2r)$, $|\sigma y| = (0,2r+1)$, $|\lambda_1| = (2p-1, 0)$ and $|\mu_1| = (2p,0)$.  
We will prove the following result:
\begin{thm} \label{thhkfp}
In case (\ref{1}) we have an isomorphism of rings
\[\THH_*(\K; H\mathbb{F}_p) \cong  P(\mu_1^p) \otimes E(\mu_1^{p-1} \sigma y, \,\, \lambda_1 \sigma x^{p-1}) \otimes \Gamma\bigl(\gamma_p(\sigma x)\bigr). \] 
In case (\ref{2}) we have an isomorphism of rings
\[ \THH_*(\K; H\mathbb{F}_p) \cong P(\mu_1^p) \otimes E( \mu_1^{p-1}\sigma y, \,\, \lambda_1) \otimes \Gamma(\sigma x)  .\]
In case (\ref{3}) we have an isomorphism of rings
\[ \THH_*(\K; H\mathbb{F}_p) \cong P(\mu_1) \otimes E(\sigma y, \, \, \lambda_1) \otimes \Gamma(\sigma x). \]
In case (\ref{4}) we have an isomorphism of rings
\[ \THH_*(\K; H\mathbb{F}_p) \cong P(\mu_1) \otimes E(\sigma y, \, \, \lambda_1) \otimes \Gamma(\sigma x), \]
at least if we assume $r \neq 1$. 
\end{thm}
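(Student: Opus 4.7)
The spectral sequence $(\ref{sset2})$ is multiplicative, and every class in filtration $0$ — that is, every product of $\sigma x$ and $\sigma y$ — is a permanent cycle. A bidegree analysis of the $E^2$-page shows that the only differentials which can act nontrivially on the multiplicative generators $\lambda_1$ and $\mu_1$ are
\[ d^{2p-1}(\lambda_1) \;=\; \alpha\,\gamma_k(\sigma x) \qquad \text{and} \qquad d^{2p}(\mu_1) \;=\; \beta\,\sigma y\,\gamma_{k-1}(\sigma x), \]
for some $\alpha,\beta \in \mathbb{F}_p$, where $k = (p-1)/r$; moreover, neither $\lambda_1$ nor $\mu_1$ can be hit, because the only conceivable sources would lie in bidegrees with negative internal degree. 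By the Leibniz rule these two coefficients determine the entire differential pattern, so the proof reduces to pinning down $\alpha$ and $\beta$ in each of the four cases.

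To do so I will compare $(\ref{sset2})$ with Veen's spectral sequence
\[ E^2 \;=\; \Tor^{(H\mathbb{F}_p)_*\K}_{*,*}(\mathbb{F}_p,\mathbb{F}_p) \;\Longrightarrow\; \THH_*(\K; H\mathbb{F}_p), \]
whose input depends only on the low-degree portion of $(H\mathbb{F}_p)_*\K$ computed in Section~\ref{modphomK}. In case (\ref{1}) this portion is trivial (just $\mathbb{F}_p$ in degree $0$) all the way up to degree $p(2p-2)-1$, so $\THH_n(\K;H\mathbb{F}_p) = 0$ for $0 < n \leq 2p-1$, which immediately forces $\alpha \neq 0$ and $\beta \neq 0$. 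In case (\ref{2}) the relevant portion is $E(x) \otimes P(\tilde\xi_1)$; Veen's $E^2$ then has dimension exactly $1$ in each of total degrees $2p-2$ and $2p-1$, which combined with the fact that $\lambda_1$ cannot be hit shows that $\sigma x$ and $\lambda_1$ both survive (so $\alpha = 0$) while $\sigma y$ must die (so $\beta \neq 0$). In cases (\ref{3}) and (\ref{4}) (with $r > 1$), Veen's $E^2$ has exactly two classes in total degree $2p-1$, matching $\sigma y\,\gamma_{k-1}(\sigma x)$ and $\lambda_1$ on the Brun side, and exactly one class in total degree $2p-2$, matching $\gamma_k(\sigma x)$ on the Brun side; these dimension counts force $\alpha = \beta = 0$.

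Given $\alpha$ and $\beta$, the $E^\infty$-page is computed by standard derivation arguments on the sub-DGAs $E(\lambda_1) \otimes \Gamma(\sigma x)$ and $P(\mu_1) \otimes E(\sigma y)$: when $\alpha \neq 0$ the first has cohomology $E(\lambda_1 \sigma x^{p-1}) \otimes \Gamma(\gamma_p(\sigma x))$, and when $\beta \neq 0$ the second has cohomology $P(\mu_1^p) \otimes E(\mu_1^{p-1}\sigma y)$; when $\alpha = 0$ or $\beta = 0$ the corresponding factor is unaffected. Combining the four dichotomies recovers the stated algebras as the associated graded to $\THH_*(\K;H\mathbb{F}_p)$, and multiplicative extensions are excluded by filtration, since the named generators of each answer sit in pairwise distinct filtration degrees on $E^\infty$. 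The main obstacle I anticipate is the Veen dimension count in case (\ref{4}), where the relation $xy^{k-1} = 0$ in $(H\mathbb{F}_p)_*\K$ produces an extra $\Tor_2$-class whose total degree is $2p-1$, coinciding with a Brun class; matching this extra generator precisely with $\lambda_1$ requires a direct small-degree calculation of $\Tor$ rather than an off-the-shelf Koszul formula.
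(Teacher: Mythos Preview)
Your strategy is the same as the paper's and most of your dimension counts are correct, but there is a real gap: the assertion that ``these two coefficients determine the entire differential pattern'' is false. The Leibniz rule only propagates $d^{2p-1}$ and $d^{2p}$ from $\lambda_1,\mu_1$ to products of the $E^2$-generators. Once those differentials fire (in cases (\ref{1}) and (\ref{2})), the page $E^{2p+1}$ acquires \emph{new} indecomposable generators $\mu_1^p$ and $\mu_1^{p-1}\sigma y$ (and, in case (\ref{1}), $\lambda_1\sigma x^{p-1}$), and nothing you have written controls their higher differentials. Your ``standard derivation argument'' computes $E^{2p+1}$, not $E^\infty$.

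In case (\ref{1}) this is repairable by a short congruence check: every even-degree class of total degree $<2p^2$ on $E^{2p+1}$ lies in $\Gamma(\gamma_p(\sigma x))$, hence has total degree divisible by $p$, which excludes a target for $d^r(\mu_1^{p-1}\sigma y)$; and the classes in $\mathbb{F}_p\{\lambda_1\sigma x^{p-1}\}\otimes\Gamma(\gamma_p(\sigma x))$ have total degree $\equiv 1\pmod p$ while $|\mu_1^p|-1=2p^2-1\equiv -1\pmod p$, which excludes a target for $d^r(\mu_1^p)$. In case (\ref{2}), however, there \emph{are} candidate targets: $\gamma_{p+1}(\sigma x)$ sits in total degree $2p^2-2$ and column $0$, so a differential $d^{2p^2-2p}(\mu_1^{p-1}\sigma y)\doteq\gamma_{p+1}(\sigma x)$ is formally possible; likewise $\lambda_1\gamma_p(\sigma x)$ sits in total degree $2p^2-1$ and column $2p-1$, so $d^{2p^2-2p+1}(\mu_1^p)\doteq\lambda_1\gamma_p(\sigma x)$ is formally possible. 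The paper excludes both by extending the Veen dimension count to degrees $2p^2-1$ and $2p^2$ (Lemma~\ref{lowdim2}), obtaining $\dim\THH_{2p^2-1}(\K;H\mathbb{F}_p)=2$ and $\dim\THH_{2p^2}(\K;H\mathbb{F}_p)=1$; you have only counted in degrees $2p-2$ and $2p-1$, so your argument for case (\ref{2}) is incomplete. A minor additional point: ``pairwise distinct filtration degrees'' is not the right reason for the absence of multiplicative extensions; the odd-degree exterior generators square to zero automatically, and the divided-power relations hold because $\Gamma(\sigma x)$ (respectively $\Gamma(\gamma_p(\sigma x))$) sits entirely in filtration~$0$.
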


In order to compute the differentials in the Brun spectral sequence (\ref{sset2}) we need an additional spectral sequence: 
 
\begin{lem} \label{AHL}
Let $B \to C$ be a morphism between cofibrant commutative $S$-algebras. Then, there is a multiplicative spectral sequence
of the form 
\[ E^2_{*,*} = \Tor^{\pi_*(C \wedge_S B)}_{*,*}(C_*, C_*) \Longrightarrow \THH_*(B,C).\]
\end{lem}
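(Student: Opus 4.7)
The plan is to identify $\THH(B;C)$ with a derived smash product over $C \wedge_S B$ and then invoke the standard Künneth spectral sequence of \cite[Theorem IV.4.1]{EKMM}.

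The starting point is the standard equivalence in $\mathscr{D}_S$
\[ \THH(B;C) \simeq C \wedge^L_{B \wedge_S B} B, \]
where $B$ carries the $(B \wedge_S B)$-module structure from the multiplication $\mu\colon B \wedge_S B \to B$, and $C$ the one from $B \wedge_S B \xrightarrow{f \wedge f} C \wedge_S C \to C$, with $f\colon B \to C$ the structure map. Since $B$ and $C$ are cofibrant commutative $S$-algebras, $C \wedge_S B$ is again a commutative $S$-algebra, and $f \wedge \id_B$ promotes it to a commutative $(B \wedge_S B)$-algebra. The crucial reduction I would establish is the base-change identity
\[ (C \wedge_S B) \wedge^L_{B \wedge_S B} B \simeq C. \]
This I would prove by first identifying $C \wedge_S B \simeq (B \wedge_S B) \wedge^L_B C$, where the right-hand side is the derived pushout along the projection of $B \wedge_S B$ onto one of its factors; then
\[ (C \wedge_S B) \wedge^L_{B \wedge_S B} B \simeq \bigl((B \wedge_S B) \wedge^L_B C\bigr) \wedge^L_{B \wedge_S B} B \simeq C \wedge^L_B B \simeq C. \]
Combining this with change of base along $B \wedge_S B \to C \wedge_S B$ would yield
\[ \THH(B;C) \simeq C \wedge^L_{C \wedge_S B} \bigl((C \wedge_S B) \wedge^L_{B \wedge_S B} B\bigr) \simeq C \wedge^L_{C \wedge_S B} C. \]

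With this identification in hand, the desired spectral sequence is simply the Künneth/Tor spectral sequence of \cite[Theorem IV.4.1]{EKMM} applied to the commutative $S$-algebra $R = C \wedge_S B$ with $M = N = C$. Its multiplicativity follows from the fact that $C$ is a commutative $(C \wedge_S B)$-algebra, via $C \wedge_S B \xrightarrow{\id_C \wedge f} C \wedge_S C \to C$; one then invokes \cite[Lemma 1.3]{BakLaz}, exactly as in the construction of the earlier spectral sequence (\ref{kunnSS}) in Subsection \ref{et1}.

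The main obstacle is purely bookkeeping: tracking the various $(B \wedge_S B)$-module structures carefully (on $C \wedge_S B$ via $f \wedge \id$, on $B$ via $\mu$, on $C$ via $(f \wedge f)$ composed with $\mu_C$) and checking that they fit together to make the base-change calculation work at the derived level. Cofibrancy of $B$ and $C$ as commutative $S$-algebras is what guarantees that each smash product above can be computed by its derived counterpart, so no additional replacement is necessary. All remaining steps—the existence of the spectral sequence and its multiplicative structure—are formal consequences of existing machinery.
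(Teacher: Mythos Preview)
Your proposal is correct and reaches the same destination as the paper—namely the identification $\THH(B;C) \simeq C \wedge^L_{C \wedge_S B} C$, after which the K\"unneth spectral sequence and \cite[Lemma 1.3]{BakLaz} give the multiplicative spectral sequence—but it gets there by a different path. The paper follows Veen and obtains the identification in one step from the decomposition $S^1 = D^1 \amalg_{S^0} D^1$: this amounts to recognizing the cyclic bar object $[n]\mapsto C\wedge_S B^{\wedge_S n}$ as the relative two-sided bar construction $B^C_\bullet(C,\,C\wedge_S B,\,C)$. You instead start from the classical equivalence $\THH(B;C)\simeq C\wedge^L_{B\wedge_S B}B$ and then perform a base change along $f\wedge\id_B\colon B\wedge_S B\to C\wedge_S B$, using that the $(B\wedge_S B)$-action on $C$ factors through $C\wedge_S B$ and that $(C\wedge_S B)\wedge^L_{B\wedge_S B}B\simeq C$. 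Your route is slightly longer and requires keeping track of several module structures, but it uses only generic associativity and base-change identities for derived smash products; the paper's route is shorter and more geometric. Either way the remaining steps are identical.
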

\begin{proof}
This follows by using a method of Veen \cite{Veen}.  Writing $S^1 = D^1 \amalg_{S^0} D^1$ one sees that $\THH(B;C) \cong C \wedge_{C \wedge_S B}^L C$ as $S$-ring spectra. 
\end{proof}

\begin{lem} \label{dimlow}
In case (\ref{1}) we have 
\[  V(0)_n\THH(\K, \hat{H}\mathbb{Z}_p) = 0\]
for $1 \leq n \leq 2p$. 
\end{lem}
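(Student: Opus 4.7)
The plan is to pass through the identification $V(0)_*\THH(\K;\hat{H}\mathbb{Z}_p) \cong \THH_*(\K;H\mathbb{F}_p)$ (exactly as in the $\ku_p$ case, via $\THH(\K;H\mathbb{F}_p)\simeq V(0)\wedge_S^L\THH(\K;\hat{H}\mathbb{Z}_p)$) and then analyze $\THH_*(\K;H\mathbb{F}_p)$ in low total degrees by means of the multiplicative spectral sequence of Lemma \ref{AHL} applied to $\K \to H\mathbb{F}_p$:
\[ E^2_{s,t}=\Tor^{(H\mathbb{F}_p)_*\K}_{s,t}(\mathbb{F}_p,\mathbb{F}_p)\Longrightarrow \THH_{s+t}(\K;H\mathbb{F}_p). \]
This is the Veen-type spectral sequence already advertised in the introduction, and it converges strongly because $(H\mathbb{F}_p)_*\K$ is a connected graded $\mathbb{F}_p$-algebra.

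The key input is Proposition \ref{homimj}. In case (\ref{1}) it identifies $(H\mathbb{F}_p)_*\K$ as
\[ E(b)\otimes P(\tilde{\xi}_1^p,\tilde{\xi}_2,\ldots)\otimes E(\tilde{\tau}_2,\ldots), \]
whose generators have degrees $|b|=p(2p-2)-1=2p^2-2p-1$, $|\tilde{\xi}_1^p|=2p^2-2p$, $|\tilde{\xi}_i|=2p^i-2$ and $|\tilde{\tau}_i|=2p^i-1$ for $i\geq 2$. The minimum positive degree is therefore $|b|=2p^2-2p-1$, and since $p\geq 5$ we have $2p^2-2p-1>2p$. Consequently
\[ (H\mathbb{F}_p)_n\K=0\quad\text{for all}\quad 1\leq n\leq 2p. \]

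Next I would compute the $E^2$-page via the reduced bar complex $\overline{B}_\bullet((H\mathbb{F}_p)_*\K)$. A class in bidegree $(s,t)$ with $s\geq 1$ is represented by a tensor of $s$ factors in positive internal degree, so it satisfies $t\geq s(2p^2-2p-1)\geq 2p^2-2p-1>2p$; meanwhile $E^2_{0,t}=0$ for $t\geq 1$ since $\mathbb{F}_p\otimes_{(H\mathbb{F}_p)_*\K}\mathbb{F}_p$ is concentrated in degree zero. Hence $E^2_{s,t}=0$ whenever $(s,t)\neq(0,0)$ and $s+t\leq 2p$. Strong convergence then forces $\THH_n(\K;H\mathbb{F}_p)=0$ for $1\leq n\leq 2p$, which transfers to the required vanishing of $V(0)_n\THH(\K;\hat{H}\mathbb{Z}_p)$.

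There is essentially no obstacle here beyond invoking Proposition \ref{homimj}: once one knows that in case (\ref{1}) the algebra $(H\mathbb{F}_p)_*\K$ is highly connected (zero in degrees $1$ through $2p^2-2p-2$), the Veen spectral sequence vanishes in the relevant range for purely combinatorial reasons. The only subtlety worth flagging is the inequality $2p^2-2p-1>2p$, which is what makes case (\ref{1}) work and ultimately relies on the assumption $p\geq 5$ (already $p\geq 3$ would suffice for this particular bound).
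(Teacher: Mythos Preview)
Your proposal is correct and takes essentially the same approach as the paper: both use the Veen spectral sequence of Lemma~\ref{AHL} with input Proposition~\ref{homimj}, and both observe that the $E^2$-page vanishes in total degrees $1$ through $2p$. The only cosmetic difference is that the paper writes out the $\Tor$ explicitly as $E(\sigma\tilde{\xi}_1^p,\sigma\tilde{\xi}_2,\ldots)\otimes\Gamma(\sigma b,\sigma\tilde{\tau}_2,\ldots)$ and then reads off the vanishing, whereas you argue the vanishing directly from connectivity of $(H\mathbb{F}_p)_*\K$ via the reduced bar complex; both arguments are equivalent here.
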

\begin{proof}
By Lemma \ref{AHL}  we have a  spectral sequence of the form 
\[ E^2_{*,*} = \Tor^{(H\mathbb{F}_p)_*\K}_{*,*}\bigl(\mathbb{F}_p, \mathbb{F}_p\bigr) \Longrightarrow \THH_*(\K; H\mathbb{F}_p\bigr).\]
Using  Proposition \ref{homimj} 
one gets 
\[E^2_{*,*} = E(\sigma \tilde{\xi}_1^p, \sigma \tilde{\xi}_2, \dots) \otimes \Gamma(\sigma b,  \sigma \tilde{\tau}_2, \dots) \] 
with $|\sigma x| = (1, |x|)$. Obviously, this bigraded abelian group is zero in the total degrees $n = 1, \dots, 2p$.  
\end{proof}

\begin{lem} \label{br2c1}
In case (\ref{1}) the differentials of the spectral sequence (\ref{sset2})
are given by 
\[d^{2p-1}(\lambda_1) \doteq \sigma x \text{~~~and~~~}   d^{2p}(\mu_1) \doteq \sigma y.\] 
We have 
\[ E^{\infty}_{*,*} = P(\mu_1^p) \otimes E(\mu_1^{p-1} \sigma y, \, \lambda_1 \sigma x^{p-1}) \otimes \Gamma\bigl(\gamma_p(\sigma x)\bigr).\]
There are no multiplicative extensions. 
\end{lem}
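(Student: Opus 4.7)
The plan is to run the Brun spectral sequence~(\ref{sset2}) using sparsity together with Lemma~\ref{dimlow} to force the two named differentials, and then a bidegree analysis to close out. In case~(\ref{1}) we have $r=p-1$, so the $E^2$-page has generators $|\sigma x|=(0,2p-2)$, $|\sigma y|=(0,2p-1)$, $|\lambda_1|=(2p-1,0)$, $|\mu_1|=(2p,0)$. Only filtrations of the form $2ap+b(2p-1)$ with $a\geq 0,\ b\in\{0,1\}$ and internal degrees of the form $2m(p-1)+n(2p-1)$ with $m\geq 0,\ n\in\{0,1\}$ can appear on $E^2$. For $2\leq i\leq 2p-2$ the differential $d^i$ on each of the four generators lands either in negative filtration or in a non-permitted filtration, so by the Leibniz rule $d^i=0$ and $E^{2p-1}=E^2$.

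By Lemma~\ref{dimlow} we have $V(0)_n\THH(\K;\hat{H}\mathbb{Z}_p)=0$ for $1\leq n\leq 2p$. The classes $\sigma x$ and $\sigma y$ sit in filtration zero and therefore cannot support a differential, so they must be hit. Solving $(r,2p-1-r)$ and $(r,2p-r)$ against the permitted bidegrees, one finds the unique possible sources to be $\lambda_1$ at $(2p-1,0)$ and $\mu_1$ at $(2p,0)$ respectively. Hence $d^{2p-1}(\lambda_1)\doteq\sigma x$ and $d^{2p}(\mu_1)\doteq\sigma y$.

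I then compute $E^{2p+1}$ by Leibniz. On the factor $\Gamma(\sigma x)\otimes E(\lambda_1)$ one has $d^{2p-1}(\lambda_1\gamma_n(\sigma x))=(n+1)\gamma_{n+1}(\sigma x)$; the homology is spanned by the cycles $\gamma_{kp}(\sigma x)$, whose subalgebra mod~$p$ is $\Gamma(\gamma_p(\sigma x))$, together with the classes $\lambda_1\gamma_{kp-1}(\sigma x)$, which by the divided-power identity $\gamma_{p-1}(\sigma x)\doteq\sigma x^{p-1}$ are unit multiples of $\lambda_1\sigma x^{p-1}\cdot\gamma_{(k-1)p}(\sigma x)$. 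Likewise on $E(\sigma y)\otimes P(\mu_1)$ the differential $d^{2p}(\mu_1^n)=n\mu_1^{n-1}\sigma y$ has homology $P(\mu_1^p)\otimes E(\mu_1^{p-1}\sigma y)$. Hence $E^{2p+1}=\Gamma(\gamma_p(\sigma x))\otimes E(\lambda_1\sigma x^{p-1})\otimes P(\mu_1^p)\otimes E(\mu_1^{p-1}\sigma y)$.

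To close the argument I must rule out $d^r$ for $r\geq 2p+1$ and any multiplicative extensions. By Leibniz it suffices to check the four generators of $E^{2p+1}$: $\gamma_p(\sigma x)$ and $\lambda_1\sigma x^{p-1}$ sit in filtrations $0$ and $2p-1$, too small to support any such $d^r$. For $\mu_1^p$ and $\mu_1^{p-1}\sigma y$ I expand a putative target bidegree $(s,t)$ in the generating basis of $E^{2p+1}$ and show that the resulting integer system has no solution with $r\geq 2p+1$. The squares $(\mu_1^{p-1}\sigma y)^2$ and $(\lambda_1\sigma x^{p-1})^2$ lie in total degrees $4p^2-2$ and $4p^2-4p+2$, and the same bookkeeping shows $E^\infty$ is zero in these degrees, ruling out the only potentially nontrivial multiplicative extensions. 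The main obstacle is this last bidegree verification, but it reduces to an elementary integer-linear-combination check using the restricted admissible filtrations and internal degrees.
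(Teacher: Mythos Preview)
Your argument is correct and follows essentially the same route as the paper: sparsity forces $d^i=0$ for $i\le 2p-2$, Lemma~\ref{dimlow} forces the two named differentials, Leibniz gives the stated $E^{2p+1}$, and a bidegree check shows collapse. Two small simplifications compared to your write-up: the paper handles the collapse by a clean mod-$p$ total-degree argument rather than solving integer systems, and for multiplicative extensions you need not check the squares of $\mu_1^{p-1}\sigma y$ and $\lambda_1\sigma x^{p-1}$ (odd-degree classes square to zero in an $\mathbb{F}_p$-algebra for $p$ odd), while the divided-power relations are automatic because $\Gamma(\gamma_p(\sigma x))$ lies entirely in filtration zero.
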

\begin{proof}
The $E^2$-page of (\ref{sset2}) is multiplicatively generated by the classes $\lambda_1$, $\mu_1$, $\gamma_{p^i}(\sigma x)$ and $\sigma y$. 
The classes $\gamma_{p^i}(\sigma x)$ and $\sigma y$ are infinite cycles because they lie in column zero. 
For bidegree reasons the only possible differential on $\lambda_1$ and $\mu_1$ are
\[ d^{2p-1}(\lambda_1) \doteq \sigma x \text{~~~and~~~}   d^{2p}(\mu_1) \doteq \sigma y.\]
We conclude that $d^s = 0$ for $s= 2, \dots , 2p-2$. If $d^{2p-1}(\lambda_1)$ was zero, the class $\lambda_1$ would be a permanent cycle. This would contradict the fact that we have
\[ V(0)_{2p-1}\THH(\K, \hat{H}\mathbb{Z}_p) = 0\]
by Lemma \ref{dimlow}.  
Thus, we have $d^{2p-1}(\lambda_1) \doteq \sigma x$  and  
\[E^{2p}_{*,*} = E(\lambda_1 \sigma x^{p-1}) \otimes \Gamma\bigl(\gamma_p(\sigma x)\bigr) \otimes P(\mu_1) \otimes E(\sigma y).\]
This algebra is generated by the classes $\lambda_1 \sigma x^{p-1}$, $\gamma_{p^i}(\sigma x)$ for $i \geq 1$, $\mu_1$ and $\sigma y$. 
There cannot be any non-trivial differential on $\lambda_1 \sigma x^{p-1}$, because this class lies in column $2p-1$. 
Thus, the only possible differential on a generator is
\[d^{2p}(\mu_1) \doteq \sigma y.\] 
This differential must exist, because otherwise $\mu_1$ would survive to the $E^{\infty}$-page, which would contradict Lemma
\ref{dimlow}. 
It follows that 
\[E^{2p+1}_{*,*} =  P(\mu_1^p) \otimes E(\mu_1^{p-1}\sigma y) \otimes E(\lambda_1 \sigma x^{p-1}) \otimes \Gamma\bigl(\gamma_p(\sigma x)\bigr)\]
as $\mathbb{F}_p$-algebras. 
Now, the spectral sequence has to collapse: 
The class $\mu_1^{p-1}\sigma y$ has  total degree $2p^2-1$, and therefore its differentials have total degree $2p^2-2$. 
All non-trivial classes in an even degree $< 2p^2 = |\mu_1^p|$ lie in $\Gamma\bigl(\gamma_p(\sigma x)\bigr)$. Since the total degree of every class 
in $\Gamma\bigl(\gamma_p(\sigma x)\bigr)$  is divisible by $p$, it follows that $\mu_1^{p-1}\sigma y$ is an infinite cycle.  
The class $\mu_1^p$ is an infinite cycle, too: The differentials of $\mu_1^p$ have  total degree $2p^2-1$. There cannot be any differential 
\[d^{i}(\mu_1^p) \doteq \mu_1^{p-1}\sigma y\] for $i \geq 2p+1$,  because $\mu_1^p$ lies in column $2p^2$ and $\mu_1^{p-1}\sigma y$ lies in column $2p^2-2p$.  
The classes in $\mathbb{F}_p\{\sigma x^{p-1}\lambda_1\} \otimes \Gamma\bigl(\gamma_p(\sigma x)\bigr)$ have total degrees
\[2p-1 +(2p-2)(p-1)+p(2p-2)i \] for $i \geq 0$. Since this is always $1$ modulo $p$, and since $2p^2-1$ is $-1$ modulo $p$, the spectral sequence collapses at the $E^{2p+1}$-page. Since $\Gamma\bigl(\gamma_p(\sigma x)\bigr)$ lies in 
 column zero, there cannot be any multiplicative extensions.  
\end{proof}

\begin{lem} \label{lowdim2}
In case (\ref{2}) we have 
\[ \dim_{\mathbb{F}_p}\bigl(V(0)_n\THH(\K; \hat{H}\mathbb{Z}_p)\bigr) =  \begin{cases}
                                      1, & n = 2p-2; \\
                                      0, & n = 2p; \\
                                      2, & n = 2p^2-1; \\
                                      1, & n = 2p^2. 
                                     \end{cases}\]
\end{lem}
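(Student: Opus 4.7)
The plan is to read off the dimensions from Veen's spectral sequence of Lemma~\ref{AHL} applied to $\K \to H\mathbb{F}_p$, namely
\[ E^2_{*,*} = \Tor^{(H\mathbb{F}_p)_*\K}_{*,*}(\mathbb{F}_p, \mathbb{F}_p) \Longrightarrow \THH_*(\K; H\mathbb{F}_p) \cong V(0)_*\THH(\K; \hat{H}\mathbb{Z}_p), \]
whose differentials $d^r\colon E^r_{s,t} \to E^r_{s-r,\, t+r-1}$ strictly decrease the filtration index $s$ (and the total degree by $1$). In case~\ref{2} we have $k = 1$, so Proposition~\ref{homolK} identifies $(H\mathbb{F}_p)_*\K$ with the free graded-commutative $\mathbb{F}_p$-algebra
\[ E(x) \otimes P(\tilde{\xi}_1, \tilde{\xi}_2, \dots) \otimes E(\tilde{\tau}_2, \tilde{\tau}_3, \dots), \]
and the standard Koszul resolution yields
\[ E^2 \cong \Gamma(\sigma x) \otimes E(\sigma \tilde{\xi}_1, \sigma \tilde{\xi}_2, \dots) \otimes \Gamma(\sigma \tilde{\tau}_2, \sigma \tilde{\tau}_3, \dots) \]
with $|\sigma x| = (1, 2p-3)$, $|\sigma \tilde{\xi}_i| = (1, 2p^i-2)$, $|\sigma \tilde{\tau}_i| = (1, 2p^i-1)$ and $|\gamma_m(\sigma x)| = (m, m(2p-3))$.

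First I would enumerate the monomials of $E^2$ in the four total degrees of interest. A routine parity and size check using $p \geq 5$ yields the following slices: $\mathbb{F}_p\{\sigma x\}$ in total degree $2p-2$, zero in total degree $2p$, $\mathbb{F}_p\{\sigma \tilde{\xi}_2,\, \gamma_p(\sigma x)\, \sigma \tilde{\xi}_1\}$ in total degree $2p^2-1$, and $\mathbb{F}_p\{\sigma \tilde{\tau}_2\}$ in total degree $2p^2$. Since $E^\infty$ is a subquotient of $E^2$, these dimensions are already upper bounds matching the claim, and the case $n = 2p$ is immediate.

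It remains to verify that the four listed basis elements are permanent cycles. The classes $\sigma x$, $\sigma \tilde{\xi}_2$ and $\sigma \tilde{\tau}_2$ all lie in column $s = 1$ and so support no nonzero $d^r$ for $r \geq 2$; incoming differentials would have to come from total degrees $2p-1$, $2p^2$ and $2p^2+1$ in column $\geq 3$, and the same monomial enumeration shows these slices are empty (in total degree $2p^2$ only $\sigma \tilde{\tau}_2$ appears, sitting in column $1$, so it cannot hit $\sigma \tilde{\xi}_2$ which is also in column $1$). The main obstacle is the mixed class $\gamma_p(\sigma x)\, \sigma \tilde{\xi}_1$, lying in bidegree $(p+1,\, 2p^2-p-2)$: an outgoing $d^r$ would land in total degree $2p^2-2$ and column at most $p-1$, but the only monomial of total degree $2p^2-2$ is $\gamma_{p+1}(\sigma x)$ in column $p+1$, hence unreachable; an incoming $d^r$ would have to come from total degree $2p^2$ in column $\geq p+3$, which is empty. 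This rules out all possible differentials, so $E^\infty$ coincides with $E^2$ in the four total degrees of interest, giving the lemma.
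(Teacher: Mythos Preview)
Your proof is correct and follows essentially the same approach as the paper: both compute the $E^2$-page of Veen's spectral sequence from Proposition~\ref{homolK}, enumerate the monomials in the relevant total degrees, and then rule out differentials by column considerations. Your write-up is in fact a bit more careful about checking incoming differentials than the paper's own proof.
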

\begin{proof}
As in case (\ref{1}) we consider the spectral sequence
\[ E^2_{*,*} = \Tor^{(H\mathbb{F}_p)_*\K}_{*,*}\bigl(\mathbb{F}_p, \mathbb{F}_p\bigr) \Longrightarrow \THH_*(\K; H\mathbb{F}_p\bigr).\]
Using Proposition \ref{homolK}, we obtain
\[E^2_{*,*} = E(\sigma \tilde{\xi}_1, \dots) \otimes \Gamma(\sigma x,  \sigma \tilde{\tau}_2, \dots) \] 
with $|\sigma a| = (1, |a|)$. 
This bigraded abelian group is zero in total degree $2p$, and therefore we have 
\[ V(0)_{2p}\THH(\K; \hat{H} \mathbb{Z}_p) = 0.\]
It is zero in total degree $2p-3$, $\mathbb{F}_p\{\sigma x\}$ in total degree $2p-2$ and $\mathbb{F}_p\{\sigma \tilde{\xi}_1\}$ in total degree $2p-1$. Since $\sigma \tilde{\xi}_1$ lies in column $1$, it is an infinite cycle. Therefore, we get 
\[ \dim_{\mathbb{F}_p}\bigl(V(0)_{2p-2}\THH(\K; \hat{H}\mathbb{Z}_p)\bigl) = 1.\]
The $E^2$-page is given by $\mathbb{F}_p\{\gamma_{p+1}(\sigma x)\}$ in total degree $2p^2-2$, by 
\[ \mathbb{F}_p\{\sigma \tilde{\xi}_2\} \oplus \mathbb{F}_p\{\sigma \tilde{\xi}_1 \gamma_p(\sigma x)\} \]
in total degree $2p^2-1$, by $\mathbb{F}_p\{\sigma \tilde{\tau}_2\}$ in total degree $2p^2$ and by zero in total degree $2p^2+1$. Since $\sigma \tilde{\tau}_2$ lies in column $1$, it is an infinite cycle and we get
\[ \dim_{\mathbb{F}_p}\bigl(V(0)_{2p^2}\THH(\K, \hat{H}\mathbb{Z}_p)\bigr) = 1.\]
The classes $\sigma \tilde{\xi}_2$ and $\sigma \tilde{\xi}_1 \gamma_p(\sigma x)$ are also infinite cycles: For $\sigma \tilde{\xi}_2$ this is clear, because this class  lies in column $1$. For $\sigma \tilde{\xi}_1 \gamma_p(\sigma x)$ it follows since no differential of $\sigma \tilde{\xi}_1 \gamma_p(\sigma x)$ can hit $\gamma_{p+1}(\sigma x)$, because both classes lie in column $p+1$. 
We get 
\[\dim_{\mathbb{F}_p}\bigl(V(0)_{2p^2-1}\THH(\K; \hat{H}\mathbb{Z}_p)\bigr) = 2.\] 
\end{proof}
\begin{lem} 
In case (\ref{2}) the spectral sequence (\ref{sset2}) has the differential 
\[  d^{2p}(\mu_1) \doteq  \sigma y.\] 
We have 
\[ E^{\infty}_{*,*} = E(\lambda_1, \, \mu_1^{p-1}\sigma y) \otimes \Gamma(\sigma x) \otimes P(\mu_1^p).\]
There are no multiplicative extensions.
\end{lem}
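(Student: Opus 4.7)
The plan is to follow the strategy of Lemma~\ref{br2c1}, replacing the appeal to Lemma~\ref{dimlow} by the more refined dimension count of Lemma~\ref{lowdim2}. First I would note that the multiplicative generators of the $E^2$-page are $\gamma_{p^i}(\sigma x)$, $\sigma y$, $\lambda_1$ and $\mu_1$. The classes in column zero ($\gamma_{p^i}(\sigma x)$ and $\sigma y$) cannot support differentials. A bidegree check as in case~(1) shows that on the remaining two generators the only candidate differentials are
\[ d^{2p-1}(\lambda_1) \doteq \sigma x \quad\text{and}\quad d^{2p}(\mu_1) \doteq \sigma y, \]
so in particular $d^s = 0$ for $2 \le s \le 2p-2$.

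The decisive step is to distinguish case~(2) from case~(1). Since here $r = p-1$, we have $|\sigma x| = (0, 2p-2)$, and $\sigma x$ is the only $E^2$-class of total degree $2p-2$. Lemma~\ref{lowdim2} gives $\dim_{\mathbb{F}_p} V(0)_{2p-2}\THH(\K;\hat{H}\mathbb{Z}_p) = 1$, which forces $\sigma x$ to be a permanent cycle, hence $d^{2p-1}(\lambda_1)=0$ and $\lambda_1$ is permanent. On the other hand $\mu_1$ is the unique $E^2$-class of total degree $2p$, and Lemma~\ref{lowdim2} asserts $V(0)_{2p}\THH(\K;\hat{H}\mathbb{Z}_p)=0$, so $\mu_1$ must be killed; the only possibility is $d^{2p}(\mu_1)\doteq\sigma y$. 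By the Leibniz rule $d^{2p}(\mu_1^k \sigma y^\epsilon) = k\mu_1^{k-1}\sigma y^{\epsilon+1}$, so the surviving classes on $E^{2p+1}$ form the algebra
\[ E(\lambda_1) \otimes \Gamma(\sigma x) \otimes P(\mu_1^p) \otimes E(\mu_1^{p-1}\sigma y). \]

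Next I would prove that this is already the $E^\infty$-page by verifying each generator is permanent. The classes $\gamma_{p^i}(\sigma x)$ sit in column zero, and $\lambda_1$ admits no differential because any $d^s(\lambda_1)$ with $s\ge 2p+1$ would land in negative first coordinate. The class $\mu_1^p$ cannot be hit (its target bidegrees lie in negative filtration), and it is the unique $E^{2p+1}$-class of total degree $2p^2$, so $\dim V(0)_{2p^2}=1$ from Lemma~\ref{lowdim2} forces it to be permanent. Finally, in total degree $2p^2-1$ the $E^{2p+1}$-page contains exactly the two classes $\mu_1^{p-1}\sigma y$ and $\lambda_1\gamma_p(\sigma x)$; neither can be hit (the only class of total degree $2p^2$ is $\mu_1^p$, already known permanent), so $\dim V(0)_{2p^2-1}=2$ forces both to survive. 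By Leibniz, all of $E^{2p+1}$ consists of permanent cycles.

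The main obstacle will be ruling out multiplicative extensions for $\lambda_1^2$, $(\mu_1^{p-1}\sigma y)^2$, and the divided-power products of the $\gamma_{p^i}(\sigma x)$. For the last of these there is nothing to do, since $\Gamma(\sigma x)$ lies entirely in column zero and therefore its relations are detected unambiguously in the associated graded. For $\lambda_1^2$ and $(\mu_1^{p-1}\sigma y)^2$ I would argue that the corresponding total degrees $4p-2$ and $4p^2-2$ of the abutment contain no $E^\infty$-classes of filtration strictly smaller than that of the naive product: a direct enumeration, using that $p\ge 5$ makes $2p^2$ much larger than $4p-2$ and that every surviving monomial has a very constrained bidegree, shows that the only potentially offending filtrations are themselves empty. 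Hence the exterior relations hold on the nose, and the theorem follows.
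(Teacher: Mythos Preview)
Your proof is correct and follows essentially the same route as the paper: the same candidate differentials, the same appeals to Lemma~\ref{lowdim2} in degrees $2p-2$, $2p$, $2p^2-1$, and $2p^2$ to force $d^{2p-1}(\lambda_1)=0$, $d^{2p}(\mu_1)\doteq\sigma y$, and collapse at $E^{2p+1}$. For the multiplicative extensions you work harder than necessary: $\lambda_1$ and $\mu_1^{p-1}\sigma y$ have odd total degree, so their squares vanish automatically by graded-commutativity over $\mathbb{F}_p$, leaving only the divided-power relations in column zero, exactly as in case~(\ref{1}).
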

\begin{proof}
As in case (\ref{1}) the only possible differentials on  the canonical algebra generators of the $E^2$-page are
\[ d^{2p-1}(\lambda_1) \doteq \sigma x \text{~~~and~~~}   d^{2p}(\mu_1) \doteq \sigma y.\] 
Hence, we have $d^i = 0$ for $i = 2, \dots, 2p-2$. 
The $E^{2p-1}$-page is given by $\mathbb{F}_p\{\sigma x\}$ in total degree $2p-2$. If there was a differential $d^{2p-1}(\lambda_1) \doteq \sigma x$, the $E^{\infty}$-page would be zero in total degree $2p-2$. This would contradict Lemma \ref{lowdim2}. Thus, we have $d^{2p-1} = 0$.  If $d^{2p}(\mu_1)$ was  zero, the class $\mu_1$ would survive to the $E^{\infty}$-page. This would contradict $\dim_{\mathbb{F}_p}\bigl(V(0)_{2p}\THH(\K, \hat{H}\mathbb{Z}_p)\bigr) = 0$.
Therefore, we get $d^{2p}(\mu_1) \doteq \sigma y$ and
\[E^{2p+1}_{*,*} = P(\mu_1^p) \otimes E(\mu_1^{p-1}\sigma y) \otimes E(\lambda_1) \otimes \Gamma(\sigma x). \]
The $E^{2p+1}$-page is given by 
\[ \mathbb{F}_p\{\mu_1^{p-1}\sigma y\} \oplus \mathbb{F}_p\{\lambda_1 \cdot \gamma_p(\sigma x)\} \]
in total degree $2p^2-1$ and by $\mathbb{F}_p\{\mu_1^p\}$ in total degree $2p^2$. 
So, by Lemma \ref{lowdim2}, there cannot be any differentials on $\mu_1^p$ or $\sigma y\mu_1^{p-1}$. 
We conclude that
\[E^{\infty}_{*,*} = E^{2p+1}_{*,*}.\]
\end{proof}
\begin{lem} \label{lowdim3} 
In case (\ref{3}) we have
\[ \dim_{\mathbb{F}_p}V(0)_{2p-2}\THH(\K;  \hat{H}\mathbb{Z}_p) = 1.\]
There is a class 
\[\sigma x \in V(0)_{2r}\THH(\K, \hat{H}\mathbb{Z}_p)\]
 and a class 
 \[\sigma y \in V(0)_{2r+1}\THH(\K, \hat{H}\mathbb{Z}_p)\]
 such that ${\sigma x}^{k-1}{\sigma y} \neq 0$.
\end{lem}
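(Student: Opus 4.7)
The strategy closely mirrors the proofs of Lemmas \ref{dimlow} and \ref{lowdim2}. We use Lemma \ref{AHL}, applied to the map $\K \to H\mathbb{F}_p$, to obtain the Veen-type spectral sequence
\[E^2_{*,*} = \Tor^{(H\mathbb{F}_p)_*\K}_{*,*}(\mathbb{F}_p, \mathbb{F}_p) \Longrightarrow \THH_*(\K; H\mathbb{F}_p),\]
whose abutment is isomorphic to $V(0)_*\THH(\K; \hat{H}\mathbb{Z}_p)$. Feeding in the description of $(H\mathbb{F}_p)_*\K$ from Proposition \ref{homolK} and a standard Koszul computation gives
\[E^2_{*,*} \cong \Gamma(\sigma x) \otimes E(\sigma y) \otimes \Gamma(\phi y) \otimes E(\sigma \tilde{\xi}_1, \sigma \tilde{\xi}_2, \ldots) \otimes \Gamma(\sigma \tilde{\tau}_2, \ldots),\]
with bidegrees $|\sigma x| = (1,2r-1)$, $|\sigma y| = (1,2r)$, $|\phi y| = (2,2p-2)$, $|\sigma \tilde{\xi}_i| = (1,2p^i-2)$ and $|\sigma \tilde{\tau}_i| = (1,2p^i-1)$; the differentials have bidegree $(-s, s-1)$ for $s \geq 2$.

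For the first claim, I would enumerate the classes of total degree $2p-2$ at $E^2$: using that the only odd-degree generators are $\sigma y$ and the $\sigma \tilde{\xi}_i$, and that $|\phi y|, |\sigma \tilde{\tau}_i| \geq 2p$, the only contribution is $\gamma_k(\sigma x)$ in column $k$. A short column-and-degree check shows $\gamma_k(\sigma x)$ cannot support a differential (there is no class in column $< k$ and total degree $2p-3$ to receive it) and cannot be hit (no class of total degree $2p-1$ sits in column $> k$, the relevant candidates $\sigma \tilde{\xi}_1$ and $\gamma_{k-1}(\sigma x)\sigma y$ being in columns $1$ and $k$). Hence $\gamma_k(\sigma x)$ is a permanent cycle and $\dim V(0)_{2p-2}\THH(\K; \hat{H}\mathbb{Z}_p) = 1$.

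For the second claim, the classes $\sigma x \in E^2_{0,2r}$ and $\sigma y \in E^2_{0,2r+1}$ of the Brun spectral sequence (\ref{sset2}) are permanent cycles, since any differential on them would land in a bidegree with negative $n$-coordinate. Lift them to classes of the same name in the abutment. Their product $\sigma x^{k-1}\sigma y$ is represented at $E^\infty$ by $(k-1)!\,\gamma_{k-1}(\sigma x)\sigma y$ in bidegree $(0,2p-1)$, and $(k-1)!$ is a unit in $\mathbb{F}_p$ because $k-1 \leq p-2$. Thus it suffices to check that $\gamma_{k-1}(\sigma x)\sigma y$ survives in the Brun spectral sequence. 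The only possible incoming differential is $d^{2p}(\mu_1) \in \mathbb{F}_p\{\gamma_{k-1}(\sigma x)\sigma y\}$, and it must vanish as soon as we know $\dim V(0)_{2p-1}\THH(\K; \hat{H}\mathbb{Z}_p) \geq 2$. This in turn is extracted from the Veen spectral sequence by verifying that the two $E^2$-classes $\sigma \tilde{\xi}_1$ (column $1$) and $\gamma_{k-1}(\sigma x)\sigma y$ (column $k$) of total degree $2p-1$ are both permanent cycles, again by column-and-degree arguments analogous to those used for $\gamma_k(\sigma x)$.

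\textbf{Main obstacle.} The column bookkeeping is routine except in the subcase $r = 1$ (still allowed in case (\ref{3}) since $v \geq 2$ imposes no condition on $r$), where the additional class $\gamma_p(\sigma x)$ appears in column $p$ and total degree $2p$, opening a potential differential $d_{p-1}(\gamma_p(\sigma x)) \doteq \sigma \tilde{\xi}_1$ that would collapse $\dim V(0)_{2p-1}\THH(\K; \hat{H}\mathbb{Z}_p)$ to $1$ and spoil the multiplicative argument. The cleanest way I see to rule this out is a naturality argument along the factorisation $\K \to \hat{H}\mathbb{Z}_p \to H\mathbb{F}_p$: the class $\sigma \tilde{\xi}_1$ maps under the induced map of Veen spectral sequences to $\sigma \bar{\xi}_1$, which detects B\"okstedt's generator $\lambda_1 \in \THH_{2p-1}(H\mathbb{Z}_p; H\mathbb{F}_p)$ and is therefore a permanent cycle there. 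Combined with an analysis of the source of any putative differential, this forces the differential on $\gamma_p(\sigma x)$ landing on $\sigma \tilde{\xi}_1$ to vanish, completing the proof.
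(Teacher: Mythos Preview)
Your argument for the first claim is correct and matches the paper's. For the second claim your approach works, but the detour through the Brun spectral sequence is unnecessary and is what creates the ``main obstacle'' you describe.

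The paper stays entirely inside the Veen spectral sequence. The classes $\sigma x$ and $\sigma y$ lie in column~$1$, so they and their product $\sigma x^{k-1}\sigma y$ (column~$k$) are infinite cycles by multiplicativity. To see that $\sigma x^{k-1}\sigma y$ is a \emph{permanent} cycle one only has to exclude incoming differentials from total degree $2p$. The only classes there are $z$ (your $\phi y$) in column~$2$ and, when $r=1$, $\gamma_p(\sigma x)$ in column~$p$. Since $\sigma x^{k-1}\sigma y$ sits in column $k\geq 2$, the class $z$ cannot reach it; and when $r=1$ one has $k=p-1$, so a differential from $\gamma_p(\sigma x)$ (column $p$) to column $p-1$ would require $s=1$, which is excluded. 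Hence $\sigma x^{k-1}\sigma y$ survives, and lifting $\sigma x,\sigma y$ from column~$1$ to the abutment gives the claim directly, uniformly in $r$.

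Your route instead asks for $\dim V(0)_{2p-1}\THH(\K;\hat H\mathbb{Z}_p)\geq 2$, which forces you to show that $\sigma\tilde{\xi}_1$ also survives in the Veen spectral sequence, and that is where the potential $d^{p-1}(\gamma_p(\sigma x))\doteq\sigma\tilde{\xi}_1$ intrudes when $r=1$. Your naturality fix is valid, but it is solving a problem you did not need to pose: in your own step~6 you already verify that $\gamma_{k-1}(\sigma x)\sigma y$ is a permanent cycle in the Veen spectral sequence, and that alone suffices for the lemma.
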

\begin{proof}
As in the other cases we use the multiplicative spectral sequence
\[ E^2_{*,*} = \Tor^{(H\mathbb{F}_p)_*\K}_{*,*}\bigl(\mathbb{F}_p, \mathbb{F}_p\bigr) \Longrightarrow \THH_*(\K; H\mathbb{F}_p\bigr).\]
Using Proposition \ref{homolK} 
it follows  that 
\[E^2_{*,*} = E(\sigma y, \sigma \tilde{\xi}_1, \dots) \otimes \Gamma(\sigma x, z, \sigma \tilde{\tau}_2, \dots)\]
with $|z| = (2, 2p-2)$. 
The $E^2$-page is  $\mathbb{F}_p\{\sigma x^k\}$ in total degree $2p-2$ and 
\[\mathbb{F}_p\{\sigma \tilde{\xi}_1\} \oplus \mathbb{F}_p\{\sigma x^{k-1}\sigma y\}\]
in total degree $2p-1$. The classes $\sigma x^k$, $\sigma \tilde{\xi}_1$ and $\sigma x^{k-1} \sigma y$ are infinite cycles, because they are products of classes in column $1$.  This implies  that
\[\dim_{\mathbb{F}_p}V(0)_{2p-2} \THH(\K; \hat{H}\mathbb{Z}_p) = 1.\]  
The $E^2$-page is in total degree $2p$ given by $\mathbb{F}_p\{z\}$ if $r > 1$ and by
\[\mathbb{F}_p\{z\} \oplus \mathbb{F}_p\{\gamma_p(\sigma x)\}\]
if $r = 1$. The differentials of $z$ cannot hit $\sigma x^{k-1}\sigma y$ because $z$ lies in column $2$ and $\sigma x^{k-1}\sigma y$ lies in a column $\geq 2$. If $r =1$ the class $\sigma x^{k-1}\sigma y$ lies in column $p-1$. Since $\gamma_p(\sigma x)$ lies in column $p$, its differentials cannot hit $\sigma x^{k-1}\sigma y$. Therefore, $\sigma x^{k-1}\sigma y$ is a  permanent cycle. 
\end{proof}

\begin{lem}
In case (\ref{3}) the spectral sequence (\ref{sset2}) collapses at the $E^2$-page and there are no multiplicative extensions. 
\end{lem}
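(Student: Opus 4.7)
The plan is to show that the only two bidegree-permissible differentials on the algebra generators of $E^2$ both vanish in case (\ref{3}), and then to address multiplicative extensions by filtration descent analogous to the preceding cases. The $E^2$-page $\Gamma(\sigma x) \otimes E(\sigma y) \otimes E(\lambda_1) \otimes P(\mu_1)$ is generated by $\sigma x$, $\sigma y$, the divided-power generators $\gamma_{p^i}(\sigma x)$ for $i \geq 1$, $\lambda_1$, and $\mu_1$. Since $\sigma x$, $\sigma y$ and the $\gamma_{p^i}(\sigma x)$ all lie in column zero they are permanent cycles, so by the Leibniz rule it suffices to determine $d^s(\lambda_1)$ and $d^s(\mu_1)$. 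As in the previous cases, the bidegree analysis leaves only two potentially non-zero candidates: $d^{2p-1}(\lambda_1) \doteq \sigma x^k$ (using that $k \leq p-1$ identifies $\gamma_k(\sigma x)$ with $\sigma x^k$ up to a unit) and $d^{2p}(\mu_1) \doteq \sigma x^{k-1}\sigma y$.

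To rule out a nonzero $d^{2p-1}(\lambda_1)$, I would note that a quick bidegree check (using $r < p-1$) shows $\sigma x^k \in E^2_{0, 2p-2}$ is the unique $E^2$-class of total degree $2p-2$; a nontrivial differential would kill it and force $V(0)_{2p-2}\THH(\K;\hat{H}\mathbb{Z}_p) = 0$, contradicting $\dim V(0)_{2p-2}\THH(\K;\hat{H}\mathbb{Z}_p) = 1$ from Lemma~\ref{lowdim3}. To rule out $d^{2p}(\mu_1)$ I would argue via the edge homomorphism. A short bidegree check, again using $r < p-1$, shows that no differentials can hit $E^2_{0, 2r}$ or $E^2_{0, 2r+1}$ and that no higher-filtration classes contribute to total degrees $2r$ or $2r+1$, so both $\sigma x$ and $\sigma y$ survive to $E^\infty$ and $V(0)_{2r}\THH(\K;\hat{H}\mathbb{Z}_p)$ and $V(0)_{2r+1}\THH(\K;\hat{H}\mathbb{Z}_p)$ are each one-dimensional. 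Consequently the edge images of the Brun classes $\sigma x, \sigma y$ agree, up to units, with the classes in the abutment produced by Lemma~\ref{lowdim3}, and by multiplicativity of the edge homomorphism the edge image of $\sigma x^{k-1}\sigma y \in E^2_{0, 2p-1}$ coincides with the nonzero abutment product $\sigma x^{k-1}\sigma y$ of Lemma~\ref{lowdim3} up to a unit. A nonzero $d^{2p}(\mu_1) \doteq \sigma x^{k-1}\sigma y$ would annihilate this class in $E^\infty_{0, 2p-1}$ and hence its edge image in the abutment, a contradiction.

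The Leibniz rule then propagates the vanishing of the two basic differentials to all differentials, so $E^\infty = E^2$. The absence of multiplicative extensions is handled as in Lemma~\ref{br2c1}: the factor $\Gamma(\sigma x) \otimes E(\sigma y)$ lies entirely in column zero, so its internal relations lift to the abutment by the non-negativity of the Brun filtration, and the relations involving $\lambda_1$ and $\mu_1$ (such as $\lambda_1^2 = 0$, $\lambda_1 \sigma y + \sigma y \lambda_1 = 0$, and graded commutativity with $\mu_1$) are verified by a routine enumeration of bidegrees in the relevant total degrees. The main obstacle in the argument is the edge-homomorphism comparison in the $d^{2p}(\mu_1)$ step, where one must identify the abutment product produced by Veen's spectral sequence with the edge image produced by the Brun spectral sequence; but under the hypothesis $r < p-1$ the dimension counts for $V(0)_{2r}\THH(\K;\hat{H}\mathbb{Z}_p)$ and $V(0)_{2r+1}\THH(\K;\hat{H}\mathbb{Z}_p)$ are immediate, so this comparison becomes essentially automatic.
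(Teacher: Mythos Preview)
Your proposal is correct and follows essentially the same approach as the paper: you identify the same two candidate differentials, rule out $d^{2p-1}(\lambda_1)$ via the dimension count $\dim V(0)_{2p-2}\THH(\K;\hat H\mathbb{Z}_p)=1$ from Lemma~\ref{lowdim3}, and rule out $d^{2p}(\mu_1)$ by showing that $\sigma x^{k-1}\sigma y$ must survive because it detects the nonzero abutment product of Lemma~\ref{lowdim3}. Your edge-homomorphism phrasing is just a repackaging of the paper's observation that these classes lie in filtration zero, and your treatment of multiplicative extensions (column-zero classes lift without ambiguity, odd-degree squares vanish by graded commutativity) is slightly more explicit than the paper's but amounts to the same reasoning.
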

\begin{proof}
The only possible differentials on the canonical algebra generators of the $E^2$-page are
\[ d^{2p-1}(\lambda_1) \doteq \sigma x^{k} \text{~~~and~~~} d^{2p}(\mu_1) \doteq \sigma x^{k-1} \sigma y .\]
This implies that $d^i = 0$ for $i= 2, \dots, 2p-2$.  In total degree $2p-2$ the $E^{2p-1}$-page is given by $\mathbb{F}_p\{\sigma x^k\}$. Therefore, by Lemma \ref{lowdim3}, the differential $d^{2p-1}(\lambda_1) \doteq \sigma x^{k}$ cannot exist. Hence, we have $d^{2p-1} = 0$. In total degree $2r$ the $E^{2p}$-page is generated as an $\mathbb{F}_p$-vector space by $\sigma x$, in total degree $2r+1$ the $E^{2p}$-page is generated by $\sigma y$.  Because the classes have filtration degree zero, Lemma \ref{lowdim3} implies that they survive to the $E^{\infty}$-page and that  $\sigma x^{k-1} \sigma y \neq 0$ in $E^{\infty}_{*,*}$. Hence, we cannot have
$d^{2p}(\mu_1) \doteq \sigma x^{k-1} \sigma y$ and the spectral sequence collapses at the $E^2$-page. 
\end{proof}

In case (\ref{4}) the mod $p$ homology of $\K$ is more complicated than in the other cases and we only consider the subcase $r > 1$. In order to be able to  compute the $E^2$-page of the spectral sequence
\[ E^2_{*,*} = \Tor^{(H\mathbb{F}_p)_*\K}(\mathbb{F}_p, \mathbb{F}_p) \Longrightarrow \THH_*(\K, H\mathbb{F}_p)\]
 in the necessary degrees we need a couple of
  lemmas.  The statements are probably well-known, but since we did not find references, we include proofs. 

\begin{lem} \label{TorChangeRing}
Let $S_* \to R_*$ be a homomorphism of non-negatively graded-commutative rings that is an isomorphism in degrees $\leq n$. 
Let $M_*$ and $N_*$ be non-negatively graded $R_*$-modules. 
Then, we have 
\[\Tor^{S_*}_{s,t}(M_*, N_*) \cong \Tor_{s,t}^{R_*}(M_*, N_*)\]
for $t \leq n$.
\end{lem}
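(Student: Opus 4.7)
The approach is to apply the Grothendieck change-of-rings spectral sequence to the factorization $(-) \otimes_{S_*} N_* \cong ((-) \otimes_{R_*} N_*) \circ (R_* \otimes_{S_*} (-))$. Since $R_* \otimes_{S_*} (-)$ sends free $S_*$-modules to free $R_*$-modules, and these are acyclic for $(-) \otimes_{R_*} N_*$, the standard composition-of-derived-functors argument produces a convergent first-quadrant spectral sequence
\[ E^2_{p,q} = \Tor^{R_*}_p\bigl(M_*,\, \Tor^{S_*}_q(R_*, N_*)\bigr) \Longrightarrow \Tor^{S_*}_{p+q}(M_*, N_*) \]
of bigraded $\mathbb{Z}$-modules. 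In internal degree $t \leq n$, the map $R_* \otimes_{S_*} N_* \to N_*$ is an isomorphism, because any simple tensor $r \otimes m$ contributing to degree $t$ has $r \in R_i$ with $i \leq t \leq n$, so $r$ lies in the image of $S_*$ and the relation $r \otimes m = 1 \otimes rm$ holds. Consequently $E^2_{p,0}$ agrees with $\Tor^{R_*}_p(M_*, N_*)$ in internal degrees $\leq n$.

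The main task is therefore the vanishing
\[ \Tor^{S_*}_q(R_*, N_*)_t = 0 \qquad \text{for } q \geq 1 \text{ and } t \leq n, \]
after which the spectral sequence degenerates in internal degrees $\leq n$ and yields the desired isomorphism. The key sublemma is that if $A_*$ is a non-negatively graded $S_*$-module with $A_t = 0$ for $t \leq n$, then $\Tor^{S_*}_q(A_*, N_*)_t = 0$ for all $q \geq 0$ and $t \leq n$. This is proved by building a free $S_*$-resolution $F_\bullet \to A_*$ inductively whose free generators lie only in internal degrees $> n$; this is possible because at every stage the module being resolved is itself concentrated in degrees $> n$, so generators for a surjection from a free module can be chosen from those degrees. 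Then $F_\bullet \otimes_{S_*} N_*$ is concentrated in internal degrees $> n$ since $N_*$ is non-negatively graded, which forces the Tor to vanish in the required range.

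To apply the sublemma to $R_*$ itself, factor the ring map $\phi\colon S_* \to R_*$ as $S_* \twoheadrightarrow I_* \hookrightarrow R_*$ with $I_* = \operatorname{im}(\phi)$, yielding short exact sequences
\[ 0 \to K_* \to S_* \to I_* \to 0, \qquad 0 \to I_* \to R_* \to C_* \to 0, \]
in which $K_* = \ker \phi$ and $C_* = \coker \phi$ are both concentrated in internal degrees $> n$ by hypothesis. Because $\Tor^{S_*}_q(S_*, -) = 0$ for $q \geq 1$ and the sublemma kills $\Tor^{S_*}_q(K_*, N_*)$ in degrees $\leq n$, the long exact Tor sequence of the first short exact sequence forces $\Tor^{S_*}_q(I_*, N_*)_t = 0$ for $q \geq 1$ and $t \leq n$; the same argument applied to the second short exact sequence then gives the required vanishing for $R_*$. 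The only point of real care is checking that the Grothendieck spectral sequence respects the internal grading, but this is automatic because every functor in play preserves it and the resolutions involved are finitely generated in each internal degree.
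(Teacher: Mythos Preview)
Your proof is correct and takes a genuinely different route from the paper's.

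The paper gives a completely elementary argument: it constructs, by hand and in parallel, a free $S_*$-resolution $F_{\bullet,*}$ of $M_*$ and a free $R_*$-resolution $T_{\bullet,*}$ of $M_*$, together with an $S_*$-linear chain map $\eta_\bullet\colon F_{\bullet,*} \to T_{\bullet,*}$ that is an isomorphism in internal degrees $\leq n$. Tensoring with $N_*$ over $S_*$ and $R_*$ respectively then gives a map of complexes that is again an isomorphism in internal degrees $\leq n$, and one reads off the claim. No spectral sequences appear.

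Your approach via the change-of-rings spectral sequence is more conceptual and puts the weight on a clean vanishing statement (your sublemma) rather than on an explicit parallel construction. A few small points are worth tightening. First, the factorization you write has $N_*$ in it but the $E^2$-term you display corresponds to deriving in the $N_*$-variable; this is harmless by symmetry of $\Tor$, but it would read more cleanly to match them. Second, after showing $R_* \otimes_{S_*} N_* \to N_*$ is an isomorphism in degrees $\leq n$, you still need one more invocation of your sublemma (now over $R_*$, applied to the kernel of that map) to conclude that $E^2_{p,0}$ agrees with $\Tor^{R_*}_p(M_*, N_*)$ in internal degree $\leq n$; you jump over this. Third, the remark that ``resolutions involved are finitely generated in each internal degree'' is neither needed nor necessarily true; the internal grading is preserved simply because every functor and every differential in the construction is graded.

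The trade-off: the paper's argument is self-contained and avoids any black boxes, which suits its role as an auxiliary lemma; your argument is shorter to state and makes the mechanism (vanishing of higher base-change obstructions in low internal degree) transparent, at the cost of invoking the Grothendieck spectral sequence.
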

\begin{proof}
We construct by induction a commutative diagram of graded $S_*$-modules 
\[\begin{tikzcd}
 F_{s-1, *} \ar{r}{d^{s-1}} \ar{d}{\eta_{s-1}} & F_{s-2, *} \ar{r}{d^{s-2}} \ar{d}{\eta_{s-2}} & \dots \ar{r}{d^1} & F_{0,*} \ar{d}{\eta_0} \ar{r}{d^0} & F_{-1,*} \ar{d}{\eta_{-1}} \ar{r} & 0\\
  T_{s-1, *} \ar{r}{d^{s-1}} \ar{r} & T_{s-2,*} \ar{r}{d^{s-2}} & \dots \ar{r}{d^1} & T_{0,*} \ar{r}{d^0} & T_{-1,*} \ar{r} &  0
\end{tikzcd}\]
with the following properties:
\begin{itemize}
\item We have $F_{-1,*} = M_*$, $T_{-1,*} = M_*$ and $\eta_{-1} = \id_{M_*}$. 
\item The lines are exact.
\item The $F_{i,*}$ are free non-negatively graded $S_*$-modules for $i \geq 0$. 
\item The $T_{i,*}$ are free non-negatively graded $R_*$-modules for $i \geq 0$ and the lower line is a sequence of $R_*$-modules. 
\item The maps $\eta_i: F_{i,*} \to T_{i,*}$ are isomorphisms in degrees $* \leq n$. 
\end{itemize}
We start by defining $F_{-1,*} \coloneqq M_*$,  $T_{-1,*} \coloneqq M_*$ and  $\eta_{-1} \coloneqq \id_{M_*}$. Let $d^{-1}$ be the unique map $M_* \to 0$. 
Let $s \geq 0$ and suppose that we have constructed the diagram up to $s-1$. 
We set
 \[ F_{s,*} = \bigoplus_{a \in \ker d^{s-1}\setminus \{0\}} \Sigma^{|a|}S_*\]
 and  define  $d^s$  to be the obvious map of $S_*$-modules. 
Furthermore,  we set 
 \[T_{s,*} = \bigoplus_{a \in \ker d^{s-1} \setminus \{0\}} \Sigma^{|a|} R_*\]
 and define $d^s$ to be the obvious map of $R_*$-modules.
 Let $\eta_s$ be the map of $S_*$-modules that is defined by 
 \[\Sigma^{|a|}1 \mapsto \begin{cases}
                          \Sigma^{|\eta_{s-1}(a)|}1, \text{~if~} \eta_{s-1}(a) \neq 0; \\
                           0 , \text{~otherwise}.
  \end{cases}
  \]
  It is then clear that
  \[\begin{tikzcd}
  F_{s,*} \ar{r}{d^s} \ar{d}{\eta_s} & F_{s-1,*} \ar{d}{\eta_{s-1}} \\
  T_{s,*} \ar{r}{d^s} & T_{s-1,*} 
  \end{tikzcd}\]
  is commutative. The map $\eta_s$ is an isomorphism in degrees $\leq n$. This is because  by the induction hypothesis we know that
  $\eta_{s-1}$ induces a bijection 
  \[\{a \in \ker d^{s-1} \setminus \{0\} : |a| \leq n\} \to \{ a \in\ker d^{s-1} \setminus \{0\}: |a| \leq n\}\]
  and because $S_* \to R_*$ is an isomorphism in degree $\leq n$. 
  This shows the induction step. 
We have commutative diagrams
\[\begin{tikzcd}
F_{s,*} \otimes_{\mathbb{Z}} N_* \ar{r} & T_{s,*} \otimes_{\mathbb{Z}} N_* \\
F_{s,*} \otimes_{\mathbb{Z}} S_* \otimes_{\mathbb{Z}} N_* \ar{r} \ar[xshift = 0.7 ex]{u} \ar[xshift = -0.7ex]{u} & T_{s,*} \otimes_{\mathbb{Z}} R_* \otimes_{\mathbb{Z}} N_* \ar[xshift = 0.7ex]{u} \ar[xshift = -0.7ex]{u} \, ,  
\end{tikzcd}\]  
where the vertical maps are induced by the $S_*$- and $R_*$-actions. 
The horizontal maps are isomorphism in degrees $\leq n$. 
 We get maps on the coequalizers 
 \[F_{s,*} \otimes_{S_*} N_* \to T_{s,*} \otimes_{R_*} N_*\]
  that are isomorphisms in degrees
 $\leq n$ and that give a map of complexes. Since homology is taken degreewise this shows the claim. 
 \end{proof}

\begin{lem} \label{subcom}
Let $R_*$ be a  graded-commutative ring and let $N_*$ be a non-negatively graded $R_*$-module. 
Let $Q_{*,*}$ be a complex of graded $R_*$-modules and  let $P_{*,*} \to Q_{*,*}$ be a subcomplex with the following properties:
\begin{enumerate}
 \item If $x \in Q_{*,*}$ has total degree $\leq n+1$, then we have $x \in P_{*,*}$. \label{uno}
 \item $P_{m,*}$ is a direct summand of $Q_{m,*}$.  \label{do}  
\end{enumerate}
  Then, the map 
  \[ H_*(P_{*,*} \otimes_{R_*}N_*) \to H_*(Q_{*,*} \otimes_{R_*} N_*) \]
  is an isomorphism in total degrees $\leq n$. 
 \end{lem}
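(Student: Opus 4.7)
The plan is to reduce to showing that the quotient complex is zero in total degrees $\leq n+1$, and then use a long exact sequence in homology. Writing $\bar{Q}_{*,*} \coloneqq Q_{*,*}/P_{*,*}$, condition (\ref{uno}) says precisely that $\bar{Q}_{m,t} = 0$ whenever $m+t \leq n+1$. Condition (\ref{do}) lets me choose, for each $m$, an $R_*$-module splitting $Q_{m,*} \cong P_{m,*} \oplus \bar{Q}_{m,*}$, so the short exact sequence of complexes of graded $R_*$-modules
\[ 0 \to P_{*,*} \to Q_{*,*} \to \bar{Q}_{*,*} \to 0 \]
is split exact in each homological degree $m$.

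Since degreewise splitting is preserved by any additive functor, applying $(-) \otimes_{R_*} N_*$ yields a short exact sequence of bigraded complexes
\[ 0 \to P_{*,*} \otimes_{R_*} N_* \to Q_{*,*} \otimes_{R_*} N_* \to \bar{Q}_{*,*} \otimes_{R_*} N_* \to 0. \]
Because $N_*$ is concentrated in non-negative internal degrees and $\bar{Q}_{m,i} = 0$ for $m+i \leq n+1$, an element $\bar{q} \otimes y$ of $(\bar{Q}_{*,*} \otimes_{R_*} N_*)_{m,t}$ with $\bar{q} \in \bar{Q}_{m,i}$ and $y \in N_j$ (so $i+j = t$) satisfies $m+t = m+i+j \geq (n+2) + 0$. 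Hence $\bar{Q}_{*,*} \otimes_{R_*} N_*$ vanishes in all total degrees $\leq n+1$.

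The long exact sequence in homology at fixed internal degree $t$ reads
\[ \cdots \to H_{k+1}(\bar{Q} \otimes_{R_*} N_*)_t \to H_k(P \otimes_{R_*} N_*)_t \to H_k(Q \otimes_{R_*} N_*)_t \to H_k(\bar{Q} \otimes_{R_*} N_*)_t \to \cdots. \]
For $k+t \leq n$ both $H_k(\bar{Q} \otimes_{R_*} N_*)_t$ and $H_{k+1}(\bar{Q} \otimes_{R_*} N_*)_t$ vanish, since in both cases the underlying complex is zero in the relevant total degree. The five lemma (or direct inspection) then gives the desired isomorphism in total degrees $\leq n$. There is no real obstacle here; the only point requiring some care is the verification that tensoring the short exact sequence of complexes with $N_*$ preserves exactness, which is exactly why hypothesis (\ref{do}) is needed rather than mere inclusion of a subcomplex.
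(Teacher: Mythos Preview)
Your proof is correct and essentially the same as the paper's. The paper phrases the endgame slightly differently: it observes that $P_{*,*}\otimes_{R_*}N_*$ is a subcomplex of $Q_{*,*}\otimes_{R_*}N_*$ which already contains everything in total degree $\leq n+1$, and then invokes the general fact that such an inclusion induces a homology isomorphism in total degrees $\leq n$; your long exact sequence argument is exactly the proof of that fact.
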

 \begin{proof}
 Note that the maps
 \[P_{m,*} \otimes_{R_*} N_* \to Q_{m,*} \otimes_{R_*} N_*\] 
 are injective.  Thus,  $P_{*,*} \otimes_{R_*} N_*$ is a subcomplex of 
 $Q_{*,*} \otimes_{R_*} N_*$.  Moreover, every class $x \in Q_{*,*} \otimes_{R_*} N_*$  in total degree $\leq n+1$ lies in $ P_{*,*} \otimes_{R_*} N_*$.  
The lemma now follows from the following fact: 
Let $Z_{*,*} \subseteq W_{*,*}$ a subcomplex with the property that every class $x \in W_{*,*}$ in total degree $\leq n+1$ lies in $Z_{*,*}$, then the induced map 
\[ H_*(Z_{*,*}) \to H_*(W_{*,*})\] is an 
isomorphism in total degrees $\leq n$.
 \end{proof}

\begin{lem} \label{extendCHain}
Let $R_*$ be a non-negatively graded-commutative ring and let $M_*$  be a  graded $R_*$-module. 
Suppose that we have a chain complex
\[\begin{tikzcd}
\dots \ar{r}{d^3} & P_{2,*} \ar{r}{d^2} & P_{1,*} \ar{r}{d^1} & P_{0,*} \ar{r}{d^0} & M_* \ar{r} & 0 
\end{tikzcd}\]
with the following properties: 
\begin{enumerate}
\item The $P_{m,*}$ are free graded  $R_*$-modules. 
\item The map $d^0$ is surjective.
\item If $x \in \ker d^i$ has total degree $\leq n$, then we have $x \in \im d^{i+1}$. \label{tre}
\end{enumerate}
Then there exists a free  resolution $Q_{*,*} \to M_*$ such that $P_{*,*}$ is a subcomplex of $Q_{*,*}$ with the properties (\ref{uno}) and (\ref{do})  in Lemma \ref{subcom}. 
\end{lem}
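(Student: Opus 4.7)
The plan is to construct $Q_{*,*}$ by induction on homological degree $m$, taking $Q_{m,*} = P_{m,*} \oplus F_{m,*}$ where $F_{m,*}$ is a free graded $R_*$-module whose generators all lie in internal degrees $\geq n+2-m$. This placement will automatically give $Q_{m,j} = P_{m,j}$ whenever $m + j \leq n+1$, which is exactly property (\ref{uno}) of Lemma \ref{subcom}. Property (\ref{do}) will hold because $P_{m,*}$ sits as a direct summand by construction.

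First I would set $Q_{0,*} := P_{0,*}$; since $d^0$ is surjective this is the initial step of a resolution of $M_*$, and both conditions trivially hold. Then, assuming $Q_{0,*},\dots,Q_{m-1,*}$ have been built to satisfy the inductive hypotheses (free, containing $P_{i,*}$ as a direct summand, $Q_{i,j}=P_{i,j}$ whenever $i+j \leq n+1$, and $\im d^i = \ker d^{i-1}$ for $i<m$), the next module would be obtained as follows. Consider
\[ C_* := \ker\bigl(d^{m-1}\colon Q_{m-1,*}\to Q_{m-2,*}\bigr) \,\big/\, d^m\bigl(P_{m,*}\bigr). \]
The key claim is that $C_j = 0$ for $j \leq n+1-m$: in that range, the induction hypothesis forces $Q_{m-1,j}=P_{m-1,j}$ and $Q_{m-2,j}=P_{m-2,j}$, so the numerator equals $\ker(d^{m-1}\colon P_{m-1,j}\to P_{m-2,j})$; since $(m-1)+j\leq n$, property (\ref{tre}) places this kernel inside $d^m(P_{m,*})$, whence $C_j = 0$. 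Next I would choose a free graded $R_*$-module $F_{m,*}$ whose generators lie in internal degrees $\geq n+2-m$, together with a homogeneous $R_*$-linear map $\phi\colon F_{m,*}\to \ker d^{m-1}$ inducing a surjection $F_{m,*}\twoheadrightarrow C_*$, set $Q_{m,*}:=P_{m,*}\oplus F_{m,*}$, and extend $d^m$ by $\phi$ on $F_{m,*}$. The summand structure gives property (\ref{do}), the degree restriction on generators gives property (\ref{uno}), the relation $d^{m-1}\circ d^m=0$ holds because $\phi$ lands in $\ker d^{m-1}$, and by construction $\im d^m = \ker d^{m-1}$, so the resolution property is propagated.

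The only subtle point is the vanishing $C_j = 0$ for $j \leq n+1-m$, and that is exactly what hypothesis (\ref{tre}) is designed to provide; once this vanishing is established, everything else is routine homological algebra. Iterating the construction yields a free resolution $Q_{*,*}\to M_*$ with $P_{*,*}$ as a subcomplex satisfying both (\ref{uno}) and (\ref{do}) of Lemma \ref{subcom}.
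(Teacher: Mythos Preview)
Your proof is correct and follows essentially the same inductive construction as the paper's proof: both build $Q_{m,*}=P_{m,*}\oplus F_{m,*}$ with the added free generators placed in internal degrees $\geq n+2-m$, using hypothesis~(\ref{tre}) to guarantee that no new generators are needed in lower degrees. The only cosmetic difference is that the paper takes the crude choice $F_{i,*}=\bigoplus_{x}\Sigma^{|x|}R_*$ indexed over all nonzero $x\in\ker d'^{i-1}$ with $|x|+i-1\geq n+1$, whereas you phrase it via the quotient $C_*$ and an arbitrary free cover; your formulation is a bit cleaner but the content is identical.
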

\begin{proof}
We define the resolution $Q_{*,*}$ inductively. 
We define 
\[ \begin{tikzcd} 
Q_{0,*} \ar{r}{{d'}^0} & M_* \ar{r} & 0
\end{tikzcd}\]
to be 
\[\begin{tikzcd}
P_{0,*} \ar{r}{d^0} & M_* \ar{r} & 0.
\end{tikzcd}\]
Suppose that $i \geq 1$ and that we have already constructed an exact sequence 
\[ \begin{tikzcd}
 Q_{i-1,*} \ar{r}{{d'}^{i-1}} & Q_{i-2,*} \ar{r}{{d'}^{i-2}} & \dots \ar{r} & Q_{0,*} \ar{r}{{d'}^0} & M_* \ar{r} & 0
 \end{tikzcd}\]
 such that:
 \begin{itemize}
\item For $0 \leq m \leq i-1$ we have 
 \[ Q_{m,*} = P_{m,*} \oplus \bigoplus_{s \in I_m}\Sigma^{k_s}R_* \] 
 for a set $I_m$ and natural numbers $k_s$ for $s \in I_m$ with $k_s + m \geq n+2$. 
 \item The diagram 
 \[\begin{tikzcd}
 Q_{i-1,*} \ar{r}{{d'}^{i-1}} & Q_{i-2,*} \ar{r}{{d'}^{i-2}} & \dots \ar{r} & Q_{0,*} \ar{r}{{d'}^0} & M_* \ar{r} & 0  \\
 P_{i-1,*} \ar{r}{d^{i-1}} \ar{u} & P_{i-2,*} \ar{u} \ar{r}{d^{i-2}} &  \dots \ar{r} & P_{0,*} \ar{r}{d^0} \ar{u} & M_* \ar{r} \ar{u}{\id} & 0
 \end{tikzcd}\]
 commutes.
\end{itemize}
We define 
\[Q_{i, *} = P_{i,*} \oplus \bigoplus_{\substack{x \in \ker {d'}^{i-1} \setminus \{0\}\\ |x| + i-1 \geq n+1}}\Sigma^{|x|}R_*.\]
Here, $|x|$ means the internal degree of the element $x$. Let 
${d'}^i: Q_{i,*} \to Q_{i-1, *}$ be the map  that is given by
\[P_{i,*} \to P_{i-1,*} \to Q_{i-1,*}\] on $P_{i,*}$ and that maps
$\Sigma^{|x|}1$ to $x$. 
Then, the sequence 
\[\begin{tikzcd}
Q_{i, *} \ar{r}{{d'}^i} & Q_{i-1,*} \ar{r}{{d'}^{i-1}} & \dots \ar{r} & Q_{0,*} \ar{r}{{d'}^0} & M_* \ar{r} & 0
\end{tikzcd}\]
is exact: If $x \in \ker {d'}^{i-1} \setminus \{0\}$ and $|x| + i-1 \leq n$, then $x \in P_{i-1,*}$ and we have $x \in \im {d'}^{i}$ by item (\ref{tre}). This shows the induction step. 
\end{proof}
\begin{lem} \label{AHL3}
We consider  case (\ref{4}). If $r > 1$ we have 
\[ \dim_{\mathbb{F}_p}V(0)_n\THH(\K; \hat{H}\mathbb{Z}_p) = \begin{cases}
                                                        1, & n = 2p-2;  \\
                                                        2,  & n = 2p-1.
                                                        \end{cases}\]
\end{lem}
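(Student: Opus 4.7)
The plan is to mirror the strategy of Lemmas~\ref{dimlow}, \ref{lowdim2} and \ref{lowdim3}: I apply the multiplicative spectral sequence of Lemma~\ref{AHL} to the ring map $\K\to H\mathbb{F}_p$, yielding
\[E^{2}_{s,t}=\Tor^{(H\mathbb{F}_{p})_{*}\K}_{s,t}(\mathbb{F}_{p},\mathbb{F}_{p})\Longrightarrow \THH_{s+t}(\K;H\mathbb{F}_{p}),\]
and use the identification $V(0)_{*}\THH(\K;\hat{H}\mathbb{Z}_{p})\cong\THH_{*}(\K;H\mathbb{F}_{p})$ recalled at the start of the section. The new difficulty in case~(\ref{4}) is that $(H\mathbb{F}_{p})_{*}\K$ is not fully known, but by Lemma~\ref{c4hom} it agrees in internal degrees $\leq 2p$ with the graded $\mathbb{F}_{p}$-algebra $S_{*}:=(E(x)\otimes P_{k}(y))/(xy^{k-1})$, where $|x|=2r-1$, $|y|=2r$ and $k=(p-1)/r\geq 2$. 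Invoking Lemma~\ref{TorChangeRing} (equivalently, by combining a low-degree partial free resolution of $\mathbb{F}_{p}$ over $S_{*}$ with Lemmas~\ref{subcom} and \ref{extendCHain}), one obtains $E^{2}_{s,t}\cong\Tor^{S_{*}}_{s,t}(\mathbb{F}_{p},\mathbb{F}_{p})$ for $t\leq 2p-1$, which reduces the problem to a finite algebraic computation.

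I would carry out the $\Tor$-computation via the reduced bar complex $B(S_{*})=\bigoplus_{s}\bar S_{*}^{\otimes s}$. Since the indecomposables of $\bar S_{*}$ sit in internal degrees $2r-1$ and $2r$ and $1<r<p-1$, the columns $s=0,1$ contribute nothing in total degrees $2p-2$ or $2p-1$; at the other end, the minimum internal degree of $\bar S_{*}^{\otimes s}$ is $s(2r-1)$, and $2rs>2p-1$ for $s\geq k+1$, so $\bar S_{*}^{\otimes s}$ vanishes in the relevant range for $s>k$. What remains is to identify cycles in $\bar S_{*}^{\otimes s}$ for $2\leq s\leq k$ modulo boundaries. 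The cycle $[x|\dots|x]\in\bar S_{*}^{\otimes k}$ of internal degree $k(2r-1)=2p-2-k$ is not a boundary (the only potential preboundaries in $\bar S_{*}^{\otimes k+1}$ at that internal degree would have to have strictly smaller minimum) and gives the one-dimensional contribution to total degree $2p-2$, representing $\sigma x^{k}$. In total degree $2p-1$ the shuffle-type cycle $\sum_{i=1}^{k}(-1)^{i-1}[x|\dots|y|\dots|x]$ in $\bar S_{*}^{\otimes k}$ (with $y$ in slot $i$) gives one class, which is the unique survivor of a bidiagonal rank computation for $d\colon\bar S_{*}^{\otimes k}\to\bar S_{*}^{\otimes k-1}$ and represents $\sigma x^{k-1}\sigma y$; the second class comes from $\bar S_{*}^{\otimes 2}$ at internal degree $2p-3$, where the relation $xy^{k-1}=0$ produces a $2(k-1)$-dimensional space of cycles spanned by $[x|y^{k-1}]$, $[y^{k-1}|x]$, $[y^{i}|xy^{k-1-i}]$ and $[xy^{i}|y^{k-1-i}]$, whose quotient by the image of $d\colon\bar S_{*}^{\otimes 3}\to\bar S_{*}^{\otimes 2}$ is one-dimensional (for $k=2$ all bar differentials vanish because $\bar S_{*}\cdot\bar S_{*}=0$, and $\Tor_{2}$ alone already accounts for both classes).

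Finally, the spectral sequence collapses in total degrees $\leq 2p-1$ by bidegree considerations identical to those at the end of the proofs of Lemmas~\ref{lowdim2} and \ref{lowdim3}, giving the claimed dimensions. The main obstacle is the intermediate rank computation: verifying that the image of $d\colon \bar S_{*}^{\otimes 3}\to\bar S_{*}^{\otimes 2}$ at internal degree $2p-3$ has codimension exactly one inside the $2(k-1)$-dimensional cycle space, and that no further survivors appear in intermediate columns $3\leq s\leq k-1$ at internal degrees $2p-s-1$; the small cases $k=2$ and $k=3$ (the latter by an explicit $3\times 4$ rank calculation on the map sending $[x|y|y],[y|x|y],[y|y|x]$ to linear combinations of $[x|y^{2}],[y^{2}|x],[y|xy],[xy|y]$) indicate the pattern, but a uniform argument requires a careful analysis of how the bar differentials interact with the single defining relation $xy^{k-1}=0$.
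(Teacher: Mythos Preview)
Your overall strategy is exactly that of the paper: feed $\K\to H\mathbb{F}_p$ into the Tor spectral sequence of Lemma~\ref{AHL}, use Lemma~\ref{c4hom} plus Lemma~\ref{TorChangeRing} to replace $(H\mathbb{F}_p)_*\K$ by $S_*=(E(x)\otimes P_k(y))/(xy^{k-1})$ in the relevant internal degrees, and then argue that the spectral sequence collapses in total degrees $\leq 2p-1$ by column considerations. The difference is in how the Tor groups are computed: you work with the reduced bar complex, whereas the paper writes down an explicit small partial free $S_*$-resolution of $\mathbb{F}_p$ (via Lemmas~\ref{subcom} and \ref{extendCHain}) whose generators can be listed directly.

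Your bar-complex argument, however, has a genuine gap in total degree $2p-2$ that you do not flag. You exhibit the cycle $[x|\cdots|x]\in\bar S_*^{\otimes k}$ and assert it gives ``the one-dimensional contribution'', but you do not address the columns $2\leq s\leq k-1$. These columns are \emph{not} empty at internal degree $2p-2-s$: for instance, in column $s=2$ one has the $(k-1)$ elements $[xy^{j-1}\,|\,xy^{k-j-1}]$ for $1\le j\le k-1$, all of which are cycles since $x^2=0$, and one must verify that they are all boundaries coming from $\bar S_*^{\otimes 3}$. Analogous checks are needed in every intermediate column. You do acknowledge the parallel issue for total degree $2p-1$ (columns $3\leq s\leq k-1$), but even there you leave it as an ``obstacle'' without a uniform argument. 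So as written the proposal is a correct outline with a real incomplete step, not a proof.

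This is precisely what the paper's approach buys: by constructing the partial resolution $P_{*,*}$ by hand, one sees at a glance that in total degree $2p-2$ the \emph{only} generator of $P_{*,*}\otimes_{S_*}\mathbb{F}_p$ is $\gamma_k$, and in total degree $2p-1$ there are exactly $w_2,a_2,\upsilon_k$, with a single relation $d^3(b_3)$ identifying $w_2$ and $a_2$. All of the delicate rank computations in the bar complex are thereby absorbed into the (finite, explicit) construction of $P_{*,*}$. If you want to salvage the bar approach, you would need to supply the missing boundary calculations uniformly in $k$; otherwise switching to the explicit partial resolution, as the paper does, is both shorter and safer.
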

\begin{proof}
As in the other cases we consider the spectral sequence
\begin{equation}\label{ahlss}
 E^2_{*,*} = \Tor^{(H\mathbb{F}_p)_*\K}_{*,*}\bigl(\mathbb{F}_p, \mathbb{F}_p\bigr) \Longrightarrow \THH_*(\K; H\mathbb{F}_p\bigr).
 \end{equation} 
By Lemma \ref{c4hom}  we have a map 
\[ {E(x) \otimes P_k(y)}/{xy^{k-1}}  \to (H\mathbb{F}_p)_*\K\]
that is an isomorphism in degrees $\leq 2p$.  By Lemma \ref{TorChangeRing} we have 
\[E^2_{s,t} \cong \Tor^{{E(x) \otimes P_k(y)}/{xy^{k-1}}}_{s,t}(\mathbb{F}_p,  \mathbb{F}_p)\]
for $t \leq 2p$. We set $R_* := {E(x) \otimes P_k(y)}/{xy^{k-1}}$. 
We will construct a chain complex 
\[ \begin{tikzcd}
\dots \ar{r} &  P_{2,*} \ar{r}{d^2} & P_{1,*} \ar{r}{d^1} & P_{0,*} \ar{r}{d^0} & \mathbb{F}_p \ar{r} & 0 
\end{tikzcd}\]
of free graded $R_*$-modules with the following properties: 
\begin{itemize}
\item The map $d^0$ is surjective.
\item If $x \in \ker d^i$ has total degree $\leq 2p$, then we have $x \in \im d^{i+1}$.
\end{itemize}
By the Lemmas $\ref{subcom}$ and $\ref{extendCHain}$ we then have 
\[ H_*(P_{*,*} \otimes_{R_*} \mathbb{F}_p) \cong \Tor^{R_*}_{*,*}(\mathbb{F}_p, \mathbb{F}_p)\]
in total degrees $\leq 2p$ and therefore $E^2_{*,*} \cong H_*(P_{*,*} \otimes_{R_*} \mathbb{F}_p)$ in total degrees $\leq 2p$.

 We set $P_{0, *} = R_*$. Let $d^0$ be the $R_*$-module map defined by $1 \mapsto 1$. 
We define 
\[ P_{1,*} = R_{*}\gamma_1 \oplus  R_*\upsilon_1,\]
 $d^1(\gamma_1) = x$ and $d^1(\upsilon_1) = y$. Note that then $\gamma_1$ has to have bidegree $(1, 2r-1)$ and that 
 $\upsilon_1$ has to have bidegree $(1,2r)$. Obviously, 
 \[\begin{tikzcd}
  P_{1,*}  \ar{r}{d^1} & P_{0,*} \ar{r}{d^0} & \mathbb{F}_p \ar{r} & 0
  \end{tikzcd}\]
  is exact. The kernel of $d^1$ is given by
  \[ \bigoplus_{i = 0}^{k-2} \mathbb{F}_p\{xy^i\gamma_1\} \oplus \mathbb{F}_p\{y^{k-1}\gamma_1\} \oplus \mathbb{F}_p\{y^{k-1}\upsilon_1\} \oplus \mathbb{F}_p\{xy^{k-2}\upsilon_1\} \oplus \bigoplus_{i = 0}^{k-3}\mathbb{F}_p\{y^{i+1}\gamma_1 - xy^i\upsilon_1\}.\]
  We set
  \[P_{2,*}  = R_*\gamma_2 \oplus R_*w_2 \oplus  R_*z_2 \oplus R_*a_2 \oplus R_*\upsilon_2,\]
 and define $d^2$ by  $d^2(\gamma_2) = x \gamma_1$, $d^2(w_2) = y^{k-1}\gamma_1$, $d^2(z_2) =  y^{k-1} \upsilon_1$, $d^2(a_2) = xy^{k-2}\upsilon_1$ and $d^2(\upsilon_2) = y \gamma_1 - x\upsilon_1$. 
   Then, the bidegrees of the  generators of $P_{2,*}$ are given by $|\gamma_2| = (2, 2  (2r-1))$, $|w_2| = (2, 2p-3)$, $|z_2| = (2, 2p-2)$, $|a_2| = (2, 2p-3)$, $|\upsilon_2| = (2, 2 \cdot 2r-1)$ and the sequence
   \[ \begin{tikzcd}
   P_{2,*} \ar{r}{d^2} & P_{1,*} \ar{r}{d^1} & P_{0,*} \ar{r}{d^0} &  \mathbb{F}_p \ar{r} & 0
   \end{tikzcd}\]
   is exact. The $\mathbb{F}_p$-vector space 
   \begin{eqnarray*}
 & &    \bigoplus_{i = 0}^{k-2} \mathbb{F}_p\{xy^i \gamma_2\} \oplus \mathbb{F}_p\{y^{k-1}\gamma_2\} \oplus \mathbb{F}_p\{y^{k-1} \upsilon_2\} \oplus \mathbb{F}_p\{xy^{k-2}\upsilon_2\}\\
 & \oplus & \bigoplus_{i = 0}^{k-3}\mathbb{F}_p\{y^{i+1} \gamma_2 - x y^i\upsilon_2\} 
     \oplus  \mathbb{F}_p\{-w_2+y^{k-2}\upsilon_2 + a_2\}
    \end{eqnarray*}
   is included in $\ker d^2$ and contains every element in $\ker d^2$ with total degree $\leq 2p$. 
   We set 
   \[P_{3,*} = R_*\gamma_3 \oplus R_*w_3 \oplus R_* z_3 \oplus R_*a_3 \oplus R_*\upsilon_3 \oplus R_*b_3,\] 
   and  define $d^3$ by 
   $d^3(\gamma_3) = x \gamma_2$, $d^3(w_3) = y^{k-1} \gamma_2$, $d^3(z_3) = y^{k-1}\upsilon_2$, $d^3(a_3) = xy^{k-2}\upsilon_2$, $d^3(\upsilon_3) = y \gamma_2-x \upsilon_2$ and $d^3(b_3) = -w_2+y^{k-2}\upsilon_2 +  a_2$.  
   We then have $|\gamma_3| = (3, 3(2r-1))$, $|w_3| = (3, 2p-2+2r-2)$, $|z_3| = (3, 2p-2+2r-1)$, $|a_3| = (3, 2p-2+2r-2)$, 
   $|\upsilon_3| = (3, 3\cdot 2r-2)$, $|b_3| = (3, 2p-3)$, the composition 
   \[\begin{tikzcd}
   P_{3,*} \ar{r}{d^3} & P_{2,*} \ar{r}{d^2} & P_{1,*} 
   \end{tikzcd}\]  
   is zero and every class in $\ker d^2$ with total degree $\leq 2p$ is in the image of $d^3$. 
   The $\mathbb{F}_p$-vector space 
   \[ \bigoplus_{i = 0}^{k-2}\mathbb{F}_p\{xy^i\gamma_3\} \oplus \mathbb{F}_p\{y^{k-1}\gamma_3\} \oplus \mathbb{F}_p\{y^{k-1}\upsilon_3\} \oplus \mathbb{F}_p\{xy^{k-2}\upsilon_3\} \oplus \bigoplus_{i = 0}^{k-3}\mathbb{F}_p\{y^{i+1}\gamma_3-xy^i\upsilon_3\}  \]
   is included in the kernel of $d^3$ and contains every element in the kernel that has a total degree $\leq 2p$. 
   For  $i \geq 4$ we set
   \[ P_{i,*} = R_*\gamma_i \oplus R_*w_i \oplus R_*z_i \oplus R_*a_i \oplus R_*\upsilon_i, \]
   where the internal degrees of the generators are defined to be $|\gamma_i| = i(2r-1)$, $|w_i| = 2p-2 +(i-2)2r-i+1$, $|z_i| = 2p-2 +(i-2)2r-i+2$, $|a_i| = 2p-2+(i-2)2r-i+1$ and $|\upsilon_i| = 2ri-i+1$. 
We set   $d^i(\gamma_i) = x \gamma_{i-1}$, $d^i(w_i) = y^{k-1} \gamma_{i-1}$, $d^i(z_i) = y^{k-1}\upsilon_{i-1}$, $d^i(a_i) = xy^{k-2}\upsilon_{i-1}$ and $d^i(\upsilon_i) = y \gamma_{i-1} - x \upsilon_{i-1}$. 
For $i \geq 4$ the $\mathbb{F}_p$-vector space 
\[  \bigoplus_{j = 0}^{k-2}\mathbb{F}_p\{xy^j\gamma_i\} \oplus \mathbb{F}_p\{y^{k-1}\gamma_i\} \oplus \mathbb{F}_p\{y^{k-1}\upsilon_i\} \oplus \mathbb{F}_p\{xy^{k-2}\upsilon_i\} \oplus \bigoplus_{j = 0}^{k-3}\mathbb{F}_p\{y^{j+1}\gamma_i-xy^j\upsilon_i\}  \]
is included in $\ker d^i$ and contains every class in $\ker d^i$ that has total degree $\leq 2p$. 
This shows that for $i \geq 3$ the following holds: The composition
\[\begin{tikzcd}
 P_{i+1, *} \ar{r}{d^{i+1}} &  P_{i,*} \ar{r}{d^i} &  P_{i-1,*}
 \end{tikzcd}\] 
is zero and  every element in $\ker d^i$ with a total degree $\leq 2p$ is in the image of $d^{i+1}$. 

The complex $P_{*,*} \otimes_{R_*} \mathbb{F}_p$ is given by
\[\begin{tikzcd} 
  \mathbb{F}_p \\
  \mathbb{F}_p \gamma_1 \oplus \mathbb{F}_p \upsilon_1 \ar{u}{0}  \\
  \mathbb{F}_p\gamma_2 \oplus \mathbb{F}_pw_2 \oplus \mathbb{F}_pz_2 \oplus \mathbb{F}_pa_2 \oplus \mathbb{F}_p\upsilon_2 \ar{u}{0} \\
  \mathbb{F}_p\gamma_3 \oplus \mathbb{F}_pw_3 \oplus \mathbb{F}_pz_3 \oplus \mathbb{F}_pa_3  \oplus \mathbb{F}_p\upsilon_3 \oplus \mathbb{F}_pb_3 \ar{u}{d^3} \\
  \mathbb{F}_p\gamma_4 \oplus \mathbb{F}_pw_4 \oplus \mathbb{F}_pz_4 \oplus \mathbb{F}_p a_4 \oplus \mathbb{F}_p \upsilon_4 \ar{u}{0} \\
  \dots \ar{u}{0}
\end{tikzcd}\]
Here,  $d^3$  maps all generators to  zero, except for $b_3$. It maps $b_3$ to $-w_2 + a_2$ if $k > 2$ and it maps $b_3$ to $ -w_2 + a_2 + \upsilon_2$ if $k = 2$. 
The bigraded abelian group $H_*(P_{*,*} \otimes_{R_*} \mathbb{F}_p)$ is in total degree $2p-3$ zero, in total degree 
$2p-2$ given by $\mathbb{F}_p\gamma_k$, in total degree $2p-1$ given by $\mathbb{F}_p\upsilon_k \oplus \mathbb{F}_pw_2$ and in total degree $2p$ given by $\mathbb{F}_pz_2$.  Thus, the same is true for the $E^2$-page of the spectral sequence (\ref{ahlss}). The differentials of $w_2$ and $\upsilon_k$ cannot hit $\gamma_k$, because the homological degree of $\gamma_k$ is greater as or equal to the homological degree of $w_2$ and $\upsilon_k$.  For the same reason $z_2$ has to be an infinite cycle. This proves the lemma. 
\end{proof}
\begin{lem} \label{Fall4}
We consider case (\ref{4}). If $r > 1$ the spectral sequence (\ref{sset2}) collapses at the $E^2$-page. There are no multiplicative extensions. 
\end{lem}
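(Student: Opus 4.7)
The plan is to mirror the argument used for case (\ref{3}), with Lemma \ref{AHL3} playing the role that Lemma \ref{lowdim3} played there. First I will identify the possible nontrivial differentials. The $E^2$-page is generated as an $\mathbb{F}_p$-algebra by the column-zero classes $\sigma x$, $\sigma y$, $\gamma_{p^i}(\sigma x)$ for $i\geq 0$, together with $\lambda_1$ in bidegree $(2p-1,0)$ and $\mu_1$ in bidegree $(2p,0)$. The column-zero classes are infinite cycles, so the only generators that could support nontrivial differentials are $\lambda_1$ and $\mu_1$. A bidegree calculation, using $|\sigma x|=(0,2r)$, $|\sigma y|=(0,2r+1)$ and $k=(p-1)/r$, shows that the only candidate targets (landing in column zero and in the correct total degree) are $d^{2p-1}(\lambda_1)\doteq\sigma x^k$ and $d^{2p}(\mu_1)\doteq\sigma x^{k-1}\sigma y$. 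The hypothesis $r>1$ will be used here to exclude $\gamma_{p^j}(\sigma x)$-contributions to total degrees $2p-2$ and $2p-1$, because $p^j\geq p>k-1$ for $j\geq 1$.

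Next I will rule out both candidate differentials using Lemma \ref{AHL3}. In total degree $2p-2$ the $E^2$-page reduces to $\mathbb{F}_p\{\sigma x^k\}$ (nonzero since $k<p$, so $k!\not\equiv 0\pmod p$); a nonzero $d^{2p-1}(\lambda_1)$ would kill both $\sigma x^k$ and $\lambda_1$, forcing $V(0)_{2p-2}\THH(\K;\hat{H}\mathbb{Z}_p)=0$ and contradicting the dimension-one part of Lemma \ref{AHL3}. In total degree $2p-1$ the $E^2$-page is two-dimensional, spanned by $\lambda_1$ and $\sigma x^{k-1}\sigma y$; a nonzero $d^{2p}(\mu_1)$ would kill $\sigma x^{k-1}\sigma y$, leaving only $\lambda_1$ at $E^\infty$ in this degree and contradicting the dimension-two part of Lemma \ref{AHL3}. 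With both differentials on generators being zero, the Leibniz rule propagates this vanishing to the full $E^2$-page, so the spectral sequence collapses and $E^\infty_{*,*}\cong \Gamma(\sigma x)\otimes E(\sigma y)\otimes E(\lambda_1)\otimes P(\mu_1)$.

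For the absence of multiplicative extensions I will follow the argument already used in cases (\ref{1}) and (\ref{3}): the tensor factor $\Gamma(\sigma x)\otimes E(\sigma y)$ lives entirely in column zero, so its classes lift canonically to $V(0)_*\THH(\K;\hat{H}\mathbb{Z}_p)$ and multiplication against them preserves filtration, leaving no room for hidden relations; the remaining relation $\lambda_1^2=0$ is automatic from graded commutativity over $\mathbb{F}_p$. The main obstacle is purely the bidegree bookkeeping that pins down the two candidate differentials, and this is exactly where the assumption $r>1$ is essential, since for $r=1$ additional $\gamma_{p^j}(\sigma x)$-type contributions would appear in the relevant total degrees and the dimension count of Lemma \ref{AHL3} would no longer suffice on its own.
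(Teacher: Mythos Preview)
Your proof is correct and follows essentially the same route as the paper's: identify the two candidate differentials $d^{2p-1}(\lambda_1)\doteq\sigma x^k$ and $d^{2p}(\mu_1)\doteq\sigma x^{k-1}\sigma y$, and rule each out by the dimension counts in Lemma~\ref{AHL3}; the absence of multiplicative extensions then follows because the $\Gamma(\sigma x)\otimes E(\sigma y)$ factor sits in column zero.

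One small correction regarding your side remark on the hypothesis $r>1$: it is not actually needed for the bidegree bookkeeping here. Even for $r=1$ one has $2rp^j\geq 2p>2p-1$ for $j\geq 1$, so no $\gamma_{p^j}(\sigma x)$ with $j\geq 1$ can appear in total degrees $2p-2$ or $2p-1$; column zero in those degrees is still one-dimensional, spanned by $\gamma_{k}(\sigma x)$ and $\gamma_{k-1}(\sigma x)\sigma y$ respectively. The genuine role of $r>1$ is upstream, in Lemma~\ref{AHL3} (through Lemma~\ref{c4hom}, which gives the low-degree description of $(H\mathbb{F}_p)_*\K$ only under this assumption). Without Lemma~\ref{AHL3} you would have no dimension input to rule out the candidate differentials.
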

\begin{proof}
As in case (\ref{3}) the only possible differentials on the canonical  algebra generators of the $E^2$-page are
\[ d^{2p-1}(\lambda_1) \doteq \sigma x^{k} \text{~~~and~~~} d^{2p}(\mu_1) \doteq \sigma x^{k-1} \sigma y .\]
We conclude that $d^i = 0$ for $i = 2, \dots, 2p-2$. The $E^{2p-1}$-page is in total degree $2p-2$ given by 
$\mathbb{F}_p\{\sigma x^k\}$. Therefore, by Lemma \ref{AHL3}, the differential $d^{2p-1}(\lambda_1) \doteq \sigma x^k$ cannot exist and we get  $d^{2p-1} = 0$.  In total degree $2p-1$ the $E^{2p}$-page is given by 
\[\mathbb{F}_p\{\lambda_1\} \oplus \mathbb{F}_p\{\sigma x^{k-1}\sigma y\}.\]
Hence, by Lemma \ref{AHL3}, the differential $d^{2p}(\mu_1) \doteq \sigma x^{k-1} \sigma y$ cannot exist and we conclude that the spectral sequence collapses at the $E^2$-page.
\end{proof}

\begin{rmk}
It seems likely that Lemma \ref{Fall4} is also true for $r = 1$. However, the above proof does not work in this case, because some of the degree arguments require $r > 1$. 
\end{rmk}
\subsection{The $V(1)$-homotopy of $\THH\bigl(\K(\mathbb{F}_q)_p\bigr)$ in the first case}  \label{et3}

In this subsection we consider the spectral sequence (\ref{sset3})
\[E^2_{*,*} \cong V(1)_*\K \otimes \THH_*(\K; H\mathbb{F}_p) \Longrightarrow V(1)_*\THH(\K)\]
in case (\ref{1}). 
By Lemma \ref{v1K} and Theorem \ref{thhkfp} we have
\[ E^2_{*,*} \cong E(x) \otimes E(a, \lambda_2) \otimes P(\mu_2) \otimes \Gamma(b) \]
with $|x| = (0, 2p-3)$, $|a| = (p(2p-2)+1, 0)$, $|\lambda_2| = (2p^2-1, 0)$, $|\mu_2| = (2p^2,0)$ and $|b| = (p(2p-2), 0)$. 

\begin{thm} \label{case1V1THH}
In case (\ref{1}) the spectral sequence (\ref{sset3}) has the differential 
\[ d^{2p-2}(\lambda_2) \doteq xa.\]
We have
\[V(1)_*\THH(\K) \cong \Omega_*^{\infty} \otimes P([\mu_2]) \otimes \Gamma ([b]),\]
where $\Omega^{\infty}_*$ is the  graded-commutative $\mathbb{F}_p$-algebra with generators $x$, $c$, $d$, $e$ in degrees  $|x| = 2p-3$, $|c| = 2p^2+2p-4$, $|d| = 4p^2-2p$, $|e| = p(2p-2)+1$ and 
relations
\begin{eqnarray} \label{rela}
& d^2  = c^2 = 0, \nonumber \\
& xe  = xc = 0,  \nonumber \\
& de  = dc = 0,  \nonumber \\
& ec  = -xd.
\end{eqnarray}
\end{thm}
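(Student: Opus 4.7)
The plan is to determine all differentials in the spectral sequence (\ref{sset3}), compute the $E^\infty$-page, and then resolve multiplicative extensions.

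I first observe that the $E^2$-page is supported in exactly two rows. In case (\ref{1}) we have $r = p-1$ and $k = 1$, so $V(1)_*\K = E(x)$ is one-dimensional in degrees $0$ and $2p-3$ and zero elsewhere. Since the differential $d^r$ has bidegree $(-r, r-1)$, any differential between these two rows forces $r-1 = 2p-3$, i.e.\ $r = 2p-2$; all other differentials either leave the strip $\{m = 0, 2p-3\}$ (hence vanish) or come from row $2p-3$ (hence also vanish). For each algebra generator $a, \lambda_2, \mu_2, \gamma_n(b)$ in row $0$, I will inspect the term $\THH_*(\K; H\mathbb{F}_p)$ (given by Theorem \ref{thhkfp}) in the target column and check that in every case except $\lambda_2$ the target is zero for degree reasons. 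For $\lambda_2$ itself, the target $E^{2p-2}_{2p^2-2p+1, 2p-3}$ is one-dimensional, spanned by $xa$. Hence the only possibly non-trivial differential on generators is
\[ d^{2p-2}(\lambda_2) = c \cdot xa \quad \text{for some } c \in \mathbb{F}_p. \]

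The main obstacle is to show that $c \neq 0$. Suppose $c = 0$; then $\lambda_2$ is a permanent cycle, and since there can be no further differentials in or out of row $0$ beyond $d^{2p-2}$, it survives to $E^\infty_{2p^2-1, 0}$ and lifts to a non-zero class $\tilde\lambda_2 \in V(1)_{2p^2-1}\THH(\K)$. The Hurewicz morphism sends $\tilde\lambda_2$ to a class in $(H\mathbb{F}_p)_{2p^2-1}(V(1) \wedge_S^L \THH(\K))$ which is automatically an $A_*$-comodule primitive. By naturality of the Brun construction with respect to the map $V(1) \wedge_S \THH(\K) \to H\mathbb{F}_p \wedge_S \THH(\K)$, the image of $\tilde\lambda_2$ in mod $p$ homology is detected in the Bökstedt spectral sequence by the class $[\sigma \tilde\tau_2]$ (modulo lower filtration), using Theorem \ref{homthhimj} and the identification of $\lambda_2$ with $\mu_1^{p-1}\sigma y$ from Theorem \ref{thhkfp}. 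In particular the Hurewicz image is non-zero. This contradicts Lemma \ref{primimj}, forcing $c \neq 0$.

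Having established $d^{2p-2}(\lambda_2) \doteq xa$, the Leibniz rule determines $d^{2p-2}$ on the whole $E^{2p-2}$-page. Since $x^2 = a^2 = 0$, a direct kernel/image computation shows that
\[ E^{2p-1} = \mathbb{F}_p\{1, x, a, x\lambda_2, a\lambda_2, xa\lambda_2\} \otimes P(\mu_2) \otimes \Gamma(b), \]
and no further differentials are possible for the same bidegree reason as before. Hence $E^{2p-1} = E^\infty$.

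Finally, I will choose representatives $[x], [a], [x\lambda_2], [a\lambda_2], [\mu_2], [b]$ of the respective $E^\infty$-classes in $V(1)_*\THH(\K)$ and set $e := [a]$, $c := [x\lambda_2]$, $d := [a\lambda_2]$. The relations $d^2 = c^2 = 0$, $xe = xc = de = dc = 0$ follow from $a^2 = x^2 = \lambda_2^2 = 0$ on $E^\infty$ together with the fact that the only classes that could a priori give rise to extensions lie in strictly higher filtration which is zero in these bidegrees. The relation $ec = -xd$ is graded commutativity: $a \cdot x\lambda_2 = -xa\lambda_2 = -x \cdot a\lambda_2$. Since the Poincaré series of $\Omega^\infty_* \otimes P([\mu_2]) \otimes \Gamma([b])$ agrees with that of $E^\infty$, the resulting homomorphism is an isomorphism and there are no further extension problems. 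The alternative description as the homology of the differential graded algebra $E(x,a,\lambda_2) \otimes P(\mu_2) \otimes \Gamma(b)$ with $d(\lambda_2) = xa$ is immediate from the computation of $E^\infty$.
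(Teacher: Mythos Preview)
Your overall architecture matches the paper's: the two-row structure, the degree arguments forcing all generators except $\lambda_2$ to be infinite cycles, the contradiction via Lemma~\ref{primimj}, and the extension analysis. However, the step where you show $c \neq 0$ has a gap.

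You write that ``by naturality of the Brun construction with respect to the map $V(1) \wedge_S \THH(\K) \to H\mathbb{F}_p \wedge_S \THH(\K)$, the image of $\tilde\lambda_2$ in mod $p$ homology is detected in the B\"okstedt spectral sequence by $[\sigma\tilde\tau_2]$.'' Two problems. First, the map you name is not the Hurewicz map; the Hurewicz goes to $(H\mathbb{F}_p)_*(V(1) \wedge_S^L \THH(\K))$, not to $(H\mathbb{F}_p)_*\THH(\K)$. Second, no comparison between the Brun filtration and the B\"okstedt filtration is set up in the paper, so the detection claim is unjustified as stated; you would need to trace an explicit compatibility you have not established.

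The paper's argument avoids this entirely. It uses that the edge homomorphism
\[
\varepsilon\colon V(1)_*\THH(\K) \longrightarrow \THH_*(\K; H\mathbb{F}_p)
\]
is induced by a map in $\mathscr{D}_S$ (see the discussion before diagram~(\ref{diagfin})), giving a commutative square
\[
\begin{tikzcd}
V(1)_*\THH(\K) \ar{r}{\varepsilon} \ar{d}{h} & \THH_*(\K;H\mathbb{F}_p) \ar{d}{h} \\
(H\mathbb{F}_p)_*(V(1)\wedge_S^L\THH(\K)) \ar{r} & (H\mathbb{F}_p)_*\THH(\K;H\mathbb{F}_p).
\end{tikzcd}
\]
If $\lambda_2$ were a permanent cycle with lift $[\lambda_2]$, then $\varepsilon([\lambda_2]) = \lambda_2 \neq 0$. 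Since $\THH(\K;H\mathbb{F}_p)$ is an $H\mathbb{F}_p$-module spectrum, the right-hand Hurewicz map is injective, so $h([\lambda_2]) \neq 0$. This is the nonzero comodule primitive that contradicts Lemma~\ref{primimj}. You should replace your detection argument by this square; it is both shorter and rigorous.

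Your treatment of the multiplicative extensions is correct in outline; the paper makes the same checks slightly more explicitly by locating, for each relation, the total degree and observing that the relevant part of row~$2p-3$ (the only place a hidden extension could land) is zero there.
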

\begin{proof}
Note that the spectral sequence only has two non-trivial lines, namely line zero which is 
\[ C_* := E(a, \lambda_2) \otimes P(\mu_2) \otimes \Gamma(b)\] and line $2p-3$ which 
is 
\[ \mathbb{F}_p\{x\} \otimes E(a, \lambda_2) \otimes P(\mu_2) \otimes \Gamma(b).\]
We claim that the classes $\gamma_{p^i}(b)$ are infinite cycles for $i \geq 0$. We have 
\[d^{2p-2}\bigl(\gamma_{p^i}(b)\bigr) = x w\] for a class  $w$ in  $C_*$ in degree $p^{i+1}(2p-2) -2p+2$. Since $w$ has even degree it lies in 
\[ P(\mu_2) \otimes \Gamma(b) \, \, \, \oplus \, \, \, \mathbb{F}_p\{a\} \otimes \mathbb{F}_p\{\lambda_2\} \otimes P(\mu_2) \otimes \Gamma(b).\]
Every  class in this graded abelian group has a degree divisible by $2p$. Since  
 $p^{i+1}(2p-2)-2p+2$ is not divisible by $2p$, we get $d^{2p-2}(\gamma_{p^i}(b)) = 0$.  
The classes $a$ and $\mu_2$ are infinite cycles, because 
 $C_*$ is trivial in degrees  $0 < * < p(2p-2)$ and in degree $p(2p-2)+2$.



 
 We claim that there is a differential $d^{2p-2}(\lambda_2) \doteq xa$. Since in total degree $2p^2-2$ the $(2p-3)$th line of the $E^2$-page  is given by $\mathbb{F}_p\{xa\}$, it suffices to show that $\lambda_2$ is not an infinite cycle. To prove this, we note that by  \cite[Lemma 3.15, proofs of Theorem 4.11 and Lemma  4.13]{Brunss}  the edge homomorphism 
 \[
 \begin{tikzcd}
 V(1)_*\THH(\K) \ar{r} & \THH_*(\K; H\mathbb{F}_p)
 \end{tikzcd}
\]
 is induced by a map $V(1) \wedge_S^L \THH(\K) \to \THH(\K, H\mathbb{F}_p)$ in $\mathscr{D}_S$. We therefore  
 have a commutative diagram
\begin{equation} \label{diagfin}
\begin{tikzcd}
 V(1)_*\THH(\K) \ar{r}{\varepsilon} \ar{d}[swap]{h} & \THH_*(\K; H\mathbb{F}_p) \ar{d}{h} \\
{H\mathbb{F}_p}_* \bigl(V(1) \wedge_S^L \THH(\K)\bigr) \ar{r} & (H\mathbb{F}_p)_* \THH(\K; H\mathbb{F}_p),
\end{tikzcd}
\end{equation}
where the upper horizontal map is the edge homomorphism and where the vertical maps are the Hurewicz homomorphisms. We suppose that $\lambda_2$ is an infinite cycle. Let $[\lambda_2] \in V(1)_{2p^2-1}\THH(\K)$ be a representative of $\lambda_2$. We have $\varepsilon([\lambda_2]) = \lambda_2 \neq 0$. Since $\THH(\K; H\mathbb{F}_p)$ is a module over the $S$-ring spectrum $H\mathbb{F}_p$, the right Hurewicz homomorphism in (\ref{diagfin}) is injective. We get $h(\varepsilon([\lambda_2])) \neq 0$ and therefore $h([\lambda_2]) \neq 0$. This is a contradiction, because by \cite[Corollary 17.14]{Swi}  the image of the Hurewicz morphism is always contained in the subspace of comodule primitives and because by Lemma \ref{primimj} there is no non-trivial comodule primitive in 
\[(H\mathbb{F}_p)_{2p^2-1}\bigl(V(1) \wedge_S^L\THH(\K)\bigr).\]
We conclude that $d^{2p-2}(\lambda_2) \doteq xa$.   
We get 
\begin{equation*} 
E^{\infty}_{*,*} = E^{2p-1}_{*,*}  = H_*\bigl((E(x) \otimes E(a,\lambda_2)\bigr) \otimes P(\mu_2) \otimes \Gamma(b)
\end{equation*}
and one easily sees that as an $\mathbb{F}_p$-vector space one has 
\[ H_*\bigl(E(x) \otimes E(a, \lambda_2)\bigl)  \cong \mathbb{F}_p\{1\} \oplus \mathbb{F}_p\{x\} \oplus \mathbb{F}_p\{a\} \oplus \mathbb{F}_p\{x \lambda_2\} \oplus \mathbb{F}_p\{a \lambda_2\} \oplus \mathbb{F}_p\{xa\lambda_2\}.\]
The $(2p-3)$th line of $E^{\infty}_{*,*}$ is therefore given by 
\[\mathbb{F}_p\{x\} \otimes P(\mu_2) \otimes \Gamma(b)\, \,  \oplus \, \, \mathbb{F}_p\{x \lambda_2\} \otimes P(\mu_2) \otimes \Gamma(b) \,\, \oplus \,\, \mathbb{F}_p\{xa\lambda_2\} \otimes P(\mu_2) \otimes \Gamma(b).\]
Thus, line $2p-3$ is zero in  total degrees divisible by $2p$. 
It follows that the classes $\gamma_{p^i}(b)$, $\mu_2$ and $a \lambda_2$ have unique representatives $[\gamma_{p^i}(b)]$, $[\mu_2]$ and $d$ in $V(1)_*\THH(\K)$. The class $a$  has a unique representative $e$ because $E^{\infty}_{*,2p-3}$ is zero in  total degrees $2p-3 < * < 2p^2-3$.  
 The classes $x$ and  $x\lambda_2$ also have unique representatives $x$ and $c$  because
they lie in line $2p-3$. 
Since $\gamma_{p^i}(b)^p = 0$  and $(a \lambda_2)^2 = 0$ in $E^{\infty}_{*,*}$ and since the total degrees of $\gamma_{p^i}(b)^p$ and $(a \lambda_2)^2$ are divisible by $2p$, we get $[\gamma_{p^i}(b)]^p = 0$ and $d^2 = 0$ in $V(1)_*\THH(\K)$. 
The equations $c^2 = 0$, $xe = 0$, $xc = 0$, $de = 0$, $dc = 0$ and $ec= -xd$  holds, because the corresponding equations in $E^{\infty}_{*,*}$ are true and because $c^2$, $xe$, $xc$ $de$, $dc$ and $ec + xd$ reduce to classes in lines $\geq 2p-3$. 
Hence, we have a map of $\mathbb{F}_p$-algebras
\[\begin{tikzcd}
g: \Omega^{\infty}_* \otimes P([\mu_2]) \otimes P_p([b], [\gamma_p(b)], \dots)  \ar{r} & V(1)_*\THH(\K). 
\end{tikzcd}\]
Because of the relations (\ref{rela}) the classes $1$, $x$, $e$. $c$, $d$ and $xd$ generate $\Omega^{\infty}_*$ as an $\mathbb{F}_p$-vector space. Thus, $g$ maps a generating set bijectively onto a basis of $V(1)_*\THH(\K)$ and therefore is an isomorphism. 
\end{proof}
We mention some ideas for the differentials of the spectral sequence (\ref{sset3}) in the other cases: 
\begin{rmk}
In case (\ref{2}) the $E^2$-page of the spectral sequence (\ref{sset3}) 
is given by 
\[ E^2_{*,*} = E(x) \otimes E(\lambda_1, \lambda_2) \otimes \Gamma(\sigma x) \otimes P(\mu_2), \] 
where the total degrees are $|x| = 2p-3$, $|\lambda_1| = 2p-1$, $|\lambda_2| = 2p^2-1$, $|\sigma x| = 2p-2$ and $|\mu_2| = 2p^2$. 
By our result obtained with the B\"okstedt spectral sequence (Theorem \ref{casetwores}), the spectral sequence has to collapse.

We now consider the cases (\ref{3}) and (\ref{4}). In case (\ref{4}) we assume that $r > 1$. 
Then, the $E^2$-page of the spectral sequence (\ref{sset3}) is given by
\[E^2_{*,*} = E(x) \otimes P_k(y)\otimes \Gamma(\sigma x) \otimes E(\sigma y) \otimes E(\lambda_1) \otimes P(\mu_1).\] 
 In case (\ref{3}) we have the equation $y^k = 0$ in $(H\mathbb{F}_p)_*\K$. In case (\ref{4}) we have the relations $y^k = 0$ and $xy^{k-1} = 0$ in $(H\mathbb{F}_p)_*\K$. It  seems plausible that, analogous to the case of $\ku_p$ (see \cite{Brunss}), we get a differential 
 \[ d^{2p-2r-1}(\mu_1) = y^{k-1} \sigma y\]
in case (\ref{3}) and  differentials 
 \[  d^{2p-2r-2}(\lambda_1) = xy^{k-2} \sigma y \text{~~~and~~~} d^{2p-2r-1}(\mu_1) = y^{k-1} \sigma y \]
 in case (\ref{4}). In case (\ref{4}) it seems plausible that there are additional differentials, similar to case (\ref{1}). 
\end{rmk}
\printbibliography
\end{document}